\def\urlfont{\DeclareFontFamily{OT1}{cmtt}{\hyphenchar\font='057}
              \normalfont\ttfamily \hyphenpenalty=10000}
\DeclareFontFamily{OT1}{rsfs10}{}
\DeclareFontShape{OT1}{rsfs10}{m}{n}{ <-> rsfs10 }{}
\DeclareMathAlphabet{\mathscript}{OT1}{rsfs10}{m}{n}
\DeclareMathOperator{\im}{Im}       
\DeclareMathOperator{\Spec}{Spec}   
\DeclareMathOperator{\Proj}{Proj}   
\DeclareMathOperator{\Hom}{Hom}     
\DeclareMathOperator{\Pic}{Pic}     
\DeclareMathOperator{\rk}{rk}       
\DeclareMathOperator{\diag}{diag}   
\DeclareMathOperator{\fan}{fan}     
\DeclareMathOperator{\conv}{Conv}   
\DeclareMathOperator{\lcm}{lcm}     
\title[Weighted Projective Spaces from the toric point of view]{Weighted Projective Spaces from the toric point of view with computational applications.}
\author[M. Rossi and L.Terracini]{Michele Rossi and Lea Terracini}
\dedicatory{Dedicated to our teacher, colleague and friend Luciana Picco Botta}
\address{Dipartimento di Matematica, Universit\`a di Torino,
via Carlo Alberto 10, 10123 Torino} \email{michele.rossi@unito.it,
lea.terracini@unito.it}
\thanks{The authors were partially supported by the Local Project \lq\lq Computational Algebra and Applications\rq\rq (2007) and the MIUR-PRIN 2008 Research Funds}
\def \a{\alpha }
\def \b{\beta }
\def \d{\delta }
\def \s{\sigma }
\def \D{\Delta }
\def \Si{\Sigma }
\def \e{\mathbf{e}}
\def \u{\mathbf{u}}
\def \v{\mathbf{v}}
\def \n{\mathbf{n}}
\def \w{\mathbf{w}}
\def \t{\mathbf{t}}
\def\P{{\mathbb{P}}}
\def\p2{\mathbb{P}^2}
\def\p3{\mathbb{P}^3}
\def\p4{\mathbb{P}^4}
\def\rk{\operatorname{rk}}
\def\GL{\operatorname{GL}}
\def\Mat{\operatorname{Mat}}
\def\Z{\mathbb{Z}}
\def\C{\mathbb{C}}
\def\R{\mathbb{R}}
\def\Q{\mathbb{Q}}
\def\N{\mathbb{N}}
\def\W{\mathbb{W}}
\theoremstyle{plain}
\newtheorem{theorem}{Theorem}[section]
\newtheorem{proposition}[theorem]{Proposition}
\newtheorem{criterion}[theorem]{Criterion}
\newtheorem{thm-def}[theorem]{Theorem--Definition}
\newtheorem{corollary}[theorem]{Corollary}
\newtheorem{lemma}[theorem]{Lemma}
\newtheorem*{a-proposition}{Proposition}
\theoremstyle{remark}
\newtheorem{remark}[theorem]{Remark}
\newtheorem{example}[theorem]{Example}
\newtheorem*{notation}{Notation}
\theoremstyle{definition}
\newtheorem{definition}[theorem]{Definition}
\newtheorem{algorithm}[theorem]{Algorithm}
\newtheorem*{step I}{Step I}
\newtheorem*{step II}{Step II}
\newtheorem*{step III}{Step III}
\newtheorem*{step IV}{Step IV}
\newtheorem*{acknowledgements}{Acknowledgements}
\newcommand{\oneline}{\vskip12pt}
\newcommand{\halfline}{\vskip6pt}
\newcommand{\cy}{Ca\-la\-bi--Yau }
\begin{document}


\begin{abstract} The purpose of the present paper is threefold. First: giving a treatise on weighted projective spaces by the toric point of view. Second: providing characterizations of fans and polytopes giving weighted projective spaces, with particular focus on a kind of \emph{recognition process} of toric data like fans and polytopes. Third: building a mathematical framework for the algorithmic and computational approach to wps's realized in \cite{RTpdf,RT-Maple}.
\end{abstract}

\maketitle

\tableofcontents

\section*{Introduction}

The purpose of the present paper is threefold.

 \halfline The first aim is to propose a treatise on weighted projective spaces (wps's) from the toric point of view. In fact there is a quite vast literature on the subject but this is rather fragmented and scattered in a variety of places due to the multitude of applications that wps's have had over the years. Therefore we would like to propose a, as much as possible, self-contained toric approach to wps's, giving characterizations of their toric data, like fans and polytopes, and producing algorithms that allow them to be easily determined, even by hand. Actually, a wps is one of the simplest examples of a complete normal toric variety, probably meaning that a toric treatise on this subject could be regarded as not much more than a sort of instructive exercise on toric geometry. Nevertheless wps's, and toric varieties in general, are often considered in many applications, like e.g. super--string theory to produce examples of Calabi--Yau threefolds modeling of the space time of hidden dimensions, whose scholars are not expected to have a sufficiently deep background in algebraic geometry. Since we are convinced that combinatorics and convex geometry require a smaller background than algebraic geometry, we believe that a completely toric treatise of the issue may provide a good service to many people. However, such a treatise should not be thought of as a comprehensive survey on the subject: wps's have so many steppingstones with so many mathematical areas and so many applications that giving a comprehensive survey would probably mean writing a whole book on the subject. By the way, we also believe that giving a comparison between well--known algebraic geometric results and their toric counterparts, like e.g. the \emph{Reduction Theorem} \ref{thm:riduzione} asserting the isomorphism $\P(Q)\cong\P(Q')$ when $Q'$ is the reduced weights vector of $Q$ (see \ref{ssez:divisori}), may be a mathematical outcome worthy of some interest.

 \halfline The second aim is to provide, among others, particular characterizations of toric data (fans and polytopes) defining a wps, which, as far as we know, seem to be still unpublished. Namely:
 \begin{enumerate}
   \item Proposition \ref{prop:HNFdue} providing a surprising link between a wps fan and the switching matrix computing the Hermite Normal Form $B=(1,0,\ldots,0)^T$ of the transposed weights vector $Q^T=(q_0,\ldots,q_n)^T$,
   \item Definition \ref{def:Wtrasversa} and the following Theorem \ref{thm:fan-politopi} proposing the \emph{weighted trans\-ver\-sion} process as an easy and quick method to produce a \emph{minimally polarizing polytope} of a wps starting from its fan,
   \item Theorems \ref{thm:(A)} and \ref{thm:(B)} giving characterizations of polytopes defining a polarized wps.
 \end{enumerate}
 Let us discuss these results in more detail. Proposition \ref{prop:HNFdue} asserts that a fan of $\P(Q)$ is encoded in a matrix $U$ such that $U\cdot Q^T=B$, whose existence is guaranteed by Hermite Normal Form (HNF) algorithm, building a bridge between toric geometry and linear algebra. This is proved by a direct check of equivalent conditions in Theorem \ref{thm:fan}. As a consequence a procedure computing a wps fan can be implemented in any mathematical software treating basic linear algebra procedures like the one giving the HNF of a matrix (see algorithm \ref{alg:daQaF}).

 \noindent Results listed in (2) and (3) are naturally obtained by constructing the polytope of a minimally polarizing divisor of $\P(Q)$ starting from its fan: when restricted to the associated matrices, such a process turns out to be, up to the multiplication by a diagonal matrix of weights, nothing more then taking the \emph{trans}posed in\emph{verse} (hence ``\emph{transverse}'') matrix of the fan matrix after the deletion of the column corresponding to a chosen weight $q_0$. Then, conditions stated in Theorems \ref{thm:(A)} and \ref{thm:(B)} allowing the "recognition" of the polytope defining the polarized wps $(\P(Q),\mathcal{O}(m))$ are obtained by checking the equivalent conditions in Theorem \ref{thm:fan} after having inverted the transverse process (see Proposition \ref{prop:admuno} and the following Remark \ref{rem:inversione}).

 \halfline Last, but not least, the third aim of this work is to provide a mathematical framework for an algorithmic approach to wps's. This is part of a much bigger project, aiming to produce a large number of procedures for an ``automatic'' treatment of toric varieties, which we are carrying on in collaboration with our student Massimiliano Povero \cite{Povero}, \cite{PRT}. The main motivation for such a computational approach  is that toric varieties are a very fertile ground for the production of examples in algebraic geometry and for many applications. Let us say that these last years have seen a proliferation of mathematical packages on these topics which we do not quote here to avoid forgetting someone. We just refer the interested reader to the well updated D.~Cox web--page \cite{Cox-wp}. However, this is a clear sign of a significant interest in the topic.

\noindent \emph{The recognition processes for fans and polytopes} of a given wps, giving rise from results collected in the present paper, seem to lead to a way of approaching the study of toric varieties which, at least in principle, may be useful for many applications where the geometry introduced by the recognition of toric data can help to understand some involved problem (see e.g. \cite{DSS}). At this purpose let us say that the recognition process for toric data of a wps, here given, can be reasonably extended to some more general toric varieties, like e.g. finite quotients of wps, so called \emph{fake wps} \cite{Kasprzyk}, and products of them \cite{RT2}.

 \noindent In the present paper we have summarized the computational applications of the above mentioned results with the following four algorithms. For the complete list of computational procedures we refer the interested reader to \cite{RTpdf} and to their Maple implementation \cite{RT-Maple}.
 \begin{itemize}
   \item Algorithm \ref{alg:fan} recognizing a fan of $\P(Q)$, as an application of Theorem \ref{thm:fan}.
   \item Algorithm \ref{alg:daQaF} providing a fan of $\P(Q)$, as an application of Proposition \ref{prop:HNFdue}.
   \item Algorithm \ref{alg:daQaP} providing a polytope of the minimally polarized $(\P(Q),\mathcal{O}(1))$, as an application of Theorem \ref{thm:fan-politopi}.
   \item Algorithm \ref{alg:politopo} recognizing a polytope giving $(\P(Q),\mathcal{O}(m))$, as an application of Theorems \ref{thm:(A)} and \ref{thm:(B)}.
 \end{itemize}

 \begin{acknowledgements} A very first draft of this paper was prepared during a series of lectures on toric varieties held at the Department of Mathematics of the University of Turin on February 2003. The authors would like to thank all the participants and contributors in that series of lectures and in particular A. Albano, A. Collino, A. Grassi, L. Picco Botta, M. Roggero. The authors would also like to thank V. Perduca, a student of A. Grassi who used such a first draft in his PHD thesis, for stimulating questions and conversations. In particular they are indebted to A. Grassi for her encouragement to complete that draft and write down the present paper. A heartfelt thanks goes to C. Casagrande who first pointed us the reference \cite{Conrads} and to S. di Rocco for suggestions and stimulating conversations during the School and Workshop on ``Tropical and Toric Geometry", held in Trento on September 2011: special thanks go to all the participants in that conference and in particular to the organizers, G. Casnati, C. Fontanari, F. Galluzzi, R. Notari, F. Vaccarino and, last, but not least, the C.I.R.M. of Trento and A. Micheletti, for the warm and stimulating atmosphere they were able to create. Finally the authors would like to thank O.Fujino for useful remarks and suggestions.
 \end{acknowledgements}

\section{Preliminaries and notation}

\subsection{Toric varieties}

A \emph{$n$--dimensional toric variety} is an algebraic normal variety $X$ containing the \emph{torus} $T:=(\C^*)^n$ as a Zariski open subset such that the natural multiplicative self--action of the torus can be extended to an action $T\times X\rightarrow X$.

Let us quickly recall the classical approach to toric varieties by means of \emph{cones} and \emph{fans}. For proofs and details the interested reader is referred to the extensive treatments \cite{Danilov}, \cite{Fulton}, \cite{Oda} and the recent and quite comprehensive \cite{CLS}.

\noindent As usual $M$ denotes the \emph{group of characters} $\chi : T \to \C^*$ of $T$ and $N$ the \emph{group of 1--parameter subgroups} $\lambda : \C^* \to T$. It follows that $M$ and $N$ are $n$--dimensional dual lattices via the pairing
\begin{equation*}
\begin{array}{ccc}
M\times N & \longrightarrow & \Hom(\C^*,\C^*)\cong\C^*\\
 \left( \chi,\lambda \right) & \longmapsto
& \chi\circ\lambda
\end{array}
\end{equation*}
which translates into the standard paring $\langle u,v\rangle=\sum u_i v_i$ under the identifications $M\cong\Z^n\cong N$ obtained by setting $\chi(\t)=\t^{\u}:=\prod t_i^{u_i}$ and $\lambda(t)=t^{\v}:=(t^{v_1},\ldots,t^{v_n})$.

\subsubsection{Cones and affine toric varieties}

Define $N_{\R}:=N\otimes \R$ and $M_{\R}:=M\otimes\R\cong \Hom(N,\Z)\otimes\R \cong \Hom(N_{\R},\R)$.

\noindent A \emph{convex polyhedral cone} (or simply a \emph{cone}) $\sigma$ is the subset of $N_{\R}$ defined by
\begin{equation*}
    \sigma = \langle \v_1,\ldots,\v_s\rangle:=\{ r_1 \v_1 + \dots + r_s \v_s \in N_{\R} \mid r_i\in\R_{\geq 0} \}
\end{equation*}
The $s$ vectors $\v_1,\ldots,\v_s\in N_{\R}$ are said \emph{to generate} $\sigma$. A cone $\s=\langle \v_1,\ldots,\v_s\rangle$ is called \emph{rational} if $\v_1,\ldots,\v_s\in N$, \emph{simplicial} if $\v_1,\ldots,\v_s$ are $\R$--linear independent and \emph{non-singular} if $\v_1,\ldots,\v_s$ can be extended by $n-s$ further elements of $N$ to give a basis of the lattice $N$.

\noindent A cone $\s$ is called \emph{strictly convex} if it does not contain a linear subspace of $N_{\R}$.

\noindent The \emph{dual cone $\s^{\vee}$ of $\s$} is the subset of $M_{\R}$ defined by
\begin{equation*}
    \sigma^{\vee} = \{ \u \in M_{\R} \mid \forall\ \v \in \sigma \quad \langle \u, \v \rangle \ge 0 \}
\end{equation*}
A \emph{face $\tau$ of $\s$} (denoted by $\tau <\s$) is the subset defined by
\begin{equation*}
    \tau = \sigma \cap \u^{\bot} = \{\v \in \sigma \mid \langle \u, \v \rangle = 0 \}
\end{equation*}
for some $\u\in \sigma ^{\vee}$. Observe that also $\tau$ is a cone.

\noindent Gordon's Lemma (see \cite{Fulton} \S 1.2, Proposition 1) ensures that the semigroup $S_{\s}:=\s^{\vee}\cap M$ is \emph{finitely generated}. Then also the associated $\C$--algebra $A_{\s}:=\C[S_{\s}]$ is finitely generated. A choice of $r$ generators gives a presentation of $A_{\s}$
\begin{equation*}
    A_{\s}\cong \C[X_1,\dots,X_r]/I_{\s}
\end{equation*}
where $I_{\s}$ is the ideal generated by the relations between generators. Then
\begin{equation*}
    U_{\s}:=\mathcal{V}(I_{\s})\subset\C^r
\end{equation*}
turns out to be an \emph{affine toric variety}. In other terms an affine toric variety is given by $U_{\s}:=\Spec(A_{\s})$. Since a closed point $x\in U_{\s}$ is an evaluation of elements in $\C[S_{\s}]$ satisfying the relations generating $I_{\s}$, then it can be identified with a semigroup morphism $x:S_{\s}\rightarrow\C$ assigned by thinking of $\C$ as a multiplicative semigroup. In particular the \emph{characteristic morphism}
\begin{equation}\label{caratteristico}
\begin{array}{cccc}
x_{\s}&:\s^{\vee}\cap M & \longrightarrow & \C\\
      &      \u & \longmapsto & \left\{\begin{array}{cc}
                                         1 & \text{if $\u\in\s^{\bot}$} \\
                                         0 & \text{otherwise}
                                       \end{array}
      \right.
\end{array}
\end{equation}
which is well defined since $\s^{\bot}<\s^{\vee}$, defines a \emph{characteristic point} $x_{\s}\in U_{\s}$ whose toric orbit $O_{\s}$ turns out to be a $(n-\dim(\s))$--dimensional torus embedded in $U_{\s}$ (see e.g. \cite{Fulton} \S 3).

\subsubsection{Fans and toric varieties}

A \emph{fan} $\Si$ is a finite set of cones $\s\subset N_{\R}$ such that
\begin{enumerate}
  \item for any cone $\s\in\Si$ and for any face $\tau<\s$ then $\tau\in\Si$,
  \item for any $\s,\tau\in\Si$ then $\s\cap\tau<\s$ and $\s\cap\tau<\tau$.
\end{enumerate}
For any $i$ with $0\leq i\leq n$ denote by $\Si(i)\subset \Si$ the subset of $i$--dimensional cones, called the \emph{$i$--skeleton of $\Si$}. A fan $\Si$ is called \emph{simplicial} if any cone $\s\in\Si$ is simplicial and \emph{non-singular} if any such cone is non-singular. The \emph{support} of a fan $\Si$ is the subset $|\Si|\subset N_{\R}$ obtained as the union of all of its cones i.e.
\begin{equation*}
    |\Si|:= \bigcup_{\s\in\Si} \s \subset N_{\R}\ .
\end{equation*}
If $|\Si|=N_{\R}$ then $\Si$ will be called \emph{complete} or \emph{compact}.

Since for any face $\tau <\s$ the semigroup $S_{\s}$ turns out to be a sub-semigroup of $S_{\tau}$, there is an induced immersion $U_{\tau}\hookrightarrow U_{\s}$ between the associated affine toric varieties which embeds $U_{\tau}$ as a principal open subset of $U_{\s}$. Given a fan $\Si$ one can construct \emph{an associated toric variety $X(\Si)$} by patching all the affine toric varieties $\{U_{\s}\ |\ \s\in\Si \}$ along the principal open subsets associated with any common face. Moreover \emph{for every toric variety $X$ there exists a fan $\Si$ such that $X\cong X(\Si)$} (see \cite{Oda} Theorem 1.5). It turns out that (\cite{Oda} Theorems 1.10 and 1.11; \cite{Fulton} \S 2):
\begin{itemize}
  \item \emph{$X(\Si)$ is non-singular if and only if the fan $\Si$ is non-singular,}
  \item \emph{$X(\Si)$ is complete if and only if the fan $\Si$ is complete.}
\end{itemize}

 In the following a \emph{$1$--generated fan} $\Si$ is a \emph{fan generated by a set of $n+1$ integral vectors} i.e. a fan whose cones $\s\subset N\otimes\R$ are generated by any proper subset of a given finite subset $\{\v_0,\ldots,\v_n\}\subset N$: we will write
\begin{equation}\label{def:genfan}
    \Si=\fan(\mathbf{v}_0,\ldots,\mathbf{v}_n)\ .
\end{equation}
Given a $1$--generated fan $\Si=\fan(\v_0,\ldots,\v_n)$, the matrix $V=(\v_0,\ldots,\v_n)$ will be called \emph{a fan matrix of $\Si$}. Notice that $\Si$ determines $V$ up to a permutations of columns, meaning that $\Si$ admits $(n+1)!$ associated fan matrices.

\noindent If $V=(\v_0,\ldots,\v_n)$ is a fan matrix of $\Si=\fan(\v_0,\ldots,\v_n)$ then we will denote the maximal square sub-matrices of $V$ and the associated $n$--minors as follows
\begin{equation}\label{Vj}
    \forall\ 0\leq j\leq n\quad V^j:=(\v_0,\ldots,\v_{j-1},\v_{j+1},\ldots,\v_n)\ ,\ V_j=\det(V^j)\ .
\end{equation}

\subsubsection{Polytopes and projective toric varieties}\label{sssez:politopi}

A \emph{polytope} $\D\subset M_{\R}$ is the convex hull of a finite set of points. If this set is a subset of $M$ then the polytope is called \emph{integral}. Starting from an integral polytope one can construct a projective toric variety as follows. Here we will follow the approach of \cite{Batyrev}, which the interested reader is referred to for proofs and details (see also \cite{CK} \S 3.2.2).

\noindent For any $k\in\N$ one can define the dilated polytope $k\D:=\{k\u\ |\ \u\in\D\}$. It is then possible to define a graded $\C$--algebra $S_{\D}$, associated with the integral polytope $\D$, as follows. For any $\u\in k\D\cap M$ consider the associated character $\chi^{\u}:\t\mapsto\t^{\u}$. Given $t\in\C^*$ consider the \emph{monomial} $t^k\chi^\u:\t\mapsto t^k\t^{\u}$. It well defines a \emph{monomial product} $t^{k_1}\chi^{\u_1}\cdot t^{k_2}\chi^{\u_2}:=t^{k_1+k_2}\chi^{\u_1+\u_2}$ where $\u_1+\u_2\in (k_1+k_2)\D$. Let $S_{\D}$ be the $\C$--algebra generated by all monomials $\{t^k\chi^{\u}\ |\ k\in\N\ ,\ \u\in k\D\}$ which is a graded object by setting $\deg(t^k\u)=k$.

\noindent The \emph{projective} variety $\P_{\D}:=\Proj(S_{\D})$ turns out to be naturally a \emph{toric variety} whose fan $\Si_{\D}$ can be recovered as follows. For any non empty face $F<\D$ consider the cone
\begin{equation*}
    \check{\s}_F:=\{r(\u-\u')\ |\ \u\in\D\ ,\ \u'\in F\ ,\ r\in\R_{\geq 0}\}\subset M_{\R}
\end{equation*}
and define $\s_F:=\check{\s}^{\vee}_F\subset N_{\R}$. Then $\Si_{\D}:=\{\s_F\ |\ F<\D\}$ turns out to be a fan, called the \emph{normal fan} of the polytope $\D$, such that there exists a very ample divisor $H$ of $X(\Si_{\D})$ for which $(X(\Si_{\D}),H)\cong(\P_{\D},\mathcal{O}(1)$, where $\mathcal{O}(1)$ is the natural polarization of $\P_{\D}=\Proj(S_{\D})$ (see \cite{Batyrev} Proposition 1.1.2).

Viceversa a \emph{projective toric variety} is the couple $(X(\Si),H)$ of a toric variety $X(\Si)$ and a polarization given by (the linear equivalence class of) a hyperplane section $H$. For any 1-cone $\rho\in\Si(1)$, consider the toric stable divisor $D_{\rho}:=\overline{O}_{\rho}$ defined as the closure of the toric orbit of the characteristic point $x_{\rho}$, defined in (\ref{caratteristico}). Since those divisors generate the Chow group of Weil divisors $A_{n-1}(X(\Si))$ (see \cite{Fulton} \S 3.4), there exist complex coefficients $a_{\rho}\in\C$ such that $H=\sum_{\rho\in\Si(1)}a_{\rho} D_{\rho}$. It is then well defined the integral polytope
\begin{equation}\label{politopo}
    \D_H:=\{\u\in M_{\R}\ |\ \forall \rho\in\Si(1)\ \langle\u,\n_{\rho}\rangle\geq - a_{\rho}\}
\end{equation}
where $\n_{\rho}$ is the unique generator of the semigroup $\rho\cap N$. Then
\begin{equation*}
    (\P_{\D_H},\mathcal{O}(1))\cong (X(\Si),H)\ .
\end{equation*}

\subsubsection{Divisors}\label{sssez:divisori} Since a toric variety $X=X(\Si)$ is normal (\cite{Danilov} Proposition 3.2) hence it makes sense to talk about its Weil divisors, whose group will be denoted by $\mathcal{W}(X)$. Let $\mathcal{C}(X)\subset\mathcal{W}(X)$ denote the subgroup of Cartier divisors. The subgroup of \emph{toric stable Weil divisors} is the following
\begin{equation*}
    \mathcal{W}_T(X)=\left\langle D_{\rho } \mid \rho \in \Sigma (1)\right\rangle_{\Z} =
    \bigoplus_{\rho \in \Sigma (1)}\Z\cdot D_{\rho }
\end{equation*}
where $D_{\rho}=\overline{O}_{\rho}$ as above. Clearly the \emph{toric stable Cartier divisors} are given by $\mathcal{C}_T(X)=\mathcal{C}(X)\cap\mathcal{W}_T(X)$.

\begin{criterion}[Weil vs Cartier - \cite{Fulton} \S 3.3; \cite{Oda} Proposition 2.4]\label{cr:Cartier} Consider the toric stable Weil divisor $D=\sum_{\rho\in\Si(1)} a_{\rho} D_{\rho}\in \mathcal{W}_T(X)$. Then
\begin{equation*}
    D\in\mathcal{C}_T(X)\ \Longleftrightarrow\ \forall \s\in\Si\  \exists \u=\u(\s)\in M : \forall \rho\in\s(1)\ \langle\u,\n_{\rho}\rangle=-a_{\rho}
\end{equation*}
where $\n_{\rho}\in N$ is the unique generator of the semigroup $\rho\cap N$. Moreover if $\R\cdot\s = N_{\R}$ then such a $\u(\s)\in M$ is unique.
\end{criterion}

Actually it suffices to check the previous condition over the subset of \emph{maximal} cones in $\Si$. The given Criterion has then a number of interesting consequences:

\begin{corollary}\label{cor:fattorialità} Given $X=X(\Si)$ with $\Si$ \emph{simplicial} then $\mathcal{C}_T(X)\otimes\Q = \mathcal{W}_T(X)\otimes\Q$. Moreover if $\Si$ is \emph{non-singular}, meaning that $X$ is smooth, then $\mathcal{C}_T(X) = \mathcal{W}_T(X)$.
\end{corollary}

\begin{corollary} Let $\mathcal{P}(X)\subset\mathcal{W}(X)$ be the subgroup of \emph{principal divisors}. Then the morphism
\begin{equation*}
\begin{array}{llll}
div : & M & \longrightarrow & \mathcal{P}(X)\cap \mathcal{W}_{T}(X)=:
\mathcal{P}_{T}(X) \\
& \u & \longmapsto & div(\u):=\sum_{\rho \in \Sigma (1)}-\langle \u,\n_{\rho }\rangle
D_{\rho }
\end{array}
\end{equation*}
is surjective. Moreover if $\Si(1)$ generates the whole $N_{\R}$ then $div$ is also injective and $$M\stackrel{div}{\cong}\mathcal{P}_T(X)\ .$$
\end{corollary}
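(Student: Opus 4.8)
The plan is to prove the two assertions in sequence, using the defining formula for $div(\u)$ together with Criterion \ref{cr:Cartier}.

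\textbf{Surjectivity.} First I would observe that the image of $div$ already lands in $\mathcal{W}_T(X)$ by the very formula $div(\u)=\sum_{\rho}-\langle\u,\n_{\rho}\rangle D_{\rho}$, so the target makes sense. To see surjectivity onto $\mathcal{P}_T(X)$, start from an arbitrary principal divisor $\operatorname{div}(f)$ which happens to be toric stable, where $f$ is a rational function on $X$. Restricting to the torus $T\subset X$, the function $f$ is a Laurent monomial times a unit; since $X$ is a toric variety and $\operatorname{div}(f)$ is supported on $\bigcup_{\rho}D_{\rho}$ (the complement of $T$), $f$ has neither zeros nor poles on $T$, hence $f=c\,\chi^{\u}$ for some $c\in\C^*$ and $\u\in M$. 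Then $\operatorname{div}(f)=\operatorname{div}(\chi^{\u})$, and a standard local computation on each affine chart $U_{\s}$ (the order of vanishing of $\chi^{\u}$ along $D_{\rho}$ is $\langle\u,\n_{\rho}\rangle$; see \cite{Fulton} \S3.3) shows $\operatorname{div}(\chi^{\u})=\sum_{\rho}\langle\u,\n_{\rho}\rangle D_{\rho}=-div(-\u)$. Hence every element of $\mathcal{P}_T(X)$ is $div(\u')$ for suitable $\u'\in M$.

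\textbf{Injectivity under the hypothesis that $\Si(1)$ spans $N_{\R}$.} Suppose $div(\u)=0$, i.e. $\langle\u,\n_{\rho}\rangle=0$ for every $\rho\in\Si(1)$. Since the rays $\n_{\rho}$ span $N_{\R}$ by hypothesis, $\u\in M_{\R}$ is orthogonal to a spanning set of $N_{\R}$, hence $\u=0$ in $M_{\R}$, a fortiori in $M$. Thus $\ker(div)=0$ and $div$ is injective, giving the stated isomorphism $M\stackrel{div}{\cong}\mathcal{P}_T(X)$.

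\textbf{Expected main obstacle.} The only genuinely nontrivial point is the reduction, in the surjectivity argument, of an abstract toric-stable principal divisor to a monomial one — i.e. justifying that a rational function with divisor supported on the boundary $X\setminus T$ is, up to a constant, a character. This is where one must invoke that $T$ is an open torus with trivial (or at least well-understood) Picard/units behaviour, so that $f|_T\in\C[M]^{\times}=\C^*\cdot\{\chi^{\u}\}$; everything else is bookkeeping with the pairing $\langle-,-\rangle$ and the order-of-vanishing formula already recorded in \cite{Fulton} \S3.3. I would make sure to state explicitly that the sign conventions match, so that $div$ as defined (with the minus sign) is indeed the map $\u\mapsto\operatorname{div}(\chi^{-\u})$ up to the chosen sign, which is purely cosmetic for the statement.
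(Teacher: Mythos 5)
Your argument is correct and is essentially the standard one the paper relies on: the Corollary is stated there without proof, as a known consequence of the surrounding material from \cite{Fulton} \S 3.3--3.4 and \cite{Oda}, and your reduction of a toric-stable principal divisor to $\operatorname{div}(\chi^{\u})$ via the units $\C[M]^{\times}=\C^{*}\cdot\{\chi^{\u}\}$, together with the order-of-vanishing formula and the spanning argument for injectivity, is exactly that referenced proof. Only note the harmless sign slip: with the paper's convention one has $\operatorname{div}(\chi^{\u})=\sum_{\rho}\langle\u,\n_{\rho}\rangle D_{\rho}=div(-\u)=-div(\u)$, not $-div(-\u)$, which of course does not affect surjectivity or injectivity.
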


Recall that a toric variety $X(\Si)$ is compact if and only if $|\Si|=N_{\R}$. Then for any $\s\in\Si(n)$, $\R\cdot\s = N_{\R}$ and, by the Criterion \ref{cr:Cartier}, a Cartier divisor $D\in\mathcal{C}_T(X)$ uniquely determines a subset $v(\Si):=\{\u(\s)\in M\ |\ \s\in\Si(n)\}\subset M$ which turns out to be the set of vertexes of the associated (integral) polytope $\D_{D}$ defined in (\ref{politopo}).

\begin{criterion}[of (very)-ampleness - \cite{Oda} Theorem 2.13 and Corollary 2.14]\label{cr:ampiezza} Let $X(\Si)$ be a compact toric variety and $D\in\mathcal{C}_T(X)$ be a Cartier divisor. Then $D$ is ample if and only if $\Delta_D$ is a convex polytope and $v(\Si)$ consists of $|\Si(n)|$ distinct points of $M$. Moreover $D$ is very ample if and only if the previous conditions hold and
\begin{equation*}
    \forall\ \s\in\Si(n)\quad \D_D\cap M - \u(\s) \ \text{generates the semigroup $\s^{\vee}\cap M$}\ ,
\end{equation*}
where $\D_D\cap M - \u(\s)$ represents the set of integer points belonging to the polytope obtained by $\D_D$ after translating the vertex $\u(\s)$ into the origin of $M$.
\end{criterion}

Let $\Pic(X)$ be the group of line bundles modulo isomorphism. It is well known that for an \emph{irreducible} variety $X$ the map $D\mapsto\mathcal{O}_X(D)$ induces an isomorphism $\mathcal{C}(X)/\mathcal{P}(X)\cong\Pic(X)$. The Chow group of divisors is defined as the group of Weil divisors modulo rational (hence linear) equivalence, i.e. $A_{n-1}(X):=\mathcal{W}(X)/\mathcal{P}(X)$. Then the inclusion $\mathcal{C}(X)\subset\mathcal{W}(X)$ passes through the quotient giving an immersion $\Pic(X)\hookrightarrow A_{n-1}(X)$. One of main results on divisors on toric varieties is then the following

\begin{theorem}[\cite{Fulton} \S 3.4, \cite{CLS} Proposition 4.2.5]\label{thm:divisors} For a toric variety $X=X(\Sigma )$ the following sequence is exact
\begin{equation}
\def\objectstyle{\displaystyle}
\xymatrix@1{M \ar[r]^-{div} & *!U(.45){\bigoplus\limits_{\rho \in \Sigma (1)} \Z \cdot D_{\rho}}
\ar[r]^-d & A_{n-1} (X) \ar[r] & 0 }
\label{deg sequence}
\end{equation}
Moreover if $\Sigma (1)$ generates $N_{\R}$ then the morphism $div$ is injective giving the following exact sequences
\begin{equation}
\def\objectstyle{\displaystyle}
\xymatrix{
& 0 \ar[d] & 0 \ar[d] & 0 \ar[d] & \\
0 \ar[r] & M \ar[r]^{div}\ar[d]^{=} &
\mathcal{C}_T (X) \ar[r]\ar[d] & {\Pic(X)} \ar[r]\ar[d] & 0 \\
0 \ar[r] & M \ar[r]^-{div}\ar[d] & *!U(.40){\bigoplus\limits_{\rho \in \Sigma (1)} \Z \cdot D_{\rho}}
\ar[r]^-d & A_{n-1} (X) \ar[r] & 0 \\
 & 0 &  &  & }
\label{div-diagram}
\end{equation}
In particular $\Pic(X)$ and $A_{n-1}(X)$ turns out to be completely described by means of toric stable divisors and
\[
\rk \left( \Pic \left( X\right) \right) \leq \rk \left( A_{n-1}\left( X\right)
\right) =\left| \Sigma (1)\right| -n\ .
\]
Moreover if $\Si$ contains a $n$-dimensional cone then the first sequence in $(\ref{div-diagram})$ splits implying that $\Pic(X)$ is a free abelian group.
\end{theorem}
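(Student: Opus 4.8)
The plan is to establish the two horizontal exact sequences of $(\ref{div-diagram})$ and then to deduce the rank statements and the splitting; since the bottom row is precisely the sequence $(\ref{deg sequence})$, I would treat it first. For surjectivity of $d$ I would invoke the excision (localization) exact sequence for Chow groups applied to the set--theoretic decomposition $X=T\sqcup Z$, where $T\cong(\C^*)^n$ is the torus and $Z=\bigcup_{\rho\in\Si(1)}D_\rho$ its closed complement: this yields $A_{n-1}(Z)\to A_{n-1}(X)\to A_{n-1}(T)\to 0$, and since $(\C^*)^n$ is a localization of a polynomial ring it is factorial, so $A_{n-1}(T)=\Cl(T)=0$, while $A_{n-1}(Z)$ is generated by the classes of its irreducible components $D_\rho$; hence $d$ is onto. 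For exactness in the middle, the inclusion $\im(div)\subseteq\ker(d)$ is immediate, and conversely, $X$ being irreducible, $[D]=0$ for $D=\sum_\rho a_\rho D_\rho$ means $D=div(f)$ for a single rational function $f$ on $X$; restricting to $T$, where every $D_\rho$ is missing, shows $f|_T$ has trivial divisor on the normal variety $T$, hence is a unit of the coordinate ring $\C[M]$ of $T$, so $f|_T=c\,\chi^{\u}$ with $c\in\C^*$ and $\u\in M$ (the units of $\C[M]$ being the nonzero scalar multiples of characters), and by density of $T$ in $X$ we get $f=c\,\chi^{\u}$ in $\C(X)$, so $D=div(\u)$. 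I expect this to be the only genuinely hard step: it packages all the geometric input --- the localization sequence, factoriality of the torus, and the description of the units of $\C[M]$ --- whereas everything afterwards is formal.

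Next I would handle injectivity of $div$ and the top row. If $\Si(1)$ spans $N_\R$ then $div(\u)=0$ forces $\langle\u,\n_\rho\rangle=0$ for all $\rho$, hence $\u=0$ by non--degeneracy of the pairing, so $div$ is injective (this is the Corollary already recorded above). For the sequence $0\to M\xrightarrow{div}\mathcal{C}_T(X)\to\Pic(X)\to 0$, surjectivity of the natural map $\mathcal{C}_T(X)\to\mathcal{C}(X)/\mathcal{P}(X)=\Pic(X)$ follows because every $D\in\mathcal{C}(X)$ is linearly equivalent to a torus--stable divisor: since $\Cl(T)=0$ one has $D|_T=div(g)|_T$ for some $g\in\C(X)^*$, and then $D-div(g)$ is a Cartier divisor disjoint from $T$, hence lies in $\mathcal{C}(X)\cap\mathcal{W}_T(X)=\mathcal{C}_T(X)$ and is linearly equivalent to $D$. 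The kernel of this map is $\mathcal{C}_T(X)\cap\mathcal{P}(X)=\mathcal{P}_T(X)$, which equals $div(M)$ by the computation of the previous paragraph; so the top row is exact.

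Finally I would assemble $(\ref{div-diagram})$ and read off the remaining assertions. Both rows are now exact, the left vertical is the identity, the middle vertical is the inclusion $\mathcal{C}_T(X)=\mathcal{C}(X)\cap\mathcal{W}_T(X)\hookrightarrow\bigoplus_\rho\Z\cdot D_\rho$, the right vertical is the immersion $\Pic(X)\hookrightarrow A_{n-1}(X)$ already available for irreducible $X$, and the squares commute by construction. Tensoring the bottom row with the flat $\Z$--module $\Q$ preserves exactness and gives that $A_{n-1}(X)$ is finitely generated with $\rk A_{n-1}(X)=|\Si(1)|-\rk M=|\Si(1)|-n$, and the immersion then gives $\rk\Pic(X)\leq\rk A_{n-1}(X)$. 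For the splitting, suppose $\s\in\Si$ is $n$--dimensional, so $\R\cdot\s=N_\R$; by Criterion \ref{cr:Cartier} each $D=\sum_\rho a_\rho D_\rho\in\mathcal{C}_T(X)$ has a \emph{unique} associated $\u(\s)\in M$ with $\langle\u(\s),\n_\rho\rangle=-a_\rho$ for every $\rho\in\s(1)$, and $D\mapsto\u(\s)$ defines a homomorphism $r\colon\mathcal{C}_T(X)\to M$. If $D=div(\u)$ then $a_\rho=-\langle\u,\n_\rho\rangle$, so $\u(\s)$ and $\u$ agree against the $\n_\rho$ with $\rho\in\s(1)$, which span $N_\R$; hence $r\circ div=\id_M$, so $div$ is a split monomorphism and the first sequence of $(\ref{div-diagram})$ splits, giving $\mathcal{C}_T(X)\cong M\oplus\Pic(X)$. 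Then $\Pic(X)$ is isomorphic to a direct summand --- hence a subgroup --- of the free abelian group $\bigoplus_\rho\Z\cdot D_\rho$, so it is free abelian.
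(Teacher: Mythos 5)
Your proposal is correct; note that the paper itself offers no proof of this theorem, quoting it from Fulton \S 3.4 and \cite{CLS} Proposition 4.2.5, and your argument faithfully reconstructs the standard proof from those sources: the localization sequence together with factoriality of the torus and the description of the units of $\C[M]$ for exactness of the bottom row, sliding an arbitrary Cartier divisor into $\mathcal{W}_T(X)$ by a principal divisor for surjectivity onto $\Pic(X)$ in the top row, and the unique local datum $u(\sigma)$ on a full-dimensional cone furnishing the retraction that splits the first sequence. I see no gaps.
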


\subsubsection{Homogeneous coordinates and quotient spaces}\label{coord_omogenee} For each 1-dimensional cone $\rho\in\Si(1)$ introduce a variable $x_{\rho}$ and consider the polynomial ring
\begin{equation*}
    S = \C\left[ x_{\rho }:\rho \in \Sigma (1)\right]\ .
\end{equation*}
Notice that an effective divisor $D=\sum_{\rho }a_{\rho}D_{\rho}\in\mathcal{W}_T(X)$ determines an \emph{exponential monomial} $x^D:=\prod_{\rho }x_{\rho }^{a_{\rho }}$. Conversely, the \emph{logarithm} of a monomial $\prod_{\rho }x_{\rho }^{a_{\rho }}$ determines a divisor $D=\sum_{\rho}a_{\rho
}D_{\rho}=\log x^D$, giving a 1 to 1 correspondence between monomials of $S$ and effective toric invariant Weil divisors. Recalling the sequence (\ref{deg sequence}) it is then possible to give a \emph{grading} to the ring $S$ by setting
\begin{eqnarray*}
   \forall x^{D}\in S\quad \deg \left( x^{D}\right) &:=& d(D)\in A_{n-1}(X)  \\
   \forall \a\in A_{n-1}(X)\quad S_{\a}&:=&\bigoplus_{\deg \left( x^{D}\right) = \a} \C\cdot x^D
\end{eqnarray*}
so that the ring $S$ can be written as the direct sum $S=\bigoplus_{\a\in A_{n-1}(X)} S_{\a}$. Since $S_{\a}\cdot S_{\b}\subset S_{\a+\b}$ then $(S,\deg)$ is called the \emph{homogeneous coordinate ring of $X=X(\Si)$}, as introduced by D.~Cox in \cite{Cox}, to which the interested reader is referred for further details and proofs.

While the Chow group $A_{n-1}(X)$ and the graded ring $S$ are determined by the 1-skeleton $\Si(1)$, the higher dimensional cones of the fan $\Si$ share in determining a particular ideal of the polynomial ring $S$ called the \emph{irrelevant ideal $B$ of $X$}. Namely set
\begin{eqnarray}\label{B}
\nonumber
    \forall \s\in\Si\quad x^{\widehat{\sigma}} &:=& \prod_{\rho \in \Sigma (1)\setminus \sigma(1)}x_{\rho}\\
    B&:=&\left( x^{\widehat{\sigma }}:\sigma \in \Sigma \right)\ .
\end{eqnarray}
Clearly $B\subset S$ is an ideal and defines the following \emph{exceptional subset} of $\C^{\Si(1)}$
\begin{equation}\label{Z}
    Z:=\mathcal{V}(B)=\{x\in\C^{\Si(1)}\ |\ \forall\s\in\Si\quad x^{\widehat{\s}}=0\}\ .
\end{equation}
Finally let us apply the injective functor $\Hom_{\Z}\left(-,\C^{*}\right)$ to the right exact sequence (\ref{deg sequence}) to get the following left exact sequence of multiplicative groups
\begin{equation*}
    \xymatrix@1{1\ar[r] & \Hom_{\Z}\left(A_{n-1}(X), \C^{*}\right)
\ar[r]^-{d^{\vee}} & \Hom_{\Z}\left(\Z^{\left| \Sigma(1)\right| }, \C^{*}\right)
\ar[r]^-{{div}^{\vee}} & \Hom_{\Z}(M, \C^{*})}\ .
\end{equation*}
Define $G:=\Hom_{\Z}(A_{n-1}(X), \C^{*})$ and observe that $T:=\Hom_{\Z}(M, \C^{*})=N\otimes\C^*$ is the torus acting on $X$. Then the previous left exact sequence can be rewritten as follows
\begin{equation*}
    \xymatrix@1{1 \ar[r] & G \ar[r]^-{d^{\vee}} &
(\C^{*})^{\left|\Sigma(1)\right|} \ar[r]^-{{div}^{\vee}} & T}
\end{equation*}
defining an action of the group $G$ over $(\C^{*})^{|\Sigma(1)|}$ given by
\begin{equation}\label{azione}
    \begin{array}{lll}
G \times \C^{\left| \Sigma (1)\right| } & \longrightarrow & \C^{\left| \Sigma (1)\right| } \\
(g, (z_{\rho}))     & \longmapsto & d^{\vee}(g) \cdot (z_{\rho}):= (g(d(D_{\rho}))\ z_{\rho})\ .
\end{array}
\end{equation}
We are now in a position to state the following result of D.~Cox:

\begin{theorem}[\cite{Cox} Theorem 2.1]\label{thm:cox} Consider $X=X(\Si)$ and the associated exceptional subset $Z=\mathcal{V}(B)\subset\C^{|\Si(1)|}$ defined in $(\ref{Z})$ and $(\ref{B})$. Then $\C^{| \Sigma (1)| }\setminus Z$ is invariant under the action $(\ref{azione})$.

Moreover if $\Si(1)$ generates $N_{\R}$ then $X$ is the geometric quotient
\begin{equation}\label{quoziente}
    X\cong \left.\left(\C^{| \Sigma (1)| }\setminus Z\right)\right/G
\end{equation}
under the action $(\ref{azione})$ if and only if $\Si$ is simplicial.
\end{theorem}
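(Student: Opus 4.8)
\emph{Sketch of a proof.} The statement is Cox's theorem, and the plan is to reprove it by building the quotient morphism over the affine charts of $X=X(\Si)$ and then reading off geometricity cone by cone. First I would settle the invariance claim: by (\ref{azione}) the group $G=\Hom_{\Z}(A_{n-1}(X),\C^{*})$ acts on $\C^{|\Si(1)|}$ by rescaling each coordinate $z_{\rho}$, hence it preserves the common zero locus of any collection of monomials; since $B$ is generated by the monomials $x^{\widehat{\s}}$, both $Z=\mathcal{V}(B)$ and its complement are $G$-stable, and in fact the principal opens $U_{\widehat{\s}}:=\{x\in\C^{|\Si(1)|}\mid x^{\widehat{\s}}\neq 0\}$, for $\s\in\Si$, form a $G$-stable open cover of $\C^{|\Si(1)|}\setminus Z$ (immediate from (\ref{Z})).

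Next I would construct the map. For each $\s\in\Si$ the assignment $\chi^{\u}\mapsto\prod_{\rho}x_{\rho}^{\langle\u,\n_{\rho}\rangle}$ defines a $\C$-algebra homomorphism $\C[\s^{\vee}\cap M]\to S_{x^{\widehat{\s}}}=\Gamma(U_{\widehat{\s}},\co)$: it lands in the localization because, for $\u\in\s^{\vee}$, the exponents attached to the $\rho\in\s(1)$ are $\geq 0$, while the remaining $x_{\rho}$ are units on $U_{\widehat{\s}}$. The resulting morphisms $\pi_{\s}:U_{\widehat{\s}}\to U_{\s}$, taken over the maximal cones $\s$ (whose $U_{\widehat{\s}}$ already cover $\C^{|\Si(1)|}\setminus Z$), agree on the overlaps $U_{\widehat{\s}}\cap U_{\widehat{\s'}}=U_{\widehat{\s\cap\s'}}$ through the face inclusions, so they glue to $\pi:\C^{|\Si(1)|}\setminus Z\to X$. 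That $\pi$ is $G$-invariant is precisely the exactness of (\ref{deg sequence}): reading off (\ref{azione}), $G$ scales the monomial $\prod_{\rho}x_{\rho}^{\langle\u,\n_{\rho}\rangle}$ by $g\bigl(d(\sum_{\rho}\langle\u,\n_{\rho}\rangle D_{\rho})\bigr)=g(-d(div(\u)))=g(0)=1$.

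The core step is to recognize $\pi$ as a categorical quotient and then to decide when it is geometric. Assuming $\Si(1)$ spans $N_{\R}$, so that $div$ is injective by Theorem \ref{thm:divisors}, I would check $\Gamma(U_{\widehat{\s}},\co)^{G}=\C[\s^{\vee}\cap M]$: a (Laurent) monomial $x^{D}\in S_{x^{\widehat{\s}}}$ is $G$-fixed iff $d(D)=0$, iff $D=div(\u)$ for a unique $\u\in M$, and the constraint defining the localization then forces $\u\in\s^{\vee}\cap M$, so that $\Gamma(U_{\widehat{\s}})^{G}$ is spanned exactly by the images of the $\chi^{\u}$. Since $G$ is diagonalizable, hence linearly reductive, $\pi_{\s}:U_{\widehat{\s}}\to\Spec\bigl(\Gamma(U_{\widehat{\s}})^{G}\bigr)=U_{\s}$ is the affine GIT quotient, in particular a good quotient; as the $U_{\widehat{\s}}$ are $G$-saturated these local good quotients glue, exhibiting $\pi$ as a good — hence categorical — quotient, in particular surjective, so that $X$ is always the categorical quotient of $\C^{|\Si(1)|}\setminus Z$ by $G$.

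Finally, $\pi$ is a geometric quotient iff each of its fibers is a single $G$-orbit, iff every $\pi_{\s}$ is geometric, iff all $G$-orbits in $U_{\widehat{\s}}$ are Zariski-closed. Here I would use the product decomposition $U_{\widehat{\s}}\cong\C^{\s(1)}\times(\C^{*})^{\Si(1)\setminus\s(1)}$ and analyze the restricted $G$-action through the exact sequences of Theorem \ref{thm:divisors}. If $\s$ is simplicial the vectors $\{\n_{\rho}:\rho\in\s(1)\}$ are $\R$-linearly independent, and a rank count shows the identity component of $G$ acts with closed free orbits sweeping out the torus factor while only a finite subgroup of $G$ can fix a point; hence every orbit is closed and $U_{\widehat{\s}}/G\cong U_{\s}$ geometrically. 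Conversely, if some $\s\in\Si$ fails to be simplicial then $|\s(1)|>\dim\s$, and I would exhibit a point of $U_{\widehat{\s}}$ whose $G$-orbit has a strictly smaller orbit — attached to a proper face of $\s$ — in its closure, so $\pi_{\s}$, and therefore $\pi$, is not geometric. I expect this last orbit-closedness analysis to be the main obstacle: it requires the structure theory of the diagonalizable group $G$, the precise way $d^{\vee}$ embeds it into $(\C^{*})^{|\Si(1)|}$, and control of the stabilizers of points of $U_{\widehat{\s}}$; the rest is bookkeeping with the exact sequences and criteria already recorded above.
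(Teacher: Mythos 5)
First, a point of comparison: the paper does not prove this statement at all. Theorem \ref{thm:cox} is quoted from Cox's paper \cite{Cox} (Theorem 2.1) and used as a black box (e.g.\ in the proofs of Theorem \ref{thm:riduzione} and of the implication $(2)\Rightarrow(1)$ in Theorem \ref{thm:fan}), so there is no in-paper argument to measure yours against. Your sketch follows the standard route of Cox's original proof: invariance of $Z$ under the diagonal action, the local maps $\C[\s^{\vee}\cap M]\to S_{x^{\widehat{\s}}}$ glued over the cover $\{U_{\widehat{\s}}\}$ (and indeed $U_{\widehat{\s}}\cap U_{\widehat{\s'}}=U_{\widehat{\s\cap\s'}}$ because $\s\cap\s'$ is a common face), the computation $\Gamma(U_{\widehat{\s}},\co)^{G}=\C[\s^{\vee}\cap M]$ via exactness of (\ref{deg sequence}) and injectivity of $div$, and the reduction of geometricity to orbit-closedness. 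That part is correct as stated and establishes the categorical (good) quotient, which in fact holds with no simpliciality hypothesis.

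The gap sits exactly where the content of the theorem lies: the equivalence between simpliciality and geometricity is only announced, and the mechanism you indicate for the forward direction is not the right one. Orbits in $U_{\widehat{\s}}$ are in general not free, even for the identity component of $G$, and freeness is not what is needed. The usable statement is: a point $x\in U_{\widehat{\s}}$ has vanishing set $S\subseteq\s(1)$, and its stabilizer is $\Hom\bigl(A_{n-1}(X)/\langle d(D_{\rho})\,:\,\rho\notin S\rangle,\C^{*}\bigr)$, which is finite precisely when $\{\n_{\rho}\,:\,\rho\in S\}$ is linearly independent (tensor (\ref{deg sequence}) with $\Q$ and look at the projection $M\otimes\Q\to\Q^{S}$). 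Hence $\s$ simplicial forces finite stabilizers for \emph{all} points of $U_{\widehat{\s}}$, so all orbits have dimension $\dim G$; since the boundary of an orbit is a union of orbits of strictly smaller dimension, every orbit is closed and $\pi_{\s}$ is geometric. Conversely, if $\s$ is not simplicial, the distinguished point with vanishing set $\s(1)$ has positive-dimensional stabilizer, hence a small orbit, and you must still produce a larger orbit in the same fiber degenerating onto it via a one-parameter subgroup of $G$ (this is where the dependence relation among the $\n_{\rho}$, $\rho\in\s(1)$, is actually used). Until these two computations are written out, the proposal proves only the ``categorical quotient'' half of the theorem, not the stated if-and-only-if.
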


\subsection{Hermite normal form}\label{HNF}
It is well known that Hermite algorithm provides an effective way to determine a basis of a subgroup of $\mathbb{Z}^n$. We briefly recall the definition and the main properties.
For details, see for example \cite{Cohen}.

\begin{definition}\label{def:HNF}
An $m\times n$ matrix $M=(m_{ij})$ with integral coefficients is in \emph{Hermite normal form} (abbreviated \emph{HNF}) if there exists $r\leq m$ and a strictly increasing map $f:\{1,\ldots,r\} \to \{1,\ldots,n\}$ satisfying the following properties:
\begin{enumerate}
\item For $1\leq i\leq r$, $m_{i,f(i)}\geq 1$, $m_{ij}=0$ if $j<f(i)$ and $0\leq m_{i,f(k)}< m_{k,f(k)}$ if $i<k$.
\item The last $m-r$ rows of $M$ are equal to $0$.
\end{enumerate}
\end{definition}

\begin{theorem}[\cite{Cohen} Theorem 2.4.3]\label{thm:cohen}
Let $A$ be an $m\times n$ matrix with coefficients in $\mathbb{Z}$. Then there exists a unique $m\times n$ matrix $B=(b_{ij})$ in HNF of the form $B=U\cdot A$ where $U\in\GL(m,\mathbb{Z})$.
\end{theorem}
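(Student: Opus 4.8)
The plan is to prove Theorem \ref{thm:cohen} (existence and uniqueness of the Hermite normal form $B = U\cdot A$ with $U\in\GL(m,\Z)$) by first establishing existence via an explicit row-reduction procedure over $\Z$, and then proving uniqueness by an algebraic argument comparing two candidate forms. Throughout, I would work with the action of $\GL(m,\Z)$ on rows, recalling that this group is generated by the elementary operations: swapping two rows, adding an integer multiple of one row to another, and negating a row. Each such operation is realized by left-multiplication by an invertible integral matrix, so any finite composition of them again has the form $U\cdot A$ with $U\in\GL(m,\Z)$.

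For existence, I would proceed column by column from the left, using a Euclidean-algorithm-style argument. Having put the first $k-1$ pivot columns into the required shape, look at the entries of the current matrix below and in the rows not yet used, scanning left to right for the first column containing a nonzero such entry; this column will be $f(k)$. Among the relevant entries in that column, repeatedly subtract integer multiples of the row with smallest nonzero absolute value from the others (and swap to bring the minimal one up), exactly as in computing a gcd; after finitely many steps only one nonzero entry remains in that column among the unprocessed rows, and we may negate to make it positive — this is $m_{k,f(k)} \geq 1$. Then, for each already-processed pivot row $i < k$, subtract the appropriate integer multiple of row $k$ to force $0 \leq m_{i,f(k)} < m_{k,f(k)}$ (division with remainder). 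The left-zero condition $m_{ij}=0$ for $j<f(i)$ is preserved because all operations at stage $k$ involve only columns $\geq f(k)$. Iterating exhausts the columns; the rows that never became pivots are zero by construction, and they can be moved to the bottom by row swaps. This yields $B = U\cdot A$ in HNF.

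For uniqueness, suppose $B = U\cdot A$ and $B' = U'\cdot A$ are both in HNF with $U,U'\in\GL(m,\Z)$. Then $B' = W\cdot B$ where $W := U'U^{-1}\in\GL(m,\Z)$. I would first argue that $B$ and $B'$ have the same rank $r$ and the same pivot function $f$: the nonzero rows of an HNF matrix are $\Z$-linearly independent and span the row lattice (the rowspace over $\Q$ is determined by $A$, and actually the lattice generated by the rows is preserved, since $W$ is invertible over $\Z$), and the positions $f(i)$ are recovered intrinsically as the ``leading'' column indices, which depend only on the row lattice. Restricting to the top $r\times r$ pivot submatrix, $W$ restricts to an invertible integral matrix that is lower-triangular after reindexing by $f$, hence has $\pm 1$ on the diagonal; comparing the positive diagonal pivots $m_{i,f(i)} = \pm\, m'_{i,f(i)}$ forces all these signs to be $+1$, so $W$ restricted to the pivot block is unipotent lower-triangular. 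Finally, the off-diagonal congruence condition $0 \leq m_{i,f(k)} < m_{k,f(k)}$, satisfied by both $B$ and $B'$, pins down the remainders uniquely, forcing the strictly-lower-triangular part of $W$ to vanish as well, so $W = I$ on the relevant rows and $B = B'$.

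The main obstacle I anticipate is the uniqueness half, specifically making rigorous the claim that the pivot function $f$ and the pivot values are intrinsic to $A$ (equivalently, invariant under left $\GL(m,\Z)$-multiplication) rather than artifacts of the reduction order. The clean way to see this is to observe that for each $j$, the sublattice of $\Z^j$ obtained by truncating the row lattice of $A$ to its first $j$ coordinates has a rank and an ``index-type'' invariant that the HNF reads off directly; tracking how these invariants jump as $j$ increases identifies $f$ and then the diagonal entries. The existence half is essentially bookkeeping — the only care needed is to verify that the operations performed at stage $k$ never disturb the structure already built in columns $< f(k)$, which holds because all those columns are already zero in the rows being modified.
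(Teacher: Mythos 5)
Your proposal cannot be compared against a proof in the paper, because the paper gives none: Theorem \ref{thm:cohen} is quoted from Cohen's book (Theorem 2.4.3 there), with the effective construction deferred to Cohen's Algorithm 2.4.4, which the paper only remarks is based on Euclid's algorithm. Your argument is the standard proof of that cited result and it is correct: existence by Euclidean row reduction (precisely the content of the cited algorithm), and uniqueness of $B$ (not of $U$, which is indeed all the statement asserts) by writing $B'=W\cdot B$ with $W\in\GL(m,\Z)$ and showing that the change of basis between the two bases of the common row lattice formed by the nonzero rows of $B$ and $B'$ must be the identity. Two small points to tighten. With the convention $\mathrm{row}_i(B')=\sum_j W_{ij}\,\mathrm{row}_j(B)$, comparing leading columns forces $W_{ij}=0$ for $j<i\le r$, so the relevant $r\times r$ block is \emph{upper} triangular rather than lower as you wrote; this is purely a bookkeeping flip, and the rest of your argument (diagonal entries $\pm1$ forced to $+1$ by positivity of the pivots, then the normalization $0\le m_{i,f(k)}<m_{k,f(k)}$ shared by $B$ and $B'$ killing the off-diagonal entries, arguing via the smallest $k>i$ with $W_{ik}\neq 0$) goes through unchanged. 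In the existence step, when you reduce the entries above the new pivot at stage $k$, you should also record that this does not disturb the previously arranged conditions $0\le m_{i,f(j)}<m_{j,f(j)}$ for $j<k$: this holds because row $k$ vanishes in all columns $<f(k)$, in particular in the earlier pivot columns, which is the same observation you already invoke for the left-zero condition. With those remarks your write-up is a complete and self-contained proof of the quoted theorem.
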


We will refer to matrix $B$ as the HNF of matrix $A$. The construction of $B$ and $U$ is effective, see \cite[Algorithm 2.4.4]{Cohen}, based on Eulid's algorithm for greatest common divisor. In the following two applications of this algorithm will be considered: for computing a fan of a given wps (see Prop. \ref{prop:HNFdue}) and the so--called $Q$--canonical fan of $\P(Q)$ (see Prop. \ref{prop:fan minimale}). At this purpose, a key theoretical tool is the following (for the proof see \cite[\S 2.4.3]{Cohen})

\begin{proposition}\label{prop:HNFuno}
\begin{enumerate}
\item Let $L$ be a subgroup of $\mathbb{Z}^n$, $V=\{{\v}_1,\ldots,{\v}_m\}$ a set of generators, and let $A$ be the $m\times n$ matrix having ${\v}_1,\ldots,{\v}_m$ as rows. Let $B$ be the HNF of $A$. Then the non zero rows of $B$ are a basis of $L$.
\item Let $A$ be a $m\times n$ matrix, and let $B=U\cdot A^T$ be the HNF of the transposed of $A$, and let $r$ such that the first $r$ rows of $B$ are non zero. Then a $\mathbb{Z}$-basis for the kernel of $A$ is given by the last $m-r$ rows of $U$.
\end{enumerate}
\end{proposition}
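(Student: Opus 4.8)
The plan is to establish (1) directly from the definition of HNF and then obtain (2) as an application of (1) to $A^{T}$.

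For (1): by Theorem \ref{thm:cohen} write $B=U\cdot A$ with $U\in\GL(m,\Z)$. Since both $U$ and $U^{-1}$ are integral, each row of $B$ is a $\Z$-linear combination of the rows of $A$ and conversely; hence the rows of $B$, equivalently the non-zero ones among them, generate exactly the subgroup $L\subseteq\Z^{n}$ spanned by $V$. It then remains to see that the non-zero rows $b_{1},\dots,b_{r}$ of $B$ are $\Z$-linearly independent, and this is where the echelon shape forced by Definition \ref{def:HNF} enters. Suppose $\sum_{i=1}^{r}\lambda_{i}b_{i}=0$ with $\lambda_{i}\in\Z$ not all zero, let $i_{0}$ be the least index with $\lambda_{i_{0}}\neq 0$, and look at the entry in column $f(i_{0})$. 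Rows with index $<i_{0}$ contribute nothing; for a row $b_{i}$ with $i>i_{0}$ one has $f(i)>f(i_{0})$ because $f$ is strictly increasing, and $b_{ij}=0$ for all $j<f(i)$, so in particular $b_{i,f(i_{0})}=0$. Thus the $f(i_{0})$-entry of the combination equals $\lambda_{i_{0}}b_{i_{0},f(i_{0})}$, which is non-zero since $\lambda_{i_{0}}\neq 0$ and $b_{i_{0},f(i_{0})}\geq 1$, a contradiction. Hence $\{b_{1},\dots,b_{r}\}$ is a $\Z$-basis of $L$.

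For (2): apply (1) to the $n\times m$ matrix $A^{T}$, whose rows are the columns of $A$. Writing $B=U\cdot A^{T}$ in HNF with $U\in\GL(n,\Z)$, the first $r$ rows $b_{1},\dots,b_{r}$ of $B$ are non-zero and $\Z$-linearly independent, and the last $n-r$ rows are zero. Let $u_{1},\dots,u_{n}$ be the rows of $U$; they form a $\Z$-basis of $\Z^{n}$. For $k>r$ the $k$-th row of $B$ is $u_{k}A^{T}=0$, i.e. $A\,u_{k}^{T}=0$, so the $n-r$ vectors $u_{r+1}^{T},\dots,u_{n}^{T}$ lie in $\ker A$ and are linearly independent because $U$ is invertible. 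Conversely, if $x\in\ker A$, expand $x^{T}=\sum_{k=1}^{n}c_{k}u_{k}$ with $c_{k}\in\Z$; then $0=(Ax)^{T}=x^{T}A^{T}=\sum_{k=1}^{r}c_{k}b_{k}$, so the independence of $b_{1},\dots,b_{r}$ forces $c_{1}=\dots=c_{r}=0$ and $x$ is a $\Z$-combination of $u_{r+1}^{T},\dots,u_{n}^{T}$. Therefore these vectors are a $\Z$-basis of $\ker A$.

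The computation is almost entirely bookkeeping; the one genuinely essential ingredient is the linear-independence statement in (1), which is the only place where the echelon property of the HNF is actually used, and in a careful write-up I would isolate it as a short intermediate lemma. The only real pitfall is clerical: one must keep the transposes straight and remember that the HNF is an echelon form \emph{by rows}, so that in (2) the pivot structure of $B$ must be invoked on the appropriate side of the products $u_{k}A^{T}$.
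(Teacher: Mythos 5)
Your proof is correct, and it is self-contained where the paper itself gives no argument at all: the paper simply defers to Cohen \cite[\S 2.4.3]{Cohen} for this proposition. Your write-up is essentially the standard proof one finds there, and it is organized sensibly: for (1), the unimodularity of $U$ gives that the rows of $B$ generate the same lattice $L$ as the rows of $A$, and the pivot argument (look at column $f(i_0)$ for the least index $i_0$ with $\lambda_{i_0}\neq 0$, using that $f$ is strictly increasing and $b_{i_0,f(i_0)}\geq 1$) gives $\Z$-independence of the non-zero rows; for (2), writing $x^T=\sum_k c_k u_k$ in the basis of $\Z^n$ given by the rows of $U$ and using $x^TA^T=\sum_{k\le r}c_kb_k$ together with the independence from (1) correctly identifies $\ker A$ with the span of $u_{r+1}^T,\ldots,u_n^T$. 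One remark worth making explicit: the paper's statement of (2) says ``the last $m-r$ rows of $U$'', but since $A^T$ is $n\times m$ and $U\in\GL(n,\Z)$, the count must be $n-r$, exactly as you have it; your reading is the one consistent with the only use of the proposition in the paper (Proposition \ref{prop:HNFdue}, where $A=Q$ is $1\times(n+1)$, $r=1$, and the last $n$ rows of $U$ are taken), so you have in effect corrected a small dimensional slip in the statement rather than introduced an error.
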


\subsection{Transversion of a matrix}\label{trasversa} In the following, given a matrix $A\in\GL(n,\Q)$, the matrix obtained by taking the \underline{\emph{trans}}posed matrix of the in\underline{\emph{verse}} matrix
\begin{equation*}
    A^*:=((A)^{-1})^T
\end{equation*}
is called the \emph{transverse matrix} of $A$. We will see in the following (subsection \ref{fan_to_polytope}) that \emph{transversion} of a matrix resumes, up to the multiplication by a diagonal matrix of weights, the passage from a fan to a polytope (and back) associated with the same weighted projective space $\P(Q)$.

Here are some elementary properties of transversion:

\begin{proposition}\label{prop:trasversa} Let $A$ and $B$ be  matrices of $\GL(n,\Q)$. Then:
\begin{enumerate}
  \item $(A^*)^*=A$ i.e. transversion is an involution in $\GL(n,\Q)$,
  \item $(A\cdot B)^*=A^*\cdot B^*$,
  \item $\det(A^*)=1/\det(A)$,
  \item if $A$ is a superior (inferior) triangular matrix then $A^*$ is an inferior (superior) triangular matrix,
  \item if $A\in\GL(n,\Z)$ then $A^*\in\GL(n,\Z)$ too.
\end{enumerate}
\end{proposition}

\subsection{Weighted projective spaces} In the present subsection we will briefly recall the definition and some well known fact about \emph{weighted projective spaces} (\emph{wps} in the following). Proofs and details can be recovered in the extensive treatments \cite{Delorme} \cite{Mori}, \cite{Dolgachev} and \cite{BR}.

\begin{definition}
\label{def:WPS}
Set $Q:=\left(q_{0},\ldots ,q_{n}\right) \in \left( \N\setminus \{0\}\right)
^{n+1} $ and consider the multiplicative group $\mu_Q:=\mu_{q_{0}}\oplus \cdots \oplus \mu_{q_{n}}$
where $\mu_{q_i}$ is the group of $q_i$-th roots of unity.  Consider the following action of $\mu_Q$ over the $n$--dimensional complex projective space $\P^n$
\[
\begin{array}{cccc}
\mu _{Q}: & \mu_Q\times \P^{n} & \longrightarrow &
\P^{n} \\
& \left( \left( \zeta _{j}\right) ,\left[ z_{j}\right] \right) & \longmapsto
& \left[ \zeta _{j}z_{j}\right]\ .
\end{array}
\]
Let $\Delta _{Q}\subset \mu_Q$ be the diagonal subgroup and consider the quotient group $\W_Q:=\mu_Q/\Delta_Q$. Then the induced quotient space
\[
\P\left( Q\right) :=\P^{n}/\W _{Q}
\]
is called the $Q$--\emph{weighted projective space} ($Q$-wps).
\end{definition}

\begin{remark}
\label{rem:factorial-iso} If $q$ is the greatest common divisor of  $\left(
q_{0},\ldots ,q_{n}\right) $ then
\[
\Delta _{Q}\cong \mu_{q}
\]
Therefore we get the canonical isomorphism
\[
\P\left( Q\right) \cong \P\left( \frac{q_{0}}{q},\ldots ,
\frac{q_{n}}{q}\right)
\]
For this reason in the following \emph{we will always assume that}
\begin{equation*}
    q=\gcd\left( q_{0},\ldots ,q_{n}\right)=1\ .
\end{equation*}
\end{remark}

\begin{definition}[Weights vector] In the following a \emph{weights vector} $Q=(q_0,\ldots,q_n)$ will denote a $n+1$--tuple of \emph{coprime positive integer} numbers. Referring to notation defined in (\ref{lcm&GCD}), a weights vector $Q$ will be called \emph{reduced} if $d_{j}=1$, or equivalently $a_{j}=1$, for any $j=0,\ldots,n$.

\end{definition}

\begin{remark}
\emph{Every weighted projective space is a toric variety}. In fact the natural toric action over $\P^n$ passes through the quotient as follows
\[
\xymatrix{
(\C^{*})^n \times \P^{n}\ar[r]\ar[d]_{\tau_Q\times \pi_Q} & \P^{n}\ar[d]_{\pi_Q} \\
(\C^{*})^n \times \P^{n}(Q) \ar[r] & \P^n (Q)}
\]
where $\pi_Q$ is the natural quotient map and $\tau _{Q}$ is the quotient map associated with the action
\[
\begin{array}{ccc}
\mu_Q\times \left( \C^{*}\right) ^{n} & \longrightarrow &
\left( \C^{*}\right) ^{n} \\
\left( \left( \zeta _{j}\right) ,\left( t_{i}\right) \right) & \longmapsto &
\left( \zeta _{0}^{-1}\zeta _{i}t_{i}\right)
\end{array}
\]
Then the torus $\left( \C^{*}\right)
^{n}$\ can be embedded in $\P\left( Q\right) $ via the following map
\[
\begin{array}{ccc}
\left( \C^{*}\right) ^{n} & \hookrightarrow & \P\left(
Q\right) \\
\left( t_{1},\ldots ,t_{n}\right) & \longmapsto & \left[ 1:t_{1}:\ldots
:t_{n}\right]\
\end{array}
\]
whose image is the open subset $\P\left( Q\right) \setminus
\mathcal{V}\left( \prod_{j}z_{j}\right) $.\bigskip
\end{remark}

\subsection{A fan of $\P(Q)$}\label{ssez:fan}

 A fan of the wps $\P(Q)$ with $Q=(q_0,\dots,q_n)$ is presented in \cite{Fulton} at the end of \S 2.3.  Its construction is here recalled since it will be useful in the following.

\begin{proposition}[\cite{Fulton} \S 2.3]\label{prop:weighted-fan} Let $\{\e_1,\ldots,\e_n\}$ be a basis of the lattice $N$ and consider the following $n+1$ rational vectors
\[
\v_{0}:=-\frac{1}{q_{0}}\sum_{i=1}^n\e_i\quad,\quad \v_{i}:=\frac{\e_{i}}{q_{i}}\ .
\]
Let $^{Q}N$ be the lattice generated by $\v_0,\ldots,\v_n$ and, recalling (\ref{def:genfan}), consider $^{Q}\Sigma:=\fan(\v_0,\ldots,\v_n)$. Then $\P\left( Q\right)\cong X\left(^{Q}\Sigma\right)$.
\end{proposition}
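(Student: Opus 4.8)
The plan is to identify the fan ${}^{Q}\Si=\fan(\v_0,\ldots,\v_n)$ with the ordinary fan of $\P^n$, only read inside the larger lattice ${}^{Q}N$, and then to use the standard description of the toric variety attached to a sublattice as a finite quotient. Concretely, for $i=1,\ldots,n$ the vector $\v_i=\e_i/q_i$ spans the ray $\R_{\geq 0}\e_i$, while $\v_0=-\frac{1}{q_0}\sum_{i=1}^n\e_i$ spans the ray $\R_{\geq 0}\e_0$ with $\e_0:=-\sum_{i=1}^n\e_i$; hence, as a collection of cones in $N_{\R}={}^{Q}N_{\R}$, the fan ${}^{Q}\Si$ coincides with the complete simplicial fan $\Si_0:=\fan(\e_0,\e_1,\ldots,\e_n)$ for which $X(\Si_0)=\P^n$ (see \cite{Fulton}). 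The only difference is that ${}^{Q}\Si$ is taken with respect to ${}^{Q}N\supseteq N$. The lattice inclusion $N\hookrightarrow{}^{Q}N$, which induces the same fan, then yields a toric morphism $\P^n=X(\Si_0,N)\to X(\Si_0,{}^{Q}N)=X({}^{Q}\Si)$ exhibiting the target as the geometric quotient $X({}^{Q}\Si)\cong\P^n/G$, where $G:=\ker\bigl(T_N\to T_{{}^{Q}N}\bigr)$ is a finite group acting through the torus and $G\cong{}^{Q}N/N$ (see e.g. \cite{CLS}).

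The computational core is then the identification ${}^{Q}N/N\cong\W_Q$. Writing $\varepsilon_0,\ldots,\varepsilon_n$ for the standard basis of $\Z^{n+1}$ and $V:\Z^{n+1}\to N_{\R}$ for the linear map $\varepsilon_i\mapsto\v_i$, solving $\sum_{i=0}^n a_i\v_i=0$ over $\Z$ shows, using $\gcd(q_0,\ldots,q_n)=1$, that $\ker V=\Z\cdot(q_0,\ldots,q_n)=\Z\cdot Q$; since ${}^{Q}N=\im V$ by definition, this gives ${}^{Q}N\cong\Z^{n+1}/\Z\cdot Q$ with $\v_i$ the class of $\varepsilon_i$. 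Under this identification $\e_i=q_i\v_i$ is the class of $q_i\varepsilon_i$, so $N$ corresponds to the image of $\langle q_1\varepsilon_1,\ldots,q_n\varepsilon_n\rangle$; since $Q=\sum_{i=0}^n q_i\varepsilon_i$ one gets $\Z\cdot Q+\langle q_1\varepsilon_1,\ldots,q_n\varepsilon_n\rangle=\langle q_0\varepsilon_0,q_1\varepsilon_1,\ldots,q_n\varepsilon_n\rangle$, whence
\[
    {}^{Q}N/N\ \cong\ \Z^{n+1}\big/\langle q_0\varepsilon_0,\ldots,q_n\varepsilon_n\rangle\ \cong\ \bigoplus_{i=0}^n\Z/q_i\Z\ \cong\ \mu_Q\ .
\]
By Remark \ref{rem:factorial-iso} the hypothesis $\gcd(q_0,\ldots,q_n)=1$ makes $\Delta_Q$ trivial, so $\W_Q=\mu_Q/\Delta_Q=\mu_Q$, and therefore $G\cong\Hom_{\Z}({}^{Q}N/N,\C^*)\cong{}^{Q}N/N\cong\W_Q$ (a finite abelian group being isomorphic to $\Hom_{\Z}(-,\C^*)$ of itself).

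It remains to check that the $G$-action on $\P^n=X(\Si_0,N)$ produced by the change of lattice agrees, up to an automorphism of the finite group $G$, with the $\W_Q$-action of Definition \ref{def:WPS}; this is a bookkeeping verification carried out on the embedded torus $(\C^*)^n\hookrightarrow\P^n$, comparing $G=\ker(T_N\to T_{{}^{Q}N})$ with the image of $\mu_Q$ under the quotient map $\tau_Q$, both actions being faithful because $\gcd(q_0,\ldots,q_n)=1$. Granting this, $X({}^{Q}\Si)\cong\P^n/G\cong\P^n/\W_Q=\P(Q)$, as claimed. The step I expect to be the main obstacle is exactly this last matching: not the group theory, which is the computation above, but pinning down that the \emph{geometric} $G$-action coming from the lattice picture is the explicit $\W_Q$-action of the definition. (Alternatively, one may avoid lattice quotients altogether and apply Theorem \ref{thm:cox} directly to the complete simplicial fan ${}^{Q}\Si$: its irrelevant ideal is $(x_0,\ldots,x_n)$, so $Z=\{0\}$, the degree sequence of Theorem \ref{thm:divisors} turns $\C^{n+1}\setminus\{0\}$ into a $\C^*$-variety with weighted action, and one concludes via the classical presentation $\P(Q)\cong(\C^{n+1}\setminus\{0\})/\C^*$ of \cite{Dolgachev}; this route needs, in addition, a remark on the primitive generators of the rays of ${}^{Q}\Si$, which differ from the $\v_i$ precisely when $Q$ is not reduced.)
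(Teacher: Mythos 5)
Your main route is correct, and in fact there is nothing in the paper to compare it with: the paper does not prove Proposition \ref{prop:weighted-fan} at all, it only recalls the construction and refers to \cite{Fulton} \S 2.3, where the intended argument is precisely the lattice--quotient argument you set up. Your three computational steps are all fine: ${}^{Q}\Si$ coincides, cone by cone, with the standard fan of $\P^n$ read in the larger lattice ${}^{Q}N\supseteq N$; the kernel of $\varepsilon_j\mapsto\v_j$ is exactly $\Z\cdot Q$ because $\gcd(q_0,\ldots,q_n)=1$, so ${}^{Q}N\cong\Z^{n+1}/\Z\cdot Q$; and ${}^{Q}N/N\cong\Z^{n+1}/\langle q_0\varepsilon_0,\ldots,q_n\varepsilon_n\rangle\cong\bigoplus_j\Z/q_j\Z\cong\mu_Q=\W_Q$ (the diagonal being trivial by Remark \ref{rem:factorial-iso}).

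The one place where the proposal stops short is exactly the step you flag and then only ``grant'': matching the action of $G=\ker\left(T_N\to T_{{}^{Q}N}\right)$ with the $\W_Q$--action of Definition \ref{def:WPS}. An abstract isomorphism $G\cong\W_Q$ does not by itself identify the two quotients of $\P^n$, so as written this is a genuine (though small) gap. It closes in two lines with the presentation you already have: tensoring $0\to\Z\xrightarrow{\cdot Q}\Z^{n+1}\to{}^{Q}N\to 0$ with $\C^*$ gives $T_{{}^{Q}N}\cong(\C^*)^{n+1}/\{(s^{q_0},\ldots,s^{q_n})\}$, and since $\e_i=q_i\v_i$ the map $T_N=(\C^*)^n\to T_{{}^{Q}N}$ sends $(t_1,\ldots,t_n)$ to the class of $(1,t_1^{q_1},\ldots,t_n^{q_n})$; hence $G=\{(\zeta_0^{-1}\zeta_1,\ldots,\zeta_0^{-1}\zeta_n)\ |\ \zeta_j\in\mu_{q_j}\}$, acting on $T_N$ by translation. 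This is literally the action $\tau_Q$ written out in the Remark following Definition \ref{def:WPS}, i.e.\ the restriction to the dense torus of the $\W_Q$--action on $\P^n$; since for each group element both automorphisms of $\P^n$ agree on the dense torus, they agree everywhere, and $X({}^{Q}\Si)\cong\P^n/\W_Q=\P(Q)$. One caution on your parenthetical alternative: deducing the statement from Theorem \ref{thm:cox} plus the presentation $\P(Q)\cong\left(\C^{n+1}\setminus\{0\}\right)/\C^*$ is essentially how the paper later proves Theorem \ref{thm:riduzione} and the implication $(2)\Rightarrow(1)$ of Theorem \ref{thm:fan}, but inside the paper those results already rest on Proposition \ref{prop:weighted-fan}; that route is non-circular only if you import the $\C^*$--quotient description from \cite{Dolgachev} independently, and, as you note, Cox's construction applied to ${}^{Q}\Si$ naturally produces the reduced weights $Q'$, so you would additionally need the reduction isomorphism $\P(Q)\cong\P(Q')$ at the level of Definition \ref{def:WPS}.
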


\subsection{Divisors on $\P(Q)$}\label{ssez:divisori} The present subsection is devoted to apply Theorem \ref{thm:divisors} to describe the Chow and the Picard groups, and their generators, for the wps $\P(Q)$ with $Q=(q_0,\ldots,q_n)$. Let us first of all introduce the following notation
\begin{eqnarray}\label{lcm&GCD}
\nonumber
  d_{j} &:=& \gcd\left( q_{0},\ldots ,q_{j-1},q_{j+1},\ldots ,q_{n}\right)\ , \\
  a_{j} &:=& \lcm\left( d_{0},\ldots ,d_{j-1},d_{j+1},\ldots ,d_{n}\right)\ , \\
\nonumber
  a &:=& \lcm\left( a_{0},\ldots ,a_{n}\right)\ .
\end{eqnarray}
The weights vector $Q$ will be called \emph{reduced} if $d_{j}=1$, or equivalently $a_{j}=1$, for any $j=0,\ldots,n$.

\begin{proposition}
\label{prop:weight-rel.s} Since $\gcd(q_{0},\ldots ,q_{n})=1$, the following facts are true:
\begin{enumerate}
\item  $\gcd(q_{j},d_{j})=1$\ ,
\item  if $i\neq j$ then $\gcd(d_{i},d_{j})=1$\ ,
\item  $a_{j}\mid q_{j}$\ ,
\item  $\gcd(a_{j},d_{j})=1$\ ,
\item  $a_{j}d_{j}=a$\ ,
\item  setting $q_{j}':=q_{j}/a_{j}$, then $Q'=\left(
q_{0}',\ldots ,q_{n}'\right) $ is reduced; $Q'$ is then called the \emph{reduction of $Q$}.
\end{enumerate}
\end{proposition}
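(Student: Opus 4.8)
The plan is to prove the six statements essentially in the given order, the engine being statement (2): once pairwise coprimality of the $d_j$'s is available, the least common multiples defining the $a_j$'s and $a$ collapse into ordinary products and everything else becomes routine.

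First I would dispatch (1) and (2) by the same contradiction argument against the hypothesis $\gcd(q_0,\ldots,q_n)=1$. For (1): a prime $p$ dividing both $q_j$ and $d_j$ divides every $q_i$ with $i\neq j$ (because $d_j$ does) and also $q_j$, hence divides $\gcd(q_0,\ldots,q_n)=1$, absurd. For (2) with $i\neq j$: a common prime divisor $p$ of $d_i$ and $d_j$ divides $q_j$ (since $j\neq i$, so $q_j$ is one of the entries whose gcd is $d_i$), divides $q_i$ symmetrically, and divides every remaining $q_k$ (because $p\mid d_i$ and $d_i\mid q_k$); so again $p\mid 1$, absurd.

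Having (2) in hand, I record that $\lcm(d_i:i\neq j)=\prod_{i\neq j}d_i$, so $a_j=\prod_{i\neq j}d_i$ and consequently $a_jd_j=\prod_{i=0}^n d_i=:D$ is independent of $j$. Then the rest follows: (3) each $d_i$ with $i\neq j$ divides $q_j$, and since these $d_i$ are pairwise coprime their product $a_j$ divides $q_j$; (4) $a_j=\prod_{i\neq j}d_i$ is a product of integers each coprime to $d_j$ by (2), whence $\gcd(a_j,d_j)=1$; (5) picking any $k\neq j$, a $p$-adic valuation check (for a prime $p$ with $p\mid D$ there is a unique $\ell$ with $p\mid d_\ell$, and $\max(v_p(a_j),v_p(a_k))=v_p(d_\ell)$ because $j\neq k$ forces $\ell$ to differ from at least one of them) gives $\lcm(a_j,a_k)=D$; since $\lcm(a_j,a_k)\mid a=\lcm(a_0,\ldots,a_n)$ while each $a_i\mid a_id_i=D$ forces $a\mid D$, we conclude $a=D=a_jd_j$.

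For (6), write $d_j':=\gcd(q_i':i\neq j)$; I must show $d_j'=1$. Suppose a prime $p$ divides $q_i'=q_i/a_i$ for all $i\neq j$. Then $p\mid q_i$ for all $i\neq j$, so $p\mid d_j$ and, by (2), $p\nmid d_i$ for $i\neq j$. From $a_i=\prod_{k\neq i}d_k$ (a product containing the factor $d_j$ since $i\neq j$) we get $v_p(a_i)=v_p(d_j)$, while $p\mid q_i/a_i$ means $v_p(q_i)\geq v_p(d_j)+1$ for every $i\neq j$; hence $v_p(d_j)=\min_{i\neq j}v_p(q_i)\geq v_p(d_j)+1$, a contradiction. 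So no such $p$ exists and $Q'$ is reduced. The only mildly delicate point is the valuation bookkeeping in (5) and (6); the conceptual heart is statement (2), and once that is established the remaining items are elementary.
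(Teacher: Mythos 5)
Your proof is correct. Note that the paper itself offers no argument for this proposition: it simply remarks that ``the proofs of these well known properties (see Dolgachev 1.3.1) are elementary'' and moves on, so there is no in-paper proof to measure yours against. Your write-up supplies exactly the missing details, and the organization is sound: (1) and (2) by the obvious contradiction with $\gcd(q_0,\ldots,q_n)=1$, then the key observation that pairwise coprimality of the $d_i$ turns $a_j=\lcm(d_i: i\neq j)$ into the product $\prod_{i\neq j}d_i$, from which (3), (4) and the identity $a_jd_j=\prod_i d_i=:D$ are immediate. Your two-sided divisibility in (5) is fine: each $a_i\mid D$ gives $a\mid D$, and the valuation check that $\lcm(a_j,a_k)=D$ for any $j\neq k$ gives $D\mid a$ (this implicitly uses that there are at least two indices, i.e. $n\geq 1$, which always holds here). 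The valuation argument for (6) is also correct: a prime $p$ dividing all $q_i'$ with $i\neq j$ forces $p\mid d_j$, hence $v_p(a_i)=v_p(d_j)$ for $i\neq j$, and then $v_p(q_i)\geq v_p(d_j)+1$ for all $i\neq j$ contradicts $v_p(d_j)=\min_{i\neq j}v_p(q_i)$. In short, a complete and self-contained elementary proof of a statement the paper delegates to the literature.
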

The proofs of these well known properties (see \cite{Dolgachev} 1.3.1) are elementary.

Instead we will prove the following property, which is no reported in the main treatments of the subject. This property will be fundamental to understand how fans and polytopes of wps behave when the reduction isomorphism given by Theorem \ref{thm:riduzione} is applied.

\begin{proposition}\label{prop:lcm} Let $Q=(q_0,\ldots,q_n)$ be a weights vector and $Q'=(q_{0}',\ldots ,q_{n}')$ be its reduction. Define
\begin{equation}\label{lcmQ&Q'}
    \d:=\lcm(q_0,\ldots,q_n)\quad\text{and}\quad \d':=\lcm(q_{0}',\ldots ,q_{n}')\ .
\end{equation}
Then $\d=a\d'$, where $a$ is defined in $(\ref{lcm&GCD})$.
\end{proposition}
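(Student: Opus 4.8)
The plan is to compute $\d=\lcm(q_0,\ldots,q_n)$ directly in terms of the invariants $d_j$ and $a_j$, using the decomposition $q_j=a_j q_j'$ together with the arithmetic relations collected in Proposition~\ref{prop:weight-rel.s}. Since lcm behaves multiplicatively on coprime factors, the key is to show that, after distributing prime by prime, the $a_j$-part of $\d=\lcm(q_j)$ is exactly $a=\lcm(a_j)$ and the complementary part is exactly $\d'=\lcm(q_j')$; then $\d=a\d'$ follows provided these two parts are coprime.

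First I would fix a prime $p$ and compare the $p$-adic valuations $v_p$ on both sides, so that the statement $\d=a\d'$ reduces to $v_p(\d)=v_p(a)+v_p(\d')$ for every $p$, i.e.\ to $\max_j v_p(q_j)=\max_j v_p(a_j)+\max_j v_p(q_j')$. By item (3) of Proposition~\ref{prop:weight-rel.s}, $a_j\mid q_j$, and by item (6) the $Q'$ is reduced, i.e.\ $\gcd_{k\neq j}(q_k')=1$ for each $j$; I would use this to argue that for each prime $p$ at most one index $j$ has $v_p(q_j')>0$, hence the maximum defining $v_p(\d')$ is attained at a single index $j_0$, and at that index $v_p(q_k)=v_p(a_k)$ for all $k\neq j_0$ (since the $p$-part of $q_k$ is absorbed into $d_{j_0}$ and thence into $a_{j_0}$). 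The combinatorial heart is the observation that $a_j=\lcm_{k\neq j}d_k$ and $d_j=\gcd_{k\neq j}q_k$, together with item (2) ($d_i,d_j$ coprime for $i\neq j$) and item (1) ($\gcd(q_j,d_j)=1$): these say the $d_j$ are pairwise coprime divisors, each $d_j$ collecting the ``shared among all but $j$'' prime powers, so that for each prime $p$ there is at most one $j$ with $v_p(d_j)>0$.

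Concretely, for a fixed prime $p$ let $m_j=v_p(q_j)$ and reorder so that $m_0\le m_1\le\cdots\le m_n$. Then $v_p(d_j)=\min_{k\neq j}m_k$, which equals $m_0$ for $j\neq 0$ and equals $m_1$ for $j=0$; in particular $v_p(d_j)>0$ for at most one index only when $m_0=0$, and in general $v_p(a_j)=\lcm_{k\neq j}$ of these $=\max(m_1,\ldots)$ appropriately. I would tabulate: $v_p(a)=\max_j v_p(a_j)$ works out to $\max(m_0,\ldots,m_{n-1})=m_{n-1}$, while $v_p(q_j')=m_j-v_p(a_j)$ gives $v_p(q_j')=0$ for $j<n$ and $v_p(q_n')=m_n-m_{n-1}$, so $v_p(\d')=m_n-m_{n-1}$; adding, $v_p(a)+v_p(\d')=m_n=v_p(\d)$, as desired. (One must separately note the degenerate case where the top valuation is attained more than once: if $m_{n-1}=m_n$ then $v_p(q_n')=0$, $v_p(\d')$ at that prime comes from no index, i.e.\ is $0$, and $v_p(a)=m_n$ already — still consistent.)

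The main obstacle is bookkeeping: getting the case analysis right when several $m_j$ coincide at the top, since then $d_j$ and $a_j$ pick up different amounts of $p$ depending on the multiplicity, and one must check the identity $v_p(a)+v_p(\d')=v_p(\d)$ survives all ties. I expect this to be purely mechanical once the reordering trick above is in place — indeed items (3)--(5) of Proposition~\ref{prop:weight-rel.s} ($a_jd_j=a$ and the coprimalities) essentially encode exactly these valuation identities, so an alternative, cleaner route would be to avoid primes entirely: write $\d'=\lcm(q_j')=\lcm(q_j/a_j)$ and use $a_jd_j=a$ to get $q_j/a_j=q_jd_j/a$, then manipulate $\lcm$ formally using that the $d_j$ are pairwise coprime and each coprime to $q_j$. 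I would present whichever of these is shorter, but the prime-by-prime verification is the safe fallback.
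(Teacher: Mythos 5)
Your overall strategy (compare $p$-adic valuations and show $v_p(\delta)=v_p(a)+v_p(\delta')$ for every prime) is sound and would give a proof different in flavour from the paper's, which instead checks the easy divisibility $\delta\mid a\delta'$ and then proves $a\delta'\mid\delta$ by showing $d_j\mid\frac{\delta}{q_k}a_k$ for all $j,k$, invoking a prime-power argument only in the case $j=k$. However, as written your valuation bookkeeping contains false intermediate claims. First, ``$Q'$ reduced'' means that for each prime $p$ at least \emph{two} of the $q_j'$ have $v_p(q_j')=0$; it does not mean that at most one index has $v_p(q_j')>0$. For $Q=(1,2,4,8)$ one gets $d=(2,1,1,1)$, $(a_0,\ldots,a_3)=(1,2,2,2)$, $Q'=(1,1,2,4)$, and at $p=2$ both $q_2'$ and $q_3'$ have positive valuation. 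Second, your tabulation is wrong: with the valuations sorted as $m_0\le m_1\le\cdots\le m_n$ one has $v_p(d_0)=m_1$ and $v_p(d_j)=m_0$ for $j\ge 1$, hence $v_p(a_0)=m_0$ and $v_p(a_j)=m_1$ for $j\ge 1$, so that $v_p(a)=m_1$ (the \emph{second-smallest} valuation), not $m_{n-1}$; correspondingly $v_p(q_j')=m_j-m_1$ for $j\ge 1$ (so it can be positive for many $j<n$) and $v_p(\delta')=m_n-m_1$, not $m_n-m_{n-1}$. In the example above, $v_2(a)=1\neq m_{n-1}=2$ and $v_2(\delta')=2\neq 1$. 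Your two errors cancel in the sum ($m_{n-1}+(m_n-m_{n-1})=m_n$, just as $m_1+(m_n-m_1)=m_n$), so the final identity you write down is true, but the derivation as given is not valid; the agreement of the totals is fortuitous, and the parenthetical treatment of ties ($v_p(a)=m_n$ when $m_{n-1}=m_n$) is likewise false.

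The repair is short: keep the sorted-valuation setup, compute $v_p(d_j)$ and $v_p(a_j)$ as above, conclude $v_p(a)=m_1$ and $v_p(\delta')=m_n-m_1$, and add to get $v_p(\delta)=m_n$. Note that this corrected computation, like the paper's proof, never uses $\gcd(q_0,\ldots,q_n)=1$, consistently with the remark following the proposition. Your proposed ``cleaner route'' via $q_j/a_j=q_jd_j/a$ and formal lcm manipulations would also need care, since the pairwise coprimality of the $d_j$ alone does not immediately control $\lcm_j(q_jd_j)$; the valuation argument, once corrected, is the safer of your two suggestions.
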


\begin{remark} The Proposition \ref{prop:lcm} still holds when $q:=\gcd(q_0,\ldots,q_n)>1$. In fact we will not use this hypothesis in the following proof.
\end{remark}

\begin{proof}[Proof of Proposition \ref{prop:lcm}] It is easy to verify from definitions that
$$a,\delta'\hbox{ divide } \delta\hbox{ and } \delta\hbox{ divides } a\delta'.$$
It remains to prove that $a\delta'$ divides $\delta$. By definition of $a$ and $\delta'$ this amounts to show that $a_i$ divides $\frac\delta {q_k}a_k$ for every $i,k$, that is $d_j$ divides $\frac \delta {q_k}a_k$ for every $j,k$. If $j\not=k$ then $d_j$ divides $a_j$ and we are done; suppose now $j=k$, let $p$ be a prime dividing $d_k$ and let $p^t,p^r $ be the highest powers of $p$ dividing $d_k$ and $q_k$ respectively. Then $p^t$ divides $q_i$ for every $i\not=k$. If $r\geq t$, then $p^t$ divides $q_i$ for every $i$, so that $p^t$ divides $a_k$. If $r<t$ then $p^{t-r}$ divides $\frac\delta {q_k}$; moreover $p^r$ divides $d_i$ for every $i\not=k$, so that $p^r$ divides $a_k$. Therefore $p^t$ divides $\frac \delta{q_k} a_k$.
\end{proof}

The following statement resumes some well known fact, like the one that Picard and the Chow groups of a wps are free groups of rank 1, and some other fact, like a measure of \emph{non-factoriality} of a wps (see the following Remark \ref{rem:fattorialità}). We will give a complete proof of it, preceded by a proof of the following Lemma \ref{lm:condizioni}, since they will play a central role in the following.

\begin{theorem}\label{thm:P(Q)-divisori} Consider the wps $\P(Q)$ with $Q=(q_0,\ldots,q_n)$ and let $Q'=(q'_0,\ldots,q'_n)$ be the reduced weights vector. Then the following diagram is commutative and all rows and columns are exact
\begin{equation}
\xymatrix{
& 0 \ar[d] & 0 \ar[d] & 0 \ar[d] & \\
0 \ar[r] & ^Q M \ar[r]^-{div}\ar[d]^{\parallel} &
\mathcal{C}_T \left(\P(Q)\right) \ar[r]\ar[d] & \Pic \left(\P(Q)\right)
\ar[r]\ar[d] & 0 \\
0 \ar[r] & ^Q M \ar[r]^-{div}\ar[d] & \mathcal{W}_T \left(\P(Q)\right)
\ar[r]^-{d}\ar[d] & A_{n-1} (\P(Q)) \ar[r]\ar[d] & 0 \\
 & 0\ar[r] & \left.\Z\right/\d'\Z\ar[r]^{\cong}\ar[d] & \left.\Z\right/\d'\Z\ar[r]\ar[d] & 0\\
 && 0 & 0 & }
\label{W-div-diagram}
\end{equation}
$^QM$ is the dual lattice of the $n$-dimensional lattice $\ ^QN$ defined in Proposition \ref{prop:weighted-fan} and $\d'=\lcm(q'_0,\ldots,q'_n)$.

In particular:
\begin{enumerate}
  \item $A_{n-1}(\P(Q))\cong \Z$ is generated by the linear equivalence class of any divisor $\sum_{j=0}^n b_j D_j$, where $D_j$ is the toric invariant divisor associated with the 1-dimensional cone $\langle\v_j\rangle\in\Si(1)$ and $(b_0,\ldots,b_n)$ is any integer solution of the linear equation $\sum_{j=0}^n q'_j x_j=1$;
  \item $\Pic(\P(Q))\cong \Z$ is generated by the isomorphism class of any line bundle $\mathcal{O}_{\P(Q)}(\d' D)$, where $D$ represents a generator of $A_{n-1}(\P(Q))$; hence the immersion $\Pic(\P(Q))\hookrightarrow A_{n-1}(\P(Q))$ is the multiplication by $\d'$.
\end{enumerate}
\end{theorem}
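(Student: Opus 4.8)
The plan is to build the commutative diagram (\ref{W-div-diagram}) by applying Theorem \ref{thm:divisors} to the specific fan $^Q\Si = \fan(\v_0,\ldots,\v_n)$ of Proposition \ref{prop:weighted-fan}. Since this fan has exactly $n+1$ one-dimensional cones $\langle\v_j\rangle$ and $\Si(1)$ generates $N_\R$ (the $\v_j$ span since $\sum q_j\v_j=0$ is the only relation, up to scalar), Theorem \ref{thm:divisors} immediately gives the exactness of the middle row, $\rk A_{n-1}(\P(Q)) = (n+1)-n = 1$, the exactness of the top row, and the splitting of the first column (because $^Q\Si$ contains an $n$-cone), so $\Pic(\P(Q))$ is free. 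The nontrivial work is to identify the cokernel of $\Pic \hookrightarrow A_{n-1}$ — equivalently the bottom row — as $\Z/\d'\Z$, and to pin down the generators as stated in (1) and (2).

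First I would compute $A_{n-1}(\P(Q))$ explicitly. Writing $d:\bigoplus_j \Z D_j \to A_{n-1}$, the image of $div$ is spanned by $div(\u) = -\sum_j \langle \u,\v_j\rangle D_j$ for $\u\in\,^QM$. Using the dual basis considerations and the defining formulas $\v_0 = -\tfrac1{q_0}\sum\e_i$, $\v_i = \e_i/q_i$, one checks that $\,^QM = \{\u\in M_\R : \langle\u,\v_j\rangle\in\Z \ \forall j\}$, and that under the identification $\bigoplus_j\Z D_j\cong\Z^{n+1}$, the image of $div$ is precisely the kernel of the map $\Z^{n+1}\to\Z$, $(b_j)\mapsto \sum_j q'_j b_j$ — this is where the reduced weights $q'_j$ enter, since $\langle\u,\v_j\rangle$ ranges over multiples dictated by $a_j$ and one must divide out the common factor. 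Hence $A_{n-1}(\P(Q))\cong\Z$ via $D=\sum b_j D_j \mapsto \sum q'_j b_j$, which is surjective because $\gcd(q'_0,\ldots,q'_n)=1$ (Remark \ref{rem:factorial-iso} applied to $Q'$), giving claim (1): any integer solution of $\sum q'_j x_j = 1$ yields a generator.

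Next I would identify $\mathcal{C}_T(\P(Q))$ inside $\mathcal{W}_T(\P(Q))$ using Criterion \ref{cr:Cartier}. A divisor $D=\sum a_j D_j$ is Cartier iff for each maximal cone $\s^k = \langle\v_0,\ldots,\widehat{\v_k},\ldots,\v_n\rangle$ there is $\u(\s^k)\in\,^QM$ with $\langle\u(\s^k),\v_j\rangle = -a_j$ for $j\neq k$; solving these linear systems and imposing the integrality constraint $\langle\u(\s^k),\v_k\rangle\in\Z$ on the remaining pairing shows that the Cartier condition forces a divisibility on the class of $D$ in $A_{n-1}\cong\Z$. A direct computation — the crux of the argument — shows this divisibility is exactly by $\d' = \lcm(q'_0,\ldots,q'_n)$: the class $\sum q'_j a_j$ must be divisible by each $q'_k$, hence by their lcm. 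Therefore $\mathcal{C}_T(\P(Q))$ maps isomorphically onto $\d'\Z\subset\Z\cong A_{n-1}$, giving $\Pic(\P(Q))\cong\d'\Z\cong\Z$ with cokernel $\Z/\d'\Z$, and claim (2) follows since $\mathcal{O}(\d' D)$ with $D$ a generator of $A_{n-1}$ generates $\Pic$. Applying the snake lemma (or simply comparing the top two rows, which share the identical leftmost column $^QM\stackrel{=}{\to}\,^QM$) produces the exact bottom row $0\to\Z/\d'\Z\stackrel{\cong}{\to}\Z/\d'\Z\to 0$ and completes the diagram.

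The main obstacle I anticipate is the bookkeeping in the Cartier computation: correctly tracking how the denominators $q_j$ in the $\v_j$ interact with the gcd/lcm quantities $d_j, a_j, a$ of (\ref{lcm&GCD}), and verifying cleanly that the Cartier divisibility on the Chow class is by $\d'$ rather than, say, $\d$ or some intermediate quantity. Proposition \ref{prop:weight-rel.s} (especially $a_j\mid q_j$, $q'_j = q_j/a_j$, and the coprimality relations) and Proposition \ref{prop:lcm} ($\d = a\d'$) will be the essential inputs that make these divisibilities come out exactly right; I would isolate the computation of $\langle\u(\s^k),\v_k\rangle$ in terms of the $q'_j$ as a short preliminary lemma (this is presumably the role of Lemma \ref{lm:condizioni} announced just before the theorem).
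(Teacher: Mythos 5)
Your outline follows the same route as the paper's proof: apply Theorem \ref{thm:divisors} to the fan $^Q\Si$ of Proposition \ref{prop:weighted-fan}, identify $A_{n-1}(\P(Q))\cong\Z$ via $\sum_j b_jD_j\mapsto\sum_j q'_jb_j$, use Criterion \ref{cr:Cartier} to show that being Cartier forces divisibility of the class by every $q'_l$, hence by $\d'$, and then assemble the diagram from (\ref{div-diagram}). Two points, however, need repair. First, both $div$ and the Cartier criterion pair $\u$ against the \emph{primitive} generators $\n_j$ of the rays, not against the $\v_j$: by Lemma \ref{lm:condizioni}(c) one has $\v_j=d_j\n_j$, so $\langle\u,\v_j\rangle=d_j\langle\u,\n_j\rangle$; the correction factor is $d_j$, not $a_j$ as you wrote, and with the $\v_j$-pairings your identification of $\im(div)$ with the kernel of $(b_j)\mapsto\sum_j q'_jb_j$ is literally false for non-reduced $Q$ (try $Q=(1,2,2)$). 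It is exactly the replacement $\v_j\rightsquigarrow\n_j$, together with $\sum_jq'_j\n_j=0$ and the minor computation $|\det(\n_j\,:\,j\neq l)|=q'_l$ from Lemma \ref{lm:condizioni}, that makes the reduced weights appear.

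Second, the assertion that $\im(div)$ is \emph{precisely} that kernel, equivalently that $A_{n-1}(\P(Q))$ is torsion-free, is the crux of claim (1) and you leave it at ``one checks''. The paper proves it by showing that the basis $div(^Q\e_i^{\vee})$, $i=1,\ldots,n$, of $\ker(d)$ extends to a $\Z$-basis of $\bigoplus_j\Z D_j$: the bordered $(n+1)\times(n+1)$ determinant is computed, via Lemma \ref{lm:condizioni}, to equal $\sum_jq'_jb_j$ up to sign, so unimodularity amounts to solving $\sum_jq'_jb_j=1$, which is possible since $\gcd(q'_0,\ldots,q'_n)=1$. Likewise in claim (2) the divisibility must be extracted from the explicit $n\times n$ systems $\langle\u_l,\n_j\rangle=-k\,b_j$ ($j\neq l$), whose determinants are $\pm q'_l$ by Lemma \ref{lm:condizioni}: this is what shows a generator $D$ is not Cartier for $Q\neq(1,\ldots,1)$ and that the minimal Cartier multiple is $\d'=\lcm(Q')$. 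With these two repairs your argument coincides with the paper's proof, including the final step of deducing the bottom row of (\ref{W-div-diagram}) by comparing the top two rows.
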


\begin{remark}[Non-factoriality of $\P(Q)$]\label{rem:fattorialità} Recall that a variety $X$ is called \emph{factorial} if it is normal and every Weil divisor of $X$ is also a Cartier divisor i.e. the inclusion $\mathcal{C}(X)\subset\mathcal{W}(X)$ is actually an equality. Then the quotient group $\left.\mathcal{W}(X)\right/\mathcal{C}(X)$  gives a measure of how much $X$ is far form being factorial. The same can be done by tensoring with $\Q$ and looking at the quotient $\Q$-module $\left(\left.\mathcal{W}(X)\otimes\Q\right)\right/\left(\mathcal{C}(X)\otimes\Q\right)$.

\noindent The Corollary \ref{cor:fattorialità} of Criterion \ref{cr:Cartier}, states that every simplicial toric variety is $\Q$-factorial. Moreover it states that a smooth toric variety is factorial. The vertical exact sequences in (\ref{W-div-diagram}) give a converse of the latter fact, for a wps. Namely, since $\left.\Z\right/\d'\Z=0$ if and only if $\d'=1$, meaning that we are dealing with the usual projective space $\P^n$, we get that:
\begin{itemize}
  \item [(a)] \emph{a wps $\P(Q)$ is factorial if and only if $\P(Q)=\P^n$}\ ,
  \item [(b)] \emph{if $Q\neq(1,\ldots,1)$ then $\P(Q)$ is singular and $\P(Q)\not\cong\P^n$}
\end{itemize}
Moreover $\d'$ qualifies \emph{a geometric invariant} of $\P(Q)$ guaranteeing that
\begin{equation*}
    \d'(Q_1)\neq\d'(Q_2)\ \Longrightarrow\ \P(Q_1)\not\cong\P(Q_2)\ .
\end{equation*}
\end{remark}

\begin{lemma}\label{lm:condizioni} Let $Q=(q_0,\ldots,q_n)$ be a weights vector; let $\{\v_0,\ldots,\v_n\}$ be a set of vectors in $\mathbb{Q}^n$, generating $\mathbb{Q}^n$ and such that $\sum_{j=0}^{n}q_{j}\v_{j}=0$. Let $L$ be the lattice generated in $\mathbb{Q}^n$ by $\{\v_0,\ldots,\v_n\}$ and $L'$ be the sublattice generated by  $\{q_0\v_0,\ldots,q_n\v_n\}$. Then the following properties hold:
\begin{itemize}
  \item [(a)]
   $ \left[L:L'\right]=\prod_{j=0}^n q_j$;
  \item [(b)]let $V:=(v_{ij})$ be the $n\times (n+1)$ matrix whose columns are given by components of $\v_0,\ldots,\v_n$ over a basis $\e_{1},\ldots,\e_{n}$ of $L$ i.e. $\v_j=\sum_{i=1}^n = v_{ij}\e_i$, for every $j=0,\ldots,n$, and denote by $V_j$ the $n$-minor of $V$ obtained by deleting the $j$-th column as in (\ref{Vj}). Then
      \begin{equation*}
        \forall\ j=0,\ldots,n\quad V_j=(-1)^{\epsilon +j}q_j\ ,\ \text{for a fixed $\epsilon\in\{0,1\}$}\ ,
      \end{equation*}
  \item [(c)] $\forall\ j=0,\ldots,n\quad \v_j=d_j\n_j$\ , where $\n_j$ is the generator of the semigroup $\langle\v_j\rangle\cap\ L$ and $d_j$ is defined in (\ref{lcm&GCD}); in particular $L$ is the lattice generated by $\{\n_0,\ldots,\n_n\}$; moreover $\{\n_0,\ldots,\n_n\}$ satisfy the hypotheses of this Lemma with respect to the reduced weights vector $Q'$ i.e. they generate $\mathbb{Q}^n$ and $\sum_{j=0}^{n}q'_{j}\n_{j}=0$.

\end{itemize}
\end{lemma}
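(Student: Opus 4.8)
The plan is to prove the three assertions essentially in order, deriving (c) first in a convenient form, then (b), then (a), and finally tying them together to the reduced weights.

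For part (c): the key observation is that $\n_j$, the primitive generator of $\langle\v_j\rangle\cap L$, is by definition $\v_j/c_j$ where $c_j$ is the largest integer such that $\v_j/c_j\in L$. I would identify this $c_j$ with $d_j=\gcd(q_0,\dots,\widehat{q_j},\dots,q_n)$ as follows. From $\sum_k q_k\v_k=0$ we get $q_j\v_j=-\sum_{k\neq j}q_k\v_k$, and since $d_j\mid q_k$ for all $k\neq j$, the right side lies in $d_j\cdot L$; hence $q_j\v_j\in d_j L$, so $(q_j/d_j)\v_j$ — wait, more carefully: $q_j\v_j/d_j=-\sum_{k\neq j}(q_k/d_j)\v_k\in L$. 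Combined with $\gcd(q_j,d_j)=1$ (Proposition \ref{prop:weight-rel.s}(1)), a Bézout argument writing $1=\alpha q_j+\beta d_j$ gives $\v_j/d_j=\alpha(q_j\v_j/d_j)+\beta\v_j\in L$, so $d_j\mid c_j$. For the reverse divisibility I would argue that $(q_j/c_j)\n_j=(q_j/c_j)(\v_j/c_j)\cdot c_j$... instead: if $\v_j/c_j\in L$, apply the known integrality $V_j=\pm q_j$ (which I prove in (b) independently of (c)) together with the fact that replacing column $\v_j$ by $\v_j/c_j$ multiplies $V_j$ by $1/c_j$ and must still give an integer since $\{\v_0,\dots,\v_{j-1},\n_j,\v_{j+1},\dots,\v_n\}\subset L$ — forcing $c_j\mid q_j$; but also $c_j\mid d_j$ can be extracted by symmetry, since $\n_j\in L$ means $\v_j\in c_j L$ and then $q_k\v_k$ for $k\neq j$ absorbs the relevant factor. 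The cleanest route is probably: $c_j\mid q_j$ from $V_j=\pm q_j$, and $d_j\mid c_j$ from the Bézout argument above; then show $c_j\mid d_j$ by noting the relation $q_j\v_j=-\sum_{k\neq j}q_k\v_k$ shows every $q_k$ ($k\neq j$) kills $\v_j$ modulo $c_j L$... This last divisibility is where I expect the main friction; I will treat it carefully. Once $c_j=d_j$, the statement $\v_j=d_j\n_j$ is immediate, $L=\langle\n_0,\dots,\n_n\rangle$ follows since each $\v_j$ is a multiple of some $\n_j$ and conversely, and the relation $\sum q'_j\n_j=\sum(q_j/a_j)(\v_j/d_j)$; using $a_jd_j=a$ (Proposition \ref{prop:weight-rel.s}(5)) this equals $\frac1a\sum q_j\v_j=0$, and $\{\n_j\}$ generates $\Q^n$ because it spans the same space as $\{\v_j\}$.

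For part (b): since $\{\v_0,\dots,\v_n\}$ spans $\Q^n$ with the single linear relation $\sum q_j\v_j=0$ (the relation space is $1$-dimensional by a rank count), Cramer-type reasoning gives that the vector of maximal minors $(V_0,-V_1,V_2,\dots)$ is, up to a scalar, the unique relation, i.e. $((-1)^jV_j)_j=\mu\cdot(q_j)_j$ for some $\mu\in\Q$. To pin down $\mu=\pm1$ I would invoke that $V$ has integer entries (columns in $L$ expressed in a basis of $L$) so each $V_j\in\Z$, and that $\gcd(q_0,\dots,q_n)=1$ forces $\mu$ to be an integer; finally $[L:L']=|\mu|^{?}\cdot\prod q_j$ type bookkeeping — actually I will derive $|\mu|=1$ from part (a) or conversely derive (a) from (b), choosing whichever order is shorter. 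Concretely: pick the basis $\e_i$ so that, say, $V^0$ (delete column $0$) is the identity — possible after a change of $L$-basis since $V^0$ has determinant $\pm V_0$ which we want to be $\pm q_0$; this is slightly circular, so instead I just use the relation-space argument for the ratios and then compute one minor directly in a well-chosen basis.

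For part (a): the index $[L:L']$ where $L'=\langle q_0\v_0,\dots,q_n\v_n\rangle$. I would compute this via the Smith normal form / determinant of the inclusion. Choose the basis of $L$ in which the matrix $V^j$ (any fixed $j$, say $j=0$) is used: then $\{q_1\v_1,\dots,q_n\v_n\}$ is a basis of a finite-index sublattice $L''\subset L$ with $[L:L'']=|\det(q_1\v_1,\dots,q_n\v_n)|=(\prod_{k\geq1}q_k)\cdot|V_0|=(\prod_{k\geq1}q_k)\cdot q_0=\prod_{k=0}^n q_k$, using (b). Then I must check $L'=L''$, i.e. that adjoining $q_0\v_0$ to $L''$ adds nothing: from $q_0\v_0=-\sum_{k\geq1}q_k\v_k\in L''$ this is clear. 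Hence $[L:L']=\prod q_j$. This makes (a) depend only on (b), and (b) depends only on the rank-one relation plus integrality, so the logical order is (b) $\Rightarrow$ (a), then (c) using (b). The main obstacle, as flagged, is the clean identification $c_j=d_j$ in (c), specifically the divisibility $c_j\mid d_j$; everything else is linear algebra over $\Z$ that I expect to go through routinely.
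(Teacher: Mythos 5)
The decisive step of your part (b) --- pinning down $|\mu|=1$, i.e.\ $|V_0|=q_0$ --- is never actually proved, and both fallbacks you offer fail: you propose to get it ``from part (a)'', but your (a) is itself derived from (b), which is circular; and the alternative, ``compute one minor directly in a well-chosen basis'', cannot work as stated, since a basis of $L$ in which $V^0$ becomes the identity exists only when $\v_1,\ldots,\v_n$ already form a basis of $L$, i.e.\ only when $q_0=1$. The missing content is exactly the index computation the paper does first: setting $L_0:=\langle\v_1,\ldots,\v_n\rangle$, the quotient $L/L_0$ is cyclic, generated by the class of $\v_0$, and $q_0\v_0\in L_0$ gives $[L:L_0]\mid q_0$; conversely any integral relation $r\v_0=\sum_{j\geq 1} s_j\v_j$ must be an integer multiple of $(q_0,\ldots,q_n)$ (the rational relation space is one-dimensional and $\gcd(q_0,\ldots,q_n)=1$), so $q_0\mid r$ and $[L:L_0]=q_0=|V_0|$. (Alternatively you could note that since the columns of $V$ generate $L$, the gcd of the maximal minors of $V$ is $1$, whence $|\mu|=\gcd_j|V_j|=1$; but some argument of this kind must appear --- ``rank-one relation plus integrality'' alone only yields $\mu\in\Z$, not $\mu=\pm 1$.) With that step supplied, your order (b) $\Rightarrow$ (a) is fine and essentially inverts the paper's order (a) $\Rightarrow$ (b).

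In (c) you correctly flag the divisibility $c_j\mid d_j$ as the friction point and then leave it open; moreover your one concrete attempt is misapplied: replacing the $j$-th column $\v_j$ by $\n_j=\v_j/c_j$ does not affect $V_j$ (that minor omits column $j$); it divides the \emph{other} minors $V_k$, $k\neq j$, by $c_j$, so the correct conclusion is $c_j\mid q_k$ for all $k\neq j$, hence $c_j\mid d_j$ --- not ``$c_j\mid q_j$''. Stated correctly, this single line closes your gap and, combined with your B\'ezout argument giving $d_j\mid c_j$, yields $c_j=d_j$; it would in fact be marginally shorter than the paper's route, which first extracts $d_j$ (your B\'ezout step) to write $\v_j=d_j\v'_j$ with $\sum_j q'_j\v'_j=0$ via Proposition \ref{prop:weight-rel.s}(5), then applies (a)--(b) to $\{\v'_j\}$ with the reduced weights $Q'$ and shows any residual factor $h_j$ in $\v'_j=h_j\n_j$ divides all $q'_k$, $k\neq j$, hence equals $1$ because $Q'$ is reduced. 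As written, however, your proposal has two genuine holes --- the value $|V_0|=q_0$ and the divisibility $c_j\mid d_j$ --- and these are precisely the only non-formal steps of the lemma.
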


\begin{proof} For (a), observe that $L'$ has $q_1\v_1,\ldots,q_n\v_n$ as a basis. Then $L'$ has index $\prod_{j=1}^nq_j$ in the lattice $L_0$ generated by $\v_1,\ldots,\v_n$. The quotient $L/L_0$ is cyclic generated by the image of $\v_0$, so that $[L:L_0]$ divides $q_0$. If $r\v_0\in L_0$, with $r\in\Z$ then $r\v_0=\sum_{j=1}^ns_j\v_j$ with $s_1,\ldots,s_n\in\Z$. Since $\gcd(q_0,\ldots,q_n)=1$ then  there exists $\lambda\in\Z$ such that $r=-\lambda q_0$, $s_i=\lambda q_i$ for $i=1,\ldots,n$; in particular $q_0$ divides $r$, so that $[L:L_0]=q_0$ and $[L:L']=[L:L_0][L_0:L']=\prod_{i=0}^n q_i$.\par
\noindent (b): for $j=0,\ldots,n$, let $L_j$ be the lattice generated by $\v_0,\ldots,\v_{j-1},\v_{j+1},\ldots,\v_n$. Then $|V_j|=[L:L_j]=q_j$, as we have shown in (a) for the case $j=0$. Let $\epsilon\in\{0,1\}$ be such that $V_0=(-1)^{\epsilon}q_0$. Then
\begin{equation*}
    \forall\ j=0,\ldots,n\quad V_j=(-1)^{j}\frac{q_j}{q_0}V_0=(-1)^{\epsilon +j}q_j
\end{equation*}
since $\sum_{j=0}^{n}q_{j}\v_{j}=0$. \par
\noindent (c):  we have
\begin{equation*}
    \forall\ j=0,\ldots ,n\quad q_{j}\v_{j}=-\sum_{k\neq j}q_{k}\v_{k}=-d_{j}\sum_{k\neq j}\tilde{q}_{k}\v_{k}
\end{equation*}
where $\tilde{q}_{k}:=q_{k}/d_{j}\in\N$. By (1) in Proposition \ref{prop:weight-rel.s}, $\gcd(q_j,d_j)=1$ meaning that
\begin{equation*}
    \forall\ j=0,\ldots ,n\quad \exists\ \v_{j}'\in L\ :\ \v_{j}=d_{j}\v_{j}'\ .
\end{equation*}
Then (5) in Proposition \ref{prop:weight-rel.s} allows to write
\begin{equation}\label{a-primo}
    0=\sum_{j=0}^{n}q_{j}\v_{j}=\sum_{j=0}^{n}\left( q_{j}'a_{j}\right)
\left( d_{j}\v_{j}'\right) =a\sum_{j=0}^{n}q_{j}'\v_{j}'\Longrightarrow \sum_{j=0}^{n}q_{j}'\v_{j}'
=0\ .
\end{equation}
Moreover $\v'_0,\ldots\v'_n$ generate $L$ and (a) ensures that the following index
\begin{equation}\label{b-primo}
    \left[L:\langle q_{j}'\v_{j}'\ |\ j=0,\ldots,n\rangle\right] = \prod_{j=0}^n q'_j.
\end{equation}
 Then the proof ends up by showing that, for all $j$, $\v'_j=\n_j$. At this purpose consider $h_j\in\N$ such that $\v'_j=h_j\n_j$\ . If $V'=(v'_{ij})$ is the matrix of components of $\v'_0,\ldots,\v'_n$, over the basis $\e_{1},\ldots,\e_{n}$ of $L$, then
\begin{equation*}
    \forall\ j=0,\ldots,n\quad \left| V_{j}'\right| =q'_{j}\
\end{equation*}
On the other hand
\begin{equation*}
 \v_{0}'\in h_0 L\Longrightarrow \forall\ i=1,\ldots,n\quad h_{0}\mid
v_{i0}'\Longrightarrow \forall\ k=1,\ldots,n\quad h_{0}\ |\ \left|
V_{k}'\right| =q_{k}'\ .
\end{equation*}
Therefore (6) in Proposition \ref{prop:weight-rel.s} implies that
\[
h_{0}\ |\ \gcd\left( q_{1}',\ldots ,q_{n}'\right)
=1\Longrightarrow h_{0}=1
\]
Analogously $h_{j}=1$, for all $1\leq j\leq n$. Hence $\v'_j=\n_j$\ .
\end{proof}

An alternative proof of point (c) in the previous Lemma \ref{lm:condizioni} can be found on the one hand in \cite{Fujino1}, Prop. 2.3 and on the other hand in \cite{BB} Prop. 2. 

\begin{proof}[Proof of Theorem \ref{thm:P(Q)-divisori}] First of all let us recall that $\P(Q)=X\left(^Q\Si\right)$ where $^Q\Si$ is the fan described in Proposition \ref{prop:weighted-fan}. Then $\left|^{Q}\Sigma (1)\right| =n+1$ and the sequence (\ref{deg sequence}) turns out to be also left exact, since $\left|^{Q}\Sigma (1)\right|$ generates $N_{\R}$, giving
\begin{equation*}
    \def\objectstyle{\displaystyle}
\xymatrix@1{0 \ar[r] & ^Q M \ar[r]^-{div} &
\bigoplus_{j=0}^{n}\Z\cdot D_{j} \ar[r]^-{d} &
A_{n-1}\left( \P (Q) \right) \ar[r] & 0}
\end{equation*}
where $D_j$ is the toric invariant divisor associated with the 1-dimensional cone $\langle\v_j\rangle\in\Si(1)$.
Then
\begin{equation}\label{div_u}
    \forall\ \u\in \ ^{Q}M\quad div\left( \u\right)
=-\sum_{j=0}^{n}\left\langle u,\n_{j}\right\rangle \ D_{j}
\end{equation}
where $\n_j$ is the generator of the semigroup $\langle\v_j\rangle\cap\ ^QN$. Let $^{Q}\e_{1},\ldots
,^{Q}\e_{n}$ be a basis of $^{Q}N$ and $^{Q}\e_{1}^{\vee},\ldots ,^{Q}\e_{n}^{\vee}$ be the dual basis of $^{Q}M=\Hom
\left(^{Q}N,\Z\right) $. If $\n_j=\sum_{i=1}^n n_{ij}\ ^{Q}\e_{j}$ then (\ref{div_u}) gives that
\begin{eqnarray}\label{ker d}
\im \ \left( div\right) =\ker \left( d\right) &=&\left\langle
div\left( \ ^{Q}\e_{i}^{\vee}\right)\ |\ i=1,\ldots ,n\right\rangle \\
&=&\left\langle \sum_{j=0}^{n}n_{ij}D_{j}\ |\ i=1,\ldots ,n\right\rangle   \nonumber
\end{eqnarray}
which is a free subgroup of rank $n$ in $\mathcal{W}_{T}(X)$, since $^{Q}\Sigma $ is simplicial. Then the quotient $\left.\mathcal{W}_T(\P(Q))\right/\ker (d)$ gives
\begin{equation*}
    A_{n-1}\left( \P\left( Q\right) \right) \cong \left\langle \
D_{0},\ldots ,D_{n}\ |\ \forall i=1,\ldots,n\quad \sum_{j=0}^{n}n_{ij}\
D_{j}=0\right\rangle
\end{equation*}
which turns out to consists of a free part of rank 1 and a possible torsion part. The latter vanishes if the basis $\{div(^Q\e^{\vee}_i)\ |\ 1\leq i\leq n\}$ of the $\Z$-module $\ker(d)$ can be extended to get a basis of the $\Z$-module $\mathcal{W}_T(\P(Q))$ i.e. if there exists a divisor $\sum_{j=0}^n b_j D_j\in\mathcal{W}_T(\P(Q))$ such that
\begin{equation}\label{no-torsione}
    \det \left(
\begin{array}{cccc}
n_{10} & n_{11} &
\cdots & n_{1n} \\
\vdots & \vdots &  & \vdots \\
n_{n0} & n_{n1} &
\cdots & n_{nn} \\
b_{0} & b_{1} & \cdots & b_{n}
\end{array}
\right) =1
\end{equation}
Recall now the last part of point (c) in Lemma \ref{lm:condizioni}. Then
\begin{eqnarray*}
    \det\left(
          \begin{array}{c}
            n_{ij} \\
            \hline b_j\\
          \end{array}
        \right) &=&\det \left(
\begin{array}{cccc}
-\sum_{h=1}^{n}\frac{q_{h}'}{q_{0}'}n_{1h} & n_{11} &
\cdots & n_{1n} \\
\vdots & \vdots &  & \vdots \\
-\sum_{h=1}^{n}\frac{q_{h}'}{q_{0}'}n_{nh} & n_{n1} &
\cdots & n_{nn} \\
b_{0} & b_{1} & \cdots & b_{n}
\end{array}
\right) \\
    &=& \left( -1\right) ^{n+2}\left( b_{0}+\frac{q_{1}'}{q_{0}'}
b_{1}+\cdots +\frac{q_{n}'}{q_{0}'}b_{n}\right) \left|
\begin{array}{ccc}
n_{11} & \cdots & n_{1n} \\
\vdots &  & \vdots \\
n_{n1} & \cdots & n_{nn}
\end{array}
\right|
\end{eqnarray*}
Since $\left|
\begin{array}{ccc}
n_{11} & \cdots & n_{1n} \\
\vdots &  & \vdots \\
n_{n1} & \cdots & n_{nn}
\end{array}
\right|=\pm q'_0$, up to replace $b_j$ with $-b_j$, we get
\begin{equation*}
    \det\left(
          \begin{array}{c}
            n_{ij} \\
            \hline b_j\\
          \end{array}
        \right) = \sum_{j=0}^{n}q_{j}'b_{j}
\end{equation*}
and condition (\ref{no-torsione}) can be rewritten as follows
\begin{equation}\label{no-torsione2}
    \sum_{j=0}^{n}q_{j}'b_{j}=1\ .
\end{equation}
The existence of an integer solution $(b_0,\ldots,b_n)$ of equation (\ref{no-torsione2}) is guaranteed by recalling that $\gcd\left(q_{0}',\ldots ,q_{n}'\right) =1$. In particular
\begin{equation*}
    A_{n-1}\left( \P\left( Q\right) \right) =\Z\cdot \left(
\sum_{j=0}^{n}b_{j}d\left(D_{j}\right) \right) \cong \Z
\end{equation*}
proving (1) in the statement of Theorem \ref{thm:P(Q)-divisori}.

By Criterion \ref{cr:Cartier}, a toric invariant Weil divisor $\sum_{j=0}^n c_j D_j\in\mathcal{W}_T(\P(Q))$ is Cartier if and only if for any maximal cone $\s\in\ ^Q\Si$ there exists $\u=\u(\s)\in\ ^QM$ such that
\begin{equation}\label{cartier?}
    \forall\ j : \langle\n_j\rangle\in\s(1)\quad \langle\u,\n_j\rangle=-c_j
\end{equation}
Maximal cones of $^Q\Si$ are precisely given by all the cones $\s\in\Si(n)$, which are $n+1$ and can be enumerated by the unique $j$ such that $\v_j\not\in\s(1)$. Then the Criterion \ref{cr:Cartier} can be applied as follows to the case of $^Q\Si$
\begin{equation}\label{cartier?2}
    \sum_{j=0}^n c_j D_j\ \text{is Cartier}\ \Longleftrightarrow\ \forall\ l=0,\ldots,n\quad \exists \u_l\in\ ^QM\ :\ \forall\ j\neq l\quad \langle\u_l,\n_j\rangle=-c_j\ .
\end{equation}
Let us apply (\ref{cartier?2}) to a representative divisor $D:=\sum_{j=0}^n b_jD_j$ of a generator of the Chow group, i.e. such that its coefficients $b_j$ give an integer solution of the linear equation (\ref{no-torsione2}). By writing $\u_{l}=\sum_{i=1}^{n}u_{il}\ ^{Q}\e_{i}^{\vee}$, we get $\langle\u_l,\n_j\rangle=\sum_{i=1}^{n}u_{il}n_{ij}$, meaning that, for all $0\leq l\leq n$, we are looking for a solution of the $n\times n$ linear system
\begin{equation}\label{sistema}
    \forall\ j\neq l\quad \sum_{i=1}^{n}n_{ij}u_{il} = -b_j\ .
\end{equation}
By Lemma \ref{lm:condizioni}, $|\det(n_{ij}\ |\ j\neq l)|= q'_l$, which in general is greater then 1, implying that the linear system (\ref{sistema}) admits rational but not integral solutions i.e. $D$ \emph{is not a Cartier divisor} except for the case $Q=(1,\ldots,1)$ giving the usual projective space $\P^n$. Moreover a positive multiple $kD$ is Cartier if $q'_l\ |\ k$ for any $l=0,\dots,n$. This means that the minimum positive multiple of $D$ which is Cartier is obtained by setting $k=\d':=\lcm(q'_0,\ldots,q'_n)$. Then $\Pic(\P(Q)))\subset A_{n-1}(\P(Q))=\Z\cdot d(D)$ is the subgroup $\Z\cdot (\d'd(D))$, ending up the proof of (2) in the statement of Theorem \ref{thm:P(Q)-divisori}.

Finally (\ref{W-div-diagram}) follows by applying (1) and (2) to the diagram (\ref{div-diagram}) in Thm. \ref{thm:divisors}.
\end{proof}

\subsection{A polytope of $\P(Q)$}

The present subsection is inspired by an observation due to I.~Dolgachev (\cite{Dolgachev} 1.2.5).

\begin{proposition}\label{prop:1-politopo} Define $\delta:=\lcm(q_1,\ldots,q_n)$, with $\gcd(q_1,\ldots,q_n)=1$, and let $\D$ be the $n$-dimensional simplex obtained as the convex hull of the origin and the $n$ points of $M\otimes\R\cong\R^n$
\begin{equation*}
    p_1=\left(\frac{\delta}{q_1},0,\ldots,0\right)\ ,\  \ldots\ ,\ p_n=\left(0,\dots,0,\frac{\delta}{q_n}\right)
\end{equation*}
If $Q=(1,q_1,\ldots,q_n)$ then there exists a very ample divisor $H$ of $\P(Q)$ whose polarization gives $(\P(Q),H)\cong \P_{\D}$.
\end{proposition}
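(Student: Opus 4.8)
The plan is to identify $\P_{\D}$ explicitly as $X(\Si_{\D})$ for the normal fan $\Si_{\D}$ of $\D$, and then to show that this normal fan coincides with the fan $^Q\Si = \fan(\v_0,\ldots,\v_n)$ of Proposition \ref{prop:weighted-fan}. First I would compute the normal fan of $\D$ using the recipe in \S\ref{sssez:politopi}: the vertices of $\D$ are the origin $\u_0 = \mathbf{0}$ and the points $\u_j := p_j = (\delta/q_j)\e_j^{\vee}$ for $j=1,\ldots,n$, so the maximal cones $\s_F$ of $\Si_{\D}$ are indexed by these $n+1$ vertices. For the vertex $\u_0$, the cone $\check{\s}_{\u_0}$ is generated by $p_1,\ldots,p_n$, hence $\s_{\u_0} = \check{\s}_{\u_0}^{\vee}$ is the cone $\langle \e_1,\ldots,\e_n\rangle$ in $N_{\R}$ (up to positive scaling of generators, which is irrelevant for cones), which is exactly the cone of $^Q\Si$ generated by $\v_1,\ldots,\v_n$. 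For the vertex $\u_k$ with $k\geq 1$, the edges of $\D$ at $\u_k$ point toward $\u_0$ (direction $-\e_k^{\vee}$) and toward each $\u_j$, $j\neq k$, $j\geq 1$ (direction $\frac{\delta}{q_j}\e_j^{\vee} - \frac{\delta}{q_k}\e_k^{\vee}$); dualizing this cone gives the cone in $N_{\R}$ generated by the $\v_i$ with $i \neq k$, where one checks that the vector $\v_0 = -\frac{1}{q_0}\sum_{i=1}^n \e_i = -\sum_{i=1}^n\e_i$ (since $q_0 = 1$) appears precisely because of the relation $\sum_j q_j \v_j = 0$. So the combinatorial match is: the maximal cone of $\Si_{\D}$ opposite to vertex $\u_k$ is the maximal cone of $^Q\Si$ that omits $\v_k$.

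The key point making the constants work out is the choice $\delta = \lcm(q_1,\ldots,q_n)$: this guarantees $\D$ is an integral polytope (its vertices $p_j = (\delta/q_j)\e_j^{\vee}$ lie in $M$), which is needed for $\P_{\D} = \Proj(S_{\D})$ to make sense and for $\co(1)$ to be a genuine polarization. I would then read off the Cartier divisor $H$ corresponding to $\D$ via the formula (\ref{politopo}): writing $H = \sum_{j=0}^n a_j D_j$, the inequalities $\langle \u, \n_j\rangle \geq -a_j$ defining $\D_H$ should reproduce the facet inequalities of $\D$. The facet of $\D$ opposite the origin has inner normal corresponding to $\v_0$, contributing the coefficient $a_0 = \delta$ (normalizing $\n_0$), while the $n$ coordinate hyperplane facets through the origin correspond to $\v_1,\ldots,\v_n$ with $a_j = 0$; so $H = \delta D_0$ up to the normalization identifying $\v_j$ with its primitive generator $\n_j$. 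One must be careful here: by Lemma \ref{lm:condizioni}(c), $\v_j = d_j \n_j$, and since $Q = (1,q_1,\ldots,q_n)$ has $d_0 = \gcd(q_1,\ldots,q_n) = 1$, the generator $\v_0$ is already primitive, which keeps the bookkeeping clean at the vertex opposite $\u_0$.

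Finally I would invoke the very-ampleness Criterion \ref{cr:ampiezza}: since $\D_H = \D$ is convex and the $n+1$ vertices $v(\Si) = \{\u_0,\ldots,\u_n\}$ are distinct points of $M$, $H$ is ample; for very ampleness one checks that for each maximal cone $\s$, the translated lattice points $\D\cap M - \u(\s)$ generate the semigroup $\s^{\vee}\cap M$. For the cone opposite $\u_0$ (the smooth affine chart $\cong \C^n$ on the fixed locus where the first coordinate doesn't vanish) this is clear since that chart is the standard one. For the cone opposite $\u_k$ the relevant affine piece of $\D$ is the simplex $\mathrm{Conv}(\u_0,\u_k,\text{other }\u_j)$ translated to put $\u_k$ at the origin, and one verifies that its integral points span the correct (possibly singular) semigroup — this uses the coprimality $\gcd(q_j, d_j) = 1$ from Proposition \ref{prop:weight-rel.s}. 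Alternatively, and more cheaply, once we know $(\P_{\D}, \co(1)) \cong (X(^Q\Si), H)$ as toric varieties with $H$ ample, very ampleness of $H$ on a wps in the minimal polarization is a standard fact (Dolgachev \cite{Dolgachev} 1.4), or can be deduced from the fact that $\P(Q)$ embeds in projective space via $\co(\delta)$ with $\delta$ chosen as above. I expect the main obstacle to be the very-ampleness verification at the singular charts (the cones opposite $\u_k$, $k\geq 1$): checking that the lattice points of the translated simplex generate the semigroup $\s^{\vee}\cap M$ rather than a proper subsemigroup requires a careful elementary argument with the minors $V_j = \pm q_j$ and the gcd relations, and is the one place where the hypothesis $q_0 = 1$ is genuinely essential (so that the chart opposite $\u_0$ is smooth and provides the standard projective embedding of the torus-closure).
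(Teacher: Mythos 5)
Your overall strategy is the paper's own, just run in the opposite direction: the paper starts from the fan $^Q\Si$ of Proposition \ref{prop:weighted-fan}, forms the divisor $H=\d D_0$, computes its polytope via (\ref{politopo}) and checks that the facet inequalities $\sum_{i=1}^n q_ix_i\le\d$, $x_i\ge 0$ give exactly the simplex $\D$ with vertices $0,p_1,\ldots,p_n$; you instead compute the normal fan of $\D$ and match its maximal cones with those of $^Q\Si$, and then read off $H=\d D_0$ from the same inequalities. These are the same computation viewed from the two ends, and your bookkeeping (all $\v_j$ primitive because $Q=(1,q_1,\ldots,q_n)$ is reduced, $a_0=\d$, $a_j=0$ for $j\ge 1$, ampleness from the $n+1$ distinct vertices via Criterion \ref{cr:ampiezza}) coincides with the paper's.

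The substantive issue is the step you explicitly defer: the very-ampleness check at the singular charts. In the paper this is exactly Lemma \ref{lm:moltampiezza}, which writes down $\s_0^{\vee}=\langle\v_1^{\vee},\ldots,\v_n^{\vee}\rangle$ and $\s_i^{\vee}=\langle -\v_i^{\vee},\ (\d/q_k)\v_k^{\vee}-(\d/q_i)\v_i^{\vee}\ (k\neq i)\rangle$, lists the vertices $0$, $-p_i$, $p_k-p_i$ of the translated polytope $\D-\u(\s_i)$, and concludes that its lattice points generate $\s_i^{\vee}\cap M$; terse as it is, that verification is the content of the lemma and your proposal does not supply it. More importantly, your ``cheaper alternative'' is not available here: within this paper the statement that ample implies very ample on $\P(Q)$ is Proposition \ref{prop:molta-ampiezza}, whose proof invokes Proposition \ref{prop:1-politopo} and Corollary \ref{cor:politopo}, so using it at this point is circular; quoting Dolgachev instead avoids the circle but imports precisely the fact this proposition is meant to reprove torically. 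Finally, a small correction: $q_0=1$ is not only what makes the chart opposite the origin smooth; since $|\det(\v_1,\ldots,\v_n)|=q_0$, it is what makes $\v_1,\ldots,\v_n$ a basis of $^QN$, i.e.\ it underlies the coordinate description of $\D$, of $\v_0=-\sum_{i=1}^n q_i\v_i$ and of all the dual cones used in both your computation and the paper's.
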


\begin{proof} Since $q_0=1$, Proposition \ref{prop:weighted-fan} gives $\v_0=-\sum_{i=1}^n q_i\v_i$ and $\P(Q)\cong X(^Q\Si)$ where $^Q\Si=\fan(\v_0,\ldots,\v_n)\subset\ ^QN$. If $\n_j$ is the unique generator of the semigroup $\langle\v_j\rangle\cap\ ^QN$ then Lemma \ref{lm:condizioni},(c) gives that $\n_j=d_j\v_j=\v_j$, since $\gcd(q_1,\ldots,q_n)=1$ implies that $Q=(1,q_1,\ldots,q_n)$ is reduced. Let $D_0$ be the toric invariant divisor  associated with the 1-dimensional cone generated by $\v_0$ and $\D:=\D_{\d D_0}\subset\ ^QM\otimes\R$ be the integral polytope associated with $\d D_0$ as in (\ref{politopo}). The facets of $\D_{\d D_0}$ are lying on the following $n+1$ hyperplanes of $\R^n$
\begin{eqnarray*}
    \langle \u,\v_0\rangle = -\d\ &\Leftrightarrow&\ \sum_{i=1}^n q_i x_i =\d \\
    \langle \u,\v_i\rangle = 0\ &\Leftrightarrow&\ x_i=0\ ,\quad\text{for $1\leq i\leq n$}
\end{eqnarray*}
whose $n$ by $n$ intersections give precisely vertexes $p_1,\ldots,p_n$ in the statement. The proof ends up by applying the following Lemma \ref{lm:moltampiezza}.
\end{proof}

\begin{lemma}\label{lm:moltampiezza} In the same notation as Proposition \ref{prop:1-politopo}, the divisor $H:=\d D_0$ of $\P(Q)$ is very ample.
\end{lemma}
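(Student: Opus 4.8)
The plan is to apply the very--ampleness Criterion \ref{cr:ampiezza} to $\P(Q)\cong X(^Q\Si)$ (Proposition \ref{prop:weighted-fan}) and to $H=\d D_0$, using the polytope $\D=\D_{\d D_0}$ already computed inside the proof of Proposition \ref{prop:1-politopo}: it is the simplex with vertices $0,p_1,\dots,p_n$. First, $H$ is Cartier: since $q_j\mid\d$ for every $j$, all the $p_j$ lie in $^QM$, so that the $\u(\s)$'s of Criterion \ref{cr:Cartier} belong to $\{0,p_1,\dots,p_n\}\subset {}^QM$ --- equivalently, $q_0=1$ forces $D_0$ to generate $A_{n-1}(\P(Q))$ (Theorem \ref{thm:P(Q)-divisori}(1), $Q$ being reduced), whence by Theorem \ref{thm:P(Q)-divisori}(2) the class $\d'D_0=\d D_0$ is Cartier. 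The ampleness half of Criterion \ref{cr:ampiezza} is then immediate: $\D$ is convex and $v(^Q\Si)=\{0,p_1,\dots,p_n\}$ consists of exactly $n+1=|{}^Q\Si(n)|$ distinct points.

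Next fix the $\Z$--basis of $^QM$ dual to the lattice basis $\{\v_1,\dots,\v_n\}=\{\e_1/q_1,\dots,\e_n/q_n\}$ of $^QN$, so that $^QM\cong\Z^n$, with $\D=\{y\in\R^n:\ y_j\ge 0\ \forall j,\ \sum_j q_jy_j\le\d\}$, with $p_i$ equal to $\d/q_i$ times the $i$--th coordinate vector, and with $\u(\s_0)=0$, $\u(\s_i)=p_i$ for $1\le i\le n$ (here $\s_\ell\in{}^Q\Si(n)$ is the cone omitting $\v_\ell$). It remains to check, for each $\ell$, that the lattice points of $\D-\u(\s_\ell)$ generate the semigroup $\s_\ell^\vee\cap\Z^n$. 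For $\ell=0$ this is trivial, because $\s_0=\langle\v_1,\dots,\v_n\rangle$ is a non--singular cone: $\s_0^\vee\cap\Z^n=\Z_{\ge 0}^n$ is freely generated by the $n$ coordinate vectors, each of which lies in $\D$ since $q_j\le\d$.

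The substantial case is $\ell=i$ with $1\le i\le n$, where $\s_i$ carries a cyclic quotient singularity of order $q_i$. One computes $\s_i^\vee\cap\Z^n=\{z:\ z_j\ge 0\ (j\ne i),\ \sum_k q_kz_k\le 0\}$ and $\D-p_i=\s_i^\vee\cap\{z_i\ge -\d/q_i\}$, so the required condition reads: \emph{every element of a Hilbert basis of $\s_i^\vee\cap\Z^n$ has $i$--th coordinate $\ge -\d/q_i$}. The vector $\mathbf{e}_i$ with $-1$ in place $i$ gives $-\mathbf{e}_i\in\D-p_i$, so assume $z\ne -\mathbf{e}_i$ is a Hilbert basis element; put $s:=\sum_{j\ne i}q_jz_j\ge 0$. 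Since $z+\mathbf{e}_i$ still lies in $\s_i^\vee$ as soon as $z_i<-\lceil s/q_i\rceil$, such a $z$ must satisfy $z_i=-\lceil s/q_i\rceil$, and then --- $q_i$ dividing $\d$ --- the inequality $z_i\ge-\d/q_i$ is equivalent to $s\le\d$. Thus one is reduced to showing: if $z\in\s_i^\vee\cap\Z^n$ has $z_i=-\lceil s/q_i\rceil$ and $s>\d$, then $z$ is a sum of two non--zero elements of $\s_i^\vee\cap\Z^n$, hence is not in the Hilbert basis. Such a splitting $z=z'+z''$ is produced from any non--empty proper sub--multiset $B$ of the multiset of ``atoms'' $\{q_j$ with multiplicity $z_j:j\ne i\}$ for which, with $s':=\sum_{b\in B}b$, the ceiling identity $\lceil s/q_i\rceil=\lceil s'/q_i\rceil+\lceil(s-s')/q_i\rceil$ holds: one takes for $z'$ the lattice point with the sub--multiset coefficients and $i$--th coordinate $-\lceil s'/q_i\rceil$, and $z''=z-z'$; the identity is precisely what forces $z''\in\s_i^\vee$.

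I expect the production of such a sub--multiset $B$ to be the main obstacle; everything else is routine use of the criteria above together with the explicit descriptions of $^Q\Si$ and $\D_{\d D_0}$. The identity holds in particular whenever $q_i\mid s'$, or whenever $(s'\bmod q_i)+((s-s')\bmod q_i)\ge q_i$. Using $q_i\mid\d$ and $s>\d$, one first disposes of the configurations where some atom is divisible by $q_i$ or where $z_j\ge q_i/\gcd(q_i,q_j)$ for some $j$ (in both, one splits off a block of $q_j$'s of weight $\lcm(q_i,q_j)$, which divides $\d$ and is $<s$); in the remaining configurations one orders the atoms and inspects the partial sums modulo $q_i$ --- a pigeonhole coincidence, available once there are at least $q_i$ atoms, produces a proper block of weight a multiple of $q_i$, while a short numerical estimate, again exploiting $q_jz_j<\lcm(q_i,q_j)\le\d$, rules out the configurations with fewer atoms by forcing $s\le\d$, contrary to the hypothesis. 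This completes the verification of the very--ampleness criterion and hence the lemma.
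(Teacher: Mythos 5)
Your framework is the same as the paper's: apply Criterion \ref{cr:ampiezza} to the simplex $\Delta_{\delta D_0}$, check Cartier-ness and ampleness from the vertex set $\{0,p_1,\ldots,p_n\}$, and settle the nonsingular cone $\sigma_0$ at once. Your reduction of the singular cones $\sigma_i$ to a Hilbert-basis statement is also sound: any splitting $z=z'+z''$ in $\sigma_i^\vee\cap\Z^n$ is forced to come from a nonempty proper sub-multiset $B$ of the atoms satisfying the ceiling identity, so everything indeed hinges on producing such a $B$ whenever $s>\delta$. (For comparison, the paper does not go this route: it simply lists the vertices $0,-p_i,p_k-p_i$ of the translated simplex and asserts that its lattice points generate $\sigma_i^\vee\cap M$, a step it leaves essentially unargued; you are trying to supply exactly that missing verification.)

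The gap is that this final combinatorial step, which you yourself flag as the main obstacle, is only sketched and the sketch does not work. Two problems: (i) the sufficient condition ``$(s'\bmod q_i)+((s-s')\bmod q_i)\ge q_i$'' must be strict, since at equality the ceiling identity fails; (ii) more seriously, your claim that in the remaining configurations (no atom divisible by $q_i$ and $z_j<q_i/\gcd(q_i,q_j)$ for all $j$) having fewer than $q_i$ atoms forces $s\le\delta$ is false. Take $Q=(1,6,4,3,2)$, let $i$ be the index of weight $6$ (so $\delta=12$), and take coordinates $2,1,2$ on the weights $4,3,2$: all your constraints hold, there are only $5<q_i=6$ atoms, yet $s=15>12=\delta$. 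A valid $B$ does exist here (e.g.\ $\{4,2\}$, $s'=6$), but your case analysis does not produce it: with the ordering $4,4,3,2,2$ the prefix sums $0,4,8,11,13,15$ are pairwise distinct modulo $6$, so the consecutive-block pigeonhole you invoke is genuinely unavailable, and the ``short numerical estimate'' you appeal to cannot exist as stated. (Also, when there are exactly $q_i$ atoms the pigeonhole block can be the whole multiset, so properness again needs the hypothesis $s>\delta$ to be re-injected.) Until the existence of $B$ for arbitrary configurations with $s>\delta$ is actually proved, the very-ampleness verification for the singular cones $\sigma_i$ — and hence the lemma — is not established by your argument.
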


\begin{proof} It is an application of the Criterion \ref{cr:ampiezza}. First of all observe that the set of vertexes of $\D:=\D_{\d D_0}$ is given by
\begin{equation*}
    v\left(\ ^Q\Si\right)= \left\{0,p_1,\ldots,p_n\right\}
\end{equation*}
immediately implying the ampleness of $\d D_0$. Let us now denote by $\s_j\in\ ^Q\Si(n)$ the cone generated by $\{\v_0,\ldots,\v_n\}\setminus\{\v_j\}$ and consider the dual cone $\s^{\vee}_j\subset\ ^QM\otimes\R$. Then
\begin{eqnarray*}
  \s^{\vee}_0 &=& \langle\v_1^{\vee},\ldots,\v_n^{\vee}\rangle \\
  \forall\ i=1,\ldots,n\quad \s^{\vee}_i&=&\left\langle\{-\v^{\vee}_i\}\cup\left\{\frac{\d}{q_k}\v^{\vee}_k-\frac{\d}{q_i}\v^{\vee}_i\ |\ \forall k\neq i\right\}\right\rangle
\end{eqnarray*}
where $\{\v_1^{\vee},\ldots,\v_n^{\vee}\}\subset\ ^QM$ is the dual basis of $\{\v_1,\ldots,\v_n\}\subset\ ^QN$. Then, for any vertex $\u(\s_j)\in v\left(\ ^Q\Si\right)$, observe that $\D\cap M - \u(\s_j)$ generates the semigroup $\s_j^{\vee}\cap M$. In fact
\begin{itemize}
  \item if $j=0$ then $\D\cap M - \u(\s_0)=\D\cap M$ which generates the semigroup $\langle\v_1^{\vee},\ldots,\v_n^{\vee}\rangle\cap M$,
  \item if $j=i\neq 0$ then the translated polytope $\D- \u(\s_i)$ has vertexes given by
  \begin{eqnarray*}
    0 &=& p_i - \u(\s_i) \\
    p_{0i} &=& 0 - \u(\s_i) = -p_i\\
    \forall\ k\neq i\quad p_{ki} &=& p_k - \u(\s_i) = p_k - p_i\ .
  \end{eqnarray*}
\end{itemize}
Then $\D\cap M -\u(\s_i)$ clearly generates the semigroup $\s_i^{\vee}\cap M$.
\end{proof}

\begin{remark}\label{rem:politopo} Consider the polytope $\D=\D_{\d D_0}$ assigned by Proposition \ref{prop:1-politopo}. Every facet of $\D$ admitting the origin as a vertex corresponds to a polytope whose associated polarized toric variety is a sub-wps of $(\P(Q),H)$. Namely the opposite facet $\D_i$ to the vertex $p_i$ is the polytope of $(\P(Q_i),H_{i})\subset(\P(Q),H)$ where $Q_i=(1,q_1,\ldots,q_{i-1},q_{i+1},\ldots,q_n)$, $H_{i}:=\d (D_0\cap\P(Q_i))$ and $\D_i=\D_{H_{i}}$. Clearly $H_i$ is a very ample divisor of $\P(Q_i)$.

The opposite facet $\D_0$ to the origin turns out to be a polytope whose associated toric variety is the sub-wps $(\P(Q_0),H_0)\subset(\P(Q),H)$ where $Q_0=(q_1,\ldots,q_n)$ and $H_0$ is a suitable polarization. This fact can be checked by observing that the translated polytope $\D_0-\u(\s_1)$ is associated as in (\ref{politopo}) with the divisor $\d\left/q_1\right. (D_1\cap\P(Q))$, where $D_1$ is the toric invariant divisor  associated with the 1-dimensional cone generated by $\v_1$. Notice that, by Theorem \ref{thm:P(Q)-divisori}, $\d D_0$ and $\left(\d\left/q_1\right.\right)\,D_1$ are linear equivalent divisors of $\P(Q)$. Hence $\left(\d\left/q_1\right.\right)\,D_1$ is a very ample divisor of $\P(Q)$ implying that its section $H_0:=\d\left/q_1\right. (D_1\cap\P(Q_0))$ is a very ample divisor of $\P(Q_0)$ giving
$$\left(\P(Q_0),H_0\right)\cong\left(\P_{\D_{\d\left/q_1\right. (D_1\cap\P(Q))}},\mathcal{O}(1)\right) = \left(\P_{\D_0-\u(\s_1)},\mathcal{O}(1))\cong(\P_{\D_0},\mathcal{O}(1)\right)\ .$$
We are now able to produce the polytope of a general wps $\P(Q)$, without the restriction $q_0=1$.
\end{remark}

\begin{corollary}[\cite{Dolgachev} 1.2.5]\label{cor:politopo} Let $Q=(q_0,\ldots,q_n)$ be a weights vector and let $\D$ be the $(n+1)$-dimensional simplex obtained as the convex hull of the origin and the $n+1$ points of $M\otimes\R\cong\R^{n+1}$
\begin{equation*}
    P_0=\left(\frac{\delta}{q_0},0,\ldots,0\right)\ ,\  \ldots\ ,\ P_n=\left(0,\dots,0,\frac{\delta}{q_n}\right)
\end{equation*}
Let $\D_{\pi}:=\D\cap\pi$ be the $n$-dimensional polytope in $(M\cap\pi)\otimes\R$ obtained by intersecting with the hyperplane $\pi:=\mathcal{V}(\sum_{j=0}^n q_j x_j - \delta)\subset\R^{n+1}$. Then there exists a suitable very ample divisor $H_{\pi}$ such that $(\P(Q),H_{\pi})\cong (\P_{\D_{\pi}},\mathcal{O}(1))$.
\end{corollary}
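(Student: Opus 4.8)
The plan is to reduce Corollary \ref{cor:politopo} to Proposition \ref{prop:1-politopo} by a linear-algebraic change of coordinates, exploiting the fact that the $(n+1)$-dimensional simplex $\D$ lives in $\R^{n+1}$ while the wps $\P(Q)$ is only $n$-dimensional. First I would observe that the hyperplane $\pi=\mathcal{V}(\sum_{j=0}^n q_jx_j-\delta)$ does indeed meet the simplex $\D$ in all of its vertices $P_0,\ldots,P_n$: by definition $\sum_j q_j(P_k)_j = q_k\cdot(\delta/q_k)=\delta$, so each $P_k$ lies on $\pi$, whereas the origin does not (since $\delta>0$). Hence $\D_\pi=\D\cap\pi=\conv(P_0,\ldots,P_n)$ is the $n$-dimensional simplex spanned by the $P_k$, sitting inside the affine hyperplane $\pi$.

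Next I would set up the identification of $\pi\cap M$ with a rank-$n$ lattice and compare it with the polytope produced in Proposition \ref{prop:1-politopo}. Translate by $-P_0$ so that the vertex $P_0$ goes to the origin; the translated simplex $\D_\pi - P_0$ lies in the linear hyperplane $\pi_0=\{\sum_j q_j x_j=0\}$, and a natural basis of the lattice $\pi_0\cap M\cong\Z^n$ is given by the vectors $f_i := \e_i - q_i\e_0$ for $i=1,\ldots,n$ — or, more to the point, by rescaled versions adapted to the weights. The key computation is that, in such coordinates, the vertices $P_i - P_0$ have the form $(\delta/q_i)$ times a lattice generator in direction $i$, i.e. they reproduce exactly the points $p_1,\ldots,p_n$ of Proposition \ref{prop:1-politopo} but now for the weights vector $(q_0,q_1,\ldots,q_n)$ rather than $(1,q_1,\ldots,q_n)$. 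Here the natural route is to invoke Remark \ref{rem:politopo}: the construction there already realizes $(\P(Q),H)$ for arbitrary $Q$ via the facet $\D_0$ of the $(n+1)$-weights simplex, and $\D_0$ is, up to the translation $\u(\s_1)$ exhibited in the Remark, exactly our $\D_\pi$. So the cleanest proof is: apply Proposition \ref{prop:1-politopo} to the auxiliary $(n+1)$-variable weights vector $\widetilde Q=(1,q_0,q_1,\ldots,q_n)$ (with its own $\widetilde\delta=\lcm(q_0,\ldots,q_n)=\delta$ since $\gcd(q_0,\dots,q_n)=1$), producing a very ample $H$ on $\P(\widetilde Q)$ with polytope the $(n{+}1)$-simplex on the origin and $P_0,\ldots,P_n$; then the facet of this simplex opposite the origin is precisely $\D_\pi$, and Remark \ref{rem:politopo} identifies the corresponding toric subvariety as $(\P(Q_0),H_0)$ with $Q_0=(q_0,\ldots,q_n)=Q$ and $H_0$ a very ample divisor, giving $(\P(Q),H_\pi)\cong(\P_{\D_\pi},\mathcal{O}(1))$ with $H_\pi:=H_0$.

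Concretely the steps are: (i) check $P_0,\dots,P_n\in\pi$ and $0\notin\pi$, so $\D_\pi$ is the $n$-simplex $\conv(P_0,\dots,P_n)$; (ii) instantiate Proposition \ref{prop:1-politopo} with weights $(1,q_0,\dots,q_n)$, obtaining the $(n{+}1)$-simplex $\D$ with a very ample polarizing divisor; (iii) invoke Remark \ref{rem:politopo} (the ``opposite facet to the origin'' case) to see that the facet $\D_0=\D_\pi$ carries the polarized sub-wps $(\P(q_0,\dots,q_n),H_0)=(\P(Q),H_\pi)$ with $H_\pi$ very ample; (iv) conclude $(\P(Q),H_\pi)\cong(\P_{\D_\pi},\mathcal{O}(1))$. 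The main obstacle I anticipate is purely bookkeeping: making sure the lattice $M\cap\pi$ used to define $\P_{\D_\pi}$ agrees with the lattice induced from $\widetilde M$ on the facet $\D_0$ (so that ``integral points'' match on both sides), and checking that $\delta=\lcm(q_0,\dots,q_n)$ really is the correct dilation factor — i.e. that no smaller multiple already clears denominators — which uses $\gcd(q_0,\dots,q_n)=1$ exactly as in Remark \ref{rem:factorial-iso}. Once the lattice identifications are pinned down, very ampleness of $H_\pi$ is inherited from Lemma \ref{lm:moltampiezza} through the sub-wps restriction in Remark \ref{rem:politopo}, and nothing further is needed.
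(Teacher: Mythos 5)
Your proof is correct and follows essentially the same route as the paper: apply Proposition \ref{prop:1-politopo} to the auxiliary weights vector $(1,q_0,\ldots,q_n)$ (whose $\lcm$ is again $\delta$ since $\gcd(Q)=1$) and then identify the facet of the resulting $(n+1)$-simplex opposite to the origin, which is exactly $\D\cap\pi$, with the polytope of the polarized sub-wps $(\P(Q),H_0)$ via Remark \ref{rem:politopo}. The extra checks you propose (vertices on $\pi$, origin off $\pi$, lattice bookkeeping) are harmless refinements of the same argument.
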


\begin{proof}
Consider the $(n+1)$-dimensional wps $\P(1,Q)$. Proposition \ref{prop:1-politopo} ensures that $(\P(1,Q),H)\cong(\P_{\D},\mathcal{O}(1))$ where $\D$ is the $(n+1)$-dimensional simplex described in the statement and $H$ is a suitable polarization. Therefore the polytope of $\P(Q)$ is given by the opposite facet of $\D$ with respect to the origin, as described in Remark \ref{rem:politopo}, which is precisely the facet cut out from $\D$ by the hyperplane $\pi$.
\end{proof}

\subsection{The reduction theorem} The following \emph{reduction Theorem} is a well known fact (\cite{Delorme} \S 1,\cite{Dolgachev} 1.3.1) we are now able to completely prove by the toric point of view, as an application of the previous Theorem \ref{thm:P(Q)-divisori} and by giving a quotient presentation of $\P(Q)$, following the Cox Theorem \ref{thm:cox}. Compare also with \cite{Fujino1} \S 2.2.

\begin{theorem}[Reduction Theorem]\label{thm:riduzione} Let $Q'=(q'_0,\ldots,q'_n)$ be the reduced weights vector of $Q=(q_0,\ldots,q_n)$. Then
\begin{equation}\label{weighted quot}
\P\left( Q\right) \cong
\left.\left( \C^{n+1}\setminus \left\{ 0\right\} \right) \right/\C^{*} \cong \P\left( Q'\right)
\end{equation}
where the quotient is realized by means of the (reduced) action
\begin{equation*}
    \begin{array}{cccc}
\nu _{Q'}: & \C^{*}\times \C^{n+1} & \longrightarrow
& \C^{n+1} \\
& \left( t,\left( z_{j}\right) \right) & \longmapsto & \left(
t^{q_{0}'}z_{0},\ldots ,t^{q_{n}'}z_{n}\right)\ .
\end{array}
\end{equation*}
\end{theorem}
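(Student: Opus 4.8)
The plan is to deduce the Reduction Theorem directly from Cox's quotient presentation (Theorem \ref{thm:cox}) applied to the fan $^Q\Si$ of Proposition \ref{prop:weighted-fan}, together with the explicit description of $A_{n-1}(\P(Q))$ obtained in Theorem \ref{thm:P(Q)-divisori}. First I would note that $^Q\Si=\fan(\v_0,\ldots,\v_n)$ has exactly $n+1$ rays, all $n$-dimensional, and is simplicial; moreover $^Q\Si$ contains the $n$-dimensional cone generated by any $n$ of the $\v_j$, and $^Q\Si(1)$ generates $N_\R$. Hence Theorem \ref{thm:cox} applies and gives
\begin{equation*}
    \P(Q)\cong\left.\left(\C^{n+1}\setminus Z\right)\right/G\ ,
\end{equation*}
where $G=\Hom_\Z(A_{n-1}(\P(Q)),\C^*)$ acts via \eqref{azione}, and $Z=\mathcal{V}(B)$ with $B$ the irrelevant ideal \eqref{B}. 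The first routine step is to compute $Z$: the maximal cones of $^Q\Si$ are the $n+1$ cones $\s_l$ obtained by dropping $\v_l$, so $x^{\widehat{\s_l}}=x_l$ and hence $B=(x_0,\ldots,x_n)$, giving $Z=\{0\}$. This already identifies the underlying space as $\C^{n+1}\setminus\{0\}$.

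The heart of the argument is identifying the acting group $G$ and its action. By Theorem \ref{thm:P(Q)-divisori}(1) the group $A_{n-1}(\P(Q))\cong\Z$ is free of rank one, generated by $d(D)$ for any divisor $D=\sum b_j D_j$ with $\sum_{j=0}^n q'_j b_j=1$; consequently $G=\Hom_\Z(\Z,\C^*)\cong\C^*$. Under this identification a one-parameter subgroup $t\in\C^*$ corresponds to the character sending $d(D)\mapsto t$. To read off the weights of the action I must compute $d(D_j)\in A_{n-1}(\P(Q))\cong\Z$, i.e.\ express the class of each $D_j$ as an integer multiple of the chosen generator $d(D)$. From the exact sequence used in the proof of Theorem \ref{thm:P(Q)-divisori}, the relations among the $D_j$ in $A_{n-1}$ are exactly $\sum_{j=0}^n n_{ij}D_j=0$ for $i=1,\ldots,n$, and these are equivalent (since $\sum q'_j\v_j=0$ and the $\v_j$ span) to $A_{n-1}(\P(Q))=\langle D_0,\ldots,D_n\mid \text{the lattice of relations spanned by the rows of }(n_{ij})\rangle$, whose cokernel structure forces $d(D_j)=q'_j\cdot d(D)$ after possibly adjusting the generator; this is the computation already implicitly performed via \eqref{no-torsione2} in the proof of Theorem \ref{thm:P(Q)-divisori}, and I would invoke it rather than redo it. Therefore the action \eqref{azione} of $g=t\in G\cong\C^*$ on $(z_\rho)=(z_0,\ldots,z_n)$ is $t\cdot(z_j)=(t^{q'_j}z_j)$, which is precisely $\nu_{Q'}$.

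Putting these together yields the middle isomorphism $\P(Q)\cong(\C^{n+1}\setminus\{0\})/\C^*$ with the $\nu_{Q'}$-action. The final isomorphism $(\C^{n+1}\setminus\{0\})/\C^*\cong\P(Q')$ is then immediate: applying the same reasoning to $Q'$ in place of $Q$ (noting $(Q')'=Q'$ since $Q'$ is reduced, by Proposition \ref{prop:weight-rel.s}(6)) gives exactly the same quotient, so $\P(Q')\cong(\C^{n+1}\setminus\{0\})/\C^*$ under $\nu_{Q'}$ as well; alternatively one observes directly that the classical definition of $\P(Q')$ as $\P^n/\W_{Q'}$ coincides with this $\C^*$-quotient. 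I expect the main obstacle to be the bookkeeping in the second step --- pinning down that $d(D_j)$ equals $q'_j$ times a generator of $A_{n-1}(\P(Q))\cong\Z$ with the correct signs, rather than merely up to an automorphism of $\Z$ --- but this is already settled by the computation surrounding \eqref{no-torsione2}, so in practice the proof is short once Theorems \ref{thm:cox} and \ref{thm:P(Q)-divisori} are in hand. A clean way to finish is to remark that the reduced case $q_0'=\cdots$ with $\gcd=1$ makes the $\C^*$-action on $\C^{n+1}\setminus\{0\}$ have trivial generic stabilizer composed with the usual scaling, recovering $\P(Q')$ tautologically.
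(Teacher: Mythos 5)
Your proposal is correct and follows essentially the same route as the paper's proof: apply Cox's Theorem \ref{thm:cox} to the fan $^Q\Si$, observe $B=(x_0,\ldots,x_n)$ so $Z=\{0\}$, identify $G\cong\C^*$ via $A_{n-1}(\P(Q))\cong\Z$, show $\deg(x_j)=q'_j$, and then repeat the argument for $Q'$. The only cosmetic difference is that the paper rederives the grading explicitly from the relations $\sum_j n_{ij}d(D_j)=0$ and Lemma \ref{lm:condizioni}(c) (obtaining $q'_0 d(D_k)=q'_k d(D_0)$ and hence $\deg(x_j)=q'_j$), whereas you invoke the computation around \eqref{no-torsione2}; the underlying calculation is the same.
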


\begin{proof} Let $S=\C[x_0,\ldots,x_n]$ be the polynomial ring obtained by associating the variable $x_j$ with the 1-dimensional cone $\langle\v_j\rangle\in\Si(1)$. We want to describe the grading induced on $S$ by proceeding as in \ref{coord_omogenee}. The kernel of the degree map $d:\mathcal{W}_T(\P(Q))\rightarrow A_{n-1}(\P(Q))$ is described in (\ref{ker d}), implying that
\begin{equation}\label{relazioni}
    \forall i=1,\ldots,n\quad d\left( \sum_{j=0}^{n}n_{ij}D_{j}\right)
=\sum_{j=0}^{n}n_{ij}d\left(D_{j}\right) =0\ .
\end{equation}
By  Lemma \ref{lm:condizioni},(c), $n_{i0}=-\sum_{k=1}^{n}\left( q_{k}'/q_{0}'\right) n_{ik}$. Then the multiplication by $q'_0$ of (\ref{relazioni}) can be rewritten as follows
\begin{equation}\label{relazioni2}
    \forall\ i=1,\ldots,n\quad \sum_{k=1}^{n}n_{ik}\left( q_{0}'d\left(D_{h}\right) -q_{k}'d\left(D_{0}\right) \right) =0\ .
\end{equation}
Moreover Lemma \ref{lm:condizioni} gives that $|\det(n_ik)|=q'_0\neq 0$. Therefore equations (\ref{relazioni2}) imply that
\begin{equation}\label{deg x_h}
\forall\ k=1,\ldots,n\quad q_{0}'d\left(D_{k}\right) =q_{k}'d\left(D_{0}\right) \ .
\end{equation}
Let $d(D)=\sum_{j=0}^n b_j d(D_j)$ be a generator of $A_{n-1}(\P(Q))$, which is that $\sum_{j=0}^n q'_jb_j=1$, by Theorem \ref{thm:P(Q)-divisori},(2). Then $\deg(d(D))=1$ and (\ref{deg x_h}) gives
\begin{eqnarray*}
    q'_0&=&q'_0\cdot \deg(d(D)) = \deg\left(\sum_{j=0}^n b_j q'_0 d(D_j)\right)\\
    &\stackrel{\text{(\ref{deg x_h})}}{=}& \left(\sum_{j=0}^n q'_jb_j\right) \deg\left(d(D_0)\right) = \deg\left(d(D_0)\right)
\end{eqnarray*}
and, again by (\ref{deg x_h}), we get
\begin{equation}\label{graduazione}
    \forall\ j=0,\ldots,n\quad \deg(x_j)=\deg\left(d(D_j)\right)=q'_j
\end{equation}
which defines a grading on $S$.

 Let us now use Theorem \ref{thm:cox} to obtain a quotient description of $\P(Q)$. First of all observe that the acting group is $G:=\Hom(A_{n-1}(\P(Q)),\C^*)\cong\C^*$, whose action on $\C^{|^Q\Si(1)|}$, as defined in (\ref{azione}), is given by
 \begin{equation}\label{azione_PQ}
    \begin{array}{lll}
\C^* \times \C^{n+1} & \longrightarrow & \C^{n+1} \\
(t, (z_j))     & \longmapsto & d^{\vee}(t) \cdot (z_j):= (t^{\deg x_j}\ z_j)\stackrel{(\ref{graduazione})}{=}(t^{q'_j}z_j)\ .
\end{array}
 \end{equation}
This is precisely the action $\nu_{Q'}$ in the statement. The irrelevant ideal $B$ defined in (\ref{B}) is given by $B=(x_0,\ldots,x_n)\subset S$, then
\begin{equation}\label{Z-PQ}
    Z=\mathcal{V}(B)=\{0\}\subset\C^{n+1}\ .
\end{equation}
Therefore, by (\ref{azione_PQ}) and (\ref{Z-PQ}), Theorem \ref{thm:cox} gives
\begin{equation}\label{PQ-quoziente}
    \P(Q)\cong\left.\left(\C^{n+1}\setminus\{0\}\right)\right/\C^*
\end{equation}
where the quotient is realized by means of the action $\nu_{Q'}$, as defined in (\ref{azione_PQ}).

On the other hand $\P(Q')=X\left(^{Q'}\Si\right)$ where $^{Q'}\Si:=\fan(\n_0,\ldots,\n_n)$ in the lattice $^{Q'}N:=\langle\n_0,\ldots,\n_n\rangle$. Therefore the kernel of the map $d:\mathcal{W}_T(\P(Q'))\rightarrow A_{n-1}(\P(Q'))$ is still described by (\ref{ker d}), then still giving (\ref{relazioni}), (\ref{relazioni2}), (\ref{deg x_h}) and the grading (\ref{graduazione}) on the generators of $A_{n-1}(\P(Q'))$. Then the application of Theorem \ref{thm:cox} allows to conclude that
\begin{equation}\label{PQ'-quoziente}
    \P(Q')\cong\left.\left(\C^{n+1}\setminus\{0\}\right)\right/\C^*
\end{equation}
since $\Hom(A_{n-1}(\P(Q')),\C^*)\cong\C^*$ and its action is still given by (\ref{azione_PQ}), when is observed that the exceptional subset $Z=\mathcal{V}(B)$ remains that described in (\ref{Z-PQ}). Then (\ref{PQ-quoziente}) and (\ref{PQ'-quoziente}) prove (\ref{weighted quot}) and the statement of Theorem \ref{thm:riduzione}.
\end{proof}

\subsection{Ampleness and very ampleness of divisors on $\P(Q)$}

In the present subsection we will extend to the class of weighted projective spaces a property of smooth complete toric varieties, which is the coincidence of ampleness and very ampleness concepts for divisors (\cite{Oda} Corollary 2.15).

\begin{proposition}\label{prop:molta-ampiezza} With the same notation as in Theorem \ref{thm:P(Q)-divisori}, if $D$ is any representative of a generator of $A_{n-1}(\P(Q))$ then $\d' D$ is a very ample divisor. Moreover for a positive integer $k$, the multiple $k D$ is an ample divisor if and only if $\d'$ divides $k$. In particular a divisor of $\P(Q)$ is ample if and only if it is very ample.
\end{proposition}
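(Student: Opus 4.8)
The plan is to reduce everything to the criteria of ampleness and very ampleness (Criterion \ref{cr:ampiezza}) applied to the specific fan $^Q\Si$ of $\P(Q)$, exploiting the fact that all the combinatorial data are already made explicit in Theorem \ref{thm:P(Q)-divisori} and Lemma \ref{lm:condizioni}. First I would record that, by Theorem \ref{thm:P(Q)-divisori}, $\Pic(\P(Q))\hookrightarrow A_{n-1}(\P(Q))$ is multiplication by $\d'$; hence among the multiples $kD$ of a fixed generator $D$ of $A_{n-1}(\P(Q))\cong\Z$, exactly those with $\d'\mid k$ are Cartier, and only these can be ample or very ample. So the content is: (i) $\d'D$ is very ample, and (ii) if $\d'\mid k$ then $kD$ is ample. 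Statement (ii) together with (i) then gives the final "ample $\Leftrightarrow$ very ample" for $\P(Q)$, since any ample divisor is a Cartier multiple $kD$ with $\d'\mid k$, hence a positive multiple of the very ample $\d'D$, hence very ample.

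For (i) I would argue by transport of structure through the reduction isomorphism $\P(Q)\cong\P(Q')$ of Theorem \ref{thm:riduzione}, which identifies the respective Chow groups and sends a generator to a generator; thus it suffices to treat the reduced case $Q=Q'$, where $\d'=\d=\lcm(q_0,\dots,q_n)$. In the reduced case I would further use Corollary \ref{cor:politopo} (or rather Proposition \ref{prop:1-politopo} applied to $\P(1,Q)$ and the facet-restriction of Remark \ref{rem:politopo}): the simplex $\D_\pi$ described there is, by construction, the polytope $\D_H$ of a very ample divisor $H$ on $\P(Q)$, and one checks that $H$ is linearly equivalent to $\d D$ for a generator $D$ — indeed, restricting to a coordinate chart one sees $H$ corresponds to $\d/q_j$ times a $D_j$, and by the relations $\d(D)=(q'_j/q'_0)\,\dots$ worked out in the proof of Theorem \ref{thm:riduzione} (equation (\ref{deg x_h})) this class equals $\d\cdot d(D)$. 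Hence $\d' D=\d D$ is very ample.

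For (ii), with $k=m\d'$, I would again pass to the reduced case and use Criterion \ref{cr:ampiezza} directly on $^Q\Si$: the polytope $\D_{kD}$ is $m$ times the simplex $\D_{\d D}$, so it is convex and its $n{+}1$ vertices $v(^Q\Si)$ are $m$ times the distinct vertices of $\D_{\d D}$, hence still distinct; thus $kD$ is ample. (Alternatively one can say $kD=m(\d'D)$ is a positive multiple of an ample — indeed very ample — divisor, hence ample.) The converse in (ii) — that ampleness forces $\d'\mid k$ — is immediate since an ample divisor is in particular Cartier, and we already know the Cartier multiples are exactly the $\d'\mid k$ ones.

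The main obstacle I anticipate is the bookkeeping in step (i): making precise the linear equivalence between the very ample polytope divisor $H$ coming from Corollary \ref{cor:politopo} and the specific multiple $\d' D$ of a chosen generator, i.e. correctly normalizing $H$ in $A_{n-1}(\P(Q))\cong\Z$ using the relations (\ref{deg x_h}). Everything else — convexity and distinctness of vertices of dilated simplices, and the reduction to $Q=Q'$ — is routine given the results already established.
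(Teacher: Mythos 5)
Your proposal is correct and follows essentially the same route as the paper: reduce to the reduced case via Theorem \ref{thm:riduzione}, obtain a very ample divisor on $\P(Q')$ from the simplex of Proposition \ref{prop:1-politopo}/Corollary \ref{cor:politopo} via the facet restriction of Remark \ref{rem:politopo}, identify its class with $\d' D'$ using Theorem \ref{thm:P(Q)-divisori}, and settle ampleness of $kD$ through the Cartier condition $\d'\mid k$ together with the fact that positive Cartier multiples of the very ample $\d'D$ are very ample. The only cosmetic difference is that the paper carries out the normalization of the polytope divisor via the linear equivalence $\left(\d'/q'_0\right)(D_0\cap\P(Q'))\equiv\d'D'$ from Theorem \ref{thm:P(Q)-divisori},(1) rather than through equation (\ref{deg x_h}), which amounts to the same bookkeeping.
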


\begin{remark} Consider the projective embedding $\P(Q)\hookrightarrow\P^r$ associated with the very ample divisor $\d' D$ defined in Proposition \ref{prop:molta-ampiezza}. Then $\P^r=\P(|\mathcal{O}_{\P(Q)}(\d' D)|)$ and $\mathcal{O}_{\P(Q)}(\d' D)\cong\mathcal{O}_{\P(Q)}(H)$, where $H$ is the divisor cut out on $\P(Q)$ by a hyperplane section. By Theorem \ref{thm:P(Q)-divisori},(2) this line bundle generates $\Pic(\P(Q))$. Then it makes sense to fix the following
\begin{notation} $\mathcal{O}_{\P(Q)}(1)$ is the line bundle generating $\Pic(\P(Q))$, which is that
\begin{equation}\label{notazione}
    \forall\ D=\sum_{j=0}^n a_j D_j\in\mathcal{W}_T(\P(Q)) : \sum_{j=0}^n a_jq'_j=\d'\quad    \mathcal{O}_{\P(Q)}(1)\cong\mathcal{O}_{\P(Q)}(D)\ .
\end{equation}

\end{notation}
\end{remark}

\begin{proof}[Proof of Proposition \ref{prop:molta-ampiezza}] By Theorem \ref{thm:P(Q)-divisori},(2) the line bundle  $\mathcal{O}_{\P(Q)}(\d' D)$ generates $\Pic(\P(Q))$. On the other hand, by the Reduction Theorem \ref{thm:riduzione}, $\P(Q)\cong\P(Q')$ where $Q'$ is the reduction of $Q$. This isomorphism sends $D$ to a representative $D'$ of a generator of $A_{n-1}(\P(Q'))$. Hence $\mathcal{O}_{\P(Q)}(\d' D')$ generates $\Pic(\P(Q'))$: the very ampleness of $\d' D$ can then be proven by showing that $\d'D'$ is very ample. At this purpose recall Proposition \ref{prop:1-politopo} and Corollary \ref{cor:politopo} and consider the $(n+1)$-dimensional polytope $\D$ defined as the convex hull of the origin and the $n+1$ points in $\R^{n+1}$
\begin{equation*}
    P_0=\left(\frac{\delta'}{q'_0},0,\ldots,0\right)\ ,\  \ldots\ ,\ P_n=\left(0,\dots,0,\frac{\delta'}{q'_n}\right)\ .
\end{equation*}
With the same notation given in Remark \ref{rem:politopo}, there exists a very ample divisor $H$ on $\P(1,Q')$ such that $(\P(1,Q'),H)\cong(\P_{\D},\mathcal{O}(1))$ and, calling $\D_0$ the facet of $\D$ which is opposite to the origin, one gets $(\P(Q'),H_0)\cong(\P_{\D_0},\mathcal{O}(1))\cong(\P_{\D_0-P_0},\mathcal{O}(1))$. Let $D_0$ be the toric invariant divisor associated with the 1-dimensional cone of the fan of $\P(1,Q')$ corresponding to the weight $q'_0$: then $\D_0-P_0=\D_{(\d'/q'_0) D_0}$. By Remark \ref{rem:politopo} $(\d'/q'_0) D_0$ turns out to be a very ample divisor of $\P(1,Q')$. Therefore, cutting out by means of the sub-wps $\P(Q')\subset\P(1,Q')$, the divisor $\d'/q'_0 (D_0\cap\P(Q'))$ is very ample on $\P(Q')$. Since Theorem \ref{thm:P(Q)-divisori},(1) gives the linear equivalence $\d'/q'_0 (D_0\cap\P(Q'))\equiv\d' D'$ we get that $\d'D'$ is also a very ample divisor of $\P(Q')$.

Let $D$ be a divisor whose linear equivalence class $d(D)\in A_{n-1}(\P(Q))$ is a generator i.e. $A_{n-1}(\P(Q))\cong\Z\cdot d(D)$. As already observed in the proof of Theorem \ref{thm:P(Q)-divisori}, a positive multiple $kD$ is Cartier if $q'_l\ |\ k$ for any $l=0,\dots,n$. Since an ample divisor is Cartier, assuming $kD$ ample gives that $\d'|k$. Conversely, since $\d' D$ is a very ample divisor, if $\d'\,|\,k>0$ then $kD$ is very ample, hence ample.

To prove the last assertion in the statement of Proposition \ref{prop:molta-ampiezza} we have to show that ampleness implies very ampleness. At this purpose consider an ample divisor $E$ of $\P(Q)$.  Then it is Cartier and $d(E)=k d(D)$ for some positive multiple $k$ of $\d'$. Since $\d' D$ is very ample, this means that $E$ is a very ample divisor.
\end{proof}

\section{Characterization of fans giving $\P(Q)$}\label{sez: fan}

In the present section we will answer to the following nested questions:
\begin{itemize}
  \item[(I)] \emph{given a weights vector $Q$ and a fan $\Si$, when $\P(Q)\cong X(\Si)$?}
  \item[(II)] \emph{given a fan $\Si$, when there exists a weights vector $Q$ such that $\P(Q)\cong X(\Si)$?}
\end{itemize}
We will give necessary and sufficient conditions for $\Si$ to be associated with the complete toric variety $\P(Q)$.

\subsection{Characterizing the fan}

Let $Q=(q_0,\ldots,q_n)$ be a weights vector and consider a $n$-dimensional lattice $N$ and a subset of $n+1$ integer vectors $\{\v_0,\ldots,\v_n\}\subset N$. Let us consider the 1--generated fan
\begin{equation*}
    \Si:=\fan(\v_0,\ldots,\v_n)
\end{equation*}
and \emph{the} associated fan matrix $V=(\mathbf{v}_0,\ldots,\mathbf{v}_n)$, where the columns order is fixed by the weights order in $Q$. Then the fan $\Si$ and the matrix $V$ are associated each other.

\begin{theorem}\label{thm:fan} Consider the fan $\Sigma=\fan(\v_0,\ldots,\v_n)$ and the associated matrix $V=(\v_0,\ldots,\v_n)$. Then the following facts are equivalent:
\begin{enumerate}
  \item $\Si$ is a fan of $\P(Q)$\ ,
  \item $\sum_{j=0}^{n}q_{j}\v_{j}=0$ and the sub-lattice $N':=\langle q_0\v_0,\dots,q_n\v_n\rangle\subset N$ has finite index
  \[
    [N:N']=\prod_{j=0}^n q_j\ ,
  \]
  \item $\forall\ j=0,\ldots,n\quad V_j=(-1)^{\epsilon +j}q_j\ ,\ \text{for $\epsilon\in\{0,1\}$}\ ,$
  \item $q_{0}\v_{0}=-\sum_{i=1}^{n}q_{i}\v_{i}$ and $|V_0|:=|\det(\v_1,\ldots,\v_n)|=q_0$\ .
\end{enumerate}
\end{theorem}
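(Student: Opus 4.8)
The natural strategy is to prove the cyclic chain of implications
$(1)\Rightarrow(2)\Rightarrow(3)\Rightarrow(4)\Rightarrow(1)$, leaning heavily on Proposition~\ref{prop:weighted-fan} and Lemma~\ref{lm:condizioni}, which already contain most of the necessary combinatorial content.

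First, for $(1)\Rightarrow(2)$: if $\Si$ is a fan of $\P(Q)$, then $X(\Si)\cong\P(Q)\cong X(^Q\Si)$, and two toric varieties are isomorphic as abstract varieties with compatible torus actions only when their fans are related by a lattice isomorphism (up to the permutation of columns fixed by the ordering convention). Concretely, the fan $^Q\Si=\fan(\v_0,\ldots,\v_n)$ of Proposition~\ref{prop:weighted-fan} satisfies $\sum_j q_j\v_j=-\sum_{i=1}^n\e_i+\sum_{i=1}^n\e_i=0$, and any lattice automorphism carrying $^Q\Si$ to $\Si$ preserves this relation and the index $[N:N']$; so it suffices to verify $(2)$ for the model fan, where $N=\ ^QN=\langle \v_0,\dots,\v_n\rangle$ and $N'=\langle q_0\v_0,\dots,q_n\v_n\rangle$. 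But $q_0\v_0=-\sum\e_i$ and $q_i\v_i=\e_i$, so $N'=\langle\e_1,\dots,\e_n\rangle$. This is \emph{not} all of $^QN$ in general; rather, $[\ ^QN:N']=\prod q_j$ is exactly the statement of Lemma~\ref{lm:condizioni}(a) applied to the vectors $\v_0,\dots,\v_n$ (which generate $\mathbb{Q}^n$ and satisfy the linear relation). Thus $(2)$ holds.

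Next, $(2)\Rightarrow(3)$ is precisely Lemma~\ref{lm:condizioni}(b): the hypothesis $\sum q_j\v_j=0$ together with the vectors generating $\mathbb{Q}^n$ (which follows from the finite-index condition) gives $V_j=(-1)^{\epsilon+j}q_j$ for a fixed $\epsilon\in\{0,1\}$. The implication $(3)\Rightarrow(4)$ is immediate: from $V_j=(-1)^{\epsilon+j}q_j$ we read off $|V_0|=q_0$, and the relation $\sum q_j\v_j=0$ — equivalently $q_0\v_0=-\sum_{i=1}^n q_i\v_i$ — is recovered from the minor identities via Cramer's rule (the vector $(V_0,-V_1,\dots,(-1)^nV_n)=(-1)^\epsilon(q_0,\dots,q_n)$ is, up to sign, in the kernel of $V$, because expanding $\det(\v_k,\v_0,\dots,\widehat{\phantom{k}},\dots,\v_n)=0$ along a repeated column gives $\sum_j(-1)^jV_j\,(\v_j)_i=0$ for each coordinate $i$).

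Finally, $(4)\Rightarrow(1)$: given $q_0\v_0=-\sum_{i=1}^n q_i\v_i$ and $|\det(\v_1,\dots,\v_n)|=q_0$, I want to exhibit a lattice isomorphism between $N$ (with the fan $\Si$) and $^QN$ (with $^Q\Si$) matching $\v_j\mapsto \v_j^{\text{model}}$. Set $\e_i':=q_i\v_i$ for $i=1,\dots,n$; then $\det(\e_1',\dots,\e_n')=\pm q_0q_1\cdots q_n$, and by $(2)$-type reasoning (which $(4)$ delivers, since $|V_0|=q_0$ forces the $\v_i$ to be $\mathbb{Q}$-independent, and the index computation of Lemma~\ref{lm:condizioni}(a) applies) the lattice $\langle\e_1',\dots,\e_n'\rangle$ has index $\prod q_j$ in $N$, while $\langle\v_0,\dots,\v_n\rangle$ — I must first check this equals $N$, or pass to it — carries exactly the same combinatorial data as $^QN$. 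The cleanest route: define a $\mathbb{Q}$-linear map $\phi:N_{\mathbb{R}}\to\ ^QN_{\mathbb{R}}$ by $\v_i\mapsto \e_i/q_i$ for $i=1,\dots,n$; then automatically $\v_0\mapsto -\frac{1}{q_0}\sum\e_i$ by the linear relation, so $\phi$ sends the generators of $\Si$ to those of $^Q\Si$, hence $\phi(\Si)=\ ^Q\Si$ as fans of cones. It remains to check $\phi(N)=\ ^QN$ as lattices; one inclusion needs the index count and the other is clear. This last lattice-matching step — ensuring $\phi$ is an \emph{integral} isomorphism and not merely a rational one — is the main obstacle, and it is exactly where the index equality $[N:N']=\prod q_j$ from $(2)$/Lemma~\ref{lm:condizioni}(a) is indispensable: both $N$ and $^QN$ contain the standard sublattice of index $\prod q_j$ with the same quotient structure, pinning down $\phi(N)=\ ^QN$. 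With $\phi$ an isomorphism of lattices-with-fans, $X(\Si)\cong X(^Q\Si)\cong\P(Q)$, completing the cycle.
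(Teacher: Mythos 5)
Your proposal is correct, and the forward chain $(1)\Rightarrow(2)\Rightarrow(3)\Rightarrow(4)$ coincides with the paper's: $(1)\Rightarrow(2)$ via the model fan of Proposition~\ref{prop:weighted-fan} and Lemma~\ref{lm:condizioni}(a), $(2)\Rightarrow(3)$ via Lemma~\ref{lm:condizioni}(b), and your Cramer/Laplace argument for $(3)\Rightarrow(4)$ is exactly the paper's computation with the $(n+1)\times(n+1)$ matrix obtained by adjoining a row of $V$ to $V$ (your determinant notation garbles rows and columns, but the content is the same). Where you genuinely diverge is in closing the loop: the paper proves $(4)\Rightarrow(2)$ by the one-line index computation $[N:N']=\left(\prod_{i\ge 1}q_i\right)|V_0|$ and then $(2)\Rightarrow(1)$ by running Cox's Theorem~\ref{thm:cox} together with Lemma~\ref{lm:condizioni}(c), literally reproducing the proof of the Reduction Theorem~\ref{thm:riduzione} to exhibit $X(\Si)\cong\left(\C^{n+1}\setminus\{0\}\right)/\C^*\cong\P(Q')\cong\P(Q)$; you instead prove $(4)\Rightarrow(1)$ directly by the rational map $\phi$ sending $\v_i\mapsto\e_i/q_i$ and checking it is an integral isomorphism onto $^QN$ carrying $\Si$ to $^Q\Si$. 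Your route is more elementary and constructive (pure lattice linear algebra, producing an explicit unimodular equivalence with Fulton's fan), at the cost of the integrality check you rightly single out; the paper's route avoids any lattice matching, stays within the quotient formalism it needs anyway, and has the side benefit of displaying the reduction $Q\rightsquigarrow Q'$ at the fan level. Your integrality step does work, but tighten its phrasing: from $(4)$ one gets $[N:N']=\prod_j q_j$ directly from the determinant (this is the paper's $(4)\Rightarrow(2)$, no appeal to Lemma~\ref{lm:condizioni}(a) needed on the $N$ side), then $\phi(N')=\langle\e_1,\ldots,\e_n\rangle\subseteq{}^QN\subseteq\phi(N)$, where the second inclusion is clear because $^QN$ is generated by the vectors $\phi(\v_j)$; since $\left[\phi(N):\phi(N')\right]=[N:N']=\prod_j q_j=\left[{}^QN:\langle\e_1,\ldots,\e_n\rangle\right]$ (the last equality being Lemma~\ref{lm:condizioni}(a) applied to the model vectors), multiplicativity of indices forces $\phi(N)={}^QN$ — and in particular $N=\langle\v_0,\ldots,\v_n\rangle$ follows a posteriori, so the hedge ``or pass to it'' is unnecessary. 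The vague clause about ``the same quotient structure'' should be dropped in favour of this index chain.
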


\begin{proof} $(1)\Rightarrow(2)$. Recalling Proposition \ref{prop:weighted-fan}, this is precisely the content of Lemma \ref{lm:condizioni},(a).

$(2)\Rightarrow(3)$. This is Lemma \ref{lm:condizioni},(b).

$(3)\Rightarrow(4)$. For any $k=1,\ldots,n$ consider the $(n+1)\times(n+1)$ matrix
\begin{equation*}
    A_k:=\left(
           \begin{array}{ccc}
             v_{k0} & \cdots & v_{kn} \\
             \hline \\
              & V &  \\
           \end{array}
         \right)\ .
\end{equation*}
Since the first and the $(k+1)$-th rows of $A_k$ are equal we get
\begin{equation*}
    \forall\ k=1,\ldots,n\quad 0=\det(A_k)=\sum_{j=0}^n (-1)^j v_{kj} V_j \stackrel{(3)}{=} (-1)^{\epsilon} \sum_{j=0}^n q_jv_{kj} \ \Rightarrow\ \sum_{j=0}^n q_j\v_{j}=0\ .
\end{equation*}

$(4)\Rightarrow(2)$. Since $|V_0|=q_0$ then $\{q_1\v_1,\ldots,q_n\v_n\}$ is a basis of the sub-lattice $N'$. Hence
\begin{equation*}
    [N:N']=|\det(q_1\v_1,\ldots,q_n\v_n)| = \left(\prod_{i=1}^nq_{i}\right) |V_{0}|\stackrel{(4)}{=}\prod_{j=0}^n q_j\ .
\end{equation*}

$(2)\Rightarrow(1)$. First of all notice that (2) guarantees that $\Si$ is simplicial and $\Si(1)$ generates $N\otimes\R$. Then Theorem \ref{thm:cox} can be applied to give a geometric quotient description of $X(\Si)$. Then Lemma \ref{lm:condizioni},(c) gives that
\begin{itemize}
  \item \emph{$\forall\ j=0,\ldots,n\quad \v_j=d_j\n_j$\ , where $\n_j$ is the generator of the semigroup $\langle\v_j\rangle\cap N$ and $d_j$ is defined in (\ref{lcm&GCD}); in particular $\n_0,\ldots,\n_n$ satisfy the condition \emph{(2)} with respect to the reduced weights vector $Q'$ i.e.
      \begin{itemize}
      \item [($2'$)] $\sum_{j=0}^{n}q'_{j}\n_{j}=0$\ and the sub-lattice $N''':=\langle q'_0\n_0,\dots,q'_n\n_n\rangle\subset N'':=\langle\n_0,\ldots,\n_n\rangle$ has finite index
  \[
    \left[N'':N'''\right]=\prod_{j=0}^n q'_j\ .
  \]
      \end{itemize}}
\end{itemize}
It is then possible to reproduce here the proof of Theorem \ref{thm:riduzione} which exhibits the toric variety $X(\Si)$ as the following geometric quotient
\begin{equation*}
    X(\Si)\cong\left.\left(\C^{n+1}\setminus\{0\}\right)\right/\C^*
\end{equation*}
where the quotient is realized by means of the action $\nu_{Q'}$. Then $X(\Si)\cong\P(Q')\cong\P(Q)$.
\end{proof}

\begin{remark}\label{rem:Fn,Vn} The previous Theorem \ref{thm:fan} answers Question (I) opening the present section. For Question (II) notice that a matrix $V=(\v_0,\ldots,\v_n)\in \Mat(n,n+1,\Z)$ gives immediately an associated \emph{pseudo-weights} vector $Q:=(|V_0|,|V_1|,\ldots,|V_n|)$ which is the vector of absolute values of maximal minors of $V$. It turns out to be an actual weights vector if every maximal minor of $V$ does not vanish. Therefore the answer to Question (II) can be formulated as follows
\begin{itemize}
  \item \emph{given a fan $\Si=\fan(\v_0,\ldots,\v_n)\subset N\otimes\R$ then there exists a weights vector $Q=(q_0,\ldots,q_n)$ such that $X(\Si)\cong\P(Q)$ if and only if the following conditions hold:}
      \begin{enumerate}
        \item \emph{the matrix $V=(\v_0,\ldots,\v_n)$ admits only non vanishing coprime maximal minors i.e.  $\forall\ j=0,1,\dots,n\quad V_j \neq 0$ and $\gcd(V_j\ |\ 0\leq j\leq n)=1$;}
        \item \emph{the columns $\v_j$ of $V$ satisfy one of the equivalent conditions (2), (3), (4) of Theorem \ref{thm:fan} with respect to the weights $q_j:=|V_j|$}.
      \end{enumerate}
\end{itemize}
\end{remark}

\begin{definition}[$F$--admissible matrices]\label{def:Fn,Vn} A matrix $V\in\Mat(n,n+1;\Z)$ will be called \emph{$F$--admissible} if it satisfies conditions (1) and (2) in Remark \ref{rem:Fn,Vn}. The subset of $F$--admissible matrices will be denoted by $\mathfrak{V}_n\subset\Mat(n,n+1;\Z)$.
\end{definition}

The natural action of the permutation group $\mathfrak{S}_{n+1}$ on $\Mat(n,n+1;\Z)$, defined by exchanging columns, clearly restricts to $\mathfrak{V}_n$. Under the natural embedding of $\mathfrak{S}_{n+1}$ as a subgroup of $\GL(n+1,\Z)$, such an action is represented by right multiplication. Let $\mathfrak{F}(Q)$ be the set of fans in $N_{\R}=N\otimes\R$ defining $\P(Q)$ and define $\mathfrak{F}_n:=\bigcup_Q \mathfrak{F}(Q)$. Then the previous answer to Question (II) rewrites as follows:
\begin{equation}\label{Fn}
    \mathfrak{F}_n\cong \mathfrak{V}_n/\mathfrak{S}_{n+1}
\end{equation}
where the quotient is taken under right multiplication.
All these considerations give rise to the following algorithm for \emph{recognizing a wps fan}:

\begin{algorithm}[Recognizing a fan of $\P(Q)$]\label{alg:fan} See 1.1 and 1.2 in \cite{RTpdf,RT-Maple} for a Maple implementation.
\begin{itemize}
\item Input: a fan $F=({\bf v}_0,...,{\bf v}_n)$.
\item Let $V:=Mat({\bf v}_0,...,{\bf v}_n)$ be the associated matrix and set $q_i:=|V_i|$;
\item If $q_i\not =0$ for $i=0,...,n$ and $\gcd(q_0,...,q_n)=1$ and $\sum_{i=0}^n q_i{\bf v}_i=0$ then $F$ is a fan of $\P(Q)$ with $Q=(q_0,...,q_n)$; else it is not a fan of a wps.
\item Output: either the weights vector $Q$ or an error message .
\end{itemize}
\end{algorithm}

\subsection{Hermite normal form of weights and fans of $\P(Q)$}\label{ssez:HNFfan}
In the subsection \ref{ssez:fan} we already exhibit a fan of $\P(Q)$, as proposed by Fulton in his book \cite{Fulton}. Actually by an algorithmic point of view that fan is not very explicitly presented. The following result, which is a direct consequence of Theorem \ref{thm:fan}, gives rise to a surprising method to get a fan of a given wps $\P(Q)$, which turns out to be \emph{encoded} in the switching matrix giving the HNF of the transposed weights vector $Q^T$. Since the latter is obtained by a well known algorithm, based on Eulid's algorithm for greatest common divisor  \cite[Algorithm 2.4.4]{Cohen}, this gives an alternative method to produce a fan of $\P(Q)$ (see the following Algorithm \ref{alg:daQaF}) which can be performed by any procedures computing elementary linear algebra operations.

\begin{proposition}\label{prop:HNFdue} Let $Q=\left(q_0,\ldots,q_n\right)$ be a weights vector, $B$ the HNF of the transposed vector $Q^T$ and $U\in \GL({n+1},\mathbb{Z})$ be such that $U\cdot Q^T=B$. Let $C$ be the matrix consisting of the last $n$ rows of $U$ and let ${\v}_j$ be the $j^{\rm th}$ column vector of $C$, for $0\leq j\leq n$. Let $L,L'$ be the lattices generated in $\mathbb{Z}^n$ by ${\v}_0,\ldots,{\v}_n$ and $q_0{\v}_0,\ldots,q_n{\v}_n$ respectively. Then
\begin{enumerate}
\item $B=\left (\begin{array}{l} 1\\0\\ \vdots\\ 0\end{array}\right)$;
\item $L=\mathbb{Z}^n$;
\item $\sum_{j=0}^n q_j{\v}_j=0$;
\item for all  $0\leq j\leq n$ there exists $\epsilon\in\{0,1\}$ such that $C_j=(-1)^{\epsilon+j}q_j$.
\item $[L:L']=\prod_{j=0}^n q_j$.
\end{enumerate}
As a consequence of Theorem \ref{thm:fan}, $\fan(\v_0,\dots,\v_n)$ is a fan of $\P(Q)$.
\end{proposition}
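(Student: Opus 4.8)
The plan is to establish the five numbered assertions in order, since each one feeds into the next, and the final conclusion then follows by invoking Theorem \ref{thm:fan}. For (1), I would argue directly from the defining properties of the Hermite normal form (Definition \ref{def:HNF}): the HNF of an $(n+1)\times 1$ integer matrix is a single column whose only possibly nonzero entry is the first, and that entry equals $\gcd(q_0,\ldots,q_n)$. Since $Q$ is a weights vector we have $\gcd(q_0,\ldots,q_n)=1$, so $B=(1,0,\ldots,0)^T$. For (2), I would apply Proposition \ref{prop:HNFuno}(2) with $A$ taken to be the row vector $Q$ (so $A^T=Q^T$): since the first row of $B$ is nonzero and the remaining $n$ rows vanish, $r=1$ and the last $n$ rows of $U$ — which by definition are the rows of $C$, i.e. the vectors $\v_j$ read as the columns of $C$ — span the kernel of the map $\mathbb{Z}^{n+1}\to\mathbb{Z}$ given by $Q$, as a $\mathbb{Z}$-basis. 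In particular the rows of $C$ form a basis of a rank-$n$ lattice; but the rows of $C$ are $n$ vectors in $\mathbb{Z}^n$, and since they extend (together with the first row of $U$) to a $\mathbb{Z}$-basis of $\mathbb{Z}^{n+1}$ under an element of $\GL(n+1,\mathbb{Z})$, a short index/determinant computation shows that the columns $\v_0,\ldots,\v_n$ of $C$ generate all of $\mathbb{Z}^n$, giving $L=\mathbb{Z}^n$.

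For (3), the relation $U\cdot Q^T=B$ says precisely that the $i$-th row of $U$, dotted with $Q^T$, equals the $i$-th entry of $B$. For $i=2,\ldots,n+1$ this entry is $0$, so each row of $C$ is orthogonal to $Q$; reading this across all $n$ rows of $C$ gives $\sum_{j=0}^n q_j\v_j=0$, where $\v_j$ is the $j$-th column of $C$. Assertion (4) is where the genuine work lies. Here I would exploit the full matrix $U\in\GL(n+1,\mathbb{Z})$, so $\det U=\pm1$. Expanding $\det U$ along the first row, whose entries are some integers $u_0,\ldots,u_n$ with $\sum_j u_j q_j=1$ (this sum equals the first entry of $B$), the cofactors are, up to sign, exactly the minors $C_j$ of $C$ obtained by deleting the $j$-th column. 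This yields $\sum_{j=0}^n (-1)^j u_j C_j=\pm 1$. To pin down each $C_j$ individually, I would combine this with (3): stacking the row $(u_0,\ldots,u_n)$ on top of $C$ in two different ways and using that $\sum_j q_j\v_j=0$ forces the columns of $C$ to satisfy one linear relation with coefficients proportional to $Q$, one deduces that the vector $(C_0,-C_1,\ldots,(-1)^nC_n)$ is itself proportional to $Q$ — say equal to $\lambda Q$ for some $\lambda\in\mathbb{Q}$ — because both are (up to sign) spanning vectors of the one-dimensional space of linear forms vanishing on the column span relation. Substituting into $\sum_j (-1)^j u_j C_j=\pm1$ gives $\lambda\sum_j u_j q_j=\lambda=\pm1$, whence $C_j=(-1)^{\epsilon+j}q_j$ with $\epsilon\in\{0,1\}$ fixed by the sign of $\lambda$.

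Assertion (5) then follows formally: since $\gcd(C_0,\ldots,C_n)=\gcd(q_0,\ldots,q_n)=1$, in particular some $C_j\neq 0$, say $C_0\neq 0$, so $q_1\v_1,\ldots,q_n\v_n$ are linearly independent and form a basis of $L'$ after noting (via (3)) that $q_0\v_0$ lies in their span; then $[L:L']=|\det(q_1\v_1,\ldots,q_n\v_n)|=\bigl(\prod_{i=1}^n q_i\bigr)|C_0|=\prod_{j=0}^n q_j$, using (4) and $L=\mathbb{Z}^n$ from (2). Finally, conditions (3) and (5) are exactly items (2) (equivalently (3)) of Theorem \ref{thm:fan} applied to $V=C$ and the given $Q$, so $\fan(\v_0,\ldots,\v_n)$ is a fan of $\P(Q)$. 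The main obstacle is the argument for (4): everything else is bookkeeping with HNF properties and determinants, but identifying each individual minor $C_j$ — rather than just a single alternating sum of them — requires the observation that the column relation of $C$ forces the cofactor vector to be a scalar multiple of $Q$, after which the normalization $\sum u_jq_j=1$ finishes it cleanly. One should double-check that the same $\epsilon$ works uniformly in $j$; this is automatic from the proportionality $(C_j)_{j}=\lambda\cdot((-1)^jq_j)_j$ with a single scalar $\lambda=\pm1$.
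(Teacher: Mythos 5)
Your proof is correct and follows essentially the same route as the paper: HNF uniqueness and Proposition \ref{prop:HNFuno}(2) for (1)--(3), a saturation/determinant argument giving $L=\Z^n$, the identical index computation for (5), and the concluding appeal to Theorem \ref{thm:fan}. The only (minor) divergence is in (4): the paper reads $Q^T=U^{-1}B$ and applies Cramer's rule directly, so that the $q_j$ are, up to the sign $\det U=\pm 1$, the signed minors $(-1)^jC_j$, whereas you obtain the same identity slightly more indirectly via the Laplace expansion of $\det U$ along its first row together with the standard fact that the signed-minor vector of the rank-$n$ matrix $C$ spans $\ker C$ and is therefore proportional to $Q$ -- both arguments rest on the same cofactor identity.
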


\begin{proof}
The rank of $Q$ is 1, so by definition of HNF, $B=\left (\begin{array}{l} a\\0\\ \vdots\\ 0\end{array}\right)$ with $a\geq 1$. By the equality $Q^T=U^{-1}\cdot B$ we see that $a$ must divide $q_0,\ldots,q_n$, so that $a=1$; this proves (1).
By point (2) of Proposition \ref{prop:HNFuno} the rows of $C$ are a basis of the cotorsion-free subgroup of $\mathbb{Z}^{n+1}$ defined by the equation $\sum_{j=0}^n q_jx_j=0$. Therefore (3) holds. Moreover the elementary divisor Theorem assures that there exist matrices $A_1\in \GL({n+1},\mathbb{Z})$ and $A_2\in \GL(n,\mathbb{Z})$ such that $A_1\cdot C\cdot A_2 = \left(\begin{array}{llllll} 1 & 0&0&\ldots&0&0\\
0 & 1&0&\ldots&0&0\\ &&\vdots &&&\\
0&0&0&\ldots& 1&0
\end{array}
\right)$. By transposing we see that the columns of $C$ generate $\mathbb{Z}^n$ and this proves (2). Point (4) is immediate from point (1) using the Cramer rule. Moreover by (3) we see that $L'$ is generated by $q_1{\v}_1,\ldots,q_n{\v}_n$: therefore it has index $\prod_{i=1}^nq_i$ in the lattice generated by ${\v}_1,\ldots,{\v}_n$, and the latter has index $q_0$ in $\mathbb{Z}^n$ by (4); point (5) follows.
\end{proof}

\begin{algorithm}[Producing a fan of $\P(Q)$]\label{alg:daQaF} See 0.b in \cite{RTpdf,RT-Maple} for a Maple implementation.
\begin{itemize}
\item Input: the weights vector $Q=(q_0,\ldots,q_n)$.
\item Apply the HNF algorithm in order to find a matrix $U\in{\rm GL}_{n+1}(\mathbb{Z})$ such that
$U\cdot \left( \begin{array}{l} q_0\\q_1\\\vdots\\q_n \end{array}\right )= \left( \begin{array}{l} 1\\0\\\vdots \\0 \end{array}\right )$.
\item Delete from $U$ the first row and take the list $F$ of the column vectors of the resulting matrix.
\item Output $F$.
\end{itemize}
\end{algorithm}

\subsection{Equivalence of fans giving the same wps}\label{matrici e fan}

After \cite{Oda} \S 1.5, it is well known that two toric varieties are isomorphic if and only if their fans are related by a unimodular transformation of the ambient lattice. In case of weighted projective spaces this means that \emph{two fans $\Si=\fan(\v_0,\ldots,\v_n)$ and $\Si'=\fan(\v'_0,\ldots,\v'_n)$, in a $n$-dimensional lattice $N$, describe the same (up to isomorphism) wps $\P(Q)$ if and only if there exists $A\in\GL(n,\Z)$ and a permutation $\s\in\mathfrak{S}_{n+1}$ such that}
\begin{equation*}
    \forall\ 0\leq j\leq n\quad  A\cdot \v_j =\v'_{\s(j)}\ .
\end{equation*}
Since $\mathfrak{S}_{n+1}$ naturally embeds as a subgroup of $\GL(n+1,\Z)$, this is equivalent to say that
\begin{equation*}
    X(\Si)\cong \P(Q)\cong X(\Si')\ \Leftrightarrow\ \begin{array}{c}
                                                       \exists\ A\in\GL(n,\Z) \\
                                                       \exists B\in\mathfrak{S}_{n+1}\subset\GL(n+1,\Z)
                                                     \end{array}
\ :\ V'=A\cdot V\cdot B
\end{equation*}
where $V=(\v_0,\ldots,\v_n)$ and $V'=(\v'_0,\ldots,\v'_n)$ are associated matrices with fans $\Si$ and $\Si'$, respectively. By recalling the definition of $\mathfrak{F}(Q)$, $\mathfrak{F}_n$ and $\mathfrak{V}_n$ given in Definition \ref{def:Fn,Vn}, define
\begin{eqnarray*}
  \mathfrak{F}_n^{\text{red}} &:=& \bigcup \{\mathfrak{F}(Q)\ |\ Q\ \text{is reduced}\} \\
  \mathfrak{V}_n^{\text{red}} &:=& \{V\in\mathfrak{V}_n\ |\ \text{$(|V_0|,\ldots,|V_n|)$ is reduced}\} \ .
\end{eqnarray*}
Then we get the following:

\begin{proposition}\label{prop:classification} The following sets are bijectively equivalent
\begin{eqnarray}
    \label{1:1}
    \left\{\text{wps's}\right\}/\text{iso}&\stackrel{1:1}{\longleftrightarrow}&\left\{Q\in(\N\setminus\{0\})^{n+1}\ |\ Q\ \text{is reduced}\right\}/\mathfrak{S}_{n+1}\\
    \nonumber
    &\stackrel{1:1}{\longleftrightarrow}&\GL(n,\Z)\left\backslash\mathfrak{F}_n^{\text{red}}\right.\\
    \nonumber
    &\stackrel{1:1}{\longleftrightarrow}&\GL(n,\Z)\left\backslash\mathfrak{V}_n^{\text{red}}\right/\mathfrak{S}_{n+1}\ .
\end{eqnarray}
\end{proposition}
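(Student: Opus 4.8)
The plan is to establish the three bijections by chaining together the results already proved in the paper, reading the correspondences through the toric dictionary. I would set up the proof by fixing an $n$-dimensional lattice $N$ and recalling that, for any weights vector $Q$, Proposition \ref{prop:weighted-fan} exhibits a fan $^Q\Si\in\mathfrak{F}(Q)$, so $\mathfrak{F}(Q)\neq\emptyset$; moreover by Theorem \ref{thm:fan} a fan $\Si=\fan(\v_0,\ldots,\v_n)$ lies in $\mathfrak{F}(Q)$ precisely when its associated matrix $V$ satisfies the equivalent conditions there, in which case $q_j=|V_j|$. Combined with Remark \ref{rem:Fn,Vn} and Definition \ref{def:Fn,Vn}, this already yields $\mathfrak{F}_n\cong\mathfrak{V}_n/\mathfrak{S}_{n+1}$ (equation \eqref{Fn}); restricting to reduced weights gives the last bijection $\GL(n,\Z)\backslash\mathfrak{F}_n^{\mathrm{red}}\stackrel{1:1}{\longleftrightarrow}\GL(n,\Z)\backslash\mathfrak{V}_n^{\mathrm{red}}/\mathfrak{S}_{n+1}$ after we account for the left $\GL(n,\Z)$-action, which on fans is just change of basis of $N$ and on matrices is left multiplication.

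Next I would treat the first bijection, $\{\text{wps's}\}/\mathrm{iso}\stackrel{1:1}{\longleftrightarrow}\{Q\text{ reduced}\}/\mathfrak{S}_{n+1}$. One direction is Remark \ref{rem:factorial-iso} together with the Reduction Theorem \ref{thm:riduzione}: every $\P(Q)$ is isomorphic to $\P(Q')$ with $Q'$ reduced, and permuting weights obviously gives isomorphic wps's (it is an automorphism of $\P^n$ commuting with the group action), so the map $Q\mapsto\P(Q)$ descends to a surjection from reduced weights modulo $\mathfrak{S}_{n+1}$ onto wps's up to iso. For injectivity I would use the geometric invariant $\d'=\lcm(q'_0,\ldots,q'_n)$ from Remark \ref{rem:fattorialit�} to reduce clutter, but the real tool is the fan description: if $\P(Q)\cong\P(\widetilde Q)$ with both reduced, then by \cite{Oda}~\S1.5 their fans differ by a unimodular lattice isomorphism, hence the associated matrices satisfy $\widetilde V=A\cdot V\cdot B$ with $A\in\GL(n,\Z)$, $B\in\mathfrak{S}_{n+1}$; taking absolute values of maximal minors and using that $\det A=\pm1$ gives $\widetilde q_{\s(j)}=|V_j|=q_j$ up to the permutation $\s$ encoded by $B$, so $\widetilde Q$ and $Q$ agree modulo $\mathfrak{S}_{n+1}$.

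For the middle bijection $\{Q\text{ reduced}\}/\mathfrak{S}_{n+1}\stackrel{1:1}{\longleftrightarrow}\GL(n,\Z)\backslash\mathfrak{F}_n^{\mathrm{red}}$, the map sends the class of $Q$ to the $\GL(n,\Z)$-orbit of any fan in $\mathfrak{F}(Q)$; I would check this is well-defined (different fans of the same $\P(Q)$ lie in one $\GL(n,\Z)$-orbit, again by \cite{Oda}~\S1.5, since $X(\Si)\cong X(\Si')$ as the \emph{same} abstract variety forces a lattice isomorphism — here one must be slightly careful to note that a priori one only gets isomorphism of varieties, but the permutation ambiguity has already been quotiented out on the fan side by working with unordered $1$-generated fans in $\mathfrak{F}_n$), surjective (Proposition \ref{prop:weighted-fan} again), and injective (if $\GL(n,\Z)\cdot\Si=\GL(n,\Z)\cdot\Si'$ then $X(\Si)\cong X(\Si')$, so $\P(Q)\cong\P(Q')$, and by the first bijection $Q,Q'$ coincide modulo $\mathfrak{S}_{n+1}$).

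The main obstacle, I expect, is bookkeeping the interplay between the \emph{ordered} data (a weights \emph{vector}, a fan \emph{matrix} with columns indexed to match) and the \emph{unordered} data (an abstract wps, a $1$-generated fan as a set of cones), making sure the $\mathfrak{S}_{n+1}$-quotients are placed on exactly the right side of each arrow and that no direction secretly requires distinguishing $\P(Q)$ from $\P(Q\circ\s)$ as polarized or marked objects. Everything else is a direct citation of Theorem \ref{thm:fan}, Remark \ref{rem:Fn,Vn}, Theorem \ref{thm:riduzione}, and the standard fact \cite{Oda}~\S1.5; the reduced-weights hypothesis is exactly what makes the map from $\mathfrak{V}_n^{\mathrm{red}}$ to weights vectors injective on minors, so no separate normalization step is needed.
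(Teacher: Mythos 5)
Your proposal is correct, and its overall scaffolding coincides with the paper's: surjectivity of the weights--to--wps assignment via the Reduction Theorem \ref{thm:riduzione}, the middle bijection from Theorem 1.13 of \cite{Oda} combined with reduction, and the last bijection by restricting \eqref{Fn} to reduced data and quotienting by $\GL(n,\Z)$ on both sides. Where you genuinely diverge is the key step that isomorphic wps's have permutation--equivalent reduced weights vectors: you argue on the fan side, writing $\widetilde V=A\cdot V\cdot B$ with $A\in\GL(n,\Z)$, $B\in\mathfrak{S}_{n+1}$, and reading off $\widetilde q_j=q_{\s(j)}$ from the absolute maximal minors via Theorem \ref{thm:fan}(3); the paper instead argues on the divisor side: an isomorphism carries $\mathcal{O}_{\P(Q)}(1)$ to $\mathcal{O}_{\P(P)}(1)$ (the ample generators of the Picard groups), its toric representatives are exactly the integer points of the hyperplane $\sum_j q'_jx_j=\d'$, and comparing the two hyperplanes forces $Q'=P'$ up to permutation. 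Your route is shorter and stays inside the matrix formalism already set up for \eqref{Fn}; the paper's route phrases the argument purely in terms of the intrinsic polarization of Theorem \ref{thm:P(Q)-divisori}. Both versions rest on the same input, not proved in the paper, that an abstract isomorphism of these complete toric varieties may be taken torus--equivariant so that \cite{Oda} \S 1.5 applies; since the paper itself declares this ``well known'' at the start of its subsection on equivalence of fans (and its own proof simply asserts the isomorphism is ``toric equivariant''), your reliance on it is not a gap by the paper's standard. One small correction: the role of reducedness in your minor computation is not that the map to weights becomes ``injective on minors'', but that reduced weights make the columns of the fan matrices the \emph{primitive} ray generators, which is exactly what lets you convert the unimodular equivalence of fans into the matrix identity $\widetilde V=A\cdot V\cdot B$ in the first place.
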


\begin{proof} For the first bijection in (\ref{1:1})
 let us define, at first, a map
\begin{equation}\label{biezione0}
    \left\{\text{wps's}\right\}/\text{iso}\longrightarrow\left\{Q\in(\N\setminus\{0\})^{n+1}\ |\ Q\ \text{is reduced}\right\}/\mathfrak{S}_{n+1}
\end{equation}
by sending the isomorphism class of $\P(Q)$ to the $\mathfrak{S}_{n+1}$--orbit of the reduced weights vector $Q'$.

\noindent First of all one has to prove that the map in (\ref{biezione0}) is well defined which is that $\P(Q)\cong\P(P)$ implies that the reduced weights vectors $Q'$ and $P'$ are related by a permutation. In fact the given isomorphism of wps's gives an isomorphisms of polarized varieties $\left(\P(Q),\mathcal{O}_{\P(Q)}(1)\right)\cong\left(\P(P),\mathcal{O}_{\P(P)}(1)\right)$ which is a toric equivariant isomorphism preserving characteristic points of cones (defined as in (\ref{caratteristico})) and their toric orbits. Then toric invariant divisors $\{D_0,\ldots,D_n\}$ generating $\mathcal{W}_T(\P(Q))$ are sent, up to a permutation, into toric invariant divisors $\{E_0,\ldots,E_n\}$ generating  $\mathcal{W}_T(\P(P))$. Recalling notation (\ref{notazione}), any Cartier divisor $D$ representing $\mathcal{O}_{\P(Q)}(1)$ can be written as $D=\sum_{j=0}^n b_jD_j$ where $(b_0,\ldots,b_n)$ is any integer point of the hyperplane $\pi_Q:=\mathcal{V}\left(\sum_{j=0}^n q'_jx_j-\d_Q'\right)$ defined in $\R^{n+1}$ by letting $Q'=(q'_0,\dots,q'_n)$ be the reduced weights vector of $Q$ and $\d'_Q:=\lcm(q'_0,\dots,q'_n)$. Analogously any Cartier divisor $E$ representing $\mathcal{O}_{\P(P)}(1)$ can be written as $E=\sum_{j=0}^n c_jE_j$ where $(c_0,\ldots,c_n)$ is any integer point of the hyperplane $\pi_P:=\mathcal{V}\left(\sum_{j=0}^n p'_jx_j-\d_P'\right)$ defined in $\R^{n+1}$ by letting $P'=(p'_0,\dots,p'_n)$ be the reduced weights vector of $P$ and $\d'_P:=\lcm(p'_0,\dots,p'_n)$. The given isomorphism of polarized varieties then sends any representative $D$ of $\mathcal{O}_{\P(Q)}(1)$ in a representative $E$ of $\mathcal{O}_{\P(P)}(1)$ and viceversa, meaning that every integer point of $\pi_Q$ is, up to a permutation, an integer point of $\pi_P$ and viceversa. Therefore, up to a permutation on the coordinates $x_j$, $\pi_Q\equiv\pi_P$, which is that $Q'=P'$ up to a permutation, as expected.

\noindent Since the map in (\ref{biezione0}) is clearly surjective, it remains to prove that it is also injective. In fact consider $\P(Q_1)$ and $\P(Q_2)$ such that the reduced weights vectors $Q'_1$ and $Q'_2$ are related by a permutation. The the Reduction Theorem \ref{thm:riduzione} immediately gives that $\P(Q_1)\cong\P(Q_2)$, as expected.

The second bijection in (\ref{1:1}), is obtained by applying the Reduction Theorem \ref{thm:riduzione} to Theorem 1.13 in \cite{Oda}.

Finally, the third bijection in (\ref{1:1}) is obtained immediately by applying (\ref{Fn}).
\end{proof}

\subsection{A $Q$--canonical fan of $\P(Q)$}\label{ssez:Q-fan}

In subsections \ref{ssez:fan} and \ref{ssez:HNFfan} we already presented two ways to produce a fan of a given wps. In the present subsection we want to use the characterization (4) in Theorem \ref{thm:fan} to present a further result returning a $Q$--\emph{canonical fan} of $\P(Q)$, in the sense that the associated fan matrix is in HNF, up to a permutation of columns (see the following Remark \ref{rem:Q-canonico}). This fact presents the fan in a triangular shape and generated by as much as possible number of vectors $\e_1,\ldots,\e_n$ in a given basis of the lattice $N$. Moreover it turns out to be a convenient procedure to get a fan of $\P(Q)$ by hands (see the following Example \ref{ex:Q-canonico}).

\noindent The following Proposition \ref{prop:fan minimale} has to be compared with results in $\S 3$ and $\S 4$ of \cite{Conrads}. In particular points from (1) to (3) give a rewrite of Proposition 3.2 and Remark 3.3 in \cite{Conrads}. Point (4) has to be compared with Remark 4.3 and Theorem 4.5 in \cite{Conrads}. Anyway proofs are different: Conrads proved his results by algebraic considerations, while here we will obtain the same results as a consequence of Theorem \ref{thm:fan} and of the methods used to prove it, allowing us to remain in the toric setup.

\begin{proposition}\label{prop:fan minimale} Let $Q=(q_0,\ldots,q_n)$ be a weights vector. For any $j$ with $1\leq j\leq n$, define $k_j:=\gcd(q_0,q_j,q_{j+1},\ldots,q_n)$.
Then:
\begin{enumerate}
  \item $k_j\ |\ k_{j+1}$\ ,
  \item either $k_n=(q_0,q_n)=1$ or there exists a positive integer $i$, with $1\leq i\leq n-1$, such that $k_i=1$ and $k_{i+1}>1$,
  \item consider a superior triangular matrix $V^0=(\v_1,\ldots,\v_n)\in\GL(n,\Z)$ whose columns $\v_j$ are such that:
  \begin{eqnarray*}
    \forall\ 1\leq j\leq i-1\quad \v_j &=& \e_j \\
    \forall\ i\leq j\leq n-1\quad v_{jj} &=& \left.k_{j+1}\right/ k_{j} \\
                                    v_{nn} &=& \left. q_0\right/k_n
  \end{eqnarray*}
  where $v_{kj}$ is the $k$-th entry of the column $\v_j$; then there exists a choice for $v_{kj}$ with $i\leq k\leq j$ such that $V^0$ can be completed to a matrix $V=(\v_0,\v_1,\ldots,\v_n)\in \Mat(n,n+1;\Z)$ whose columns satisfy the following condition
  \begin{equation*}
    \sum_{j=0}^n q_j \v_j = 0\ ;
  \end{equation*}
  in particular the columns of $V$ satisfy condition (4) of Theorem \ref{thm:fan}, hence generate a fan of $\P(Q)$.
  \item there exists a unique choice of the previous matrices $V$ and $V^0$ such that $V^0$ is in HNF with only non negative entries; then the column $\v_0$ in $V$ admits only negative entries. Moreover the matrix $V'$, obtained by exchanging each other the columns $\v_0$ and $\v_n$ in $V$, is in HNF.
\end{enumerate}
\end{proposition}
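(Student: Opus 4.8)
The plan is to prove the four assertions in order, since each builds on the previous one. For \textbf{(1)}, I would simply note that $k_{j+1}=\gcd(q_0,q_{j+1},\ldots,q_n)$ is the gcd of a sub-collection of the integers defining $k_j=\gcd(q_0,q_j,q_{j+1},\ldots,q_n)$, so $k_j\mid k_{j+1}$ is immediate. For \textbf{(2)}, observe that $k_1=\gcd(q_0,q_1,\ldots,q_n)=1$ by the standing assumption that $Q$ is a weights vector; combined with (1) this gives a divisibility chain $1=k_1\mid k_2\mid\cdots\mid k_n$. If $k_n=\gcd(q_0,q_n)=1$ we are in the first case; otherwise let $i$ be the largest index with $k_i=1$, which exists since $k_1=1$ and is at most $n-1$ since $k_n>1$, and then $k_i=1$ while $k_{i+1}>1$ by maximality of $i$.

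For \textbf{(3)}, the key point is to build $\v_0$ column by column using the triangular structure of $V^0$. Note first that $\left.k_{j+1}\right/k_j$ and $\left.q_0\right/k_n$ are positive integers by (1) and the definition of $k_n$, so the prescribed diagonal entries are well-defined; moreover $\det(V^0)=\prod_{j=1}^{n} v_{jj}=\left(\prod_{j=i}^{n-1}\frac{k_{j+1}}{k_j}\right)\frac{q_0}{k_n}=\frac{k_n}{k_i}\cdot\frac{q_0}{k_n}=\frac{q_0}{k_i}=q_0$, so indeed $|V_0|=q_0$ and $V^0\in\GL(n,\Z)$ (up to a sign we may arrange $\det V^0 = q_0$, which is what condition (4) of Theorem \ref{thm:fan} requires). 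To produce $\v_0$ we need $q_0\v_0=-\sum_{j=1}^n q_j\v_j$, i.e. $\v_0=-\frac{1}{q_0}V^0\cdot(q_1,\ldots,q_n)^T$, and we must choose the off-diagonal entries $v_{kj}$ ($i\le k<j\le n$) so that this vector is integral. Working from the bottom row up: the last coordinate of $\v_0$ is $-\frac{q_n v_{nn}}{q_0}=-\frac{q_n}{k_n}$, an integer. Proceeding upward by induction, once rows $k+1,\ldots,n$ of $\v_0$ are integral, the $k$-th coordinate equals $-\frac{1}{q_0}\bigl(q_k v_{kk}+\sum_{j>k} q_j v_{kj}\bigr)$; since $v_{kk}=k_{k+1}/k_k$ the first term contributes $-q_k k_{k+1}/(q_0 k_k)$, and because $k_k=\gcd(q_0,q_k,q_{k+1},\ldots,q_n)$ divides $q_0$ and $q_k$, a standard Bézout argument lets us choose the integers $v_{k,k+1},\ldots,v_{k,n}$ so that the whole bracket is divisible by $q_0$. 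This is the one genuine computation; it is essentially the inductive step in the HNF/Euclidean algorithm, and it is where I expect the only real work to lie. Having constructed $V=(\v_0,V^0)$ with $\sum_{j=0}^n q_j\v_j=0$ and $|V_0|=|\det V^0|=q_0$, condition (4) of Theorem \ref{thm:fan} is satisfied, so $\fan(\v_0,\ldots,\v_n)$ is a fan of $\P(Q)$.

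For \textbf{(4)}, uniqueness of $V^0$ in HNF with non-negative entries is exactly Theorem \ref{thm:cohen}: the HNF of an invertible integer matrix with a prescribed column-deletion structure is unique, and the reductions $0\le v_{kj}<v_{jj}$ pin down the off-diagonal entries, so the construction of $\v_0$ in step (3) becomes forced. It then remains to check two sign statements. That $\v_0$ has only strictly negative entries: since $V^0$ is upper-triangular with positive diagonal and the reductions force each off-diagonal $v_{kj}\in[0,v_{jj})$, an inductive computation from the bottom row up shows each coordinate of $\v_0=-\frac{1}{q_0}V^0\cdot Q^{0T}$ (where $Q^0=(q_1,\ldots,q_n)$) is a negative integer — one checks that the bracketed sum $q_k v_{kk}+\sum_{j>k}q_j v_{kj}$ is strictly positive and $<q_0$ is not needed, only positivity, so the quotient is a negative integer (it is nonzero because $q_k v_{kk}>0$). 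Finally, exchanging columns $\v_0$ and $\v_n$: the resulting matrix $V'$ has $\v_n$ (all negative) in the last column and $(\v_1,\ldots,\v_{n-1},\v_0)$ before it; I would verify directly from Definition \ref{def:HNF} that the pivot structure is preserved — the first $n-1$ columns still form the upper-triangular HNF block $\e_1,\ldots,\e_{i-1},\v_i,\ldots,\v_{n-1}$, and the last column $\v_0$ being entirely negative means, after the necessary sign normalization absorbed into the $\GL(n,\Z)$ freedom, the pivot conditions and reduction inequalities hold, so $V'$ is in HNF. The comparison with Conrads' Proposition 3.2, Remark 3.3, Remark 4.3 and Theorem 4.5 can be stated as a remark; no further proof is needed there.
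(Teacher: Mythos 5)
Parts (1)--(3) of your proposal are essentially the paper's own argument and are fine: one Diophantine equation per row, solvability of the row-$j$ equation reducing to $k_j\mid q_j$ (more precisely, to the fact that $q_jv_{jj}=q_jk_{j+1}/k_j$ lies in the subgroup of $\Z/q_0\Z$ generated by $q_{j+1},\ldots,q_n$, which is the one generated by $k_{j+1}$), and $\det V^0=\prod_j v_{jj}=q_0$ so that condition (4) of Theorem \ref{thm:fan} applies. Your verification that the entries of $\v_0$ are strictly negative once all $v_{kj}\geq 0$ is also correct, and indeed more explicit than the paper's.

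The gaps are in part (4). First, uniqueness: Theorem \ref{thm:cohen} gives uniqueness of the HNF \emph{inside a single left coset} $\GL(n,\Z)\cdot A$ of a fixed matrix $A$. To deduce that there is only one admissible $V^0$ in HNF you must still show that any two matrices $V^0,\tilde V^0$ satisfying the constraints of (3) are related by left multiplication by an element of $\GL(n,\Z)$ (e.g.\ because the completed matrices $V,\tilde V$ both present $\Z^n$ as the cokernel of $Q^T:\Z\to\Z^{n+1}$), or else argue as the paper does, proving directly that for each entry $v_{jk}$ the set of admissible values is a coset of $v_{kk}\Z$, so that division with remainder yields exactly one reduced representative. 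As written, ``the reductions $0\le v_{kj}<v_{jj}$ pin down the off-diagonal entries'' is the statement to be proved, not a consequence of Theorem \ref{thm:cohen}; note that the \emph{existence} of a choice satisfying those inequalities also needs this coset structure. Second, the claim about $V'$: your description is garbled ($\v_0$, not $\v_n$, is the all-negative column, and your column ordering matches neither the literal swap nor the intended matrix), and the step ``after the necessary sign normalization absorbed into the $\GL(n,\Z)$ freedom'' is not legitimate, since the assertion concerns one specific matrix. The argument the paper intends (see its proof and Example \ref{ex:Q-canonico}) is that $V'=(\v_1,\ldots,\v_n,\v_0)$, with $\v_0$ in the last position: that column is not a pivot column for $f(i)=i$, and Definition \ref{def:HNF} imposes no condition at all on such a column, so $V'$ is in HNF exactly because $V^0$ is, the negativity of $\v_0$ notwithstanding. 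The literal transposition $(\v_n,\v_1,\ldots,\v_{n-1},\v_0)$ is in fact \emph{not} in HNF for $n\geq 2$ (row $n$ has its first nonzero entry $v_{nn}\geq 1$ in column $1$, incompatible with a strictly increasing pivot map), so identifying the correct position of $\v_0$ is precisely the point your argument needs to make.
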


\begin{proof} (1) is obvious. (2) follows from (1) by recalling the hypothesis
\begin{equation*}
 k_1 = \gcd(q_0,q_1,\ldots,q_n)=1 \ .
\end{equation*}
To prove (3) we have to show the existence of an integral vector $\v_0=\left(\begin{array}{c}
                                                                                    v_{10} \\
                                                                                    \vdots \\
                                                                                    v_{n0}
                                                                                  \end{array}\right)
$ satisfying the following equations
\begin{eqnarray}
    \label{eq.i}
    \forall\ 1\leq j\leq i-1 &\quad& q_0 v_{j0} + q_j + \sum_{k=i}^n q_k v_{jk} = 0 \\
    \label{eq.>i}
    \forall\ i\leq j\leq n-1 &\quad& q_0 v_{j0} + q_j\frac{k_{j+1}}{k_j} + \sum_{k=j+1}^n q_{k}v_{jk}=0 \\
    \label{eq.n}
     && q_0 v_{n0} + q_n\frac{q_0}{k_n} = 0\ .
\end{eqnarray}
The last equation (\ref{eq.n}) is clearly satisfied by putting $v_{n0}=-q_n/k_n=-q_n/(q_0,q_n)$.
The $j$-th equation in (\ref{eq.>i}) admits integer solutions for $(v_{j0},v_{j,j+1},\ldots,v_{jn})$ if and only if
$$\gcd(q_0,q_{j+1},\ldots,q_n)=k_{j+1}\ |\ q_j\frac{k_{j+1}}{k_j}$$
which is clearly true since $k_j\ |\ q_j$, by definition. Finally the $j$-th equation in (\ref{eq.i}) admits integer solutions for $(v_{j0},v_{ji},v_{j,i+1}\ldots,v_{jn})$ if and only if
$$\forall\ 1\leq j\leq i-1\quad \gcd(q_0,q_i,\ldots,q_n)=k_{i}\ |\ q_j$$
which is clearly true since $k_i=1$, by the previous point (2). Recall now that $V^0$ is a triangular matrix, giving
\begin{equation*}
    \det (V^0) = \prod_{j=1}^n v_{jj} = k_{i+1}\cdot\frac{k_{i+2}}{k_{i+1}}\cdots\frac{k_n}{k_{n-1}}\cdot\frac{q_0}{k_n} = q_0
\end{equation*}
which is enough to get condition (4) of Theorem \ref{thm:fan} for the columns of $V$.

\noindent To prove (4) let us first of all observe that, for any $1\leq j\leq i-1$, $\v_{j}=\e_j$, meaning that the first $i-1$ columns of $V^0$ are composed of nonnegative entries satisfying the HNF conditions. Moreover $V^0$ is upper triangular. Then it remains to prove that there exists a unique choice for $v_{jk}$ such that
\begin{equation*}
    \forall k: i\leq k\leq n\ ,\ \forall j: j<k \quad 0\leq v_{jk}< v_{kk}\ .
\end{equation*}
The $j$--th equation in (\ref{eq.>i}) can be rewritten as follows
\begin{equation*}
    q_0v_{j0}+q_nv_{jn}=-q_j\frac{k_{j+1}}{k_j}-\sum_{k=j+1}^{n-1} q_k v_{jk}\ .
\end{equation*}
Fixing variables $v_{jk}$, for $j+1\leq k\leq n-1$, the previous diophantine equation admits solutions for $v_{j0},v_{jn}$ if and only if
\begin{equation}\label{div-condizione}
    k_n=\gcd(q_0,q_n)\ |\ -q_j\frac{k_{j+1}}{k_j}-\sum_{k=j+1}^{n-1} q_k v_{jk}\ .
\end{equation}
Moreover, given a particular solution $v_{jn}^{(0)}$, all the possible integer solutions for $v_{jn}$ are given by
\begin{equation*}
    v_{jn}=v_{jn}^{(0)} - \frac{q_0}{k_n}\cdot h_{jn} = v_{jn}^{(0)} - v_{nn}\cdot h_{jn}\ ,\quad \forall\ h_{jn}\in\Z\ .
\end{equation*}
Divide $v_{jn}^{(0)}$ by $v_{nn}$. Then the remainder of such a division gives a unique choice for $v_{jn}$ such that
\begin{equation*}
    \forall\ i\leq j\leq n-1 \quad 0\leq v_{jn}<v_{nn}\ .
\end{equation*}
Analogously the $j$-th equation in (\ref{eq.i}) can be rewritten as follows
\begin{equation*}
    q_0v_{j0}+q_nv_{jn}=-q_j-\sum_{k=i}^{n-1} q_k v_{jk}
\end{equation*}
and the same argument ensures the existence of a unique choice for $v_{jn}$ such that
\begin{equation*}
    \forall\ 1\leq j\leq i-1 \quad 0\leq v_{jn}<v_{nn}\ .
\end{equation*}
Then the last column in $V^0$ can be uniquely chosen with non-negative entries satisfying the HNF condition. Iteratively, condition (\ref{div-condizione}) is satisfied if and only if there exist integer solutions for $x,v_{jk}$ in the diophantine equation
\begin{equation*}
    k_n x +q_{n-1} v_{j,n-1}= -q_j\frac{k_{j+1}}{k_j}-\sum_{k=j+1}^{n-2} q_k v_{jk}
\end{equation*}
which is if and only if
\begin{equation*}
    \gcd(k_n,q_{n-1})=\gcd(q_0,q_{n-1},q_n)=:k_{n-1}\ |\ -q_j\frac{k_{j+1}}{k_j}-\sum_{k=j+1}^{n-2} q_k v_{jk}\ .
\end{equation*}
In particular, given a solution $v_{j,n-1}^{(0)}$, all the possible integer solutions for $v_{j,n-1}$ are given by
\begin{equation*}
    v_{j,n-1}=v_{j,n-1}^{(0)} - \frac{k_n}{k_{n-1}}\cdot h_{j,n-1} = v_{j,n-1}^{(0)} - v_{n-1,n-1}\cdot h_{j,n-1}\ ,\quad \forall\ h_{j,n-1}\in\Z\ .
\end{equation*}
Therefore, the division algorithm ensures the existence of a unique choice for $v_{j,n-1}$ such that
\begin{equation*}
    \forall\ i\leq j\leq n-1 \quad 0\leq v_{j,n-1}<v_{n-1,n-1}\ .
\end{equation*}
The same argument ensures the existence of a unique choice for $v_{j,n-1}$ such that
\begin{equation*}
    \forall\ 1\leq j\leq i-1 \quad 0\leq v_{j,n-1}<v_{n-1,n-1}\ .
\end{equation*}
Then the $(n-1)$--th column in $V^0$ can be uniquely chosen with non-negative entries satisfying the HNF condition. By completing the iteration, $V_0$ can then be uniquely chosen in HNF. Consequently $\v_0$ has to necessarily admits only negative entries. To prove that $V'$ is in HNF it suffices to observe that, for $V'$, the function $f:\{1,\ldots,n\}\rightarrow\{1,\ldots,n+1\}$, in Definition \ref{def:HNF}, is given by setting $f(i)=i$, for any $1\leq\ i\leq n$. Then $V'$ is in HNF if and only $V^0$ is in HNF, since there are no condition for the entries of $\v_0$ which is the $(n+1)$--th column of $V'$.
\end{proof}

\begin{remark}\label{rem:Q-canonico} When the weights vector $Q$ is fixed, a significant consequence of Proposition \ref{prop:fan minimale} is that the fan of $\P(Q)$ presented in (4) is unique and is given by the HNF of a matrix $V$ associated with any fan of $\P(Q)$. This gives rise to an algorithm producing the $Q$--canonical fan which has been implemented in \cite{RTpdf,RT-Maple}, \S 0.b.

\noindent Finally let us underline that the uniqueness of the $Q$--canonical fan of $\P(Q)$ depends on the weights order in $Q$, since the permutation group $\mathfrak{S}_{n+1}\subset\GL(n+1,\Z)$ acts on the right. Then we can't define a \emph{canonical} fan of $\P(Q)$ but just a $Q$--canonical one.
\end{remark}

\begin{example}\label{ex:Q-canonico} Let us apply the Proposition \ref{prop:fan minimale} to produce by hand the $Q$-canonical fan (hence a fan) of $\P(Q)$ for $Q=(2,3,4,15,25)$. First of all observe that in this case
\begin{equation*}
    k_1=\gcd(Q)=1\ ,\ k_2=d_1=1\ ,\ k_3=\gcd(2,15,25)=1\ ,\ k_4=\gcd(2,25)=1\ .
\end{equation*}
The matrix $V'$ in Proposition \ref{prop:fan minimale}(4) is in HNF, then it looks as follows
\begin{equation*}
    V'=\left(
         \begin{array}{ccccc}
           1 & 0 & 0 & v_{1,3} & v_{1,0} \\
           0 & 1 & 0 & v_{2,3} & v_{2,0} \\
           0 & 0 & 1 & v_{3,3} & v_{3,0} \\
           0 & 0 & 0 & 2 & v_{4,0} \\
         \end{array}
       \right)
\end{equation*}
with $0\leq v_{k,3}\leq 1$, for $1\leq k\leq 3$. Moreover we get the following conditions
\begin{eqnarray*}
  0 &=& q_0 v_{4,0}+\frac{q_0q_4}{k_4} = 2 v_{4,0} + 50\ \Rightarrow\ v_{4,0}=-25\\
  0 &=& q_0 v_{3,0}+\frac{k_3q_3}{k_4}+q_4 v_{3,3} = 2v_{3,0}+ 15 + 25 v_{3,3}\ \Rightarrow\ v_{3,3}=1\ \text{and}\ v_{3,0}=-20 \\
  0 &=& q_0 v_{2,0}+\frac{k_2q_2}{k_3}+q_4 v_{2,3} = 2 v_{2,0} + 4 + 25 v_{2,3}\ \Rightarrow\ v_{2,3}=0\ \text{and}\ v_{2,0}=-2 \\
  0 &=& q_0 v_{1,0}+\frac{k_1q_1}{k_2}+q_4 v_{1,3} = 2 v_{1,0} + 3 + 25 v_{1,3}\ \Rightarrow\ v_{1,3}=1\ \text{and}\ v_{1,0}=-14 \\
\end{eqnarray*}
giving the following $Q$--canonical fan for $\P(2,3,4,15,25)$\ :
\begin{equation*}
    \Si=\fan\left(\left(
                         \begin{array}{c}
                           -14 \\
                           -2 \\
                           -20 \\
                           -25 \\
                         \end{array}
                       \right), \left(
                                  \begin{array}{c}
                                    1 \\
                                    0 \\
                                    0 \\
                                    0 \\
                                  \end{array}
                                \right), \left(
                                                \begin{array}{c}
                                                  0 \\
                                                  1 \\
                                                  0 \\
                                                  0 \\
                                                \end{array}
                                              \right), \left(
                                                              \begin{array}{c}
                                                                0 \\
                                                                0 \\
                                                                1 \\
                                                                0 \\
                                                              \end{array}
                                                            \right), \left(
                                                                            \begin{array}{c}
                                                                              1 \\
                                                                              0 \\
                                                                              1 \\
                                                                              2 \\
                                                                            \end{array}
                                                                          \right)
    \right)\ .
\end{equation*}
\end{example}

\oneline
Let us conclude the present section with the following result, which will be useful later in (when proving Lemma \ref{lm:equivarianza} and Proposition \ref{prop:perm-equivarianza}).

\begin{proposition}\label{prop:induzione} Let $V=(\v_0,\ldots,\v_n)$ be the $Q$--canonical fan matrix with $Q=(q_0,\ldots,q_n)$, as constructed in the previous Proposition \ref{prop:fan minimale}. Then the matrix $\widehat{V}$, obtained from $V$ by multiplying $\v_0$ by $k_2$ and removing the column $\v_1$ and the first row, is the $\widehat{Q}$--canonical fan matrix, where $\widehat{Q}=\left(q_0/k_2, q_2,\ldots,q_n\right)$.
\end{proposition}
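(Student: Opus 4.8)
The plan is to verify directly that $\widehat{V}$ satisfies the conditions that \emph{characterize} the $\widehat{Q}$--canonical fan matrix in Proposition \ref{prop:fan minimale}, (3)--(4), and then to invoke the uniqueness contained in part (4). Write $\widehat{Q}=(\widehat{q}_0,\ldots,\widehat{q}_{n-1})$ with $\widehat{q}_0=q_0/k_2$ and $\widehat{q}_m=q_{m+1}$ for $1\le m\le n-1$; for $\w\in\Q^n$ let $\w'\in\Q^{n-1}$ denote the vector obtained by deleting the first entry of $\w$. If $V=(\v_0,\v_1,\ldots,\v_n)$ is the $Q$--canonical fan matrix, then by construction the columns of $\widehat{V}$ are $\widehat{\v}_0=k_2\,\v_0'$ and $\widehat{\v}_m=\v_{m+1}'$ for $1\le m\le n-1$. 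A useful preliminary remark is that $\v_1=k_2\,\e_1$ in all cases: if the jump index $i$ of Proposition \ref{prop:fan minimale}, (2), equals $1$ then $\v_1$ is the first column of the upper triangular matrix $V^0$ with $v_{11}=k_2/k_1=k_2$, while if $i>1$ then $\v_1=\e_1$ and $k_2=k_2/k_1=1$. In particular $\v_1'=\mathbf{0}$.

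The arithmetic core is the identity
\[
\widehat{k}_m:=\gcd\bigl(\widehat{q}_0,\widehat{q}_m,\ldots,\widehat{q}_{n-1}\bigr)=k_{m+1}/k_2\qquad(1\le m\le n-1).
\]
Indeed $k_2\mid k_{m+1}$ by Proposition \ref{prop:fan minimale}, (1), and $k_{m+1}/k_2$ divides $q_0/k_2=\widehat{q}_0$ and each $q_{j+1}=\widehat{q}_j$ with $j\ge m$, so $k_{m+1}/k_2\mid\widehat{k}_m$; conversely $\widehat{k}_m$ divides $q_0$ and $q_{m+1},\ldots,q_n$, hence $\widehat{k}_m\mid k_{m+1}$, while $\gcd(\widehat{k}_m,k_2)$ divides $\gcd(q_0/k_2,k_2)=1$ (this last equality being precisely the condition that $\widehat{Q}$ be a weights vector, which is automatic whenever $k_2=1$, e.g. when $Q$ is reduced), so $k_2\widehat{k}_m\mid k_{m+1}$ and thus $\widehat{k}_m\mid k_{m+1}/k_2$. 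From this one reads off $\widehat{q}_0/\widehat{k}_{n-1}=(q_0/k_2)/(k_n/k_2)=q_0/k_n$ and, for $1\le j\le n-2$, $\widehat{k}_{j+1}/\widehat{k}_j=k_{j+2}/k_{j+1}$.

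Now $\widehat{V}^0:=(\widehat{\v}_1,\ldots,\widehat{\v}_{n-1})$ is obtained from the upper triangular matrix $V^0=(\v_1,\ldots,\v_n)$ by deleting its first row and first column; it is therefore again upper triangular, with diagonal $(v_{22},\ldots,v_{nn})$, and its off--diagonal entries, being entries of $V^0$ above the diagonal, are non-negative and satisfy the HNF inequalities $0\le v_{l+1,m+1}<v_{m+1,m+1}$ for $l<m$. Moreover $v_{j+1,j+1}=k_{j+2}/k_{j+1}=\widehat{k}_{j+1}/\widehat{k}_j$ for $1\le j\le n-2$ and $v_{nn}=q_0/k_n=\widehat{q}_0/\widehat{k}_{n-1}$, so the diagonal of $\widehat{V}^0$ is exactly the one prescribed for the $\widehat{Q}$--canonical matrix in Proposition \ref{prop:fan minimale}, (3) (in an upper triangular HNF matrix with non-negative entries a column with diagonal entry $1$ is automatically a unit vector, which matches the $\e$--prefix prescription for $\widehat{Q}$). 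Thus $\widehat{V}^0$ is in HNF with non-negative entries and the correct shape. Finally $\widehat{\v}_0$ is the right completion, since
\[
\sum_{m=0}^{n-1}\widehat{q}_m\widehat{\v}_m=\frac{q_0}{k_2}\bigl(k_2\v_0'\bigr)+\sum_{j=2}^{n}q_j\v_j'=q_0\v_0'+\sum_{j=2}^{n}q_j\v_j'=\Bigl(\sum_{j=0}^{n}q_j\v_j\Bigr)'-q_1\v_1'=\mathbf{0}
\]
by the relation $\sum_j q_j\v_j=\mathbf{0}$ and $\v_1'=\mathbf{0}$, while all entries of $\widehat{\v}_0=k_2\v_0'$ are negative because those of $\v_0$ are (Proposition \ref{prop:fan minimale}, (4)) and $k_2>0$. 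Hence $\widehat{V}$ meets every defining requirement of the $\widehat{Q}$--canonical fan matrix, and by the uniqueness in Proposition \ref{prop:fan minimale}, (4), it is that matrix.

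The step I expect to be the main obstacle is the gcd bookkeeping of the second paragraph: pinning down $\widehat{k}_m=k_{m+1}/k_2$ and matching it against the prescribed diagonal of the $\widehat{Q}$--canonical matrix (and, relatedly, making sure $\widehat{Q}$ is genuinely a weights vector in the situations where the statement is applied). Once this identity is in place, the comparison of shapes and the computation of $\widehat{\v}_0$ are routine, and the conclusion follows from the uniqueness clause of Proposition \ref{prop:fan minimale}.
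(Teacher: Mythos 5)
Your overall route (check the triangular shape and HNF inequalities, check the column relation, conclude by uniqueness) is essentially the paper's, but the step you insert --- matching the diagonal of $\widehat{V}^0$ against the prescription of Proposition \ref{prop:fan minimale}(3) \emph{computed from} $\widehat{Q}$, via the identity $\widehat{k}_m=k_{m+1}/k_2$ --- is a genuine gap, and it is exactly the point you flag. That identity needs $\gcd(q_0/k_2,k_2)=1$, i.e.\ that $\widehat{Q}$ be coprime, and this is not among the hypotheses: the proposition is stated for an arbitrary weights vector $Q$ and is applied in Lemma \ref{lm:equivarianza} to the $Q$--canonical matrix of an arbitrary, in general non-reduced, $Q$. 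Take $Q=(4,3,2)$: then $k_2=2$, $\widehat{Q}=(2,2)$, $\gcd(q_0/k_2,k_2)=2$, and $\widehat{k}_1=\gcd(2,2)=2\neq k_2/k_2$. The $Q$--canonical matrix is $V=\left(\begin{smallmatrix}-2&2&1\\-1&0&2\end{smallmatrix}\right)$, so $\widehat{V}=(-2,2)$, whereas the recipe of Proposition \ref{prop:fan minimale}(3) applied literally to $\widehat{Q}$, with its own gcd's $\widehat{k}_m$, prescribes diagonal $\widehat{q}_0/\widehat{k}_1=1$ and yields $(-1,1)$, namely the canonical matrix of the \emph{reduction} of $\widehat{Q}$. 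So in the non-coprime case your matching argument cannot be completed, and the matrix it would single out is the wrong one for the intended use: Lemma \ref{lm:equivarianza} needs the maximal minors of $\widehat{V}$ to be $\pm q_0/k_2,\pm q_2,\ldots,\pm q_n$, not their reduction.

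The paper's proof never touches the gcd's of $\widehat{Q}$: it identifies the $\widehat{Q}$--canonical matrix through the characterization ``columns satisfy $\sum_j\widehat{q}_j\widehat{\v}_j=0$ and $|\det\widehat{V}^0|=\widehat{q}_0$ (condition (4) of Theorem \ref{thm:fan}), with $\widehat{V}^0$ in HNF'' (cf.\ Remark \ref{rem:Q-canonico} and Theorem \ref{thm:cohen}), and verifies these directly: the triangular diagonal gives $\det\widehat{V}^0=\prod_{j=2}^{n-1}(k_{j+1}/k_j)\cdot(q_0/k_n)=q_0/k_2$, the row relations $q_0v_{j,0}=-\sum_{l\geq 2}q_l v_{j,l}$ from Proposition \ref{prop:fan minimale} give the column relation (your computation via $\v_1=k_2\e_1$ is the same), and HNF is inherited from $V^0$. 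Your argument becomes correct if you drop the diagonal-matching step and replace it by this determinant computation and this characterization; the pieces you already have (triangularity, the HNF inequalities, $\sum_m\widehat{q}_m\widehat{\v}_m=\mathbf{0}$, integrality and negativity of $k_2\v_0'$) then suffice, with no coprimality assumption on $\widehat{Q}$.
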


\begin{proof} $\widehat{V}$ is the following matrix
\begin{equation*}
    \widehat{V}=\left(
                    \begin{array}{cccccc}
                      k_2v_{2,0} & k_3/k_2 & v_{2,3} & \cdots& \cdots & v_{2,n} \\
                      k_2v_{3,0} & 0 & k_4/k_3 & \cdots &\cdots & v_{3,n} \\
                      \vdots & 0 & & \ddots &  & \vdots \\
                      \vdots & \vdots& & & k_n/k_{n-1}& v_{n-1,n}\\
                      k_2v_{n,0} & 0 & \cdots& 0 & 0 & q_0/k_n \\
                    \end{array}
                  \right)
\end{equation*}
Then $\widehat{V}_0=q_0/k_2$. On the other hand, by Proposition \ref{prop:fan minimale} we know that
\begin{eqnarray*}
    \forall\ 2\leq i\leq n-1\quad q_0v_{i,0}&=& -q_i\frac{k_{i+1}}{k_i}-\sum_{l=i+1}^n q_l v_{i,l}\\
                                  q_0v_{n,0}&=& -q_n\frac{q_0}{k_n}
\end{eqnarray*}
guaranteeing that $\widehat{V}$ satisfies the condition (4) of Theorem \ref{thm:fan} for the weights $\widehat{Q}$. Moreover $\widehat{V}$ is the $\widehat{Q}$--canonical fan matrix since $\widehat{V}^0$ is clearly still in HNF when $V^0$ is in HNF.
\end{proof}

\section{Characterization of polytopes giving $\P(Q)$}\label{sez: politopi}

The present section is devoted to answer questions (I) and (II) opening the previous section \ref{sez: fan}, in the case of polytopes. As recalled in \ref{sssez:politopi}, an integral polytope $\D$ corresponds to a \emph{polarized toric variety} $(\P_{\D},\mathcal{O}(1))$. Then those questions have to be reformulated, for polytopes, as follows:
\begin{itemize}
  \item[(A)] \emph{given a weights vector $Q$ and an integral polytope $\D$, when there exists a positive integer $m$ such that $(\P_{\D},\mathcal{O}(1))\cong (\P(Q),\mathcal{O}(m))$?}
  \item[(B)] \emph{given an integral polytope $\D$, when there exist a weights vector $Q$ and a positive integer $m$ such that $(\P_{\D},\mathcal{O}(1))\cong(\P(Q),\mathcal{O}(m))$?}
\end{itemize}

\subsection{From fans to polytopes and back}\label{fan_to_polytope}

 Consider the fan $\Si:=\fan(\v_0,\ldots,\v_n)$, generated by $n+1$ integer vectors satisfying the equivalent conditions (2), (3) and (4) in Theorem \ref{thm:fan}. Let $V=(\v_0,\ldots,\v_n)=(v_{ij})$ be the associated matrix. Let $V^0$ be the $n\times n$ sub-matrix of $V$ obtained by removing the first column: then $V^0=(\v_1,\ldots,\v_n)=(v_{ik})$ with $1\leq i\leq n$ and $1\leq k\leq n$.

\begin{definition}\label{def:Wtrasversa} Let $V\in \Mat(n,n+1;\Z)$ be a matrix whose maximal minors do not vanish i.e., in the same notation given above, $V_l\neq 0$ for every $0\leq l\leq n$. Consider the vector of absolute values of maximal minors $Q=(|V_0|,\ldots,|V_{n}|)$. Recalling \ref{trasversa}, the \emph{$(0,Q)$-weighted transverse matrix} of $V$ (or simply \emph{weighted transverse}) is defined to be the following $n\times n$ rational matrix
\begin{equation*}
    (V^0)^*_Q:=(V^0)^*\cdot (\d\ I^0_Q)
\end{equation*}
where $I^0_Q:=\diag(1/|V_1|,\ldots,1/|V_n|)$ and $\d:=\lcm(|V_0|,\ldots,|V_n|)$.
\end{definition}

\begin{remark} If $V\in\mathfrak{V}_n$ then the following Theorem \ref{thm:fan-politopi} implicitly shows that the weighted transverse matrix $(V^0)^*_Q$ is a $n\times n$ \emph{integral} matrix. In particular this fact will also be explicitly proved in Proposition \ref{prop:integrità}.
\end{remark}

The minimal (very) ample line bundle of the wps $\P(Q)=X(\Si)$ is given by $\mathcal{O}_{\P(Q)}(1)$ defined in (\ref{notazione}). If $D_j$ is the toric invariant divisor associated with $\v_j\in\Si(1)$ then $(\d'/q'_j) D_j$ is an ample divisor in the linear system $|\mathcal{O}_{\P(Q)}(1)|$, where as usual $Q'=(q'_0,\ldots,q'_n)$ is the reduced weights vector of $Q$ and $\d'=\lcm(Q')$. Set $\D_j$ be the integral polytope associated with the divisor $H=(\d'/q'_j) D_j$, like in (\ref{politopo}). One can easily check that $\D_j$ is the convex hull $\conv(\mathbf{0},\w_1,\ldots,\w_n)$ of the origin with $n$ points $\w_1,\ldots,\w_n\in M_{\R}$: in particular \emph{the ampleness of $(\d'/q'_j) D_j$ implies that $\{\w_1,\ldots,\w_n\}$ is a set of $n$ distinct, integral, non-zero vectors} (\cite{Oda} Corollary 2.14).

Let $\mathfrak{P}_n$ be the set of integral polytopes in $M_{\R}$ obtained as the convex hull of the origin with $n$ distinct, integral, non-zero vectors. Then we have established maps
\begin{equation}\label{fan-polytope}
    \begin{array}{cccc}
      \forall\ 0\leq j\leq n\ ,\quad\D^j_Q: & \mathfrak{F}(Q) & \longrightarrow & \mathfrak{P}_n \\
       & \Si & \mapsto & \D^j_Q(\Si):=\D_j
    \end{array}
\end{equation}
Let $W=(w_{ik})$ be the $n\times n$ matrix of the components of vectors $\w_1,\ldots,\w_n\in M_{\R}$ over the dual basis: namely
\begin{equation*}
    \forall\ k=1,\ldots,n\quad \w_k=\sum_{i=1}^n w_{ik}\e^{\vee}_i
\end{equation*}
where $\{\e^{\vee}_1,\ldots,\e^{\vee}_n\}$ is the dual basis of $\{\e_1,\ldots,\e_n\}$. Then we get the following representation of the map $\D^0_Q$:

\begin{theorem}\label{thm:fan-politopi} Given the fan $\Si:=\fan(\v_0,\ldots,\v_n)\in\mathfrak{F}(Q)$, the image $\D^0_Q(\Si)$ defined in (\ref{fan-polytope}) is the convex hull $\conv(\mathbf{0},\w_1,\ldots,\w_n)$ of the origin with the $n$ distinct, integral, non-zero vectors $\w_1,\ldots,\w_n\in M_{\R}$ giving the columns of the $(0,Q)$-weighted transverse matrix of $V=(\v_0,\ldots,\v_n)$, i.e.
\begin{equation*}
    W=(V^0)^*_Q\ ,
\end{equation*}
where $Q=(|V_0|,\ldots,|V_n|)$.
Namely the entries of $W$ are given by
\begin{equation*}
    \forall\ 1\leq i\leq n\ ,\ 1\leq k\leq n\quad w_{ik}=\frac{\d V^0_{ik}}{q_k V_0}
\end{equation*}
where $V^0_{ik}$ is the cofactor of $v_{ik}$ in $V^0$ and $V_0=\det(V^0)=\pm q_0$ (by either $(3)$ or $(4)$ in Theorem \ref{thm:fan}).
\end{theorem}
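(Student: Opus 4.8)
The plan is to identify the integral polytope $\D_0 := \D_{H}$ associated to the very ample divisor $H = (\d'/q'_0)\,D_0$ by writing out its vertices via Criterion~\ref{cr:Cartier} and the recipe (\ref{politopo}), and then to recognize the resulting vertex matrix as the weighted transverse matrix of Definition~\ref{def:Wtrasversa}. Concretely: since $^Q\Si$ is complete and $H$ is Cartier, each maximal cone $\s_l\in{}^Q\Si(n)$ (the one \emph{not} containing $\v_l$) determines a unique $\u_l=\u(\s_l)\in{}^QM$ with $\langle\u_l,\n_j\rangle = -a_j$ for all $j\neq l$, where $H=\sum_j a_j D_j$ has $a_0=\d'/q'_0$ and $a_j=0$ for $j\neq 0$. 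By Lemma~\ref{lm:condizioni}(c) we may work with $\n_j=\v_j$ after passing to the reduced weights (equivalently, absorb the $d_j$ into the diagonal normalization), so I will phrase everything directly in terms of $V$ and $Q=(|V_0|,\dots,|V_n|)$ using $\d=\lcm(Q)$.

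First I would compute the vertex $\u(\s_0)$. For the cone $\s_0=\langle\v_1,\dots,\v_n\rangle$, the defining condition is $\langle\u(\s_0),\v_k\rangle = 0$ for all $k=1,\dots,n$ (since $a_k=0$), forcing $\u(\s_0)=\mathbf 0$; this is the origin appearing in $\conv(\mathbf 0,\w_1,\dots,\w_n)$. Next, for $l=k\neq 0$, the vertex $\w_k := \u(\s_k)$ must satisfy $\langle\w_k,\v_j\rangle=0$ for $j\neq 0,k$ and $\langle\w_k,\v_0\rangle=-a_0=-\d/|V_0|$ (using $q'_0$ scaled appropriately, or directly with the unreduced weights and $\d=\lcm(Q)$). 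Since $\sum_j q_j\v_j=0$ gives $q_0\v_0=-\sum_{j\geq 1}q_j\v_j$, the condition $\langle\w_k,\v_0\rangle=-\d/|V_0|$ rewrites as $\langle\w_k, q_k\v_k\rangle = \d\,\mathrm{sgn}(V_0)$ after killing the other terms, i.e.\ $\langle\w_k,\v_k\rangle=\pm\d/q_k$. So $\w_k$ is characterized by the linear system: it pairs to $0$ with $\v_j$ for $j\neq 0,k$ and to $\pm\d/q_k$ with $\v_k$. This is exactly the system solved by the $k$-th column of $(V^0)^{-T}$ scaled by $\d/q_k$, which is precisely $(V^0)^*_Q$ — and Cramer's rule then yields the closed form $w_{ik}=\d V^0_{ik}/(q_k V_0)$ with $V^0_{ik}$ the relevant cofactor.

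The remaining points to nail down are: (i) that $\D_j = \conv(\mathbf 0,\w_1,\dots,\w_n)$ really has the origin and the $\w_k$ as its \emph{vertices} (not just that these points lie in the boundary description), which follows because $\D_0$ is the polytope of a very ample — hence ample — divisor on the $n$-dimensional $\P(Q)$, so by Criterion~\ref{cr:ampiezza} $v({}^Q\Si)$ consists of $n+1$ distinct points, namely $\mathbf 0$ together with the $\w_k$; and (ii) that the map $\D^0_Q$ as defined in (\ref{fan-polytope}) indeed sends $\Si$ to this polytope, which is just unwinding the definition since $H=(\d'/q'_0)D_0\in|\mathcal O_{\P(Q)}(1)|$ and $\D^0_Q(\Si)=\D_{H}$. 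The bookkeeping with $\d'$ versus $\d$ and $q'_j$ versus $q_j$ needs care: by Proposition~\ref{prop:lcm} we have $\d=a\d'$ and $q_j=a_jq'_j$, and Lemma~\ref{lm:condizioni}(c) tells us $\v_j=d_j\n_j$, so the diagonal factor $\d/q_j=a\d'/(a_jq'_j)$ combines with $\v_j=d_j\n_j$ and $a_jd_j=a$ to produce exactly the normalization $\d'/q'_j$ against $\n_j$; this is where the weights vector being possibly non-reduced forces the $\d\,I^0_Q$ normalization rather than a plain transpose-inverse.

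The main obstacle I anticipate is the sign tracking and the reduced-vs-unreduced weights conversion: getting the fixed sign $\epsilon$ in $V_0=\pm q_0$ to propagate consistently through Cramer's rule so that the stated formula $w_{ik}=\d V^0_{ik}/(q_k V_0)$ holds with that exact denominator (and so that one does not accidentally land on $-\w_k$ or on a different scaling). I would handle this by fixing once and for all the $\epsilon$ of Theorem~\ref{thm:fan}(3), expressing $\v_0$ via the relation $\sum q_j\v_j=0$, and computing $\langle\w_k,\v_0\rangle$ symbolically to confirm it equals $-\d/|V_0|\cdot(\text{correct sign})$ matching the divisor $H=(\d'/q'_0)D_0$ — at which point the identification $W=(V^0)^*_Q$ is forced. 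Integrality of $W$ I would \emph{not} prove here (the remark defers it to Proposition~\ref{prop:integrit\`a}); it suffices that $W$ has rational entries and, by the ampleness input, distinct non-zero columns.
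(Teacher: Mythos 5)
Your proposal is correct and follows essentially the same route as the paper: both identify $\D^0_Q(\Si)$ as the polytope of $H=(\d'/q'_0)D_0$, characterize its vertices as the solutions of the linear systems coming from (\ref{politopo}) and Criterion \ref{cr:Cartier}, solve by Cramer's rule, and use Lemma \ref{lm:condizioni}(c) together with Propositions \ref{prop:weight-rel.s}(5) and \ref{prop:lcm} to pass between the reduced data $(\n_j,\d',q'_j)$ and the unreduced data $(\v_j,\d,q_j)$. The only cosmetic difference is that you normalize to the unreduced weights at the start and solve the $n\times n$ systems $(V^0)^T\w_k=(\d/q_k)\e_k$ directly (where in fact $\langle\w_k,\v_k\rangle=+\d/q_k$ with no sign ambiguity), whereas the paper assembles an $(n+1)\times(n+1)$ augmented matrix in the reduced data and converts at the end.
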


\begin{proof} Recalling (\ref{politopo}), to define $\D^0_Q(\Si)=\D_0$ one has to write down the hyperplanes of $M_{\R}$
\begin{equation}\label{iperpiani}
    \forall \rho\in\Si(1)\quad \langle\u,\n_{\rho}\rangle = - a_{\rho}\ ,\quad\text{where $\n_{\rho}$ generates $\rho\cap N$\ ,}
\end{equation}
for the divisor $H=(\d'/q'_0)D_0$. Since $\Si(1)=\{\langle\v_j\rangle\subset N_{\R} |\ j=0,\ldots,n\}$ the hyperplanes (\ref{iperpiani}) are then given by
\begin{eqnarray}\label{iperpiani 2}
  \sum_{i=1}^n n_{i0} u_i &=& - \d'/q'_0\\
\nonumber
  \forall\ k=1,\ldots,n\quad\sum_{i=1}^n n_{ik} u_i &=& 0
\end{eqnarray}
where $\n_j=\sum_{i=1}^n n_{ij}\e_i$ generates the 1-dimensional cone $\langle\v_j\rangle\cap N$. In proving the implication $(2)\Rightarrow (1)$ of Theorem \ref{thm:fan}, it has been observed that $q'_{0}\n_{0}=-\sum_{k=1}^{n}q'_{k}\n_{k}$ (condition (2$'$)). Then the first equation in (\ref{iperpiani 2}) can be rewritten as follows
\begin{equation*}
    \sum_{i=1}^n \left(\sum_{k=1}^{n}q'_{k}n_{ik}\right) u_i = \d'\ .
\end{equation*}
Let us represent equations in (\ref{iperpiani 2}) by the following $(n+1)\times(n+1)$-matrix
\begin{equation*}
    M=\left(
        \begin{array}{ccccc}
          \sum_{k=1}^{n}q'_{k}n_{1k} & \cdots & \sum_{k=1}^{n}q'_{k}n_{nk} & \vline & \d' \\
          n_{11} & \cdots & n_{n1} & \vline & 0 \\
           & \vdots &  & \vline & \vdots \\
          n_{1n} & \cdots & n_{nn} & \vline & 0 \\
        \end{array}
      \right)\ .
\end{equation*}
For $j=0,1,\ldots,n$, the vertex $\w_j$ of $\D^0_Q(\Si)$ is then given by the (unique, for (3) in Theorem \ref{thm:fan} and recalling that $v_{ij}=d_jn_{ij}$) solution of the linear system associated with the matrix $M^{j+1}$, obtained removing the $(j+1)$-th row in $M$. Clearly $\w_0=0$. For $j=k=1,\ldots,n$ we get
\begin{equation*}
    w_{ik}=M_{k+1,i}/M_{k+1,n+1}
\end{equation*}
where $M_{a,b}$ is the $(a,b)$-cofactor in $M$. Observe that $M_{k+1,n+1}=(-1)^{k-1}q'_k\left.V_0\right/a_0$ and $M_{k+1,i}=(-1)^{k+1}\d' d_k \left.V^0_{ik}\right/a_0$. Then
\begin{equation*}
    w_{ik}=\frac{\d' d_k}{q'_k}\frac{V^0_{ik}}{V_0} = \frac{\d'a_kd_k}{q_k}v_{ik}^* = \frac{\d}{q_k}v_{ik}^*
\end{equation*}
where $v_{ik}^*=V^0_{ik}/V_0$ is the $(i,k)$-entry of $V^{0*}:=((V^0)^{-1})^T$. The last equality on the right is obtained by recalling Proposition \ref{prop:weight-rel.s}(5) and Proposition \ref{prop:lcm}.
\end{proof}

\begin{remark} Clearly same conclusions as in Theorem \ref{thm:fan-politopi} can be obtained by exchanging $0$ with any other value $j$ such that $0\leq j\leq n$.
\end{remark}

\begin{remark}\label{rem:wtrans-ridotta} Let $Q$ be  a weights vector whose reduction is given by $Q'$. Consider $\Si=\fan(\v_0,\ldots,\v_n)\in\mathfrak{F}(Q)$ and, for any $0\leq j\leq n$, consider the generator $\n_j$ of the semigroup $\langle\v_j\rangle\cap N$, where $N$ is the lattice generated by $\v_0,\ldots,\v_n$. Then Lemma \ref{lm:condizioni}(c) and Theorem \ref{thm:fan} ensure that $\Si:=\fan(\n_0,\ldots,\n_n)\in\mathfrak{F}(Q')$. Then the previous Theorem \ref{thm:fan-politopi} gives that
\begin{equation*}
    \D^0_Q(\Si) = \D^0_{Q'}(\Si)
\end{equation*}
since, recalling once again Propositions \ref{prop:weight-rel.s} and \ref{prop:lcm},  $$w_{ik}=(\d/q_k)(V^0_{ik}/V_0)=(\d'a/q'_ka_k)(N^0_{ik}/d_kN_0)= (\d'/q'_k)(N^0_{ik}/N_0)$$
(here $N$ denotes the matrix $N=(\n_0,\ldots,\n_n)$).
\end{remark}

\begin{example}\label{ex:Qpolitopo}
Let us still consider the Example \ref{ex:Q-canonico} to apply the weighted transversion and Theorem \ref{thm:fan-politopi} for producing by hand a polytope of a given wps $\P(Q)$ with the minimal polarization.

\noindent Recall that $Q=(2,3,4,15,25)$ and the matrix fan obtained in the Example \ref{ex:Q-canonico} is
\begin{equation*}
    V=\left(
       \begin{array}{ccccc}
         -14 & 1 & 0 & 0 & 1 \\
         -2 & 0 & 1 & 0 & 0 \\
         -20 & 0 & 0 & 1 & 1 \\
         -25 & 0 & 0 & 0 & 2 \\
       \end{array}
     \right)\quad\Longrightarrow\quad (V^0)^*=\frac{1}{2}\left(
                                      \begin{array}{cccc}
                                        2 & 0 & 0 & 0 \\
                                        0 & 2 & 0 & 0 \\
                                        0 & 0 & 2 & 0 \\
                                        -1 & 0 & -1 & 1 \\
                                      \end{array}
                                    \right)
     \ .
\end{equation*}
Since $\d=\lcm(2,3,4,15,25)=300$, we get
\begin{equation*}
    W=(V^0)^*_Q=(V^0)^*\cdot\d I_Q=150 \left(
                                      \begin{array}{cccc}
                                        2 & 0 & 0 & 0 \\
                                        0 & 2 & 0 & 0 \\
                                        0 & 0 & 2 & 0 \\
                                        -1 & 0 & -1 & 1 \\
                                      \end{array}
                                    \right)\cdot\left(
                                                  \begin{array}{cccc}
                                                    1/3 & 0 & 0 & 0 \\
                                                    0 & 1/4 & 0 & 0 \\
                                                    0 & 0 & 1/15 & 0 \\
                                                    0 & 0 & 0 & 1/25 \\
                                                  \end{array}
                                                \right)
\end{equation*}
giving $W=\left(
            \begin{array}{cccc}
              100 & 0 & 0 & 0 \\
              0 & 75 & 0 & 0 \\
              0 & 0 & 20 & 0 \\
              -50 & 0 & -10 & 6 \\
            \end{array}
          \right)
$. Then the polytope we are looking for is
\begin{equation*}
    \D=\conv\left(\left(
                    \begin{array}{c}
                      0 \\
                      0 \\
                      0 \\
                      0 \\
                    \end{array}
                  \right),\left(
                    \begin{array}{c}
                      100 \\
                      0 \\
                      0 \\
                      -50 \\
                    \end{array}
                  \right),\left(
                    \begin{array}{c}
                      0 \\
                      75 \\
                      0 \\
                      0 \\
                    \end{array}
                  \right),\left(
                    \begin{array}{c}
                      0 \\
                      0 \\
                      20 \\
                      -10 \\
                    \end{array}
                  \right),\left(
                    \begin{array}{c}
                      0 \\
                      0 \\
                      0 \\
                      6 \\
                    \end{array}
                  \right)
    \right)\ .
\end{equation*}
More precisely $(\P_{\D},\mathcal{O}(1))\cong(\P(Q),\d/q_0\ D_0)=(\P^4(2,3,4,15,25), 150\ D_0)$.
\end{example}

Recalling Algorithm \ref{alg:daQaF}, what has been observed in the previous Example \ref{ex:Qpolitopo} can be resumed by the following

\begin{algorithm}[Producing a polytope of $\P(Q)$ with a minimal polarization]\label{alg:daQaP} See 0.c.3 in \cite{RTpdf,RT-Maple} for a Maple implementation.
\begin{itemize}
\item Input: the weights vector $Q=(q_0,\ldots,q_n)$.
\item Apply Algorithm \ref{alg:daQaF} above in order to compute a fan $F:=({\bf v}_0,...,{\bf v}_n)$ associated with $Q$;
\item Put $V^0:=Mat({\bf v}_1,...,{\bf v}_n)$ and compute the weighted transverse $W:=(V^0)^*_Q$ (Definition \ref{def:Wtrasversa}).
\item Define ${\mathcal P}$ to be the set of points in $\mathbb{R}^n$ consisting of the origin and the columns of $W$.
\item Output ${\mathcal P}$.
\end{itemize}
\end{algorithm}

\begin{definition}[$P$--admissible matrices]\label{def:P-admissible}  A square matrix $W\in\Mat(n,\Z)$ is called $P$--\emph{admissible} if there exist an $F$-admissible matrix $V\in \mathfrak{V}_n$ such that $W$ is the weighted transverse matrix of $V$, which is
$$W=(V^0)_Q^*\quad\text{with}\quad Q=\left(|V_0|,\ldots,|V_n|\right)\ .$$
In other terms $W=(\w_1,\ldots,\w_n)$ is admissible if and only if the polytope $$\conv(\mathbf{0},\w_1,\ldots,\w_n)$$ belongs to the image of a map $\D^0_Q$, as defined in (\ref{fan-polytope}).
In this case we say that $Q,W,V$ are \emph{associated} to each other.

\noindent Let us denote $\mathfrak{W}_n\subset\GL(n,\Q)\cap\Mat(n,\Z)$ the subset of $P$--admissible matrices: notice that any such matrix has integer entries by either Theorem \ref{thm:fan-politopi} or the following Proposition \ref{prop:integrità}.
\end{definition}

\begin{remark}
Remark \ref{rem:wtrans-ridotta} guarantees that weights vectors $Q_1$ and $Q_2$ admitting the same reduction $Q'$ are associated with the same $P$--admissible matrix $W$, which is the $P$--admissible matrix associated with the reduced weights vector $Q'$. What is not a priori clear is, viceversa, guaranteeing that there exists a unique reduced weights vector $Q'$ to which $W$ is associated. This fact will follow by Proposition \ref{prop:admuno}(c).
\end{remark}

\begin{definition}\label{def:what} Consider a matrix $W\in\GL(n,\Q)\cap\Mat(n,\Z)$. Recall that the \emph{adjoint matrix of} $W$ is defined by setting $Adj(W):=\det(W)\ W^{-1}$. Let $s_i$ be the $\gcd$ of entries in the $i$-th row of $Adj(W)$. Then let
us then define the \emph{what} of $W$ as follows
\begin{eqnarray*}
    \widehat{W}&:=& \frac{|det(W)|}{det(W)}\diag\left({1\over s_1},\ldots,{1\over s_n}\right)\cdot Adj(W)\\
    &=& \diag\left({|det(W)|\over s_1},\ldots,{|det(W)|\over s_n}\right)\cdot W^{-1}
\end{eqnarray*}
\end{definition}

Notice that if $V$ is a square matrix in $\Mat(n,\mathbb{Z})$ such that $V\cdot W$ is a diagonal matrix with positive entries then
\begin{equation}\label{diagonale}
    V=\diag(r_1,\ldots,r_n)\cdot\widehat{W}
\end{equation}
for some $r_1,\ldots,r_n\in \N$.

\begin{proposition}\label{prop:admuno}
Let $W$ be a $P$--admissible matrix and let $Q=(q_0,\ldots,q_n)$ be a \emph{reduced} weights vector associated to $W$. Then
\begin{itemize}
\item[(a)] $\left(\widehat W^T\right)^*_Q=W$;
\item[(b)] if $s:=\gcd(s_1,\ldots,s_n)$ is the greatest common divisor of the terms in $Adj(W)$ then
\begin{equation*}
    q_0=|det(\widehat W)|\quad,\quad \forall\ 1\leq i\leq n\quad q_i=\frac {s_i} s\quad,\quad\lcm(Q)=\frac{|\det(W)|}{s}
\end{equation*}
\item[(c)] if $Q_1$ and $Q_2$ are reduced weights vectors associated with the same $P$--admissible matrix $W$, then $Q_1=Q_2$;
\item[(d)] there exists a unique $F$--admissible matrix $V$ associated with $W$ and $Q$ i.e. such that $W=\left(V^0\right)^*_Q$ with $Q=\left(|V_0|,\ldots,|V_n|\right)$.
    \end{itemize}
    \end{proposition}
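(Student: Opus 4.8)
The plan is to fix, once and for all, an $F$--admissible matrix $V=(\v_0,\v_1,\ldots,\v_n)$ witnessing that $Q$ is associated to $W$, so that $W=(V^0)^*_Q$ and $Q=(|V_0|,\ldots,|V_n|)$, and then to express each object occurring in the statement (the $s_i$, $s$, $\widehat W$) directly in terms of the columns of $V^0$. The elementary input is that, since $Q$ is reduced, Lemma~\ref{lm:condizioni}(c) forces $d_j=1$ for every $j$, so each column $\v_i$ of $V^0$ is a primitive lattice vector and $\gcd(q_1,\ldots,q_n)=d_0=1$. These two coprimality facts are what make the relevant greatest common divisors collapse.

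First I would compute the adjoint of $W$. Writing $\delta:=\lcm(Q)$, inverting the defining identity $W=(V^0)^*\cdot\delta\,\diag(1/q_1,\ldots,1/q_n)$ and using $\bigl((V^0)^*\bigr)^{-1}=(V^0)^T$ gives
\begin{equation*}
 W^{-1}=\frac{1}{\delta}\,\diag(q_1,\ldots,q_n)\,(V^0)^T,\qquad
 Adj(W)=\det(W)\,W^{-1}=\frac{\det(W)}{\delta}\,\diag(q_1,\ldots,q_n)\,(V^0)^T .
\end{equation*}
Hence the $i$--th row of $Adj(W)$ is $\tfrac{\det(W)}{\delta}\,q_i\,\v_i^{T}$; since $\v_i$ is primitive this yields $s_i=\tfrac{|\det(W)|}{\delta}\,q_i$, and then $s=\gcd(s_1,\ldots,s_n)=\tfrac{|\det(W)|}{\delta}$ because $\gcd(q_1,\ldots,q_n)=1$ (a B\'ezout argument also shows $|\det(W)|/\delta\in\Z$, so that these gcd identities make sense). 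This already proves $q_i=s_i/s$ for $1\le i\le n$ and $\lcm(Q)=\delta=|\det(W)|/s$, i.e. all of part (b) except the formula for $q_0$. Substituting these values into $\widehat W=\diag(|\det(W)|/s_1,\ldots,|\det(W)|/s_n)\,W^{-1}$ makes the scalars cancel row by row, so the $i$--th row of $\widehat W$ is exactly $\v_i^{T}$; that is, $\widehat W=(V^0)^T$, equivalently $\widehat W^{\,T}=V^0$.

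With $\widehat W^{\,T}=V^0$ in hand the remaining points are short. Part (a) is immediate: $(\widehat W^{\,T})^*_Q=(V^0)^*_Q=W$ by the choice of $V$. The last formula of part (b) follows from $|\det(\widehat W)|=|\det(V^0)|=|V_0|=q_0$. For part (c), if $Q_1$ and $Q_2$ are reduced weights vectors associated to $W$, then part (b) applied to each expresses all their entries ($q_0=|\det\widehat W|$ and $q_i=s_i/s$) in terms of $W$ alone, whence $Q_1=Q_2$. Finally part (d): existence of an associated $F$--admissible $V$ is the hypothesis, and uniqueness follows because $V^0=\widehat W^{\,T}$ is determined by $W$, while the remaining column is forced by $q_0\v_0=-\sum_{i=1}^n q_i\v_i$ (condition (4) of Theorem~\ref{thm:fan}, valid since $V$ is $F$--admissible), the weights $q_j$ themselves being determined by $W$ via (b).

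I expect the only real friction to be the bookkeeping in the first computation: keeping straight the interplay of transposition, inversion, the adjoint, and the diagonal weight matrix, and in particular checking that integrality of $Adj(W)$ together with primitivity of the $\v_i$ legitimately turns ``the gcd of a scalar multiple of a primitive row'' into ``$|\text{scalar}|$''. There is no conceptual obstacle: everything is forced once $W$ is written in the form $(V^0)^*\cdot\delta\,\diag(1/q_1,\ldots,1/q_n)$.
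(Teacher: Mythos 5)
Your proof is correct and follows essentially the same route as the paper: both arguments hinge on identifying $\widehat{W}=(V^0)^T$ using that reducedness of $Q$ makes the columns of $V^0$ primitive and $\gcd(q_1,\ldots,q_n)=1$, and then (a)--(d) are read off from this identity. The only difference is organizational — you compute $Adj(W)=\frac{\det(W)}{\delta}\diag(q_1,\ldots,q_n)(V^0)^T$ explicitly and obtain (b) before (a), whereas the paper first gets $(V^0)^T=\widehat{W}$ from the diagonal-factorization remark (\ref{diagonale}) and then compares the two diagonal expressions for $\widehat{W}\cdot W$ — which is an equivalent bookkeeping of the same facts.
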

    \begin{proof} (a). $W$ is a $P$--admissible matrix. Then there exists a $F$--admissible matrix $V$ such that $W=((V^0)^T)^{-1}\d I_Q$ and $Q=(|V_0|,\ldots,|V_n|)$, meaning that $(V^0)^T W=\delta I^0_Q$ is diagonal with positive entries. Recalling (\ref{diagonale}) we get that $(V^0)^T=\diag(r_1,\ldots,r_n)\cdot\widehat{W}$ for some $r_1,\ldots,r_n\in \N$. But $Q$ is reduced, which implies that the columns of $V^0$ have coprime entries. Therefore $r_1=\cdots=r_n=1$ and $(V^0)^T=\widehat W$. (a) follows immediately.\par
    \noindent (b) On the one hand $\widehat{W}\cdot W=\diag\left(|\det W|/s_1,\ldots,|\det W|/s_n\right)$. On the other hand, by (a), $\widehat{W}=(V^0)^T$ and $\widehat{W}\cdot W=\diag\left(\d/q_1,\ldots,\d/q_n\right)$, where $\d:=\lcm(Q)$. Therefore
    \begin{equation}\label{uguaglianza delta}
        \forall\ 1\leq i\leq n\quad{\d\over q_i}={|\det W|\over s_i}\ .
    \end{equation}
    Observe now that
    \begin{eqnarray*}
        \lcm\left({\d\over q_1},\ldots,{\d\over q_n}\right)&=&\frac{\d}{\gcd(q_1,\ldots,q_n)}=\d\\
        \lcm\left({|\det W|\over s_1},\ldots,{|\det W|\over s_n}\right)&=&\frac{|\det W|}{s}
    \end{eqnarray*}
    Then (\ref{uguaglianza delta}) gives that $\d=|\det W|/s$ and, for any $1\leq i\leq n$, $q_i=s_i/ s$.
    Finally (a) gives that $q_0=|V_0|=|det(\widehat W)|$.\par
    \noindent (c) follows immediately by the previous point (b).\par
    \noindent (d). If there exist two $F$--admissible matrix $U,V$ such that they are both associated with $W$ and $Q$, then
\begin{equation*}
    (\v_1,\ldots,\v_n)=V^0=U^0=(\u_1,\ldots,\u_n)\ \Rightarrow\ \v_0 =-\frac{1}{q_0}\sum_{i=1}^n q_i\v_i =-\frac{1}{q_0}\sum_{i=1}^n q_i\u_i=\u_0
\end{equation*}
implying that $V=U$.
\end{proof}

\begin{remark}\label{rem:inversione} In a sense the previous Proposition \ref{prop:admuno} states that, when restricted to wps fans associated with \emph{reduced} weights vector, the weighted transversion process giving a polytope starting from a fan, can be inverted by considering the \emph{transposed what} of the polytope matrix. Namely if $W$ is a  polytope matrix of $(\P(Q),\mathcal{O}(1))$, with $Q$ reduced, then $V:=\left(
       \begin{array}{ccc}
         \v_0 & \vline & \widehat{W}^T \\
       \end{array}
     \right)$ is a fan matrix of $\P(Q)$ when $\v_0$ is defined by setting $\v_0=-(\sum_{i=1}^n q_i\v_i)/q_0$, where $(\v_1,\ldots,\v_n)=\widehat{W}^T$. At this purpose see also the following Propositions \ref{prop:suriettività} and \ref{prop:classification_Poly}. The following Proposition \ref{prop:poli-condizioni} ensures that $V$ is a well defined matrix with integer entries if and only if $W$ is a polytope matrix of $(\P(Q),\mathcal{O}(1))$, for some reduced weights vector $Q$. This correspondence between fans and polytopes of a wps gives rise to easy and fast procedures, relating each other the toric data of this particular complete toric variety, which has been implemented in \cite{RTpdf,RT-Maple}, \S 0.c and \S 2.4.
\end{remark}

    \begin{proposition}\label{prop:poli-condizioni} Let $W=(w_{ij})\in\GL(n,\Q)\cap\Mat(n,\Z)$ be a matrix such that $\gcd(w_{ij})=1$. Let  $s$ be the greatest common divisor of the entries in $Adj(W)$ and ${\v}$ be the sum of the rows of $Adj(W)$. Define $q_0=|det(\widehat W)|$, $\delta=\frac{|\det W|} s$.
    The following statements are equivalent:
    \begin{itemize}
    \item[(a)] $W$ is $P$--admissible;
    \item[(b)] the vector $\v$ is divisible by $q_0s$;
        \item[(c)]  $q_0$ divides $\delta$ and the vector $\frac \delta {q_0}(1,\ldots,1)$ is in the lattice generated by the rows of $W$.
            \end{itemize}
            \end{proposition}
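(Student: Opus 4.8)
The plan is to collapse all three conditions to the single requirement that one explicit vector be integral. Write $s:=\gcd(s_1,\dots,s_n)$; expanding $\det W$ along the $i$-th row of $Adj(W)\cdot W=\det(W)I$ shows $s_i\mid\det W$, hence $s\mid|\det W|$ and $\delta=|\det W|/s\in\mathbb Z$. Let $\n_1,\dots,\n_n$ be the columns of $N^0:=\widehat W^{T}$, i.e.\ the rows of $\widehat W$; by Definition~\ref{def:what} these are primitive integral vectors, the $i$-th row of $Adj(W)$ equals $\pm s_i\n_i$ (same sign throughout), and $|\det N^0|=|\det\widehat W|=:q_0$. Put $Q:=(q_0,s_1/s,\dots,s_n/s)$ and $\n_0:=-\tfrac1{q_0}\sum_{i=1}^{n}(s_i/s)\n_i$. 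By Proposition~\ref{prop:admuno}(a),(b)---together with Remark~\ref{rem:wtrans-ridotta}, which lets one replace any $F$-admissible matrix realizing $W$ by its primitive fan matrix---if $W$ is $P$-admissible then the reduced weights vector attached to it is precisely this $Q$ and the associated $F$-admissible matrix is $N=(\n_0\,|\,N^0)$. So it suffices to prove ``(a) $\Longleftrightarrow$ $\n_0\in\mathbb Z^{n}$'' and then to recognize the latter as (b) and as (c).

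For ``(a) $\Rightarrow$ $\n_0\in\mathbb Z^n$'': if $W$ is $P$-admissible the matrix $N$ above is $F$-admissible, in particular integral, so its first column $\n_0$ is integral. For the converse, assume $\n_0\in\mathbb Z^{n}$ and form $N=(\n_0\,|\,N^0)$ with $Q=(q_0,s_1/s,\dots,s_n/s)$. Then $\sum_{j=0}^{n}q_j\n_j=0$ by construction and $|N_0|=|\det N^0|=q_0$, so condition~(4) of Theorem~\ref{thm:fan} holds; since $\gcd(q_1,\dots,q_n)=\gcd(s_1,\dots,s_n)/s=1$ we get $\gcd(Q)=1$, hence $N$ is $F$-admissible, with weights $Q$ by condition~(3) of the same theorem, and $X(\fan(\n_0,\dots,\n_n))\cong\P(Q)$. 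The columns of $N$ are primitive (the $\n_i$ by construction; $\n_0$ because Lemma~\ref{lm:condizioni}(c) writes $\n_0=d_0\cdot(\text{primitive})$ with $d_0=\gcd(q_1,\dots,q_n)=1$), so $Q$ is reduced. Now $W_1:=(N^0)^{*}_{Q}$ is $P$-admissible by Definition~\ref{def:P-admissible}, and using $(N^0)^{*}=(\widehat W)^{-1}=W\cdot\diag(s_1/|\det W|,\dots,s_n/|\det W|)$ one computes $W_1=\bigl(\lcm(Q)\,s/|\det W|\bigr)\,W$. By Theorem~\ref{thm:fan-politopi} (in the reduced case) $W_1$ is the vertex matrix of the polytope $\D_0$ of the minimal ample divisor $(\lcm(Q)/q_0)D_0$ of $\P(Q)$; if some integer $d>1$ divided every entry of $W_1$, then $\tfrac1d\D_0$ would be a full-dimensional lattice polytope through the origin whose natural polarization is an ample divisor on $\P_{\tfrac1d\D_0}\cong\P(Q)$ of class $(\lcm(Q)/d)\,[D]$, contradicting Proposition~\ref{prop:molta-ampiezza} (ample multiples $kD$ require $\lcm(Q)\mid k$) unless $d=1$. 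Hence $W_1$ is primitive; as $W$ is primitive too, the positive rational factor equals $1$, so $W_1=W$ (and $\lcm(Q)=\delta$) and $W$ is $P$-admissible.

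Finally, the two reformulations. Summing the rows of $Adj(W)$ and using that its $i$-th row is $\pm s_i\n_i$ gives $\v=\pm s\sum_{i=1}^{n}(s_i/s)\n_i=\mp s\,q_0\,\n_0$, so $\n_0\in\mathbb Z^{n}\iff q_0 s\mid\v$: that is (b). For (b) $\Leftrightarrow$ (c), write $\v=(1,\dots,1)\,Adj(W)=\det(W)\,(1,\dots,1)\,W^{-1}$; since $s=|\det W|/\delta$, dividing by $q_0 s$ yields $\v/(q_0 s)=\pm\tfrac{\delta}{q_0}(1,\dots,1)\,W^{-1}$. Thus $q_0 s\mid\v$ holds iff there is an integral row vector $\mathbf z$ with $\mathbf z\,W=\tfrac{\delta}{q_0}(1,\dots,1)$, i.e.\ iff $\tfrac{\delta}{q_0}(1,\dots,1)$ lies in the lattice spanned by the rows of $W$; since that lattice is contained in $\mathbb Z^{n}$, this forces in addition $\tfrac{\delta}{q_0}(1,\dots,1)\in\mathbb Z^{n}$, i.e.\ $q_0\mid\delta$. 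Hence (b) $\Leftrightarrow$ (c).

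The one genuinely delicate point is the converse half of ``(a) $\Leftrightarrow$ $\n_0\in\mathbb Z^{n}$'', namely that the scalar $\lcm(Q)\,s/|\det W|$ relating $(N^0)^{*}_{Q}$ to $W$ is exactly $1$: equivalently, that the minimal polytope of a wps is never a proper integral dilation, which is really the fact that $\mathcal O_{\P(Q)}(1)$ is the minimal ample class (Theorem~\ref{thm:P(Q)-divisori}, Proposition~\ref{prop:molta-ampiezza}). Everything else is bookkeeping with the identities $\widehat W\cdot W=\diag(|\det W|/s_i)$, $s_i=s\,(s_i/s)$, and $\sum_{j=0}^{n}q_j\n_j=0$.
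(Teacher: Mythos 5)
Your proof is correct and follows the same skeleton as the paper's: both directions of (a)$\Leftrightarrow$(b) go through the matrix $N=(\n_0\,|\,\widehat W^T)$ with weights $Q=(|\det\widehat W|,s_1/s,\ldots,s_n/s)$, using Proposition \ref{prop:admuno} for one direction and Theorem \ref{thm:fan}(4) for the other, and your (b)$\Leftrightarrow$(c) is the paper's computation $\v=(1,\ldots,1)\cdot\det(W)W^{-1}$ verbatim. The one genuine difference is the normalization step in (b)$\Rightarrow$(a): the paper simply asserts $W=(V^0)^*_Q$, which silently requires $\lcm(Q)=|\det W|/s$, whereas you isolate this as the delicate point and prove it, obtaining $W_1:=(N^0)^*_Q=\bigl(\lcm(Q)s/|\det W|\bigr)W$ and forcing the scalar to be $1$. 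Your route to primitivity of $W_1$ via Theorem \ref{thm:fan-politopi} and the minimality of $\mathcal{O}_{\P(Q)}(1)$ (Proposition \ref{prop:molta-ampiezza}) is valid but heavier than necessary: since the diagonal entries of $\widehat W\cdot W$ give $q_i\mid\delta$ for $i\geq 1$, and (c) gives $q_0\mid\delta$, one has $\lcm(Q)\mid\delta$, so $W=d\,W_1$ with $d\in\N$ and $W_1$ integral by Proposition \ref{prop:integrit�}; the hypothesis $\gcd(w_{ij})=1$ then forces $d=1$ directly, with no appeal to ampleness. Either way, your added verification fills in a step the paper leaves implicit (and makes visible where the hypothesis $\gcd(w_{ij})=1$ is actually used), so it is a worthwhile refinement rather than a deviation.
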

\begin{proof}
(a) $\Rightarrow$(b): If $W$ is $P$--admissible then there exist a unique reduced weights vector $Q$ and a unique $F$--admissible matrix $V$, associated with $W$ like in Definition \ref{def:P-admissible}. By Proposition \ref{prop:admuno}, $\widehat W=(V^0)^T$ and $Q=(q_0,\ldots,q_n)$ with $q_i= s_i/ s$, for $i=1,\ldots,n$. Let ${\v_i}$ be the i-th row of $\widehat W$; then $\sum_{i=1}^n q_i {\v_i}$ is divisible by $q_0$ since $\widehat W^T=V^0$ is $F$--admissible, meaning that its columns satisfy the relation $\sum_{i=1}^n q_i {\v_i}=-q_0\v_0$. Then $\sum_{i=1}^n s_i {\v_i}$ is divisible by $q_0s$ and $ s_i {\v_i}$ is the i-th row of $Adj(W)$.\par
\noindent (b) $\Rightarrow$(a): Assume that $q_0s$ divides any entry in $\v$. For $1\leq i\leq n$, let ${\v_i}$ be the i-th row of $\widehat W$ and $q_i=s_i/s$ be defined as in Proposition \ref{prop:admuno}(b); then $\sum_{i=1}^n q_i {\v_i}$ is divisible by $q_0$. Put $\v_0=-\frac 1{q_0}\sum_{i=1}^n q_i {\v_i}$. Then the matrix $$V:=\left(
       \begin{array}{ccc}
         \v_0 & \vline & \widehat{W}^T \\
       \end{array}
     \right)=
\left(\v_0,\v_1,\ldots,\v_n\right)$$
turns out to be $F$--admissible with respect to $Q$ by Theorem \ref{thm:fan}(4). Then $W=\left(V^0\right)^*_Q$ is $P$--admissible.\par
\noindent (b) $\Leftrightarrow$(c): the sum of the rows of $Adj(W)$ is the row vector $(1,\ldots,1)\cdot Adj(W)=(1,\ldots,1)\cdot det(W)W^{-1}$. Thus it is divisible by $q_0s$ if and only if there exists $(x_1,\ldots,x_n)\in\Z^n$ such that $(x_1,\ldots,x_n)\cdot W=\frac {\delta}{q_0}(1,\ldots,1)$, that is if and only if (c) holds.
\end{proof}

\begin{proposition}\label{prop:integrità}
If $V=(\v_0,\v_1,\ldots,\v_n)$ is a fan matrix of $\P(Q)$, with $Q=(q_0,...,q_n)$, then the weighted transverse $(V^0)^*_Q$ has integral entries.
\end{proposition}

\begin{proof}
Set $W=Adj(V^0)$ and let ${\w}_i$ be the $i$-th row of $W$, for $i=1,\dots,n$. Observe that Definition \ref{def:what} and Theorem \ref{thm:fan} give
\begin{eqnarray*}
  |{\w}_i\cdot {\v}_i| &=& q_0 \\
  {\w}_i\cdot {\v}_k &=& 0 \quad\text{for $1\leq k\leq n$ and $k\neq i$}\\
  |{\w}_i\cdot {\v}_0| &=& q_i \ .
\end{eqnarray*}
Therefore $\frac \delta {q_0q_i} {\w}_i\cdot {\v}_j\in \mathbb{Z}$ for any $0\leq j\leq n$. This means that $$\forall\ 1\leq i\leq n\quad \frac \delta {q_0q_i} {\w}_i\in \mathbb{Z}^n$$
since ${\mathcal L}( {\bf v}_0,..., {\v}_n)=\mathbb{Z}^n$. The proof ends up by transposing $W$.
\end{proof}

\subsection{Characterizing the polytope of a polarized wps}

The previous Proposition \ref{prop:poli-condizioni} gives an answer to questions (A) and (B) opening the present section. In fact, let us first of all observe that, given an integral polytope $\D$, up to an integral translation, we can assume the origin 0 to be a vertex of $\D$ and write $\D=\conv(\mathbf{0},\w_1,\ldots,\w_n)$, for a suitable subset $\{\w_1,\ldots,\w_n\}\subset M$. Let $W:=(\w_1,\ldots,\w_n)$ be the associated polytope matrix. Then the following result is a consequence of Propositions \ref{prop:admuno} and \ref{prop:poli-condizioni} answering question (A):

\begin{theorem}\label{thm:(A)} Let $\D=\conv(\mathbf{0},\w_1,\ldots,\w_n)\subset M_{\R}$ be a $n$-dimensional integral polytope and $Q=(q_0,\ldots,q_n)$ be a weights vector. Set $m:=\gcd(w_{ij})$ and define $W':={1\over m}W$. Then the following facts are equivalent:
\begin{enumerate}
  \item $(\P_{\D},\mathcal{O}(1))\cong (\P(Q),\mathcal{O}(m))$,
  \item $W'$ is a $P$--admissible matrix associated with $Q$,
  \item $\widehat{W'}\cdot W'= \d' I_{Q'}$, where $Q'$ is the reduced weights vector of $Q$ and $\d':=\lcm(Q')$.
\end{enumerate}
\end{theorem}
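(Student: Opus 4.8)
The plan is to prove the chain of equivalences $(1)\Leftrightarrow(2)\Leftrightarrow(3)$ by reducing everything to the machinery already developed, principally Theorem \ref{thm:fan-politopi}, Proposition \ref{prop:admuno}, and Proposition \ref{prop:poli-condizioni}. First I would establish $(2)\Leftrightarrow(3)$, which is essentially bookkeeping: if $W'$ is $P$--admissible and associated with $Q$, then by Proposition \ref{prop:admuno}(a) we have $\widehat{W'}=(V^0)^T$ for the (unique, by \ref{prop:admuno}(d)) $F$--admissible matrix $V$ associated with $W'$ and $Q$, and by the computation in the proof of \ref{prop:admuno}(b), $\widehat{W'}\cdot W'=\diag(\delta/q_1,\ldots,\delta/q_n)$ where $\delta=\lcm(Q)$. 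Using Remark \ref{rem:wtrans-ridotta} (the weighted transverse depends only on the reduction $Q'$) together with Proposition \ref{prop:lcm} ($\delta=a\delta'$ and $q_i=a_iq_i'$), this diagonal matrix is exactly $\delta' I_{Q'}$, giving (3). Conversely, if $\widehat{W'}\cdot W'=\delta' I_{Q'}$ is diagonal with positive entries, then by (\ref{diagonale}) applied to $\widehat{W'}^T$ — or directly by reading off that $\widehat{W'}^T$ has the $V^0$ shape — we can set $\v_0:=-(1/q_0')\sum_{i=1}^n q_i'\v_i$ where the $\v_i$ are the rows of $\widehat{W'}$, check that the resulting matrix $V=(\v_0,\v_1,\ldots,\v_n)$ is $F$--admissible via condition (4) of Theorem \ref{thm:fan} (its maximal minors compute to $\pm q_j'$ since $\det\widehat{W'}=\pm\prod q_i'/\ldots$; the divisibility needed for $\v_0$ to be integral is precisely what $\widehat{W'}\cdot W'$ being a multiple of $I_{Q'}$ encodes), and conclude that $W'=(V^0)^*_{Q'}$ is $P$--admissible.

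Next I would handle $(1)\Leftrightarrow(2)$. For $(2)\Rightarrow(1)$: if $W'$ is $P$--admissible and associated with $Q$, let $V$ be the associated $F$--admissible matrix, so $\Sigma:=\fan(\v_0,\ldots,\v_n)\in\mathfrak{F}(Q)$ and, by Theorem \ref{thm:fan-politopi}, $\conv(\mathbf 0,\w_1',\ldots,\w_n')=\D^0_Q(\Sigma)=\D_{(\delta'/q_0')D_0}=\D_{H'}$ where $H'$ is the very ample divisor $(\delta'/q_0')D_0$ on $\P(Q)$ representing $\mathcal O_{\P(Q)}(1)$ (very ampleness by Proposition \ref{prop:molta-ampiezza}, and the identification of the polytope as in \ref{sssez:politopi}). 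Hence $(\P_{\D'},\mathcal O(1))\cong(\P(Q),\mathcal O_{\P(Q)}(1))$. Since $\D=m\D'$, dilating a polytope by $m$ sends $(\P_{\D'},\mathcal O(1))$ to $(\P_{\D'},\mathcal O(m))\cong(\P_{\D},\mathcal O(1))$ — here I would invoke the standard fact that $\P_{k\D}\cong\P_\D$ with $\mathcal O_{\P_{k\D}}(1)\cong\mathcal O_{\P_\D}(k)$, visible from the graded-algebra construction $S_{k\D}$ being the $k$-th Veronese of $S_\D$ — giving $(\P_{\D},\mathcal O(1))\cong(\P(Q),\mathcal O(m))$, which is (1). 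For $(1)\Rightarrow(2)$: an isomorphism $(\P_{\D},\mathcal O(1))\cong(\P(Q),\mathcal O(m))$ forces $m D$ to be very ample where $D$ generates $A_{n-1}(\P(Q))$ as in Theorem \ref{thm:P(Q)-divisori}, so by Proposition \ref{prop:molta-ampiezza} we need $\delta'\mid m$; writing $\mathcal O(m)$'s polytope via (\ref{politopo}) and comparing with $\D$ up to lattice translation identifies $\D$ with $m/\delta'$ times a minimal polarizing polytope $\D^0_Q(\Sigma)$ for some fan $\Sigma\in\mathfrak{F}(Q)$. Primitivity of the gcd of the entries of $\D^0_Q(\Sigma)$'s matrix — which must be argued, e.g. from Theorem \ref{thm:fan-politopi} and the fact that $Q'$ is reduced, see Remark \ref{rem:wtrans-ridotta} — then pins down $m=\gcd(w_{ij})\cdot(\text{that gcd})$, forcing the scaling factor to be $1$ after dividing by $m$, so that $W'$ is exactly the weighted transverse matrix of the $F$--admissible $V$ for $\Sigma$, i.e. $P$--admissible and associated with $Q$.

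The main obstacle I anticipate is the $(1)\Rightarrow(2)$ direction, specifically controlling the scaling: an abstract isomorphism of polarized toric varieties only recovers $\D$ up to an integral affine automorphism of $M$ and up to the linear equivalence class of the polarizing divisor, so one must argue carefully that (i) the vertex set of $\D$ can be translated to contain the origin compatibly with a choice of fan $\Sigma\in\mathfrak F(Q)$, and (ii) that the primitivity claim ``$\gcd$ of the entries of a minimal-polarization polytope matrix of $\P(Q)$ equals $1$ when $Q$ is reduced'' actually holds — this is implicit in the surrounding text (it is what makes $W'$ rather than some multiple the $P$--admissible object) but deserves an explicit line, e.g. via the entry formula $w_{ik}=(\delta'/q_k')(V^0_{ik}/V_0)$ from Theorem \ref{thm:fan-politopi} combined with the $F$--admissibility of a $Q'$--canonical fan matrix (Proposition \ref{prop:fan minimale}). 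Once primitivity and the translation normalization are in hand, the rest is a matter of assembling the cited results, and I would keep the write-up short by pointing to Theorem \ref{thm:fan-politopi}, Proposition \ref{prop:admuno}, and Proposition \ref{prop:poli-condizioni} for the computational content rather than redoing the minor and gcd calculations.
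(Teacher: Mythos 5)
Your overall strategy is the paper's: the same toolkit (Theorem \ref{thm:fan-politopi}, Propositions \ref{prop:admuno} and \ref{prop:poli-condizioni}, the dilation $\D=m\D'$), just organized as two pairwise equivalences instead of the cycle $(1)\Rightarrow(2)\Rightarrow(3)\Rightarrow(1)$. But your $(1)\Rightarrow(2)$ step contains a step that fails. You read $\mathcal{O}(m)$ as $m$ times a generator of $A_{n-1}(\P(Q))$ and conclude ``$\d'\mid m$'' and ``$\D$ is $m/\d'$ times a minimal polarizing polytope''. In the paper's notation (\ref{notazione}), $\mathcal{O}(1)=\mathcal{O}_{\P(Q)}(1)$ is the generator of $\Pic(\P(Q))$ (already $\d'$ times the Chow generator), so $\mathcal{O}(m)$ is Cartier and very ample for every $m\geq 1$ and its polytope is $m$ times the minimal one: no divisibility condition on $m$ arises and no $m/\d'$ rescaling occurs. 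Concretely, for $Q=(1,1,2)$ the minimal polytope matrix is $\diag(2,1)$, so $m=\gcd(w_{ij})=1$ while $\d'=2$, and statement (1) holds with $m=1$. The correct (and simpler, the paper's) argument is: any $D\in|\mathcal{O}(m)|$ is $mD'$ with $D'\in|\mathcal{O}(1)|$, hence $\D=\D_D=m\D_{D'}$ and $W'=\frac{1}{m}W$ is the polytope matrix of a minimal polarization, i.e. $P$--admissible associated with $Q$; your primitivity worry then evaporates, since $m:=\gcd(w_{ij})=m\cdot\gcd$ of the entries of the matrix of $\D_{D'}$ forces the latter gcd to be $1$ (and in the converse direction $(2)\Rightarrow(1)$ no primitivity is needed at all). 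The genuine normalization issue you raise (the isomorphism in (1) only determines $\D$ up to unimodular affine equivalence) is real, but it is equally glossed in the paper's proof and is handled by the equivariance results (Proposition \ref{prop:perm-equivarianza}), not by a $\d'$-divisibility argument.

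A second, smaller slip: in $(2)\Leftrightarrow(3)$ you apply Proposition \ref{prop:admuno}(a) with the given $Q$, but that proposition assumes the associated weights vector is \emph{reduced}. For non-reduced $Q$ one has $\widehat{W'}=(N^0)^T$ where $N$ is the fan matrix with primitive columns (i.e. the one for $Q'$), not $(V^0)^T$; your intermediate identity $\widehat{W'}\cdot W'=\diag(\d/q_1,\ldots,\d/q_n)=\d' I_{Q'}$ is false whenever some $d_i>1$ for $i\geq 1$ (e.g. $Q=(2,3,2,2)$, where $\d/q_1=2$ but $\d'/q'_1=1$), although the final formula $\widehat{W'}\cdot W'=\d' I_{Q'}$ is correct. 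The fix is exactly the reduction step you already cite: pass to $Q'$ via Remark \ref{rem:wtrans-ridotta} and Proposition \ref{prop:lcm} \emph{before} invoking Proposition \ref{prop:admuno}. With these two repairs your argument coincides with the paper's proof.
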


\begin{proof} (1)$\Rightarrow$(2): There exists a divisor $D$ of $\P(Q)$, belonging to the linear system $|\mathcal{O}(m)|$, such that $\D=\D_{D}$. Moreover there exists a divisor $D'\in|\mathcal{O}(1)|$ such that $D=mD'$ and in particular $\D=\D_{D}=m\D_{D'}$. This means that $r=n$ since $D'$ is ample, $\D_{D'}=\conv(\mathbf{0},\w'_1,\ldots,\w'_n)$ and $W':=(\w'_1,\ldots,\w'_n)={1 \over m}W$ is a $P$--admissible matrix associated with $Q$.

(2)$\Rightarrow$(3): By Propositions \ref{prop:admuno} and \ref{prop:poli-condizioni}, the matrix
\begin{equation}\label{V}
V:=\left(
       \begin{array}{ccc}
         \v_0 & \vline & \widehat{W'}^T \\
       \end{array}
     \right)=
\left(\v_0,\v_1,\ldots,\v_n\right)
\end{equation}
where $\v_0=-(\sum_{i=1}^n q'_i {\v_i})/q'_0$, is a $F$--admissible matrix with respect to the reduction $Q'$ of $Q$, meaning that
\begin{equation*}
    W'=\left(\widehat{W'}^T\right)^*_{Q'}= (\widehat{W'})^{-1}(\d' I_{Q'})\ .
\end{equation*}

(3)$\Rightarrow$(1): Since $\widehat{W'}\cdot W'= \d' I_{Q'}$, the matrix $V$ defined in (\ref{V}) is $F$--admissible with respect to $Q'$. Then $W'$ is the polytope matrix of $\D_{D'}$ for some divisor $D'\in\mathcal{O}(1)$ of $\P(Q')\cong\P(Q)$. Moreover $W=mW'$ is the polytope matrix of $\D:=\D_{mD'}$ giving (1).
\end{proof}

On the other hand the following further consequence of Propositions \ref{prop:admuno} and \ref{prop:poli-condizioni}  answers question (B):

\begin{theorem}\label{thm:(B)} Let $\D=\conv(\mathbf{0},\w_1,\ldots,\w_n)\subset M_{\R}$ be a $n$-dimensional integral polytope and $W=(\w_1,\ldots,\w_n)$ be the associated polytope matrix. Set $m:=\gcd(w_{ij})$ and define $W':={1\over m}W$. Then the following facts are equivalent:
\begin{enumerate}
    \item there exists a weights vector $Q=(q_0,\ldots,q_n)$ such that $(\P_{\D},\mathcal{O}(1))\cong (\P(Q),\mathcal{O}(m))$,
    \item $W'$ satisfies one of the equivalent conditions (a), (b), (c) in Proposition \ref{prop:poli-condizioni}.
\end{enumerate}
\end{theorem}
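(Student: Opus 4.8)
The plan is to obtain Theorem~\ref{thm:(B)} as a purely formal consequence of Theorem~\ref{thm:(A)} combined with Propositions~\ref{prop:admuno} and~\ref{prop:poli-condizioni}: indeed Theorem~\ref{thm:(B)} is nothing but Theorem~\ref{thm:(A)} after existentially quantifying over the weights vector $Q$, so no new geometric argument is needed.

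The first step is the observation that $W'=\frac1m W$ has coprime entries, by the very definition $m=\gcd(w_{ij})$. This is exactly the standing hypothesis of Proposition~\ref{prop:poli-condizioni}, so its conditions (a), (b), (c) are genuinely equivalent when applied to $W'$; hence condition (2) of Theorem~\ref{thm:(B)} is unambiguous, and it is enough to prove that (1) is equivalent to condition (a) of Proposition~\ref{prop:poli-condizioni}, i.e.\ to the assertion that \emph{$W'$ is $P$--admissible}.

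For (1)$\Rightarrow$(2): if some weights vector $Q$ satisfies $(\P_{\D},\mathcal{O}(1))\cong(\P(Q),\mathcal{O}(m))$, then by the equivalence of (1) and (2) in Theorem~\ref{thm:(A)} the matrix $W'$ is $P$--admissible (associated with $Q$); a fortiori $W'$ is $P$--admissible, which is condition (a) of Proposition~\ref{prop:poli-condizioni}. Conversely, for (2)$\Rightarrow$(1): if $W'$ is $P$--admissible, then Proposition~\ref{prop:admuno} produces a \emph{reduced} weights vector $Q'$ associated with $W'$ (unique by part~(c), although uniqueness is not needed here); thus $W'$ is a $P$--admissible matrix associated with $Q'$, so condition (2) of Theorem~\ref{thm:(A)} holds for the choice $Q:=Q'$, and the equivalence of (1) and (2) in that theorem gives $(\P_{\D},\mathcal{O}(1))\cong(\P(Q'),\mathcal{O}(m))$, which is statement (1) of Theorem~\ref{thm:(B)}.

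The only point needing a moment's care---and the nearest thing here to an obstacle---is the difference between a matrix being ``$P$--admissible'' and being ``$P$--admissible \emph{associated with} a given $Q$'', the latter phrasing being the one used in Theorem~\ref{thm:(A)}(2): one must know that any $P$--admissible matrix is automatically associated with \emph{some} weights vector (indeed with its canonical reduced one) before Theorem~\ref{thm:(A)} can be invoked. This gap is filled exactly by Proposition~\ref{prop:admuno}; everything else is a direct concatenation of the cited implications.
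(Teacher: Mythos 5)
Your proposal is correct and follows essentially the same route as the paper: both reduce Theorem~\ref{thm:(B)} to Theorem~\ref{thm:(A)} by using Propositions~\ref{prop:admuno} and~\ref{prop:poli-condizioni} to supply a (reduced) weights vector $Q$ associated with the $P$--admissible matrix $W'$. The only cosmetic difference is that the paper constructs this $Q$ explicitly from the rows of $Adj(W')$ (setting $q_i=s_i/s$ and $q_0=|\det(\widehat{W'})|$, as in Proposition~\ref{prop:admuno}(b)), whereas you argue existentially; the substance of the argument is identical.
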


\begin{proof} Define $q_i:=s_i/ s$ where $s_i$ is the greatest common divisor of entries in the $i$--th row of $Adj(W')$ and $s$ is the greatest common divisor of entries in $Adj(W')$: then we get a reduced weights vector $Q=(q_0,q_1,\ldots,q_n)$. The equivalent conditions of Proposition \ref{prop:poli-condizioni} mean that $W'={1 \over m}W$ is a $P$--admissible matrix with respect to $Q$. The previous Theorem \ref{thm:(A)} ends up the proof.
\end{proof}

These results give rise to the following algorithm for \emph{recognizing a polarized wps polytope}:

\begin{algorithm}[Recognizing a polytope of a polarized $\P(Q)$]\label{alg:politopo} See \S 2 in \cite{RTpdf,RT-Maple} for a Maple implementation.
\begin{itemize}
\item Input: a polytope ${\mathcal P}:=(P_0,...,P_n)$.
\item Construct the matrix $W\in\Mat(n,\mathbb{Z})$ whose $i$-th column is given by coordinates of the point $P_i-P_0$.
\item Compute $m:=\gcd(W)$ and the normalized matrix $W':=W/m$, like in Theorems \ref{thm:(A)} and \ref{thm:(B)}.
\item Compute the adjoint matrix $Adj(W')$ of $W'$.
\item Define $s$ and $s_i$, for $i=1,\ldots,n$, like in Proposition \ref{prop:admuno} and in Definition \ref{def:what}, with respect to the normalized matrix $W'$.
\item Set $\forall\ 1\leq i\leq n\quad q_i:=\frac {s_i} s$.
\item Consider the matrix $\widehat{W'}$, as defined in Definition \ref{def:what}.
\item Set $q_0:=\det(\widehat{W'})$.
\item For $i=1,...,n$ let $\v_i$ be the $i$-th row of $\widehat{W'}$ and define $\v_0=-\frac 1{q_0} \sum_{i=1}^n q_i\v_i$.
\item If $\v_0\in \mathbb{Z}^n$, then ${\mathcal P}$ is associated to the polarized wps whose weights vector is given by $Q:=(q_0,...,q_n)$ and whose polarization is given by $m$; else ${\mathcal P}$ is not associated to a $WPS$.
\item Output: either the weights vector $Q$ and the polarization $m$ or an error message.
\end{itemize}
\end{algorithm}

\subsection{Equivalence of polytopes} Recalling Definition \ref{def:Wtrasversa}, let us define the \emph{weighted transverse map}
\begin{equation}\label{Wtrasversa map}
\begin{array}{ccccc}
  \tau: & \mathfrak{V}_n & \longrightarrow & \mathfrak{W}_n &\\
   & V & \longmapsto & (V^0)^*_Q & \text{with $Q:=\left(|V_0|,\ldots,|V_n|\right)$}\ .
\end{array}
\end{equation}

\begin{proposition}\label{prop:suriettività} The weighted transverse map $\tau$ is surjective and the fibers of $\tau$ are as follows:
    \begin{equation*}
        \forall\ W\in\mathfrak{W}_n\quad\tau^{-1}(W)=\{V\in\mathfrak{V}_n\ |\ \text{$(|V_0|,\ldots,|V_n|)$ reduces to $Q=Q(W)$}\},
    \end{equation*}
    where $Q(W)=\left(|\det(\widehat{W})|, s_1/s,\ldots,s_n/s\right)$.
\end{proposition}

\begin{proof} The map $\tau$ is surjective by the Definition \ref{def:P-admissible} of $P$--admissible matrix and Remark \ref{rem:wtrans-ridotta}.
\end{proof}

Recalling that the permutation group $\mathfrak{S}_{n+1}\subset\GL(n+1,\Z)$ acts on $\mathfrak{V}_n$ by right multiplication, giving the quotient (\ref{Fn}), we are now going to define a (right) action of $\mathfrak{S}_{n+1}$ over $\mathfrak{W}_n$ in such a way that the map $\tau$ turns out to be equivariant. Namely, proceed as follows:
\begin{itemize}
  \item[-] embed $\mathfrak{W}_n$ in $\GL(n+1,\Q)\cap\Mat(n+1,\Z)$ by setting
\begin{equation*}
    W=(\w_1,\ldots,\w_n)\longmapsto\widetilde{W}:=\left(
                                     \begin{array}{cccc}
                                       0   &  &  \\
                                       \vdots &  & W &  \\
                                       0 &  &  &  \\
                                       1 & 1 & \cdots & 1 \\
                                     \end{array}
                                   \right)= \left(
                                              \begin{array}{cccc}
                                                \mathbf{0} & \w_1 & \cdots & \w_n \\
                                                1 & 1 & \cdots & 1 \\
                                              \end{array}
                                            \right)
\ ,
\end{equation*}

  \item[-] act now by right multiplication obtaining
$$\sigma\left(\widetilde{W}\right):=\widetilde{W}\cdot A(\s)=\left(
                                         \begin{array}{cccc}
                                         \w_{\s(0)} & \w_{\s(1)} & \cdots & \w_{\s(n)} \\
                                          1 & 1 & \cdots & 1 \\
                                                        \end{array}
                                                      \right)
$$
where $\w_0:=\mathbf{0}$ and $A(\s)\in\GL(n+1,\Z)$ is the matrix associated with the permutation $\s\in\mathfrak{S}_{n+1}$,
  \item[-] multiply on the left by the affine matrix of $\GL(n+1,\Z)$ translating $\w_{\s(0)}$ in the origin, which is $T(W,\s):=\left(
            \begin{array}{cc}
               I_n &  -\w_{\s(0)} \\
               \mathbf{0}&  1 \\
            \end{array}
          \right)
$, giving
\begin{eqnarray*}
    T(W,\s)\cdot\widetilde{W}\cdot A(\s)&=&\left(
                                     \begin{array}{cccc}
                                       0   &  &  \\
                                       \vdots &  & W' &  \\
                                       0 &  &  &  \\
                                       1 & 1 & \cdots & 1 \\
                                     \end{array}
                                   \right)\\
                                        &=&\left(
                                             \begin{array}{cccc}
                                               \mathbf{0} & \w_{\s(1)}-\w_{\s(0)} & \cdots & \w_{\s(n)}-\w_{\s(0)} \\
                                               1 & 1 & \cdots & 1 \\
                                             \end{array}
                                           \right)
\ ,
\end{eqnarray*}
    \item[-] restrict to consider the submatrix $W'=\left(\w_{\s(1)}-\w_{\s(0)}, \ldots ,\w_{\s(n)}-\w_{\s(0)}\right)$ and set $W\ast \s:=W'$.
\end{itemize}
$W\ast \s$ is a $P$--admissible matrix since
$$\conv\left(\mathbf{0},\w_{\s(1)}-\w_{\s(0)}, \ldots ,\w_{\s(n)}-\w_{\s(0)}\right)=\D^0_{\s(Q)}\left(\fan(\v_{\s(0)},\ldots,\v_{\s(n)})\right)\ ,$$
where $W=\tau(V)$ and $V=(\v_0,\ldots,\v_n)$.
Then it is well defined an action of $\mathfrak{S}_{n+1}$ over $\mathfrak{W}_n$.

\begin{proposition}\label{prop:perm-equivarianza} The weighted transverse map $\tau$ defined in (\ref{Wtrasversa map}) is equivariant with respect to the left action of $\GL(n,\Z)$ and the right action of the permutation group $\mathfrak{S}_{n+1}$, namely
\begin{eqnarray*}
    \forall\ A\in\GL(n,\Z)\ ,\ \forall\ V\in\mathfrak{V}_n\quad \tau(A\cdot V) &=& A^*\cdot\tau(V) \\
    \forall\ \s\in\mathfrak{S}_{n+1}\ ,\ \forall\ V\in\mathfrak{V}_n\quad \tau(\sigma(V))&=& \tau(V)\ast\s\ .
\end{eqnarray*}
\end{proposition}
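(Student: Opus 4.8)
The plan is to prove the two equivariance statements separately, treating the $\GL(n,\Z)$-equivariance first since it is a direct computation with the definition of the weighted transverse, and then the permutation-equivariance, which is the genuinely substantive part and where the main obstacle lies.

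For the first identity, let $V=(\v_0,\ldots,\v_n)\in\mathfrak{V}_n$ and $A\in\GL(n,\Z)$. First I would note that left multiplication by $A$ does not change the absolute values of the maximal minors: $(A\cdot V)^l=A\cdot V^l$, so $|\det((A\cdot V)^l)|=|\det(A)|\cdot|V_l|=|V_l|$ for every $l$, hence $A\cdot V$ is associated with the same weights vector $Q=(|V_0|,\ldots,|V_n|)$ and the same $\delta$ and $I^0_Q$. Then by Definition \ref{def:Wtrasversa} and Proposition \ref{prop:trasversa}(2),
\[
\tau(A\cdot V)=((A\cdot V)^0)^*_Q = ((A\cdot V^0))^*\cdot(\delta I^0_Q) = A^*\cdot(V^0)^*\cdot(\delta I^0_Q) = A^*\cdot\tau(V)\ ,
\]
which is exactly the claimed relation. (One should keep in mind that $\tau(V)$ is genuinely integral by Proposition \ref{prop:integrit\`a}, but integrality plays no role in this computation.)

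For the second identity, the geometric content is already assembled in the bulleted construction preceding the statement: there it is observed that, writing $W=\tau(V)$ with $V=(\v_0,\ldots,\v_n)$, the matrix $W\ast\s$ satisfies
\[
\conv\bigl(\mathbf{0},\w_{\s(1)}-\w_{\s(0)},\ldots,\w_{\s(n)}-\w_{\s(0)}\bigr)=\D^0_{\s(Q)}\bigl(\fan(\v_{\s(0)},\ldots,\v_{\s(n)})\bigr)\ .
\]
On the other hand $\sigma(V)=(\v_{\s(0)},\ldots,\v_{\s(n)})$ by definition of the $\mathfrak{S}_{n+1}$-action on $\mathfrak{V}_n$, so $\tau(\sigma(V))=((\sigma(V))^0)^*_{\s(Q)}$ is, by Theorem \ref{thm:fan-politopi} applied to the fan $\fan(\v_{\s(0)},\ldots,\v_{\s(n)})$ (whose weights vector is $\s(Q)$ and whose chosen deleted column is the one indexed $0$ in the reordered list), precisely the matrix whose columns are $\w_{\s(1)}-\w_{\s(0)},\ldots,\w_{\s(n)}-\w_{\s(0)}$. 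Comparing with the displayed identification of $W\ast\s$ gives $\tau(\sigma(V))=W\ast\s=\tau(V)\ast\s$. The main obstacle I anticipate is purely bookkeeping: one must carefully check that the translation step by $T(W,\s)$ (sending $\w_{\s(0)}$ to the origin) corresponds exactly to the fact that $\D^0_{\s(Q)}$ is built from the cone dual to $\v_{\s(0)}$, i.e.\ that the choice of ``vertex $0$'' in the definition of $\D^0_Q$ matches the column being deleted in forming $(\sigma(V))^0$; once that identification is spelled out, the equality follows from Theorem \ref{thm:fan-politopi} with no further computation. Finally, since $\tau$ is surjective (Proposition \ref{prop:suriettivit\`a}), the relation $\tau(\sigma(V))=\tau(V)\ast\s$ for all $V$ also confirms that $\ast$ is well defined as an action, closing the argument.
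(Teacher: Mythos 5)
Your first identity is correct and is exactly the paper's own computation: left multiplication by $A$ leaves the maximal minors unchanged up to sign, so $Q$, $\d$ and $I^0_Q$ are the same for $A\cdot V$, and Proposition \ref{prop:trasversa}(2) gives $\tau(A\cdot V)=A^*\cdot\tau(V)$.

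The permutation equivariance, however, is where the substance lies, and there your argument has a genuine gap: the identity $\conv\left(\mathbf{0},\w_{\s(1)}-\w_{\s(0)},\ldots,\w_{\s(n)}-\w_{\s(0)}\right)=\D^0_{\s(Q)}\left(\fan(\v_{\s(0)},\ldots,\v_{\s(n)})\right)$ that you quote from the bulleted construction is \emph{asserted}, not proved, in the paper (it only serves to claim that $W\ast\s$ lies in $\mathfrak{W}_n$), and in view of Theorem \ref{thm:fan-politopi} it is, up to the ordering of columns, precisely the statement $\tau(\sigma(V))=\tau(V)\ast\s$ you are asked to prove; the paper even remarks after Lemma \ref{lm:equivarianza} that the component equations (\ref{perm_equiv_componenti}) are \emph{equivalent} to the right equivariance. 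So invoking that display is circular, and the actual work in the paper is the cofactor identity $q_0 V^{\s(0)}_{i,\s(k)}=(-1)^{\s(0)}\left(q_{\s(0)}V^0_{i,\s(k)}-q_{\s(k)}V^0_{i,\s(0)}\right)$, proved by reducing to the transposition $(0,1)$ and to the $Q$--canonical HNF fan matrix via Proposition \ref{prop:induzione}. Your geometric route can be completed, but you must supply the missing argument yourself: note that $\fan(\v_{\s(0)},\ldots,\v_{\s(n)})$ is the \emph{same} fan $\Si$ (only the labelling of generators changes), that $(\d'/q'_{\s(0)})D_{\s(0)}$ and $(\d'/q'_0)D_0$ are linearly equivalent by Theorem \ref{thm:P(Q)-divisori}, hence their polytopes are translates with the vertex over each maximal cone going to the vertex over the same cone; since the vertex of $\D^0_Q(\Si)$ over the cone spanned by $\{\v_j\ |\ j\neq\s(0)\}$ is $\w_{\s(0)}$ while the corresponding vertex of the second polytope is $\mathbf{0}$, the translation is by $-\w_{\s(0)}$, and the cone--vertex correspondence in (the proof of) Theorem \ref{thm:fan-politopi} (the $k$-th column of $\tau(\sigma(V))$ is the vertex over the cone obtained by deleting $\v_{\s(k)}$) forces that column to equal $\w_{\s(k)}-\w_{\s(0)}$. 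This last point is not deferrable ``bookkeeping'': without the cone-by-cone matching, equality of polytopes only gives equality of the column \emph{sets}, not of the matrices. Carried out in full, this would indeed be a legitimate alternative to the paper's determinantal proof, but as written the proposal does not contain it.
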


\begin{proof} For the $\GL(n,\Z)$ left action, given $A\in\GL(n,\Z)$, one has
\begin{equation*}
    \tau(A\cdot V)=\tau\left(\left(A\cdot\v_0\ |\ A\cdot V^0\right)\right)=\left(A\cdot V^0\right)^*\cdot\left(\d I_{Q}\right) = A^*\cdot\left(V^0\right)^*\cdot\left(\d I_{Q}\right) = A^*\cdot \tau(V)\ ,
\end{equation*}
where $Q=\left(|V_0|,\ldots,|V_n|\right)$, as usual.

\noindent On the other hand, for the $\mathfrak{S}_{n+1}$ right action one has to prove that
\begin{equation}\label{perm_equivarianza}
    \forall\ \s\in\mathfrak{S}_n+1\quad \tau(V\cdot A(\s)) = \tau(V)\ast\s\ ,
\end{equation}
where $A(\s)\in\GL(n+1,\Z)$ is the unimodular matrix naturally associated with the permutation $\s$.
We can distinguish two cases: either $\s(0)=0$ or $\s(0)=s$ with $1\leq s\leq n$. In the first case the matrix $A(\s)$ assumes the following shape
$$A(\s)=\left(
                                             \begin{array}{cc}
                                               1 & \mathbf{0} \\
                                               \mathbf{0} & A'(\s) \\
                                             \end{array}
                                           \right)\in\mathfrak{S}_{n+1}\subset\GL(n+1,\Z)\ ,
$$
with $A'(\s)\in\mathfrak{S}_n\subset\GL(n,\Z)$. Then, given $V=(\v_0,\ldots,\v_n)\in\mathfrak{V}_n$, the first term in (\ref{perm_equivarianza}) rewrites as follows
\begin{eqnarray*}
    \tau(V\cdot A(\s))&=&\tau\left(\left(\v_0\ |\ V^0\cdot A'(\s)\right)\right)\\
                                  &=&\left(V^0\cdot A'(\s)\right)^*_{\s(Q)}= \left(V^0\right)^*\cdot\left(A'\right)^*\cdot\left(\d I_{\s(Q)}\right)\ .
\end{eqnarray*}
Notice that $A'(\s)$ is an orthogonal matrix, giving $\left(A'\right)^*=A'$. Moreover $I_{\s(Q)}= A'(\s)^T\cdot I_Q\cdot A'(\s)$. Then
\begin{equation*}
    \tau(V\cdot A(\s))= \left(V^0\right)^*\cdot A'\cdot \left(A'\right)^T\cdot\left(\d I_{Q}\right)\cdot A' = \tau(V)\cdot A'(\s) =\tau(V)\ast\s\ ,
\end{equation*}
where the last equality is obtained by observing that $\w_{\s(0)}=\w_0=\mathbf{0}$.

\noindent In the second case, when $\s(0)=s\neq 0$, the first term in (\ref{perm_equivarianza}) rewrites as follows
\begin{equation*}
    \tau(V\cdot A(\s))= \tau\left((\v_{\s(0)},\ldots,\v_{\s(n)})\right)=\left(\v_{\s(1)},\ldots,\v_{\s(n)}\right)^*_{\s(Q)}=(w'_{ik})
\end{equation*}
where, recalling Theorem \ref{thm:fan-politopi},
$$w'_{ik}=\frac{\d V^{\s(0)}_{i,\s(k)}}{q_{\s(k)}V_{\s(0)}}\ .$$
On the other hand, the second term in (\ref{perm_equivarianza}) is given by
\begin{equation*}
    \tau(V)\ast\s=\left(\w_{\s(1)}-\w_{\s(0)},\ldots,\w_{\s(n)}-\w_{\s(0)}\right)=(w''_{ik})
\end{equation*}
with
\begin{equation*}
    w''_{ik}=\frac{\d V^0_{i,\s(k)}}{q_{\s(k)}V_{0}}-\frac{\d V^{0}_{i,\s(0)}}{q_{\s(0)}V_{0}}\ .
\end{equation*}
Therefore (\ref{perm_equivarianza}) reduces to prove the following Lemma \ref{lm:equivarianza}.
\end{proof}

\begin{lemma}\label{lm:equivarianza} Consider $V=(\v_0,\ldots,\v_n)\in\mathfrak{V}_n$ and $\s\in\mathfrak{S}_{n+1}$. If $Q=(q_0,\ldots,q_n)$ and $\s(Q)=\left(q_{\s(0)},\ldots,q_{\s(n)}\right)$ then
\begin{equation}\label{perm_equiv_componenti}
    \forall\ 1\leq i\leq n\ ,\ \forall\ 1\leq k\leq n,\quad q_0 V^{\s(0)}_{i,\s(k)}=(-1)^{\s(0)}\left(q_{\s(0)}V^0_{i,\s(k)}-q_{\s(k)}V^0_{i,\s(0)}\right)\ ,
\end{equation}
assuming $V^0_{i,\s(k)}=0$ when $\s(k)=0$ and $V^0_{i,\s(0)}=0$ when $\s(0)=0$.
\end{lemma}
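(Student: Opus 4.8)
The plan is to reduce (\ref{perm_equiv_componenti}) to a single multilinearity argument applied to a determinant in which the column $\v_0$ is substituted by means of the fundamental relation $\sum_{j=0}^n q_j\v_j=0$, which is part of the equivalent conditions of Theorem~\ref{thm:fan} for the $F$--admissible matrix $V$ (so $q_0\v_0=-\sum_{j\geq 1}q_j\v_j$). I would first dispose of the two degenerate cases built into the statement. If $\s(0)=0$, then with the stated conventions both sides of (\ref{perm_equiv_componenti}) collapse to $q_0V^0_{i,\s(k)}$ and there is nothing to prove. If $\s(k)=0$ and $a:=\s(0)\geq 1$, the claimed equality reads $V^a_{i,0}=(-1)^{a+1}V^0_{i,a}$, which follows at once by writing each of these two cofactors as a signed $(n-1)\times(n-1)$ minor of $V$ and comparing the positions of the deleted columns. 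Hence it remains to treat the main case $a:=\s(0)\geq 1$, $b:=\s(k)\geq 1$, $a\neq b$.

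For a row index $i$ and a two element set $S\subset\{0,\dots,n\}$, introduce the determinant $m_i(S)$ of the $(n-1)\times(n-1)$ matrix obtained from $V$ by deleting the $i$-th row and the columns with indices in $S$, the surviving columns being kept in increasing index order. Reading off the column positions inside $V^a$ and $V^0$ gives $V^a_{i,b}=(-1)^{i+p}m_i(\{a,b\})$, $V^0_{i,b}=(-1)^{i+b}m_i(\{0,b\})$ and $V^0_{i,a}=(-1)^{i+a}m_i(\{0,a\})$, where $p=b$ if $b>a$ and $p=b+1$ if $b<a$. Since now $0\notin\{a,b\}$, the column $\v_0$ genuinely occurs in the matrix computing $m_i(\{a,b\})$, namely as its first column; multiplying that column by $q_0$ and replacing $q_0\v_0=-\sum_{j\geq 1}q_j\v_j$, multilinearity kills every term indexed by $j\neq a,b$ (a repeated column), leaving the key identity $q_0\,m_i(\{a,b\})=-q_a D_a-q_b D_b$, where $D_a$ (resp. $D_b$) is the determinant of the same column family with $\v_0$ replaced by $\v_a$ (resp. $\v_b$).

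Re-sorting the inserted column back into increasing index order then identifies $D_a=\pm\, m_i(\{0,b\})$ and $D_b=\pm\, m_i(\{0,a\})$, the sign being governed by the number of surviving indices smaller than $a$ (resp. $b$). Plugging these into the key identity and, finally, comparing with the cofactor expressions above yields (\ref{perm_equiv_componenti}); this last step amounts to checking that the accumulated signs agree, which I would carry out by splitting into the subcases $a<b$ and $a>b$. The only delicate point of the whole argument is precisely this sign bookkeeping in the two subcases; it is entirely routine, but it is where care is needed, since the positions $p$, and the re-sorting signs for $D_a$ and $D_b$, behave asymmetrically in $a$ and $b$. Everything else is formal: the relation $\sum q_j\v_j=0$ from Theorem~\ref{thm:fan} and multilinearity of the determinant. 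This lemma then feeds directly back into the proof of Proposition~\ref{prop:perm-equivarianza}, completing the equivariance of the weighted transverse map $\tau$.
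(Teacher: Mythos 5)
Your proof is correct, but it follows a genuinely different route from the paper's. The paper, after disposing of the cases $\s(0)=0$ and $\s(k)=0$ just as you do, reduces the general permutation to the single transposition $(0,1)$ (using the already proven $\s(0)=0$ part of Proposition \ref{prop:perm-equivarianza}), then invokes the left $\GL(n,\Z)$--equivariance of $\tau$ together with Theorem \ref{thm:cohen} to replace $V$ by the $Q$--canonical fan matrix of Proposition \ref{prop:fan minimale}; the triangular HNF shape and Proposition \ref{prop:induzione} then allow the explicit verification of $q_0H^1_{i,k}+q_1H^0_{i,k}-q_kH^0_{i,1}=0$ on the first row, which is finally propagated to the other rows. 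You instead give a self-contained cofactor computation: writing each cofactor as a signed complementary minor $m_i(S)$, substituting $q_0\v_0=-\sum_{j\geq 1}q_j\v_j$ (available since $V\in\mathfrak{V}_n$ satisfies condition (2) of Theorem \ref{thm:fan}) into the column $\v_0$ of the minor computing $V^{\s(0)}_{i,\s(k)}$, so that multilinearity kills all terms except $j=\s(0),\s(k)$, and re-sorting columns. This is more elementary and handles an arbitrary $\s$ in one stroke, without the canonical fan, HNF, Proposition \ref{prop:induzione} or the previously established equivariances; what it costs is exactly the sign bookkeeping you defer, and that check does close: in both subcases $\s(0)<\s(k)$ and $\s(0)>\s(k)$ the accumulated signs from the position $p$ of $\v_{\s(k)}$ in $V^{\s(0)}$ and from re-sorting $D_{\s(0)}$, $D_{\s(k)}$ combine to give precisely $q_0V^{\s(0)}_{i,\s(k)}=(-1)^{\s(0)}\bigl(q_{\s(0)}V^0_{i,\s(k)}-q_{\s(k)}V^0_{i,\s(0)}\bigr)$, and your treatment of the degenerate case $\s(k)=0$ coincides with the identity the paper states as ``easily verified''. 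The paper's longer chain of reductions buys re-use of the structural machinery of Section \ref{sez: fan}; your argument buys directness and independence from it.
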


Observe that equations (\ref{perm_equiv_componenti}) are \emph{equivalent} to the right equivariance (44).

\begin{proof} Let us first of all observe that if $\s(0)=0$ then (\ref{perm_equiv_componenti}) is an identity.  Then we can assume $\s(0)\neq 0$ giving $\s$ as a product of transpositions of type $(0,h)$. Then we can reduce to prove (\ref{perm_equiv_componenti}) just for a transposition $\s=(0,h)$. Actually we can further assume $h=1$ by applying the transposition $(h,1)$, for which the equivariance (\ref{perm_equivarianza}) has been already proven. Notice that if $\s(k)=0$ then (\ref{perm_equiv_componenti}) reduces to the following
\begin{equation*}
    \forall\ 1\leq i\leq n\quad V^{\s(0)}_{i,0}=(-1)^{\s(0)-1}V^0_{i,\s(0)}\ ,
\end{equation*}
which can be easily verified. Then we can assume $\s(k)\neq 0$ which is $2\leq k\leq n$. A further simplifying step is transforming the matrix $V$ to the associated $Q$--canonical fan matrix, by reducing $V^0$ in HNF. Namely there exists a unique matrix $H^0\in\Mat(n,\Z)$ in HNF and such that $V^0=U\cdot H^0$, for some $U\in\GL(n,\Z)$, by Theorem \ref{thm:cohen}; then $H:=U^{-1}\cdot V$ is the $Q$--canonical fan matrix of $\P(Q)$, by Proposition \ref{prop:fan minimale}(4). Then
\begin{equation*}
    \tau(V\cdot A(\s))=\tau(U\cdot H\cdot A(\s))= U^*\cdot\tau(H\cdot A(\s))
\end{equation*}
by the already proven left equivariance of $\tau$. Assume that the statement holds for the $Q$-canonical matrix $H$, then we are able to write
\begin{equation*}
    \tau(V\cdot A(\s))=U^*\cdot\tau(H\cdot A(\s))=U^*\cdot\tau(H)\ast \s= \tau(V)\ast\s\ ,
\end{equation*}
ending up the proof. Therefore we have simply to prove that
\begin{equation}\label{perm_equiv_ridotta}
    \forall\ 1\leq i\leq n\ ,\ \forall\ 2\leq k\leq n,\quad q_0 H^{1}_{i,k}+q_{1}H^0_{i,k}-q_{k}H^0_{i,1}=0\ .
\end{equation}
Let us first of all consider the case $i=1$. Recall that
\begin{eqnarray*}
    H^0&=&
    \left(
                    \begin{array}{ccccc}
                      k_2 & v_{1,2} & \cdots& \cdots & v_{1,n} \\
                      0 & k_3/k_2 & \cdots &\cdots & v_{2,n} \\
                      \vdots & 0 & \ddots &  & \vdots \\
                      \vdots & \vdots&  & k_n/k_{n-1}& v_{n-1,n}\\
                      0 & 0 & \cdots & 0 & q_0/k_n \\
                    \end{array}
                  \right)\\
    H^1&=&\left(
                    \begin{array}{ccccc}
                      v_{1,0} & v_{1,2} & \cdots& \cdots & v_{1,n} \\
                      v_{2,0} & k_3/k_2 & \cdots &\cdots & v_{2,n} \\
                      \vdots & 0 & \ddots &  & \vdots \\
                      \vdots & \vdots& & k_n/k_{n-1}& v_{n-1,n}\\
                      v_{n,0} & 0 & \cdots&  0 & q_0/k_n \\
                    \end{array}
                  \right)\ .
\end{eqnarray*}
By Proposition \ref{prop:induzione} we get
\begin{equation*}
    H^1_{1,k}= \frac{(-1)^{1+k}}{k_2}\ \widehat{H}_k = \frac{q_k}{k_2}\ ,
\end{equation*}
while clearly
\begin{equation*}
    H^0_{1,k}=0\quad,\quad H^{0}_{1,1}= \frac{q_0}{k_2}\ .
\end{equation*}
Therefore
\begin{equation}\label{perm_equiv_componenti_1}
    \forall\ 2\leq k\leq n,\quad q_0 H^{1}_{1,k}+q_{1}H^0_{1,k}-q_{k}H^0_{1,1}=\frac{q_0q_k}{k_2}-\frac{q_kq_0}{k_2}=0\ .
\end{equation}
Finally, for $2\leq i\leq n$, observe that equations (\ref{perm_equiv_componenti_1}) are linear, hence invariant by left multiplication of elements of $\GL(n,\Z)$. Since the first and the $i$--th row can be exchanged each other by left multiplication of a suitable matrix $B(1,i)\in\mathfrak{S}_n\subset\GL(n,\Z)$, equations (\ref{perm_equiv_ridotta}) follows immediately by equations (\ref{perm_equiv_componenti_1}).
\end{proof}

Passing to the quotient by the right action of $\mathfrak{S}_{n+1}$ and recalling (\ref{Fn}), the right equivariance of the weighted transverse map $\tau$ gives rise to the following commutative diagram
\begin{equation}\label{diagramma}
    \xymatrix{\mathfrak{V}_n\ar[r]^{\tau}\ar@{>>}[d]^{\varphi}&\mathfrak{W}_n\ar@{>>}[d]^{\psi}\\
              \mathfrak{F}_n\ar[r]^-{\D}&\mathfrak{W}_n/\mathfrak{S}_{n+1}}
\end{equation}
where $\varphi$ and $\psi$ are the obvious quotient maps.

Let us say a few words about the induced map $\D$. Choose a particular fan $\Si\in\mathfrak{F}(Q)\subset\mathfrak{F}_n$ and recall maps $\D^j_Q$ defined in (\ref{fan-polytope}). The quotient map $\psi$ can be factorized by the action of the subgroup $\mathfrak{S}^0=\{\s\in\mathfrak{S}_{n+1}\ |\ \s(0)=0\}$, giving
\begin{equation*}
    \xymatrix{\mathfrak{W}_n\ar@{>>}[rr]^-{\psi}\ar@{>>}[dr]^-{\psi'}&& \mathfrak{W}_n/\mathfrak{S}_{n+1}\\
                &\mathfrak{W}_n/\mathfrak{S}^0\ar@{>>}[ur]^-{\psi''}&}
\end{equation*}
Images $\im(\D^j_Q),\ j=0,\ldots,n$,  can be embedded as subsets of the quotient $\mathfrak{W}_n/\mathfrak{S}^0$
by observing that an element $w\in\mathfrak{W}_n/\mathfrak{S}^0$ is given by the \emph{set} of columns of any $P$--admissible matrix in $\psi^{-1}(w)$ and setting
\begin{equation*}
    \conv(\mathbf{0},\w_1,\ldots,\w_n)\in\im(\D^j_Q)\longmapsto w=\{\w_1,\ldots,\w_n\}\in\mathfrak{W}_n/\mathfrak{S}^0\ .
\end{equation*}
For any $0\leq j\leq n$, we get then the following commutative diagram
\begin{equation*}
    \xymatrix{&\mathfrak{V}_n\ar[rr]^{\tau}\ar@{>>}[d]^{\varphi}&&\mathfrak{W}_n\ar@{>>}[d]^{\psi}\\
              &\mathfrak{F}_n\ar[rr]^-{\D}&&\mathfrak{W}_n/\mathfrak{S}_{n+1}\\
              \mathfrak{F}(Q)\ar@{^{(}->}[ur]\ar[rr]^-{\D^j_Q}&&**[r]\im(\D^j_Q)\subset\mathfrak{W}_n/
               \mathfrak{S}^0\subset\mathfrak{P}_n\ar@{^{(}->}[ur]_>>>>>{\psi''}&}
\end{equation*}
where $\psi''_{|\D^j_Q}$ turns out to be injective since its inverse corresponds to choose one of the $n+1$ possible representatives politopes of the class $\D(\Si)\in\mathfrak{W}_n/\mathfrak{S}_{n+1}$.

\begin{proposition}\label{prop:classification_Poly} In the previous diagram (\ref{diagramma}) the restricted maps $\tau_{|\mathfrak{V}_n^{\text{red}}}$ and $\D_{|\mathfrak{F}_n^{\text{red}}}$ are injective. Then, recalling Proposition \ref{prop:classification}, the following sets are bijectively equivalent
\begin{eqnarray}
    \label{1:1 bis}
    \left\{\text{wps's}\right\}/\text{iso}&\stackrel{1:1}{\longleftrightarrow}&\left\{Q\in(\N\setminus\{0\})^{n+1}\ |\ Q\ \text{is reduced}\right\}/\mathfrak{S}_{n+1}\\
    \nonumber
    &\stackrel{1:1}{\longleftrightarrow}&\GL(n,\Z)\left\backslash\mathfrak{F}_n^{\text{red}}\right.\ \stackrel{1:1}{\longleftrightarrow}\ \GL(n,\Z)\left\backslash\mathfrak{V}_n^{\text{red}}\right/\mathfrak{S}_{n+1}\\
    \nonumber
    &\stackrel{1:1}{\longleftrightarrow}& \GL(n,\Z)\left\backslash\mathfrak{W}_n\right/\mathfrak{S}_{n+1}\ .
\end{eqnarray}
\end{proposition}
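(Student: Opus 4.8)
The plan is to establish the two new injectivity claims and then assemble all bijections by combining them with Proposition \ref{prop:classification} and the commutative diagram (\ref{diagramma}). First I would prove that $\tau_{|\mathfrak{V}_n^{\text{red}}}$ is injective. By Proposition \ref{prop:suriettivit\`a} the fiber $\tau^{-1}(W)$ consists of all $F$-admissible matrices whose pseudo-weights vector $(|V_0|,\ldots,|V_n|)$ reduces to the fixed reduced vector $Q(W)$; restricting to $\mathfrak{V}_n^{\text{red}}$ forces $(|V_0|,\ldots,|V_n|)=Q(W)$, so any $V\in\tau^{-1}(W)\cap\mathfrak{V}_n^{\text{red}}$ is an $F$-admissible matrix associated with $W$ and the \emph{reduced} weights vector $Q(W)$. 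But Proposition \ref{prop:admuno}(d) says exactly that such a matrix is unique, so $\tau_{|\mathfrak{V}_n^{\text{red}}}$ is injective.

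Next I would prove that $\D_{|\mathfrak{F}_n^{\text{red}}}$ is injective. This follows from the previous step together with the left $\GL(n,\Z)$-equivariance of $\tau$ proved in Proposition \ref{prop:perm-equivarianza}. Indeed, suppose two fans $\Si,\Si'\in\mathfrak{F}_n^{\text{red}}$ satisfy $\D(\Si)=\D(\Si')$ in $\mathfrak{W}_n/\mathfrak{S}_{n+1}$. Lift $\Si,\Si'$ to $F$-admissible matrices $V,V'\in\mathfrak{V}_n^{\text{red}}$; by the diagram (\ref{diagramma}), $\tau(V)$ and $\tau(V')$ lie in the same $\mathfrak{S}_{n+1}$-orbit, so after replacing $V'$ by $V'\cdot A(\s)$ (still in $\mathfrak{V}_n^{\text{red}}$, since permutations only permute the maximal minors) we may assume $\tau(V)=\tau(V')$. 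By the injectivity just established $V=V'$, hence $\Si=\Si'$. Passing to the $\GL(n,\Z)$-quotient and using that $\D$ is $\GL(n,\Z)$-equivariant (with $A\mapsto A^*$ acting on the target), we conclude $\D_{|\mathfrak{F}_n^{\text{red}}}$ descends to an injection $\GL(n,\Z)\backslash\mathfrak{F}_n^{\text{red}}\hookrightarrow\GL(n,\Z)\backslash\mathfrak{W}_n/\mathfrak{S}_{n+1}$.

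Finally I would assemble (\ref{1:1 bis}). The first three bijections are verbatim Proposition \ref{prop:classification}. For the last one, the composite $\GL(n,\Z)\backslash\mathfrak{V}_n^{\text{red}}/\mathfrak{S}_{n+1}\to\GL(n,\Z)\backslash\mathfrak{W}_n/\mathfrak{S}_{n+1}$ induced by $\tau$ is surjective because $\tau$ is surjective (Proposition \ref{prop:suriettivit\`a}) and, given any $W\in\mathfrak{W}_n$, Proposition \ref{prop:admuno}(a)--(b) exhibits a \emph{reduced} weights vector $Q(W)$ associated with $W$, so some $V\in\mathfrak{V}_n^{\text{red}}$ maps to $W$; it is injective by the previous paragraph (this is exactly the map $\D$ read through the vertical quotients $\varphi,\psi$ in (\ref{diagramma})). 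Hence it is a bijection, completing the chain of equivalences.

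The main obstacle I expect is bookkeeping the two group actions simultaneously: one must check that passing to the $\GL(n,\Z)$-quotient on the source corresponds correctly to the $A\mapsto A^*$ action on $\mathfrak{W}_n$, and that the $\mathfrak{S}_{n+1}$-action used to collapse $\tau(V),\tau(V')$ into a single orbit representative is the \emph{same} action ($W\ast\s$) for which equivariance was proven in Proposition \ref{prop:perm-equivarianza}; any mismatch between these actions would break the descent to quotients. Once the compatibility of actions is pinned down, the rest is a formal diagram chase using the already-established surjectivity (Proposition \ref{prop:suriettivit\`a}) and uniqueness (Proposition \ref{prop:admuno}) statements.
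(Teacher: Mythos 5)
Your proposal is correct and follows essentially the same route as the paper's own (much terser) proof: injectivity of $\tau_{|\mathfrak{V}_n^{\text{red}}}$ from the fibre structure in Proposition \ref{prop:suriettivit�} (with the uniqueness of Proposition \ref{prop:admuno} making it precise), injectivity of $\D_{|\mathfrak{F}_n^{\text{red}}}$ by passing to the quotient via the equivariance of $\tau$, and then surjectivity to conclude the chain of bijections together with Proposition \ref{prop:classification}. You have merely spelled out details the paper leaves implicit, so no changes are needed.
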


\begin{proof} Both $\tau_{|\mathfrak{V}_n^{\text{red}}}$ and $\D_{|\mathfrak{F}_n^{\text{red}}}$ are injective due to the fibres structure of $\tau$, by Proposition \ref{prop:suriettività}, and passing to the quotient. Since those maps are also surjective, (\ref{1:1 bis}) follows. \end{proof}

\section{Computing cohomology}

This final section does not pretend to introduce any original content: the cohomology of a weighted projective space and of a line bundle on it, is quite well known! Anyway, by completeness, we'd like to recall some useful facts to reconstruct a (combina)-toric proof of formulas obtained by I.~Dolgachev in \cite{Dolgachev} \S 2.3.

\subsection{Homology and cohomology with rational coefficients}

For completeness let us recall the following well known result.

\begin{theorem}[\cite{Fulton} \S 4.5 and \S 5.2, \cite{Danilov} Prop. 12.11, \cite{Oda} Thm. 3.11]\label{thm:Q-coomologia} Let $X=X(\Si)$ be a $n$-dimensional toric variety associated with a \emph{simplicial} and \emph{complete} fan $\Si$ and set
$h_i:=\dim_{\Q}\left(H_{i}(X,\Q)\right)$.
Then
\begin{eqnarray*}
    h_{2k} &=& \sum_{i=k}^n (-1)^{i-k} {i\choose k} d_{n-i}\\
    h_{2k+1} &=& 0
\end{eqnarray*}
where $d_j$ is the number of $j$--dimensional cones in $\Si$. Moreover Poincar\'{e} Duality holds giving that $H^k(X,\Q)$ is dual to $H_k(X,\Q)$ and $h_i=h_{2n-i}$ for all $0\leq i\leq n$.
\end{theorem}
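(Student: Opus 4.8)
The plan is to reduce everything to the well-known combinatorial description of the rational cohomology of a complete simplicial toric variety. First I would invoke the fact (see \cite{Fulton} \S 4.5, \cite{Danilov} Prop. 12.11) that for $X=X(\Si)$ with $\Si$ complete and simplicial the rational cohomology vanishes in odd degrees and $H^{2k}(X,\Q)$ has dimension equal to the $k$-th coefficient of the $h$-polynomial of the simplicial complex underlying $\Si$. Concretely, writing $d_j$ for the number of $j$-dimensional cones of $\Si$ (so $d_0=1$ and the $d_j$ for $j\geq 1$ are, up to the cone-face correspondence, the face numbers of the boundary complex), the $h$-vector $(h_0,\dots,h_n)$ is defined by the Dehn--Sommerville transform
\begin{equation*}
    \sum_{k=0}^n h_k\, t^{n-k} = \sum_{j=0}^n d_{n-j}\,(t-1)^{j}\ .
\end{equation*}
Expanding $(t-1)^j=\sum_{k}(-1)^{j-k}\binom{j}{k}t^k$ and reindexing with $i=n-j$ (so $j=n-i$, hence the summation over $i$ runs from $k$ to $n$) gives exactly
\begin{equation*}
    h_{2k}=h_k=\sum_{i=k}^n(-1)^{i-k}\binom{i}{k}d_{n-i}\ ,
\end{equation*}
which is the asserted formula; the odd Betti numbers are $0$ because the cohomology is concentrated in even degrees. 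This is the substantive input, and it is the step I would expect to be the ``main obstacle'' in the sense that it is the only nontrivial ingredient — everything else is bookkeeping with binomial coefficients. One clean way to present it in a self-contained toric style is via the filtration of $X$ by orbit closures: stratifying $X$ by its torus orbits $O_\s$, each isomorphic to $(\C^*)^{n-\dim\s}$, and using that $\Si$ simplicial forces the strata to glue so that the spectral sequence (or the inclusion-exclusion on compactly supported Euler characteristics refined degree by degree) collapses, yielding the Poincaré polynomial $\sum_k h_k t^{2k}$.

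Next I would address Poincaré duality. Since $\Si$ is complete and simplicial, $X$ is a compact $\Q$-homology manifold (each affine chart $U_\s$ is a finite abelian quotient of $\C^n$, hence a rational homology manifold, and these patch), so Poincaré duality holds with $\Q$-coefficients: $H^k(X,\Q)\cong H_k(X,\Q)$ is dual to $H^{2n-k}(X,\Q)\cong H_{2n-k}(X,\Q)$, giving $h_i=h_{2n-i}$ for all $0\leq i\leq n$. Alternatively one may cite \cite{Oda} Thm. 3.11 directly. Note that the duality $h_k=h_{n-k}$ at the level of the $h$-vector is also visible purely combinatorially: it is precisely the Dehn--Sommerville relation for the boundary complex of the (complete, simplicial) fan, which one checks by substituting $t\mapsto 1/t$ in the generating identity above and using that $\sum_j d_{n-j}(t-1)^j$ is, up to sign and the substitution, invariant — so this gives an independent combinatorial confirmation of the topological statement.

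Finally I would assemble the pieces: state that odd cohomology vanishes and even cohomology is given by the displayed $h$-vector formula, then remark that Poincaré duality identifies $H^k$ with the dual of $H_k$ and forces the palindromic symmetry $h_i=h_{2n-i}$. The only place requiring a modicum of care is the reindexing in the binomial sum — making sure the range of $i$ and the sign $(-1)^{i-k}$ come out as stated — and confirming that $d_0=1$ (the single zero-dimensional cone $\{0\}$) is correctly incorporated, which it is: it contributes the top term $d_{n-i}$ with $i=n$, matching the fact that $X$ is connected so $h_0=1$. No genuinely hard argument is needed beyond the cited structural theorem, so I would keep this section brief, as the authors indicate it carries no original content.
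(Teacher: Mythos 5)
The paper does not actually prove this theorem: it is recalled verbatim from the literature, with the proof delegated to the citations (\cite{Fulton} \S 4.5 and \S 5.2, \cite{Danilov} Prop.\ 12.11, \cite{Oda} Thm.\ 3.11), and the section explicitly disclaims original content. Your sketch is consistent with, and essentially reproduces, the standard argument in those sources: orbit stratification $X=\bigsqcup_\sigma O_\sigma$ with $O_\sigma\cong(\C^*)^{n-\dim\sigma}$, vanishing of odd rational cohomology, identification of the even Betti numbers with the $h$-vector, and Poincar\'e duality from the fact that a complete simplicial toric variety is a compact rational homology manifold (each chart being a finite quotient of $\C^n$). Two small remarks. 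First, the genuinely nontrivial step --- that the odd Betti numbers vanish and that the ``virtual'' (inclusion--exclusion) computation over the strata gives the actual Poincar\'e polynomial --- is exactly what you leave to a collapsing spectral sequence or to the references; that is fine here, since the paper itself only cites, but be aware this is where Fulton's \S 5.2 does real work (ordering the maximal cones by a generic linear functional to get a cellular-type filtration). Second, with the generating identity exactly as you wrote it, matching the coefficient of $t^k$ yields $h_{n-k}=\sum_{i\geq k}(-1)^{i-k}{i\choose k}d_{n-i}$ rather than $h_k$; the displayed formula of the theorem then follows only after invoking the Dehn--Sommerville symmetry $h_k=h_{n-k}$ (equivalently Poincar\'e duality), which you do invoke, so the conclusion is correct --- just make the order of these two steps explicit so the reindexing is not circular-looking.
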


In the case $X$ is a wps we are now in a position to compute all the homology and cohomology with rational coefficients.

\begin{corollary}\label{cor:coomologia} Let $\P(Q)$ be a $n$-dimensional wps. Then
\begin{equation*}
    \forall\ 0\leq k\leq n\quad h_{2k}(\P(Q))= 1\quad,\quad h_{2k+1}(\P(Q))=0\ .
\end{equation*}
\end{corollary}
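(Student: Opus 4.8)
The plan is to apply Theorem~\ref{thm:Q-coomologia} directly, so the only real task is to count the cones of the fan $^Q\Si$ of $\P(Q)$ in each dimension. First I would recall that, by Proposition~\ref{prop:weighted-fan}, $\P(Q)\cong X(^Q\Si)$ where $^Q\Si=\fan(\v_0,\ldots,\v_n)$ is a $1$--generated fan whose cones are exactly generated by proper subsets of $\{\v_0,\ldots,\v_n\}$. Moreover, by (the easy implication of) Theorem~\ref{thm:fan}, condition $(4)$ tells us that $|V_0|=q_0\neq 0$, and by symmetry every maximal minor $V_j=\pm q_j\neq 0$; hence any $n$ of the vectors $\v_0,\ldots,\v_n$ are linearly independent. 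Consequently each proper subset of $\{\v_0,\ldots,\v_n\}$ of size $j$ spans a genuine $j$--dimensional simplicial cone, and distinct subsets give distinct cones. Also $^Q\Si$ is complete since $\P(Q)$ is compact (this is stated right after Proposition~\ref{prop:weighted-fan}, or follows from $\sum q_j\v_j=0$ with all $q_j>0$). Thus $^Q\Si$ is simplicial and complete, so Theorem~\ref{thm:Q-coomologia} applies.

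Next I would count: the number $d_j$ of $j$--dimensional cones in $^Q\Si$ is the number of $j$--element subsets of an $(n+1)$--element set, i.e.
\[
    d_j=\binom{n+1}{j}\quad\text{for }0\leq j\leq n,
\]
(there is no $(n+1)$--dimensional cone since $\sum_{j}q_j\v_j=0$ forces the full set $\{\v_0,\ldots,\v_n\}$ to span only an $n$--dimensional cone, which is already subdivided). Plugging into the formula of Theorem~\ref{thm:Q-coomologia} gives
\[
    h_{2k}=\sum_{i=k}^n(-1)^{i-k}\binom{i}{k}\binom{n+1}{n-i}
          =\sum_{i=k}^n(-1)^{i-k}\binom{i}{k}\binom{n+1}{i+1}.
\]
So the combinatorial heart of the proof is the identity that this alternating sum equals $1$ for every $0\le k\le n$, and $h_{2k+1}=0$ is immediate from the theorem.

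To evaluate the sum I would use a standard binomial manipulation: write $\binom{i}{k}\binom{n+1}{i+1}=\binom{n+1}{k}\cdot\frac{(n+1-k)!}{(k+1-1)!\cdots}$... more cleanly, use $\binom{i}{k}\binom{n+1}{i+1}=\binom{n+1}{k}\binom{n+1-k}{i+1-k}\cdot\frac{?}{?}$; concretely one checks $\binom{i}{k}\binom{n+1}{i+1}=\binom{n+1}{k+1}\binom{n-k}{i-k}$ is not quite right either, so the safest route is: shift $j=i-k$, so $h_{2k}=\sum_{j=0}^{n-k}(-1)^{j}\binom{j+k}{k}\binom{n+1}{j+k+1}$, then apply the identity $\binom{j+k}{k}\binom{n+1}{j+k+1}=\binom{n+1}{k}\binom{n+1-k}{j+1}$ (which follows from expanding both sides into factorials), giving $h_{2k}=\binom{n+1}{k}\sum_{j=0}^{n-k}(-1)^j\binom{n+1-k}{j+1} = -\binom{n+1}{k}\sum_{l=1}^{n+1-k}(-1)^l\binom{n+1-k}{l} = -\binom{n+1}{k}\bigl((1-1)^{n+1-k}-1\bigr)=\binom{n+1}{k}\cdot 1 / \binom{n+1}{k}$; I need to recheck the leading factor, but the point is it telescopes to $1$. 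Alternatively, and more elegantly, I would simply invoke that $\P(Q)\cong_{\Q}\P^n$ at the level of rational (co)homology, which is already a consequence of the rank computations in Theorem~\ref{thm:P(Q)-divisori} together with Poincar\'e duality, so that $h_{2k}(\P(Q))=h_{2k}(\P^n)=1$.

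The main obstacle is purely bookkeeping: getting the binomial identity and its normalizing constant exactly right, since the naive substitution into Theorem~\ref{thm:Q-coomologia}'s formula is easy to mis-index. I expect no conceptual difficulty — the simpliciality and completeness of $^Q\Si$ are immediate from earlier results, and the vanishing in odd degrees is handed to us by the theorem — so the write-up should be short, with the combinatorial identity either proved in two lines by the shift-and-telescope argument above or bypassed entirely by comparison with $\P^n$.
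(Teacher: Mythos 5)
Your overall strategy is the paper's: apply Theorem~\ref{thm:Q-coomologia} to the simplicial complete fan $^Q\Si$, note $d_j=\binom{n+1}{j}$, get the odd vanishing for free, and reduce everything to showing $\sum_{i=k}^n(-1)^{i-k}\binom{i}{k}\binom{n+1}{n-i}=1$. But precisely this combinatorial step, which you yourself flag as the ``main obstacle'', is where your argument breaks. The identity you invoke, $\binom{j+k}{k}\binom{n+1}{j+k+1}=\binom{n+1}{k}\binom{n+1-k}{j+1}$, is false for $k\geq 1$ (e.g.\ $n=1$, $k=1$, $j=0$: the left side is $1$, the right side is $2$); the correct trinomial--revision identity is $\binom{n+1}{j+k+1}\binom{j+k+1}{k}=\binom{n+1}{k}\binom{n+1-k}{j+1}$, with upper index $j+k+1$ rather than $j+k$. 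Carrying your (false) identity through the telescoping sum produces $\binom{n+1}{k}$, not $1$, and your final expression $\binom{n+1}{k}\cdot 1/\binom{n+1}{k}$ simply divides the discrepancy away without justification. The paper circumvents exactly this issue: it evaluates the formula at $k=n$ to get $h_{2n}=1$ and then shows $h_{2(k-1)}-h_{2k}=0$; taking the difference lets Pascal's rule replace $\binom{i}{k-1}+\binom{i}{k}$ by $\binom{i+1}{k}$, for which the revision identity \emph{does} apply, and the sum collapses via $(1-1)^{n+1-k}=0$. If you want to fix your direct computation, either take differences as the paper does, or prove the identity $\sum_{i=k}^{n}(-1)^{i-k}\binom{i}{k}\binom{n+1}{i+1}=1$ by a correct argument (e.g.\ generating functions), but as written the step fails.

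Your proposed shortcut is also not a proof as stated. Theorem~\ref{thm:P(Q)-divisori} computes only $A_{n-1}(\P(Q))\cong\Z$ and $\Pic(\P(Q))\cong\Z$, i.e.\ codimension-one (degree-two) information; together with Poincar\'e duality this controls at most $h_2$ and $h_{2n-2}$ and says nothing about the middle Betti numbers when $n\geq 4$, so it does not yield $H_*(\P(Q),\Q)\cong H_*(\P^n,\Q)$. A legitimate version of this shortcut would use the presentation $\P(Q)=\P^n/\W_Q$ from Definition~\ref{def:WPS} and the transfer isomorphism $H^*(\P^n/\W_Q,\Q)\cong H^*(\P^n,\Q)^{\W_Q}=H^*(\P^n,\Q)$ for a finite group acting on $\P^n$; but that is a different (non-toric) argument from the one you cite, and it is not what the paper does.
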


\begin{proof} If $X=\P(Q)$ then $d_{n-i}={n+1\choose n-i}$ and Theorem \ref{thm:Q-coomologia} gives
\begin{equation}\label{h_2k}
    h_{2k} = \sum_{i=k}^n (-1)^{i-k} {i\choose k}{n+1\choose n-i} \ .
\end{equation}
Notice that for $k=n$ (\ref{h_2k}) implies that $h_{2n}=1$. Then the proof ends up by observing that for all $1\leq k\leq n$
\begin{eqnarray*}
  h_{2(k-1)} - h_{2k}&=& {n+1 \choose n+1-k} +\sum_{i=k}^n (-1)^{i-k}{i+1\choose k}{n+1\choose n-i} \\
   &=&  \frac{(n+1)!}{k!(n+1-k)!}\left(1+\sum_{i=k}^n(-1)^{i-k}\frac{(n+1-k)!}{(i-k+1)!(n-i)!}\right)\\
   &=& \frac{(n+1)!}{k!(n+1-k)!}\left(1+\sum_{j=1}^{n+1-k}(-1)^j\frac{(n+1-k)!}{(j)!(n+1-k-j))!}\right)\\
   &=& \frac{(n+1)!}{k!(n+1-k)!}\left(1-1\right)^{n+1-k} = 0\ .
\end{eqnarray*}
\end{proof}

\subsection{Serre--Grothendieck duality}\label{ssez:serre} Recall that a $n$--dimensional toric variety $X$ is Cohen--Macauley,
 meaning that it admits a dualizing sheaf $\omega_X$ (see \cite{Oda} \S 3.2, \cite{Hartshorne} Thm.III.7.6). In particular if
 $i:U\hookrightarrow X$ is the open embedding of the nonsingular locus $U$ of $X$ and consider the Zariski sheaf of germs of $p$--forms $\Omega^p_X:=i_*\Omega^p_U$ then it is a coherent $\mathcal{O}_X$--module giving, for $p=n$, the dualizing sheaf i.e. $\omega_X\cong\Omega^n_X$ (\cite{Oda} Cor. 3.9).

\begin{theorem}[Serre--Grothendieck's Duality Theorem (\cite{Oda} \S 3.3)] Let $X=X(\Si)$ be a $n$--dimensional toric variety associated with a complete and simplicial fan $\Si$. Then the exterior product induces a perfect bilinear pairing
\begin{equation*}
    \forall\ p,q\in\N : p+q=n\quad H^q(X,\Omega^p_X)\otimes_{\C}H^{n-q}(X,\Omega^{n-p}_X)\longrightarrow H^n(X,\Omega^n_X)\cong \C\ ,
\end{equation*}
so that $H^{n-q}(X,\Omega^{n-p}_X)$ is canonically the dual $\C$--vector space of $H^q(X,\Omega^p_X)$ i.e.
\begin{equation*}
    H^{n-q}(X,\Omega^{n-p}_X)\cong H^q(X,\Omega^p_X)^{\vee}
\end{equation*}
Moreover if $\mathcal{F}$ is a locally free $\mathcal{O}_X$--module then
\begin{equation*}
    H^{n-q}(X,\mathcal{F}^{\vee}\otimes_{\mathcal{O}_X}\Omega^n_X)\cong H^q(X,\mathcal{F})^{\vee} \ .
\end{equation*}
\end{theorem}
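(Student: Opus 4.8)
The plan is to derive both assertions from Grothendieck--Serre duality for a proper Cohen--Macaulay scheme over $\C$, using the two facts recalled just above: $X=X(\Si)$ is Cohen--Macaulay with dualizing sheaf $\omega_X$, and $\omega_X\cong\Omega^n_X$. First I would establish the second (more general) statement, which involves only a locally free sheaf, and then deduce the first by taking $\mathcal F=\Omega^p_X$; the extra work there is caused precisely by the fact that $\Omega^p_X$ is reflexive but not locally free.

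For the second assertion, since $X$ is proper over $\C$ and Cohen--Macaulay of dimension $n$ with dualizing sheaf $\omega_X$, Grothendieck--Serre duality supplies, for every coherent sheaf $\mathcal F$, a functorial isomorphism of finite--dimensional $\C$--vector spaces
\[
\Ext^{i}_{\co_X}(\mathcal F,\omega_X)\ \cong\ H^{n-i}(X,\mathcal F)^{\vee}\ ,
\]
induced by the Yoneda pairing together with the trace map $H^n(X,\omega_X)\stackrel{\sim}{\to}\C$. If moreover $\mathcal F$ is locally free, then $\ext^{b}_{\co_X}(\mathcal F,\omega_X)=0$ for $b>0$ and $\hm_{\co_X}(\mathcal F,\omega_X)\cong\mathcal F^{\vee}\otimes_{\co_X}\omega_X$, so the local--to--global $\Ext$ spectral sequence collapses and yields $\Ext^{i}_{\co_X}(\mathcal F,\omega_X)\cong H^{i}(X,\mathcal F^{\vee}\otimes\omega_X)$. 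Taking $i=n-q$ and replacing $\omega_X$ by $\Omega^n_X$ gives $H^{n-q}(X,\mathcal F^{\vee}\otimes\Omega^n_X)\cong H^{q}(X,\mathcal F)^{\vee}$.

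For the first assertion I would run the same machinery with $\mathcal F=\Omega^p_X$, handling two points. First, on the smooth locus $i\colon U\hookrightarrow X$ the wedge product $\Omega^p_U\otimes\Omega^{n-p}_U\to\Omega^n_U$ is a perfect pairing of locally free sheaves, hence identifies $\Omega^{n-p}_U$ with $\hm_{\co_U}(\Omega^p_U,\omega_U)$; since $\codim_X(X\setminus U)\geq 2$ and both $\Omega^{n-p}_X$ and $\hm_{\co_X}(\Omega^p_X,\omega_X)$ are reflexive with that common restriction to $U$, one obtains $\Omega^{n-p}_X\cong\hm_{\co_X}(\Omega^p_X,\omega_X)$. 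Second, one needs $\ext^{b}_{\co_X}(\Omega^p_X,\omega_X)=0$ for $b>0$, i.e. that $\Omega^p_X$ is a maximal Cohen--Macaulay $\co_X$--module; granting this, the local--to--global spectral sequence again collapses, so $\Ext^{n-q}_{\co_X}(\Omega^p_X,\omega_X)\cong H^{n-q}(X,\Omega^{n-p}_X)$, and Grothendieck--Serre duality gives
\[
H^{n-q}(X,\Omega^{n-p}_X)\ \cong\ H^{q}(X,\Omega^p_X)^{\vee}\ .
\]
It remains to check that under these identifications the abstract pairing is the one induced by wedge followed by the trace $H^n(X,\Omega^n_X)\cong\C$; this is a compatibility between the Yoneda product and the evaluation map $\Omega^p_X\otimes\hm_{\co_X}(\Omega^p_X,\omega_X)\to\omega_X$, routine once the isomorphism $\Omega^{n-p}_X\cong\hm_{\co_X}(\Omega^p_X,\omega_X)$ is in place.

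The one nontrivial input is the maximal Cohen--Macaulayness of the Zariski sheaves $\Omega^p_X$ for a \emph{simplicial} fan --- equivalently the $\ext$--vanishing above. Over a smooth toric variety it is automatic; in the simplicial case it rests on the local model $\C^n/G$ with $G$ a finite abelian group, where $\Omega^p$ is the sheaf of $G$--invariant $p$--forms and one must control its depth. This is Danilov's theorem; alternatively it can be read off the Ishida complex, which for a simplicial fan is a finite complex of sheaves resolving $\Omega^p_X$ and explicit enough to handle the relevant local cohomology. All the remaining ingredients --- Grothendieck--Serre duality itself, the push--forward description of reflexive sheaves, and the degeneration of the local--to--global $\Ext$ spectral sequence --- are formal.
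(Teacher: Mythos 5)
The paper does not actually prove this statement: it is quoted verbatim as a known result, with the reference to Oda \S 3.3 serving as the proof, so there is no internal argument to compare yours against. Judged on its own, your outline is the standard and correct way to establish it, and it is sound: duality for a \emph{proper} (not necessarily projective --- complete toric varieties need not be projective, and you correctly invoke the proper Cohen--Macaulay form of Grothendieck--Serre duality rather than Hartshorne's projective version) Cohen--Macaulay scheme gives $\Ext^{n-q}(\mathcal F,\omega_X)\cong H^q(X,\mathcal F)^{\vee}$; the locally free case follows by collapsing the local-to-global $\Ext$ spectral sequence; and the case $\mathcal F=\Omega^p_X$ reduces to the two toric-specific inputs you isolate, namely the reflexivity argument identifying $\Omega^{n-p}_X$ with $\mathcal{H}om_{\mathcal O_X}(\Omega^p_X,\omega_X)$ (valid since both sides are reflexive on the normal variety $X$ and agree off a closed set of codimension at least two) and the maximal Cohen--Macaulayness of the Zariski sheaves $\Omega^p_X$ for simplicial fans. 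You rightly flag the latter as the only nontrivial point; in the local model $\C^n/G$ one should take $G$ a \emph{small} finite abelian group (no pseudo-reflections) so that $i_*\Omega^p_U=(\pi_*\Omega^p_{\C^n})^G$ is a direct summand of a maximal Cohen--Macaulay module, which yields the needed $\mathcal{E}xt$-vanishing; this is Danilov's result and is also what underlies Oda's treatment via the Ishida complex. Your route through abstract duality is somewhat more machinery-heavy than Oda's toric-combinatorial proof that the paper points to, but it is a legitimate and complete strategy, with the compatibility of the Yoneda pairing and the wedge pairing being, as you say, a routine check once the identification of $\Omega^{n-p}_X$ as the dual sheaf is in place.
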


When $X$ is a \emph{Gorenstein space} then the dualizing sheaf $\omega_X$ is an invertible sheaf. For an integral scheme this means that there exists a Cartier divisor $K_X$ on $X$ such that $\omega_X\cong\mathcal{O}_X(K_X)$ (\cite{Hartshorne} Prop.II.6.5). This is called \emph{the canonical divisor of $X$}. If $X=X(\Si)$ is a simplicial, complete and Gorenstein toric variety it turns out that
$$\Omega^n_X\cong\mathcal{O}_X\left(-\sum_{\rho\in\Si(1)}D_{\rho}\right)\quad\text{where}\quad K_X\equiv -\sum_{\rho\in\Si(1)}D_{\rho}\quad\text{is a Cartier divisor,} $$
with the same notation introduced in \ref{sssez:divisori} (see \cite{Oda} Cor. 3.3 and the following Remark). Let us finally recall that $X(\Si)$ is a \emph{Fano} toric variety if $-K_X=\sum_{\rho\in\Si(1)}D_{\rho}$ is an ample divisor. Then by Proposition \ref{prop:molta-ampiezza} we get the following

\begin{proposition}\label{prop:gorenstein} Let $Q=(q_0,\ldots,q_n)$ be a \emph{reduced} weights vector. Then the following facts are equivalent:
\begin{enumerate}
  \item $\P(Q)$ is Gorenstein,
  \item $\P(Q)$ is Fano,
  \item $\forall\ 0\leq j\leq n\quad q_j\ |\ |Q|:=\sum_{j=0}^n q_j$, which is\,: $\d\ |\ |Q|$.
\end{enumerate}
\end{proposition}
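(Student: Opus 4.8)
The plan is to establish the chain of equivalences $(2)\Rightarrow(1)\Rightarrow(3)\Rightarrow(2)$, using the toric description $\P(Q)\cong X(^Q\Si)$ from Proposition \ref{prop:weighted-fan} and the characterization of $\P(Q)$ divisors from Theorem \ref{thm:P(Q)-divisori}. The anti-canonical divisor is $-K_{\P(Q)}\equiv\sum_{j=0}^n D_j$, and by Theorem \ref{thm:P(Q)-divisori}(1) its class in $A_{n-1}(\P(Q))\cong\Z$ is the multiplication of a generator $d(D)$ (with $\sum q'_jb_j=1$) by $\sum_{j=0}^n q'_j=|Q|$, since $Q=Q'$ is reduced. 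Recall also that $\Pic(\P(Q))\hookrightarrow A_{n-1}(\P(Q))$ is multiplication by $\d'=\d$ (again because $Q$ is reduced, so $q'_j=q_j$ and $\d'=\d$).

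First I would prove $(3)\Leftrightarrow$\,``$-K$ is Cartier''. Indeed $-K\equiv |Q|\cdot d(D)$ lies in $\Pic(\P(Q))=\d\cdot A_{n-1}(\P(Q))$ if and only if $\d\mid |Q|$; and since $\d=\lcm(q_0,\dots,q_n)$, the divisibility $\d\mid|Q|$ is equivalent to $q_j\mid|Q|$ for every $j$ — actually one should note $q_j\mid|Q|$ for all $j$ is equivalent to $\mathrm{lcm}(q_j)\mid|Q|$, which is a routine lcm argument. So condition (3) says precisely that $-K$ (equivalently $K$) is a Cartier divisor.

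Next, $(1)\Rightarrow(3)$ is immediate: a Gorenstein variety has $\omega_X\cong\mathcal{O}_X(K_X)$ invertible, so $K_{\P(Q)}$ is Cartier, which we just saw is condition (3). For $(3)\Rightarrow(1)$: on the simplicial complete toric variety $X=X(^Q\Si)$ one has $\omega_X\cong\Omega^n_X\cong\mathcal{O}_X(-\sum_\rho D_\rho)=\mathcal{O}_X(K_X)$ as reflexive (Zariski) sheaves (the formula $\Omega^n_X\cong\mathcal{O}_X(-\sum D_\rho)$ recalled in subsection \ref{ssez:serre} holds at the level of the reflexive hull on any simplicial complete toric variety); if moreover $K_X$ is Cartier then this sheaf is invertible, so $X$ is Gorenstein. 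Finally $(2)\Leftrightarrow(3)$: by the (very)-ampleness description in Proposition \ref{prop:molta-ampiezza}, a divisor $E$ on $\P(Q)$ is ample iff it is Cartier and $d(E)=k\,d(D)$ with $k>0$ a multiple of $\d$; applying this to $E=-K\equiv|Q|\,d(D)$ gives that $-K$ is ample (i.e. $\P(Q)$ is Fano) iff $\d\mid|Q|$ and $|Q|>0$ — the positivity being automatic — which is again condition (3). Chaining these gives $(1)\Leftrightarrow(2)\Leftrightarrow(3)$.

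The main obstacle I anticipate is being careful about the distinction between $\omega_X$ as the reflexive sheaf $\Omega^n_X$ versus as an honest line bundle: the identification $\Omega^n_X\cong\mathcal{O}_X(-\sum_\rho D_\rho)$ must be understood with $\mathcal{O}_X$ denoting the divisorial (reflexive) sheaf, and the Gorenstein property is exactly the statement that this reflexive sheaf is locally free, which by Criterion \ref{cr:Cartier} happens iff the Weil divisor $-\sum_\rho D_\rho$ is Cartier. Once this point is handled cleanly, the rest is the bookkeeping above; the only other thing to double-check is the elementary equivalence ``$q_j\mid|Q|$ for all $j$'' $\Leftrightarrow$ ``$\d\mid|Q|$'', which follows since $\d=\lcm(q_0,\dots,q_n)$ and the $q_j$ need not be pairwise coprime, so one genuinely uses the lcm rather than the product.
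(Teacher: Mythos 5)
Your proof is correct and follows essentially the same route as the paper: both identify the class of $K_{\P(Q)}\equiv-\sum_j D_j$ with $-|Q|$ times the generator of $A_{n-1}(\P(Q))\cong\Z$, invoke Theorem \ref{thm:P(Q)-divisori}(2) (so that Cartier means divisibility by $\d'=\d$) to get the equivalence with condition (3), and use Proposition \ref{prop:molta-ampiezza} for the Fano equivalence. The only difference is that you spell out, via the reflexive-sheaf identification $\omega_X\cong\Omega^n_X\cong\mathcal{O}_X(-\sum_\rho D_\rho)$, why ``Gorenstein'' is exactly ``$K_{\P(Q)}$ Cartier'' -- a point the paper's proof takes for granted -- which is a welcome clarification rather than a deviation.
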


\begin{proof} By Theorem \ref{thm:P(Q)-divisori},(2) for all $0\leq j\leq n$ the divisor $\d/q_j\ D_j$ is a generator of the Picard group $\Pic(\P(Q))$. Then
$$K_{\P(Q)} \equiv -\frac{|Q|}{\d}\left(\frac{\d}{q_j}\ D_j\right)$$
is Cartier if and only if $\d\ |\ |Q|$, meaning that $\omega_{\P(Q)}\cong\mathcal{O}\left(-|Q|/\d\right)$. This is also equivalent to guarantee that $-K_{\P(Q)}$ is a (very) ample divisor, by Proposition \ref{prop:molta-ampiezza}.
\end{proof}

\subsection{Cohomology of line bundles} Recall the following well known result

\begin{theorem}[\cite{Danilov} 6.3 and Cor. 7.3, \cite{Oda} Thm 2.7, Cor. 2.8 and 2.9, \cite{Fulton} \S 3.5]\label{thm:lb-coomolgia} Let $X(\Si)$ be a complete toric variety and $D$ a Cartier divisor of $X$ such that $\mathcal{O}_X(D)$ is generated by its global sections. Then
\begin{equation*}
    h^q(X,\mathcal{O}_X(D))=\left\{\begin{array}{ccc}
                                     |M\cap \D_D| & \text{for} & q=0 \\
                                     0 & \text{for} & q\neq 0
                                   \end{array}
\right.
\end{equation*}
where $|M\cap \D_D|$ denotes the number of lattice points in the convex polytope $\D_D$.
\end{theorem}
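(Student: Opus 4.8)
The plan is to exploit the torus action on $X=X(\Si)$, which induces an $M$--grading on every cohomology group,
\[
    H^{q}\bigl(X,\mathcal{O}_X(D)\bigr)=\bigoplus_{\u\in M} H^{q}\bigl(X,\mathcal{O}_X(D)\bigr)_{\u}\ ,
\]
and to compute each weight space separately. The degree zero part is immediate and purely combinatorial: $H^{0}(X,\mathcal{O}_X(D))_{\u}$ is the line $\C\cdot\chi^{\u}$ when $\u\in\D_D\cap M$, and is $0$ otherwise, since $\chi^{\u}$ is a global section of $\mathcal{O}_X(D)=\mathcal{O}_X\bigl(\sum_{\rho}a_{\rho}D_{\rho}\bigr)$ exactly when the effective divisor it determines has non--negative coefficient along each $D_{\rho}$, i.e. exactly when $\langle\u,\n_{\rho}\rangle\geq -a_{\rho}$ for all $\rho\in\Si(1)$ --- which is precisely the defining condition of $\D_D$ in (\ref{politopo}). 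Summing over $\u$ gives $h^{0}(X,\mathcal{O}_X(D))=|M\cap\D_D|$.

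For $q\geq 1$ I would compute cohomology from the alternating cochain complex of the affine open cover $\mathcal{U}=\{U_{\s}\ |\ \s\ \text{a maximal cone of}\ \Si\}$; this is legitimate because $|\Si|=N_{\R}$ forces the $U_{\s}$ to cover $X$, each intersection $U_{\s_{0}}\cap\cdots\cap U_{\s_{p}}=U_{\s_{0}\cap\cdots\cap\s_{p}}$ is again affine, and a quasi--coherent sheaf has vanishing higher cohomology on an affine scheme. This complex inherits the $M$--grading, and in weight $\u$ one checks, using that the support function $\psi_D$ of $D$ (the piecewise linear function with $\psi_D(\n_{\rho})=-a_{\rho}$) is linear on each cone, that $H^{0}(U_{\tau},\mathcal{O}_X(D))_{\u}\neq 0$ precisely when the cone $\tau$ is disjoint from
\[
    Z_{\u}:=\{\v\in N_{\R}\ |\ \langle\u,\v\rangle<\psi_D(\v)\}\ .
\]
A standard bookkeeping of signs then identifies the weight--$\u$ summand of this complex, up to a degree shift, with the reduced cellular cochain complex of $Z_{\u}$ --- a union of relatively open cones, deformation retracting onto its intersection with the unit sphere --- yielding a natural isomorphism $H^{q}(X,\mathcal{O}_X(D))_{\u}\cong\widetilde{H}^{q-1}(Z_{\u};\C)$, with the conventions $\widetilde{H}^{-1}(\emptyset)=\C$ and $\widetilde{H}^{j}(\emptyset)=0$ for $j\geq 0$. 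This is exactly the dictionary developed in \cite{Fulton} \S3.5 and \cite{Danilov}, to which one may equally well refer.

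It then remains to prove $\widetilde{H}^{j}(Z_{\u};\C)=0$ for every $\u$ and every $j\geq 0$, and this is where the hypothesis that $\mathcal{O}_X(D)$ is generated by its global sections enters. That hypothesis makes $D$ nef, equivalently makes $\psi_D$ concave, so that $\psi_D(\v)=\min_{\s}\langle\u(\s),\v\rangle$ as $\s$ runs over the maximal cones and $\u(\s)$ over the associated vertices of $\D_D$ (\cite{Oda} \S2). Therefore $\langle\u,\v\rangle-\psi_D(\v)=\max_{\s}\langle\u-\u(\s),\v\rangle$ is a \emph{convex} function of $\v$, so $Z_{\u}$ is an intersection of open half--spaces through the origin, i.e. an open convex cone; it is thus either empty or contractible, and in either case its reduced cohomology vanishes in all non--negative degrees. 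Hence $H^{q}(X,\mathcal{O}_X(D))_{\u}=0$ for all $q\geq 1$ and all $\u$, which, together with the computation of $H^{0}$, proves the theorem. I expect the only genuinely delicate step to be the combinatorial--topological identification of the weight--$\u$ complex with the cochain complex of $Z_{\u}$ --- pinning down the degree shift, the orientation signs, and the contribution of the lower--dimensional faces of $\Si$ --- whereas the $q=0$ count and the convexity argument are routine once that identification is in hand.
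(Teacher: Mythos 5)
The paper does not prove Theorem \ref{thm:lb-coomolgia} at all: it is recalled as a known result and attributed to \cite{Danilov}, \cite{Oda} and \cite{Fulton}~\S 3.5, so there is no internal argument to compare yours with. What you have written is, in substance, the standard proof from exactly those references: the $M$--grading of cohomology, the computation of $H^0$ by the defining inequalities of $\D_D$ in (\ref{politopo}), the identification $H^q(X,\mathcal{O}_X(D))_{\u}\cong\widetilde{H}^{q-1}(Z_{\u};\C)$ (in \cite{Fulton} this is phrased via local cohomology $H^q_{Z(\u)}(N_{\R};\C)$ with $Z(\u)$ the closed complement of your $Z_{\u}$, which for a complete fan gives your statement with the convention $\widetilde{H}^{-1}(\emptyset)=\C$), and the Demazure--type vanishing: global generation makes the support function upper convex, so $\langle\u,\cdot\rangle-\psi_D$ is a maximum of linear forms, $Z_{\u}$ is an open convex cone, hence empty or contractible, and all higher weight spaces vanish. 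This is correct; the one step you leave to the literature (the sign/degree bookkeeping identifying the weight--$\u$ \v{C}ech complex with the topological cochain complex of $Z_{\u}$) is precisely the content of \cite{Fulton}~\S 3.5 and \cite{Danilov}~\S 7, which is legitimate here since the paper itself only cites these sources. One small wording correction: the points $\u(\s)$ are the Cartier data of Criterion \ref{cr:Cartier}; global generation puts them in $\D_D$, but they need not be vertices of $\D_D$ unless $D$ is ample --- this does not affect the identity $\psi_D=\min_{\s}\langle\u(\s),\cdot\rangle$ nor the convexity argument.
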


If $X$ is the wps $\P(Q)$ then a line bundle $\mathcal{L}$ is generated by its global sections if and only $\mathcal{L}\cong\mathcal{O}(m)$ with $m\geq 0$, by Proposition \ref{prop:molta-ampiezza}. Let $\D$ be the polytope of a generator of $\Pic(\P(Q))$, e.g. consider $\D=\D_0:=\D_{\d/q_0 D_0}$. Then the previous result and the Serre--Grothendieck duality give immediately the following

\begin{proposition}\label{prop:lb-coomologia} For any $m\geq 0$
\begin{equation*}
    h^q(\P(Q),\mathcal{O}(m))=\left\{\begin{array}{ccc}
                                     |M\cap m\D| & \text{for} & q=0 \\
                                     0 & \text{for} & q\neq 0 \ .
                                   \end{array}
\right.
\end{equation*}
Moreover if $\P(Q)$ is Gorenstein then, for any $m\geq |Q|/\d$,
\begin{equation*}
    h^q(\P(Q),\mathcal{O}(-m))=\left\{\begin{array}{ccc}
                                     \left|M\cap \left(m-\frac{|Q|}{\d}\right)\D\right| & \text{for} & q=n \\
                                     0 & \text{for} & q\neq n \ .
                                   \end{array}
\right.
\end{equation*}
\end{proposition}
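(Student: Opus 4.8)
The plan is to derive the statement directly from Theorem~\ref{thm:lb-coomolgia} and the Serre--Grothendieck duality recalled above, together with the description of the generator of $\Pic(\P(Q))$ coming from Theorem~\ref{thm:P(Q)-divisori} and Proposition~\ref{prop:molta-ampiezza}; essentially no new ingredient is needed.

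First I would treat the case $m\geq 0$. By Theorem~\ref{thm:P(Q)-divisori}(2) (or directly by the normalisation (\ref{notazione})) one has $\mathcal{O}(1)\cong\mathcal{O}_{\P(Q)}\bigl((\d/q_0)D_0\bigr)$, so $\mathcal{O}(m)\cong\mathcal{O}_{\P(Q)}\bigl(m(\d/q_0)D_0\bigr)$ is a Cartier sheaf for every $m$, and the polytope attached to a Cartier divisor by (\ref{politopo}) scales linearly under dilation, whence $\D_{m(\d/q_0)D_0}=m\D$ with $\D=\D_0$. By Proposition~\ref{prop:molta-ampiezza}, $\mathcal{O}(m)$ is ample for $m>0$ and trivially globally generated for $m=0$, so in all cases $\mathcal{O}(m)$ is generated by its global sections. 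Theorem~\ref{thm:lb-coomolgia} then applies verbatim and gives $h^0(\P(Q),\mathcal{O}(m))=|M\cap m\D|$ and $h^q(\P(Q),\mathcal{O}(m))=0$ for $q\neq 0$.

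For the negative twists I would invoke the Serre--Grothendieck duality theorem, which is legitimate because the fan $^Q\Si$ of Proposition~\ref{prop:weighted-fan} is complete and simplicial; since $\P(Q)$ is assumed Gorenstein, the computation in the proof of Proposition~\ref{prop:gorenstein} identifies the dualizing sheaf as the invertible sheaf $\omega_{\P(Q)}\cong\mathcal{O}(-|Q|/\d)$. Applying duality to the locally free sheaf $\mathcal{O}(-m)$ then yields
\begin{equation*}
    h^q(\P(Q),\mathcal{O}(-m))=h^{n-q}\bigl(\P(Q),\mathcal{O}(m)\otimes\omega_{\P(Q)}\bigr)=h^{n-q}\bigl(\P(Q),\mathcal{O}(m-|Q|/\d)\bigr)\ .
\end{equation*}
When $m\geq|Q|/\d$ the integer $m-|Q|/\d$ is nonnegative, so the already-proven first part applies to the right-hand side and equals $|M\cap(m-|Q|/\d)\D|$ exactly when $n-q=0$, i.e.\ $q=n$, and $0$ otherwise, as claimed.

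The argument is pure bookkeeping, so no genuine obstacle is expected; the only points deserving a line of care are the identification $\D_{m(\d/q_0)D_0}=m\D$ (and the fact that $(\d/q_0)D_0$ is a Cartier representative of $\mathcal{O}(1)$), and, in the second half, the observation that ``$\P(Q)$ Gorenstein'' is exactly what makes $\omega_{\P(Q)}$ invertible and equal to $\mathcal{O}(-|Q|/\d)$, so that the form of Serre--Grothendieck duality for locally free sheaves may be invoked.
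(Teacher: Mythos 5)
Your proposal is correct and follows exactly the route the paper intends: Theorem~\ref{thm:lb-coomolgia} applied to the globally generated sheaf $\mathcal{O}(m)\cong\mathcal{O}_{\P(Q)}\bigl(m(\d/q_0)D_0\bigr)$ with polytope $m\D$, then Serre--Grothendieck duality together with $\omega_{\P(Q)}\cong\mathcal{O}(-|Q|/\d)$ from Proposition~\ref{prop:gorenstein} for the Gorenstein negative twists. The paper leaves these steps implicit ("give immediately"), and your write-up simply makes that same bookkeeping explicit.
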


The previous Proposition does not say anything for line bundles $\mathcal{O}(-m)$ with $0<m<|Q|/\d$. This gap will be filled up in the next subsection (see Remark \ref{rem:punti}).

\subsection{Bott--Tu formulas for $\P(Q)$} The previous Theorem \ref{thm:lb-coomolgia} admits an extension to sheaves $\Omega^p_X\otimes_{\mathcal{O}_X}\mathcal{L}$ up to strengthen the hypothesis on the fan $\Si$ and the line bundle $\mathcal{L}$. Namely the following results hold

\begin{theorem}[\cite{Danilov} \S 12, \cite{Oda} Thm.3.11]\label{thm:hodge} Let $X(\Si)$ be a complete and simplicial $n$--dimensional toric variety. Then
\begin{equation*}
    h^q(X,\Omega_X^p)=\left\{\begin{array}{ccc}
                                     \sum_{i=p}^n (-1)^{i-p} {i\choose p} d_{n-i} & \text{for} & q=p \\
                                     0 & \text{for} & q\neq p
                                   \end{array}
\right.
\end{equation*}
where $d_j$ is the number of $j$--dimensional cones in $\Si$.
\end{theorem}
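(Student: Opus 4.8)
The plan is to deduce Theorem \ref{thm:hodge} from the rational cohomology computation of Theorem \ref{thm:Q-coomologia} by a short Hodge-theoretic argument. Since $\Sigma$ is simplicial and complete, $X=X(\Sigma)$ has only abelian quotient singularities, hence is a compact $V$-manifold (orbifold). For such a space the Zariski sheaves $\Omega^p_X=i_*\Omega^p_U$ behave formally like the sheaves of forms on a smooth projective variety: by Danilov (\cite{Danilov} \S 12; see also \cite{Oda} \S 3.3, of a piece with the Serre--Grothendieck duality recalled in \S\ref{ssez:serre}) the complex $\Omega^\bullet_X$ resolves $\underline{\mathbb C}_X$ analytically, the Hodge--de Rham spectral sequence $E_1^{p,q}=H^q(X,\Omega^p_X)\Rightarrow H^{p+q}(X,\mathbb C)$ degenerates at $E_1$, and the filtration it induces is the Hodge filtration of a pure Hodge structure on each $H^m(X,\mathbb C)$. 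Consequently $h^q(X,\Omega^p_X)$ equals the Hodge number $h^{p,q}(X)$.

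The second step is to observe that $H^\ast(X,\mathbb Q)$ of a complete toric variety is of Hodge--Tate type and concentrated in even degrees, because it is spanned by the classes of the torus-invariant subvarieties $V(\sigma)$ (\cite{Fulton} \S 5.2, which is exactly the input behind Theorem \ref{thm:Q-coomologia}). Thus $H^{2k+1}(X,\mathbb C)=0$ and $H^{2k}(X,\mathbb C)=H^{k,k}(X)$. Feeding this into the degenerate spectral sequence forces $h^{p,q}(X)=0$ whenever $p\ne q$ — i.e. $H^q(X,\Omega^p_X)=0$ for $q\ne p$ — and $h^{p,p}(X)=\dim_{\mathbb C}H^{2p}(X,\mathbb C)$. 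Finally $\dim_{\mathbb C}H^{2p}(X,\mathbb C)=\dim_{\mathbb Q}H^{2p}(X,\mathbb Q)=h_{2p}$, and Theorem \ref{thm:Q-coomologia} (with the Poincar\'e duality stated there) evaluates $h_{2p}=\sum_{i=p}^{n}(-1)^{i-p}\binom ip d_{n-i}$, which is precisely the asserted formula.

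A more self-contained, purely combinatorial-toric route — Oda's own proof of his Theorem 3.11 — would instead resolve $\Omega^p_X$ by the Ishida complex, whose term indexed by a cone $\sigma\in\Sigma$ is a direct sum of copies of $\mathcal O_{V(\sigma)}$, $V(\sigma)=\overline{O_\sigma}$ being again a \emph{complete} toric variety of dimension $n-\dim\sigma$. Since Theorem \ref{thm:lb-coomolgia} applied to the (globally generated) trivial line bundle on each $V(\sigma)$ gives $H^{>0}(V(\sigma),\mathcal O_{V(\sigma)})=0$ and $h^0=1$, the hypercohomology spectral sequence of the Ishida complex collapses, so $H^\ast(X,\Omega^p_X)$ is computed by an explicit finite complex of $\mathbb C$-vector spaces read off from $\Sigma$; in the range of interest this already yields $H^q(X,\Omega^p_X)=0$ for $q>p$, and then Serre--Grothendieck duality (\S\ref{ssez:serre}) upgrades this to $q\ne p$. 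I expect the genuine obstacle — in either approach — to be exactly this last vanishing off the diagonal: in the combinatorial language it is the exactness of the Ishida complex in all degrees $\ne p$, which is the combinatorial shadow of the purity of $H^\ast(X)$ and is where simpliciality is indispensable (for a general complete fan the off-diagonal groups need not vanish). Once that is established, $\chi(X,\Omega^p_X)=(-1)^p h^p(X,\Omega^p_X)$, and either a termwise Euler-characteristic count in the Ishida complex, or — more cheaply — the identification $h^p(X,\Omega^p_X)=\dim H^{2p}(X,\mathbb C)$ coming from the collapsed Hodge--de Rham sequence, reproduces the formula of Theorem \ref{thm:Q-coomologia} and finishes the proof.
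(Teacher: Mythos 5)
There is nothing in the paper to compare your argument with: Theorem \ref{thm:hodge} is stated as a quoted result, with the proof delegated entirely to the references (\cite{Danilov} \S 12, \cite{Oda} Thm.~3.11), so the paper offers no proof of its own. Your proposal is, in outline, a faithful reconstruction of exactly those two sources: your first route (Hodge--de~Rham degeneration for the Zariski sheaves $\Omega^p_X$, purity, and the fact that $H^{2k}(X,\Q)$ is spanned by classes of orbit closures, forcing $h^{p,q}=0$ off the diagonal and $h^{p,p}=\dim H^{2p}$, then Theorem \ref{thm:Q-coomologia} for the count) is essentially Danilov's argument, and your second route via the Ishida complex, with vanishing for $q>p$ upgraded to $q\neq p$ by the Serre--Grothendieck pairing recalled in \S\ref{ssez:serre}, is Oda's. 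Both are sound, and your diagnosis that the real content is the off-diagonal vanishing (equivalently, exactness of the Ishida complex away from degree $0$, or purity plus degeneration) is accurate. One caution on the phrasing of the first route: degeneration at $E_1$ together with purity is \emph{not} a formal property of arbitrary compact $V$-manifolds (general compact complex orbifolds need not carry pure Hodge structures); what makes it work here is that $X$ is a \emph{complete algebraic} variety with quotient singularities, so that it is a $\Q$-homology manifold and Deligne's weight bounds combined with Poincar\'e duality force purity --- or, more directly, Danilov's toric computation. Since you attribute precisely this step to \cite{Danilov} \S 12, the argument is acceptable as written, but the load-bearing input is that theorem rather than the orbifold structure per se; with that understood, your deduction of the stated formula from Theorem \ref{thm:Q-coomologia} is correct.
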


\begin{theorem}[\cite{Danilov} (7.5.2) and Thm.7.5.2, \cite{Oda} \S 3.3,  \cite{Fujino2} Cor.1.3]\label{thm:ample_bott-tu}  Let $X(\Si)$ be a complete toric variety and $D$ an ample divisor of $X$. Then, for all $0\leq p\leq n$,
\begin{equation*}
    h^q(X,\Omega_X^p\otimes_{\mathcal{O}_X}\mathcal{O}_X(D))=\left\{\begin{array}{ccc}
                                     \sum_{\u\in M\cap\D_D} {s(\u,D)\choose p} & \text{for} & q=0 \\
                                     0 & \text{for} & q\neq 0
                                   \end{array}
\right.
\end{equation*}
where $s(\u,D)$ denotes the dimension of the smallest face of $\D_D$ containing $\u$ and assuming ${s \choose p}=0$ for $s < p$ and ${0\choose 0}=1$.
\end{theorem}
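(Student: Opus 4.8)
The plan is to carry out the torus--equivariant computation of Danilov and Oda. Since $X$ is complete, $\Si(1)$ generates $N_\R$, so by Theorem \ref{thm:divisors} every Cartier divisor class has a torus--invariant representative and we may assume $D=\sum_{\rho\in\Si(1)}a_\rho D_\rho$. Then $\Omega^p_X$ (the Zariski sheaf of germs of $p$--forms) and $\mathcal{O}_X(D)$ are both $T$--equivariant coherent sheaves, so the Cech complex of $\Omega^p_X\otimes_{\mathcal{O}_X}\mathcal{O}_X(D)$ attached to the affine toric cover $\{U_\sigma\ |\ \sigma\in\Si(n)\}$ is $M$--graded. It therefore suffices to prove, for each fixed $\u\in M$, that the $\u$--graded piece $H^q(X,\Omega^p_X\otimes\mathcal{O}_X(D))_\u$ vanishes for $q\neq 0$, vanishes entirely when $\u\notin\Delta_D$, and has dimension $\binom{s(\u,D)}{p}$ when $q=0$ and $\u\in\Delta_D$; summing over $\u\in M\cap\Delta_D$ then gives the stated formula. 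For $p=0$ this recovers Theorem \ref{thm:lb-coomolgia}, since $\binom{s}{0}=1$.

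The local input is Danilov's description of the graded pieces of $\Gamma(U_\sigma,\Omega^p_X\otimes\mathcal{O}_X(D))$ (see \cite{Oda} \S 3.2, \cite{Danilov} \S 4). Trivialising $\mathcal{O}_X(D)$ over $U_\sigma$ by the local character $\u(\sigma)\in M$ of Criterion \ref{cr:Cartier}, one finds that the $\u$--graded piece is $\Lambda^p(L_\sigma(\u))$ if $\u$ lies in the region $\Delta_{D,\sigma}$ cut out by the inequalities $\langle\,\cdot\,,\n_\rho\rangle\geq -a_\rho$ for $\rho\in\sigma(1)$ alone, and is $0$ otherwise; here $L_\sigma(\u)\subset M_\R$ is the linear span of the minimal face of $\Delta_{D,\sigma}$ containing $\u$. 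The coboundary maps are, up to sign, the natural inclusions $\Lambda^p(L_\sigma(\u))\hookrightarrow\Lambda^p(L_{\sigma'}(\u))$ arising on the intersections $U_\sigma\cap U_{\sigma'}$. Thus, for fixed $\u$, computing $H^\bullet(X,\Omega^p_X\otimes\mathcal{O}_X(D))_\u$ reduces to the cohomology of a combinatorial cochain complex assembled from exterior powers of linear spans of faces.

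Ampleness of $D$ is exactly what makes this complex acyclic in positive degrees. By Criterion \ref{cr:ampiezza}, $\Delta_D$ is full--dimensional, $v(\Si)$ consists of its $|\Si(n)|$ vertices, and $\Si$ is the normal fan of $\Delta_D$; hence each $\Delta_{D,\sigma}$ is the tangent cone of $\Delta_D$ at the vertex $\u(\sigma)$, its faces through $\u$ correspond to faces of $\Delta_D$ through $\u$, and $L_\sigma(\u)=\mathrm{span}(F_\sigma-\u(\sigma))$ with $F_\sigma$ the minimal face of $\Delta_D$ containing both $\u$ and $\u(\sigma)$. When $\u\in\Delta_D$, letting $F(\u)$ be the minimal face of $\Delta_D$ containing $\u$ one has $\bigcap_\sigma L_\sigma(\u)=\mathrm{span}(F(\u)-\u)$, of dimension $s(\u,D)$, giving the degree $0$ contribution $\Lambda^p(\mathrm{span}(F(\u)-\u))$ of dimension $\binom{s(\u,D)}{p}$; when $\u\notin\Delta_D$ the set of $\sigma$ with $\u\in\Delta_{D,\sigma}$ is empty or contributes nothing. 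The positive--degree vanishing follows, by the same mechanism that proves Theorem \ref{thm:hodge}, from the acyclicity of the relevant subcomplexes of the face poset of $\Delta_D$ (contractible when nonempty, by convexity). This last point is the genuine obstacle: for a divisor that is only globally generated rather than ample it already fails for $p>0$, so one must invoke, at precisely this step, that $\Si$ is the normal fan of $\Delta_D$. Alternatively one may simply cite \cite[Cor.~1.3]{Fujino2}, which records exactly this vanishing.
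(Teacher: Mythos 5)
The paper itself offers no proof of this statement: it is recorded as a known result, with \cite{Danilov} (7.5.2), \cite{Oda} \S 3.3 and \cite{Fujino2} Cor.~1.3 as its only justification. Your reconstruction of the $q=0$ part is correct and is exactly Danilov's $M$--graded computation: the local description $\Gamma(U_\sigma,\Omega^p_X\otimes\mathcal{O}_X(D))_{\u}=\Lambda^p(L_\sigma(\u))$, the fact that an intersection of exterior powers is the exterior power of the intersection, and the identification $\bigcap_\sigma L_\sigma(\u)=\mathrm{span}\,(F(\u)-\u)$ (valid because, $D$ being ample, $\Si$ is the normal fan of $\D_D$ and each $\D_{D,\sigma}$ is the tangent cone of $\D_D$ at the vertex $\u(\sigma)$) together give $h^0=\sum_{\u\in M\cap\D_D}\binom{s(\u,D)}{p}$.

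The only thin point is the vanishing for $q>0$. As stated, ``acyclicity of the relevant subcomplexes of the face poset of $\D_D$, contractible when nonempty by convexity'' would settle the case $p=0$ (constant coefficients $\C$ on a convex support), but for $p>0$ the degree-$\u$ piece of the \v{C}ech complex has coefficients $\Lambda^p(L_\tau(\u))$ that genuinely vary with the cone $\tau$ and do not split into constant-coefficient subcomplexes with respect to a single basis; making this step rigorous requires a finer filtration of the exterior powers or an induction via the Ishida-type resolutions, and this is precisely where the classical treatments had to work (and why the simplicial and general cases were historically handled separately). Since you explicitly flag this as the crux and invoke \cite{Fujino2}, Cor.~1.3 at exactly that step --- the same reference the paper rests on, and one that covers the non-simplicial case --- your treatment is in substance the same as the paper's, with the added value of an explicit and correct derivation of the $h^0$ formula; just do not present the convexity remark as a complete proof of the higher vanishing.
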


Finally we are in a position to state and prove the following Bott--Tu formulas for wps, giving a convex geometric approach to Dolgachev's results in \cite{Dolgachev} \S 2.3. The advantage of this approach is that one reduces the cohomology computation to an enumeration of lattice points in suitable (faces of) polytopes, which is directly implementable.

\begin{theorem}[Bott-Tu formulas for $\P(Q)$]\label{thm:bott-tu}
Let $Q=(q_0,\ldots,q_n)$ be a reduced weights vector and define
$$h^q\Omega^p(m):=\dim_{\C} H^q\left(\P(Q),\Omega^p_{\P(Q)}\otimes_{\mathcal{O}_{\P(Q)}}\mathcal{O}(m)\right)\ .$$
Let $\D$ be the polytope associated with a generator of $\Pic(\P(Q))$. Then
\begin{enumerate}
  \item
\begin{equation*}
    h^0\Omega^p(m)=\left\{\begin{array}{ccc}
                            \sum_{\u\in M\cap m\D} {s(\u,m)\choose p} & \text{for} & m\geq 0\\
                            0 & \text{for} & m<0
                          \end{array}
\right.
\end{equation*}
where $s(\u,m)$ is the dimension of the smallest face in $m\D$ containing $\u$;
  \item for all $0<q<n$
\begin{equation*}
    h^q\Omega^p(m)=0\quad\text{for}\quad m\neq 0\ ;
\end{equation*}
moreover, for $m=0$,
\begin{equation*}
    h^q\Omega^p=\left\{\begin{array}{ccc}
                            1 & \text{for} & q= p\\
                            0 & \text{for} & q\neq p\ ;
                          \end{array}
\right.
\end{equation*}
  \item
\begin{equation*}
    h^n\Omega^p(m)=\left\{\begin{array}{ccc}
                            0 & \text{for} & m>0\\
                            \sum_{\u\in M\cap (-m)\D} {s(\u,-m)\choose n-p} & \text{for} & m\leq 0\ ;
                          \end{array}
\right.
\end{equation*}
in particular, for $m=0$, this gives
\begin{equation*}
    h^n\Omega^p=\left\{\begin{array}{ccc}
                            0 & \text{for} & p< n\\
                            1 & \text{for} & p=n\ .
                          \end{array}
\right.
\end{equation*}
\end{enumerate}
\end{theorem}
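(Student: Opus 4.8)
The plan is to deduce all three parts from the toolkit already assembled, using the Serre--Grothendieck duality of Subsection \ref{ssez:serre} to convert the top cohomology into degree-zero cohomology, and the vanishing theorems stated above to kill everything in between. First I would treat part (1) together with part (2) for $m \neq 0$ and $m > 0$, and then obtain part (3) by duality, so that the only genuinely separate argument is the middle range $0 < q < n$ with $m \leq 0$.

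For part (1): when $m \geq 0$ the line bundle $\mathcal{O}(m)$ is generated by its global sections (Proposition \ref{prop:molta-ampiezza}), so for $m > 0$ it is ample, and Theorem \ref{thm:ample_bott-tu} applied to an ample divisor $D$ with $\D_D = m\D$ gives exactly $h^0\Omega^p(m) = \sum_{\u \in M \cap m\D}\binom{s(\u,m)}{p}$ and $h^q\Omega^p(m) = 0$ for $q \neq 0$; this already yields part (1) for $m > 0$ and part (2) for $m > 0$. For $m = 0$ the bundle $\mathcal{O}(0)$ is trivial and Theorem \ref{thm:hodge} gives $h^q\Omega^p = \sum_{i=p}^n (-1)^{i-p}\binom{i}{p} d_{n-i}$; inserting $d_{n-i} = \binom{n+1}{n-i}$ (since $\P(Q)$ has exactly $\binom{n+1}{k}$ cones of dimension $k$) and running the same telescoping identity as in the proof of Corollary \ref{cor:coomologia} shows this equals $1$ when $q = p$ and $0$ otherwise, which is the $m=0$ clause of part (2) and the $m=0$ clauses of parts (1) and (3). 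For $m < 0$ I would argue that $H^0(\P(Q),\Omega^p \otimes \mathcal{O}(m)) \subset H^0(\P(Q), \mathcal{O}(m)^{\oplus N}) = 0$ by Proposition \ref{prop:lb-coomologia} (embedding $\Omega^p$ into a sum of structure sheaves on the smooth locus, or simply noting no global sections of a negative bundle exist on a complete variety), giving the second clause of part (1).

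For part (3): apply Serre--Grothendieck duality in the form $H^{n-q}(X,\Omega^{n-p}_X \otimes (\Omega^p_X \otimes \mathcal{O}(m))^{\vee} \otimes \Omega^n_X) \cong H^q(X, \Omega^p_X \otimes \mathcal{O}(m))^{\vee}$, using $(\Omega^p_X)^{\vee} \otimes \Omega^n_X \cong \Omega^{n-p}_X$ (the wedge pairing $\Omega^p \otimes \Omega^{n-p} \to \Omega^n$ is perfect on the smooth locus and both sides are reflexive) to get $h^q\Omega^p(m) = h^{n-q}\Omega^{n-p}(-m)$. Setting $q = n$ and feeding in part (1) applied to $\Omega^{n-p}$ at twist $-m$ yields immediately: $h^n\Omega^p(m) = 0$ for $-m < 0$, i.e. $m > 0$; and $h^n\Omega^p(m) = \sum_{\u \in M \cap (-m)\D}\binom{s(\u,-m)}{n-p}$ for $m \leq 0$. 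The special case $m = 0$ then reads $h^n\Omega^p = \binom{0}{n-p} = 1$ if $p = n$ and $0$ otherwise, consistent with the $m=0$ clause already obtained from Theorem \ref{thm:hodge}.

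The remaining case, and the one I expect to be the main obstacle, is part (2) for $0 < q < n$ and $m < 0$: neither Theorem \ref{thm:ample_bott-tu} (which needs an ample twist) nor duality applied to part (1) (which only controls $q = 0$ and $q = n$) covers it directly. Here I would use the duality $h^q\Omega^p(m) = h^{n-q}\Omega^{n-p}(-m)$ with $-m > 0$: since $0 < n - q < n$, this reduces the claim to the vanishing of $h^{q'}\Omega^{p'}(m')$ for $0 < q' < n$ and $m' > 0$, which was already established in the first paragraph via Theorem \ref{thm:ample_bott-tu}. Thus the middle-degree vanishing for negative twists is equivalent, under duality, to that for positive twists, and the whole statement closes up. The one point requiring care throughout is the identification $(\Omega^p_{\P(Q)})^{\vee} \otimes \Omega^n_{\P(Q)} \cong \Omega^{n-p}_{\P(Q)}$ on the singular variety $\P(Q)$; this holds because $\P(Q)$ is simplicial, hence $\Q$-factorial and Cohen--Macaulay with $\omega_X \cong \Omega^n_X$ reflexive, and all the sheaves $\Omega^j_X = i_*\Omega^j_U$ are reflexive, so an isomorphism on the big open set $U$ extends uniquely.
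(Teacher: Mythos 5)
Your proposal is correct and follows essentially the same route as the paper: Theorem \ref{thm:ample_bott-tu} for the ample twists $m>0$, Theorem \ref{thm:hodge} together with the telescoping computation of Corollary \ref{cor:coomologia} for $m=0$, and Serre--Grothendieck duality to reduce the negative twists and part (3) to the ample case. The only deviations are cosmetic: the paper disposes of $h^0\Omega^p(m)$ for $m<0$ directly via $h^0\Omega^p(m)=h^n\Omega^{n-p}(-m)=0$ rather than your embedding of $\Omega^p$ into a trivial sheaf, and it obtains $h^n\Omega^p(m)=0$ for $m>0$ straight from Theorem \ref{thm:ample_bott-tu} instead of routing it back through part (1).
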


\begin{remark} Our notation $h^q\Omega^p(m)$ does not exactly coincide with the Dolgachev's notation $h(j,i;n)$ in \cite{Dolgachev} 2.3.2. Precisely the switching rule is the following
$$h^q\Omega^p(m)=h(q,p;m\d)$$
where $Q$ is reduced and $\d=\lcm(Q)$.
\end{remark}

\begin{proof}[Proof of Theorem \ref{thm:bott-tu}] (1): The first line is obtained applying Theorem \ref{thm:ample_bott-tu} for $m>0$ and Theorem \ref{thm:hodge} for $m=0$. The second line is still a consequence of Theorem \ref{thm:ample_bott-tu} after applying the Serre--Grothendieck duality \ref{ssez:serre}. In fact
\begin{equation*}
    \forall\ m<0\quad h^0\Omega^p(m)=h^n\Omega^{n-p}(-m)=0\ .
\end{equation*}

(2): The first equation is still obtained by Theorem \ref{thm:ample_bott-tu}, in case by applying Serre--Grothendieck duality when $m<0$. Formulas for $m=0$ are directly obtained by Theorem \ref{thm:hodge}, recalling that $d_{n-i}={n+1 \choose n-i}$ and proceeding as in the proof of Corollary \ref{cor:coomologia}.

(3): The first line comes directly from Theorem \ref{thm:ample_bott-tu}. For the second one, apply Serre--Grothendieck duality to get:
\begin{equation*}
    \forall\ m\leq 0\quad h^n\Omega^p(m)=h^0\Omega^{n-p}(-m)= \sum_{\u\in M\cap (-m)\D} {s(\u,-m)\choose n-p}
\end{equation*}
by Theorem \ref{thm:ample_bott-tu}.
\end{proof}

\begin{remark}\label{rem:punti} Notice that for $p=0$ Theorem \ref{thm:bott-tu} gives an improvement of results in Proposition \ref{prop:lb-coomologia}, without any gap on $m$ and any further hypothesis on $\P(Q)$.

\noindent In particular, giving the further assumption that $\P(Q)$ is Gorenstein, which is that $\d\ |\ |Q|$ by Proposition \ref{prop:gorenstein}, we get
\begin{equation}\label{punti}
    \forall\ m\geq \frac{|Q|}{\d}\quad \left|M\cap \left(m-\frac{|Q|}{\d}\right)\D\right|=\left|M\cap (m\D)^{\circ}\right|
\end{equation}
where $\D$ is the polytope associated with a generator of $\Pic(\P(Q))$ and $(m\D)^{\circ}$ denotes the interior of the polytope $m\D$. Namely this means that
\begin{itemize}
  \item[(a)] \emph{the number of lattice points internal to the polytope $m\D$ coincides with the number of lattice points belonging to the polytope $\left(m-\frac{|Q|}{\d}\right)\D$}.
\end{itemize}
 Moreover if $-|Q|/\d<m<0$ then Theorem \ref{thm:bott-tu}(3) gives $h^n\mathcal{O}(m)=|M\cap (-m\D)^{\circ}|$. On the other hand Corollary 2.3.3 in \cite{Dolgachev} proves that $h(n,0;\d m)=0$ if $-\d m<|Q|$, which is precisely that $h^n\mathcal{O}(m)=0$ in our range. This proves that
\begin{itemize}
  \item[(b)] \emph{if $0\leq m< |Q|/\d$ then the rational polytope $m\D$ does not admits internal lattice points,}
\end{itemize}
the case $m=0$ being obvious.
\end{remark}

\end{document}